\newtheorem{theorem}{Theorem}[section]
\newtheorem{remark}[theorem]{Remark}
\newtheorem{question}[theorem]{Question}
\newtheorem{lemma}[theorem]{Lemma}
\newtheorem{proposition}[theorem]{Proposition}
\newtheorem{corollary}[theorem]{Corollary}
\begin{document}
\title{Arithmetic of function field  units}

\author{Bruno Angl\`es   \and Floric Tavares Ribeiro}

\address{
Universit\'e de Caen Basse-Normandie, 
Laboratoire de Math\'ematiques Nicolas Oresme, 
CNRS UMR 6139, 
Campus II, Boulevard Mar\'echal Juin, 
B.P. 5186, 
14032 Caen Cedex, France.
}
\email{bruno.angles@unicaen.fr, floric.tavares-ribeiro@unicaen.fr}

\begin{abstract} We prove a "discrete analogue" for Taelman's class modules of certain Conjectures formulated by R. Greenberg for cyclotomic fields. \end{abstract}
\date{ \today}
\maketitle
\tableofcontents
\section{Introduction}${}$\par
 Let us  recall some facts from classical cyclotomic theory (we refer the reader to \cite{WAS}, chapters 13 and 15). Let $p\geq 3$ be an odd prime number and let $\Delta= {\rm Gal}(\mathbb Q(\mu_p)/\mathbb Q).$ Let $M$ be the projective limit for the norm map  of the $p$-Sylow subgroups of the ideal class groups along the cyclotomic $\mathbb Z_p$-extension of $\mathbb Q(\mu_p): \mathbb Q(\mu_{p^{\infty}}).$ Let $\Gamma={\rm Gal}( \mathbb Q(\mu_{p^{\infty}})/\mathbb Q(\mu_p)),$ and let $\gamma \in \Gamma$ such that:
 $$\forall \zeta \in \mu_{p^{\infty}},\, \gamma (\zeta)=\zeta^{1+p}.$$
 We can identify $\mathbb Z_p[[\Gamma]]$ with $\mathbb Z_p[[T]]$ by sending $\gamma$ to $1+T.$ Therefore $M$ is a $\mathbb Z_p[[T]][\Delta]$-module. Let $\chi \in \widehat {\Delta}:= {\rm Hom}(\Delta, \mathbb Z_p^\times),$ and let $e_{\chi}=\frac{1}{p-1}\sum_{\delta \in \Delta} \chi(\delta) \delta^{-1}\in \mathbb Z_p[\Delta].$ For $\chi \in \widehat{\Delta}, $ we set:
 $$M_{\chi}=e_{\chi}M.$$
 Then $M_{\chi}$ is a finitely generated and torsion $\mathbb Z_p[[T]]$-module. If $\chi$ is odd distinct from the Teichm\"uller character : $\omega_p,$ B. Mazur and A. Wiles proved that the characteristic ideal of the $\mathbb Z_p[[T]]$-module $M_{\chi}$ is generated by some polynomial $P_{\chi}(T)\in \mathbb Z_p[T]$ such that there exists $U_{\chi}(T)\in \mathbb Z_p[[T]]^\times$ with the property (\cite{MAZ}):
 $$\forall y \in \mathbb Z_p, \, P_{\chi}((1+p)^y-1) U_{\chi}((1+p)^y-1)=L_p(y, \omega_p\chi^{-1}),$$
 where $L_p(.,\omega_p\chi^{-1})$ is the $p$-adic $L$-function attached to the even character $\omega_p\chi^{-1}.$ Observe that if Vandiver's Conjecture is true for the prime $p,$ then for $\chi$ odd, $M_{\chi}$ is a cyclic $\mathbb Z_p[[T]]$-module and for $\chi$ even: $M_{\chi}=\{0\}.$ R. Greenberg has conjectured  weaker statements  (see \cite{GRE}):\par
 \noindent {\sl Pseudo-cyclicity Conjecture:} If $\chi$ is odd then there exists an injective  morphism of $\mathbb Z_p[[T]]$-modules between a cyclic $\mathbb Z_p[[T]]$-module and $M_{\chi}$  such that the cokernel of this morphism is finite;\par
 \noindent {\sl Pseudo-nullity Conjecture:} If $\chi$ is even then $M_{\chi}$ is finite.\par
 \noindent These two conjectures are  open.\par
 ${}$\par
 L. Taelman has recently introduced  new arithmetic objects associated to Drinfeld modules (\cite{TAE1}): class modules and modules of units; in the case of the Carlitz module, these latter objects have similar properties to that of the ideal class groups and units of number fields (see for example \cite{ANG&TAE1}). While  Greenberg's Conjectures are "vertical" problems (one fixes a cyclotomic field and study the structures of the $p$-class groups along the cyclotomic $\mathbb Z_p$-extension), in this article we consider analogues of these problems for Taelman's class modules  but in the "horizontal" case.\par
 Let $\mathbb F_q$ be a finite field having $q$ elements and let $\theta$ be an indeterminate over $\mathbb F_q.$ Let  $\chi:(\frac{A}{fA})^\times \rightarrow (K^{ac})^\times$ be a Dirichlet character of conductor $f$ (see section \ref{Dirichletcharacters}), where $A=\mathbb F_q[\theta],$ and $K^{ac}$ is a fixed algebraic closure of $K=\mathbb F_q(\theta).$  Let $L$ be the $f$th cyclotomic function field (see \cite{ROS}, chapter 12). Let $C$ be the Carlitz module (see \cite{GOS}, chapter 3, and also  section \ref{Anexample}). Let's consider the isotypic component of Taelman's class module (\cite{TAE1}, \cite{TAE2}) attached to the $f$th cyclotomic function field  :
 $$H_{\chi}:=e_\chi (\frac{L_{\infty}}{O_L+\exp_C(L_{\infty})}\otimes _{\mathbb F_q} \mathbb F_q(\chi)),$$
 where $\exp_C$ is the Carlitz exponential map (\cite{GOS}, chapter 3), $O_L$ is the integral closure of $A$ in $L,$  $\mathbb  F_q(\chi) \subset K^{ac}$ is the finite extension of $\mathbb F_q$ obtained by adjoining to $\mathbb F_q$ the values of $\chi,$ and $e_\chi \in \mathbb F_q(\chi)[{\rm Gal}(L/K)]$ is the usual idempotent associated to $\chi$ (note that $f$ is square free). Then L. Taelman proved that $H_\chi$ is a finite $A[\chi]:= \mathbb F_q(\chi)[\theta]$-module (see for example  \cite{TAE1}). In this article,  we study the $A[\chi]$-module structure of $H_\chi$ when $\chi$ runs through the infinite family of Dirichlet characters of a given type (see  \ref{Dirichletcharacters}).\par
 Let $n\geq 1$ be an integer and let $t_1, \ldots,t_n$ be $n$ variables over $ K_{\infty}:=\mathbb F_q((\frac{1}{\theta})).$ Let $\mathbb T_n(K_{\infty})$ be the Tate algebra in the variables $t_1, \ldots, t_n$ with coefficients in $K_{\infty}$ (see \cite{PUT}, chapter 3). Let $\tau: \mathbb T_n(K_{\infty})\rightarrow \mathbb T_n(K_{\infty})$ be the continuous morphism of $\mathbb F_q[t_1, \ldots, t_n]$-algebras such that $\forall x\in K_{\infty}, \tau (x)=x^q.$ Let $\phi: A\rightarrow A[t_1, \ldots, t_n]\{\tau\}$ be the morphism of $\mathbb F_q$-algebras such that:
 $$\phi_{\theta}= (t_1-\theta)\cdots (t_n-\theta)\tau +\theta.$$
 \noindent Let $\exp_{\phi}$ be the exponential function attached to $\phi,$ i.e. $\exp_{\phi}$ is the unique element in $\mathbb T_n(K_{\infty})\{ \{\tau\}\}$ such that $\exp_{\phi} \equiv 1\pmod{\tau}$ and:
 $$\exp_{\phi} \theta= \phi_{\theta} \exp_{\phi}.$$
 Then inspired by Taelman's work (\cite{TAE1}, \cite{TAE2}, \cite{TAE3}), one can introduce a "generic class module" of level $n$ (see \cite{APTR}):
 $$H_n:= \frac{\mathbb T_n(K_{\infty})                       }{ A[t_1, \ldots, t_n]+\exp_{\phi}(\mathbb T_n(K_{\infty}))                          }.$$
 Observe that $H_n$ is an $A[t_1, \ldots, t_n]$-module via $\phi,$ furthermore it is a finitely generated $\mathbb F_q[t_1, \ldots, t_n]$-module and  a torsion $A[t_1, \ldots,t_n]$-module (we refer the reader to section \ref{section2} for more details). The terminology "generic class module" comes from the fact that the evaluations of elements of $\mathbb T_n(K_{\infty})$ on elements of $\overline{\mathbb F_q}^n$ ($\overline{\mathbb F_q}$ being the algebraic closure of $\mathbb F_q$ in $K^{ac}$) induce surjective morphisms from $H_n$ to  isotypic components of Taelman's class modules associated to Dirichlet characters of type $n.$ These generic class modules can be viewed as discrete analogues of the Iwasawa modules discussed above, having in mind that the role of $\mathbb Z_p$ is played by $\mathbb F_q[t_1, \ldots, t_n],$ the role of $T$ is played by $\phi_{\theta}.$  For example Mazur-Wiles Theorem has an analogue in this situation, let $n\geq q, $ $n\equiv 1\pmod{q-1}$ (this is the analogue of the condition $\chi$ odd, $\chi \not =\omega_p,$ for Iwasawa modules), then there exists $\mathbb B(t_1, \ldots, t_n)\in A[t_1, \ldots, t_n]$ which is a monic polynomial in $\theta,$ such that the Fitting ideal of the $A[t_1, \ldots, t_n]$-module $H_n$ is generated by $\mathbb B(t_1, \ldots, t_n)$ and (\cite{APTR}, Theorem 7.7):
 $$\mathbb (-1)^{\frac{n-1}{q-1}}\mathbb B(t_1, \ldots, t_n)\frac{\widetilde{\pi}}{\omega(t_1)\cdots \omega(t_n)}= L(t_1, \ldots,t_n),$$
 where $L(t_1, \ldots, t_n)\in \mathbb T_n(K_{\infty})^\times$ is the $L$-series attached to $\phi/A[t_1, \ldots, t_n]$ (see section \ref{section2}),  and $\omega(t)$ is the Anderson-Thakur special function (see \cite{AND&THA}, and also \cite{APTR}). The reader can now easily guess what are the discrete analogues of Greenberg's Conjectures in our situation. We prove these discrete Greenberg's Conjectures in section \ref{Thecaseofthecarlitzmodule} which was left as open questions in \cite{APTR}, more precisely:\par
 \noindent {\sl Pseudo-cyclicity:} let $n\geq 2, $ $n\equiv 1\pmod{q-1},$ there exists an injective  morphism of $A[t_1, \ldots, t_n]$-modules between a cyclic $A[t_1, \ldots, t_n]$-module and $H_n$  such that the cokernel of this morphism is a finitely generated and torsion $\mathbb F_q[t_1, \ldots, t_n]$-module (Theorem \ref{TheoremS2-2});\par
 \noindent {\sl Pseudo-nullity:}   let $n\geq 2, $ $n\not \equiv 1\pmod{q-1},$ then $H_n$ is a finitely generated and torsion $\mathbb F_q[t_1, \ldots, t_n]$-module (Theorem \ref{TheoremS2-1}).\par
By \cite{APTR},  Theorem \ref{TheoremS2-1} and Theorem \ref{TheoremS2-2} imply  that there exists $F(t_1, \ldots,t_n)\in \mathbb F_q[t_1, \ldots, t_n]\setminus \{0\}$  (depending on the module structure of $H_n$) such that for every Dirichlet character $\chi$ of type $n$ verifying $F(\eta_1, \ldots , \eta_n)\not =0,$ then, if $n\equiv 1\pmod{q-1},$ $H_{\chi}$ is a cyclic $A[\chi]$-module  and if $n\not \equiv 1\pmod{q-1}$ we have $H_{\chi}=\{ 0\}$. 

${}$\par
The paper is organized as follows:  in section \ref{section1}, we introduce a natural sub-module of finite index in the module of Taelman's units associated to a Drinfeld module : the module of Stark units;  we prove several basic properties of this module and study its connection to $L$-series. In section \ref{section2}, we show how the constructions of section \ref{section1} can also be made in the context of deformation of Drinfeld modules over Tate algebras;  we then study in deep the arithmetic properties  of Stark units attached to deformations of the Carlitz module and their connections to $L$-series  leading to the proof of the discrete Greenberg Conjectures. In the last section, combining the ideas developed in section \ref{section1} and section \ref{section2}, we prove a cyclicity result involving the "derivatives" of  Dirichlet-Goss $L$-series.\par
${}$\par
The authors warmly thank David Goss for fruitful comments on an earlier version of this article. \par

\section{Stark Units}\label{section1}${}$\par
In this section, we  construct a natural submodule of the module of Taelman's units associated to a Drinfeld module defined over the ring of integers of a finite extension of $K.$ The ideas developed in this section will be used in section \ref{section2} to prove the discrete Greenberg Conjectures.\par

\subsection{Preliminaries}\label{Prel}${}$\par
Let $k$ be a field and let $\theta$ be an indeterminate over $k.$ We set $R=k[\theta]$ and $\mathbb K=k((\frac{1}{\theta})).$ We endow $\mathbb K$ with the $\frac 1 \theta$-adic topology. An element $x\in \mathbb K^\times$ is said to be monic if $x=\frac{1}{\theta^m}+\sum_{i>m}x_i\frac{1}{\theta^i}, $ $m\in \mathbb Z,$ $x_i\in k.$\par
Let $M$ be a finite dimensional $k$-vector space which is an $R$-module, then there exist $r_1, \ldots, r_n $ which are monic elements in $R$ such that we have an isomorphism of $R$-modules:
$$M\simeq \prod_{j=1}^n \frac{R}{r_jR}.$$
We set:
$$[M]_R= r_1\cdots r_n,$$
the monic generator of the Fitting ideal of $M$.
Observe that:
$$[M]_R=\det_{k[X]}((1\otimes X){\rm Id}-(\theta\otimes 1)\mid_{M\otimes_kk[X]})\mid_{X=\theta}.$$
Therefore, if $M_1, M, M_2$ are $R$-modules such that $M$ is a finite dimensional $k$-vector space, and  suppose that we have a short exact sequence of $R$-modules:
$$0\rightarrow M_1\rightarrow M\rightarrow M_2\rightarrow 0,$$
then:
$$[M]_R=[M_1]_R [M_2]_R.$$
${}$\par\par
Let $V$ be a finite dimensional $\mathbb K$-vector space. We recall the following basic fact:
\begin{lemma}\label{lemmaPrel1} Let $M$ be a sub $R$-module of $V$ such that $M$ is discrete in $V.$ Let $W$ be the  sub $\mathbb K$-vector space of $V$ generated by $M.$ Then $M$ is a finitely generated and free $R$-module of rank $\dim_{\mathbb K} W.$
\end{lemma}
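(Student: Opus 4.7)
The plan is to exhibit $M$ as an extension of a finite-dimensional $k$-vector space by a free $R$-module of rank $d := \dim_{\mathbb K} W$, and then invoke the structure theorem for finitely generated modules over the PID $R$.

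After replacing $V$ by $W$ I may assume $M$ spans $V$ over $\mathbb K$. Extract a $\mathbb K$-basis $e_1, \ldots, e_d$ of $V$ from $M$ and set $M_0 := Re_1 \oplus \cdots \oplus Re_d$, a free $R$-module of rank $d$ contained in $M$. Equip $V$ with the max-norm $\|\sum a_i e_i\| := \max_i |a_i|$ coming from the $\frac{1}{\theta}$-adic absolute value on $\mathbb K$. Using the canonical $k$-linear splitting $\mathbb K = R \oplus \frac{1}{\theta}k[[\frac{1}{\theta}]]$, each $m \in M$ is congruent mod $M_0$ to a unique element in the ``unit ball'' $C := \{v \in V : \|v\| < 1\}$; since $M_0 \cap C = \{0\}$ (nonzero polynomials have norm $\geq 1$), this yields a $k$-linear bijection $M \cap C \xrightarrow{\sim} M/M_0$.

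The core claim is that $M/M_0$ is finite-dimensional over $k$. Discreteness of $M$ provides an integer $N_0$ such that every nonzero $m \in M$ satisfies $\|m\| \geq |\theta|^{-N_0}$. The ``truncation'' map $C \to k^{dN_0}$ recording, in each coordinate, the coefficients of $\theta^{-1}, \ldots, \theta^{-N_0}$ is $k$-linear, and any element of its kernel lying in $M \cap C$ has norm at most $|\theta|^{-(N_0+1)}$, hence must vanish by discreteness. So the map is injective on $M \cap C$, giving $\dim_k (M/M_0) \leq dN_0$.

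Since $M_0$ is finitely generated over $R$ and $M/M_0$ is finitely generated over $k \subset R$, the module $M$ is finitely generated over $R$. It is $R$-torsion-free because it sits inside the $\mathbb K$-vector space $V$ and every nonzero element of $R$ is a unit in $\mathbb K$. The structure theorem over the PID $R$ then makes $M$ free; tensoring with $\mathbb K$ identifies its rank with $\dim_{\mathbb K}(\mathbb K \otimes_R M) = \dim_{\mathbb K} W = d$. The only substantive step is the finite-dimensionality of $M/M_0$ over $k$, which via the truncation argument generalizes the classical ``discrete $+$ compact $=$ finite'' reasoning (the latter applying directly when $k$ is finite, since $C$ would then be compact).
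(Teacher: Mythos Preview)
Your proof is correct and takes a genuinely different route from the paper's. The paper argues by induction on $\dim_{\mathbb K} W$: it picks $m\in M\setminus\{0\}$ of minimal norm, observes that minimality forces $M\cap \mathbb K m = Rm$, and then applies the inductive hypothesis to the image $M'$ of $M$ in $W/\mathbb K m$ to split off one free rank at a time. Your approach instead sandwiches $M$ between a free rank-$d$ submodule $M_0$ and something with finite $k$-quotient, using the truncation map to exploit discreteness directly. The paper's argument is shorter and more in the spirit of successive minima; yours is more explicit (it even gives the bound $\dim_k(M/M_0)\le dN_0$) and makes the role of the structure theorem over a PID transparent. One small point: your final rank identification ``$\dim_{\mathbb K}(\mathbb K\otimes_R M)=\dim_{\mathbb K}W$'' implicitly uses that the natural map $\mathbb K\otimes_R M\to W$ is an isomorphism, which is cleanest seen by noting that $M/M_0$ is $R$-torsion (finite-dimensional over $k$), so $M$ and $M_0$ have the same $R$-rank, namely $d$.
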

\begin{proof} Set $n= \dim_{\mathbb K} W.$ Choose $\|\cdot\|$ a norm of $\mathbb K$-vector space on $W$. Since $W$ is closed in $V,$ we deduce that $M$ is discrete in $W.$  Thus there exists $m\in M\setminus\{0\}$ such that $\|m\|$ is minimal, so that $Rm=M\cap \mathbb K m$. In particular, when $\dim_{\mathbb K}W=1$, then $M=Rm$ is free of rank $1$. Let $M'$ be the image of $M$ in $W/\mathbb K m$. By induction, $M'$ is free of rank $n-1$. Since we have a short exact sequence of torsion-free $R$-modules
$$ 0 \to Rm \to M \to M' \to 0 $$
we deduce that  $M$ is a free $R$-module of rank $n$.
\end{proof}
An $R$-lattice in $V,$ $M,$ is a  sub $R$-module of $V$ such that $M$ is discrete in $V$ and $M$ contains a $\mathbb K$-basis of $V.$ In particular, by the above Lemma, $M$ is a free $R$-module of rank $\dim_{\mathbb K}V.$\par
Let $M_1$ and $M_2$ be two $R$-lattices in $V.$ Let $n=\dim _{\mathbb K} V.$ Following \cite{TAE2}, we will define $[M_1:M_2]_R\in \mathbb K^\times.$ Select $e_1, \ldots, e_n$ in $M_1$ and $f_1, \ldots, f_n\in M_2$ such that:
$$M_1= \oplus_{j=1}^n Re_j\, {\rm and}\, M_2=\oplus_{j=1}^n Rf_j.$$ 
Then, observe that:
$$M_1= \oplus_{j=1}^n \mathbb K e_j=\oplus_{j=1}^n \mathbb K f_j.$$
Let $f:V\rightarrow V$ be the $\mathbb K$-linear map such that $f(e_i)= f_i , i=1, \ldots, n.$  Let $B$ be the matrix of $f$ in the basis $(e_i)_{i=1, \ldots, n}.$ then $B\in GL_n(\mathbb K)$ and $(\det B)k^\times$ does not depend on the choices of $(e_i)_{i=1,\ldots, n}$ and $(f_i)_{i=1, \ldots n}.$ Let $x\in \mathbb  K^\times$ be the monic element such that $(\det B)k^\times = xk^\times,$ then we set:
$$[M_1:M_2]_R=x.$$
Observe that $[M_1:M_2]_R$ is well-defined.\par
If $M_2\subset M_1$ then $\frac{M_1}{M_2}$ is a finite dimensional $k$-vector space. Since $R$ is a principal ideal domain, there exists an $R$-basis $(e_1, \ldots, e_n)$ of $M_1,$ $r_1, \ldots, r_t,$ $t$ monic elements in $R,$ $t\leq n,$ such that if we set $f_i=r_ie_i, $ $i=1, \ldots, t,$ $f_i=e_i,$ $t+1\leq i\leq n,$ then $(f_1, \ldots, f_n)$ is an $R$-basis of $M_2.$  Thus, we have:
$$[M_1:M_2]_R= [\frac{M_1}{M_2}]_R.$$\par
If $M_1,M_2, M_3$ are three $R$-lattices in $V,$ then:
$$[M_1:M_3]_R=[M_1:M_2]_R[M_2:M_3]_R.$$\par
Let $M_1, M_2$ be two $R$-lattices in $V.$ Let $W$ be a sub $\mathbb K$-vector space of $V$ and let $N_1, N_2$ be two $R$-lattices in $W$ such that:
$$M_i\cap W=N_i, i=1,2.$$
Then $\frac{M_1}{N_1}$ and $\frac{M_2}{N_2}$ are two $R$-lattices in $\frac{V}{W}$ and we have:
$$[\frac{M_1}{N_1}:\frac{M_2}{N_2}]_R=\frac{[M_1:M_2]_R}{[N_1:N_2]_R}.$$
${}$\par
${}$\par

Let $G$ be a finite abelian group, we assume that $\mid G\mid$ is prime to the characteristic of $k.$ Let $\overline{k}$ be an algebraic closure of $k.$  We set $\widehat{G}={\rm Hom}(G, \overline{k}^\times).$ For $\chi \in \widehat G,$ let $k(\chi)=k(\chi(g), g\in G)\subset \overline{k},$ $R(\chi)=k(\chi)[\theta],$ and   $\mathbb K(\chi)= k(\chi)((\frac{1}{\theta})).$\par
For $\chi \in \widehat{G},$ we set:
$$e_{\chi}=\frac{1}{\mid G\mid }\sum_{g\in G} \chi( g) g^{-1} \in k(\chi)[G].$$
We also set:
$$[\chi]=\{ \psi \in \widehat{G}, \psi =\sigma\circ \chi \, {\rm for \, some\, } \sigma \in {\rm Gal}(k(\chi)/k)\},$$
and:
$$e_{[\chi]}=\sum_{\psi \in [\chi]}e_{\psi}\in k[G].$$
Let $M$ be a $k[G]$-module. Then, for $\chi \in \widehat{G},$ we set:
$$M(\chi)=e_{\chi} (M\otimes_kk(\chi)).$$
We have :
$$\oplus_{\psi \in [\chi]}M(\psi)=(e_{[\chi]}M)\otimes_kk(\chi).$$\par
Let $f\in  R[G]\cap {\rm Frac}(R)[G]^\times $ and let $\rho_f:R[G]\rightarrow R[G]$ be the morphism of $R$-modules given by the multiplication by
$f.$ Let $\det_R\rho_f\in R\cap {\rm Frac}(R)^\times $  be the determinant of the matrix of $\rho_f$ with respect to the $R$-basis given by the elements of $G ,$ then there exists a unique monic element $\det_Rf \in R$ such that:
$$(\det_Rf)k^\times =  (\det_R\rho_f) k^\times.$$
Observe that $\det_Rf$ is well-defined. For $\chi \in \widehat G,$ we define $f(\chi)\in R(\chi)$ by the equality:
$$e_{\chi} f=f(\chi)e_{\chi}.$$
We get:
$$(\det_Rf)k^\times =( \prod_{\chi\in \widehat{G}}f(\chi))k^\times.$$\par

 ${}$\par
Let $M$ be a finite dimensional $k$-vector space which is an $R[G]$-module.  Let $L/k$ be a finite extension containing $k(\chi),$ then $[e_{\chi}(M\otimes_kL)]_{L[\theta]}$ does not depend on $L.$ Observe that:
$$\forall \sigma \in {\rm Gal}(k(\chi)/k), \sigma([M(\chi)]_{R(\chi)})= [M(\sigma\circ \chi)]_{R(\chi)}.$$
 We set:
$$[M]_{R[G]}= \sum_{\chi\in \widehat{G}}[M(\chi)]_{R(\chi)}e_{\chi} \in R[G]\cap {\rm Frac}(R)[G]^\times.$$
Then:
$$\det_R[M]_{R[G]}= \prod_{\chi \in \widehat{G}}[M(\chi)]_{R(\chi)}= [M]_R.$$
Finally, observe that if $M_1,M, M_2$ are finite dimensional $k$-vector spaces that are also $R[G]$-modules and such that we have an exact sequence of $R[G]$-modules:
$$O\rightarrow M_1\rightarrow M\rightarrow M_2\rightarrow 0,$$
then:
$$[M]_{R[G]}=[M_1]_{R[G]} [M_2]_{R[G]}.$$ ${}$\par
${}$\par
Let $V$ be a finite dimensional $\mathbb K$-vector space which is a free $\mathbb K[G]$-module. An $R[G]$-lattice in $V,$  is  an $R$-lattice in $V,$  $M,$ such that $M$ is an $R[G]$-module. Observe that $M$ is a free $R[G]$-module: for $\chi \in \widehat{G}, $ $e_{[\chi]}M$ is a finitely generated $e_{[\chi]}R[G]$-module without torsion and thus a free $e_{[\chi]}R[G]$-module ($e_{[\chi]}R[G]$ is a principal ideal domain). Let $\chi \in \widehat{G},$ then $M(\chi)$ is an $R(\chi)$-lattice in the finite dimensional $\mathbb K(\chi)$-vector space $V(\chi).$ \par
Let $M_1,M_2$ be two $R[G]$-lattices in $V.$ Observe that:
$$\forall \sigma \in {\rm Gal}(k(\chi)/k), \sigma([M(\chi):M_2(\chi)]_{R(\chi)})= [M_1(\sigma\circ \chi):M_2(\sigma \circ \chi)]_{R(\chi)}.$$
 We set:
$$[M_1:M_2]_{R[G]}= \sum_{\chi\in \widehat{G}} [M_1(\chi):M_2(\chi)]_{R(\chi)}e_{\chi}\in \mathbb K[G]^\times.$$
We have:
$$[M_1:M_2]_R=\prod_{\chi \in \widehat{G}}[M_1(\chi):M_2(\chi)]_{R(\chi)}.$$
Furthermore, if $M_2\subset M_1,$ we have:
$$[M_1:M_2]_{R[G]}=[\frac{M_1}{M_2}]_{R[G]}.$$
Finally if $M_1, M_2, M_3$ are three $R[G]$-lattices in $V, $ then:
$$[M_1:M_3]_{R[G]}= [M_1:M_2]_{R[G]}[M_2:M_3]_{R[G]}.$$


\subsection{$\mathbb F_q(z)[\theta]$-Drinfeld modules}\label{Background}${}$\par
Let $\mathbb F_q$ be a finite field having $q$ elements and let $p$ be the characteristic of $\mathbb F_q.$ Let $\theta$ be an indeterminate over $\mathbb F_q$ and let $A=\mathbb F_q[\theta].$ Let $A_+$ be the set of monic polynomials in $A.$  For $n\geq 0,$ we denote by $A_{+,n}$ the set of elements of $A_+$ of degree $n.$ Let $K=\mathbb F_q(\theta)$ and $K_\infty=\mathbb F_q((\frac{1}{\theta})).$ Let $\mathbb C_\infty$ be the completion of a fixed algebraic closure of $K_\infty$ and let $v_\infty: \mathbb C_\infty \rightarrow \mathbb Q\cup\{+\infty\}$ be the valuation on $\mathbb C_\infty$ normalized such that $v_\infty(\theta)=-1.$\par
Let $z$ be an indeterminate over $\mathbb C_{\infty}.$  We still denote by $v_{\infty}$ the $\infty$-adic Gauss valuation on $\mathbb C_{\infty}(z),$ i.e. if $f\in \mathbb C_{\infty}[z],$ $f=\sum_{k=0}^n f_n z^n,$ $f_n\in \mathbb C_{\infty},$ $v_{\infty}(f)={\rm Inf}\{ v_{\infty}(f_k), k\in \{ 0, \ldots, n\}\}.$
Let $F$ be a subfield of $\mathbb C_{\infty}$  containing $K_{\infty}$ and complete for $v_{\infty},$ we denote by $\mathbb T_z(F)$ the completion of $F[z]$ for $v_{\infty}$ and $\widetilde{F}$ the completion of $F(z)$ for $v_{\infty}.$ Then the $\mathbb F_q(z)$-vector space generated by $\mathbb T_z(F)$ is dense in $\widetilde{F}.$\par

Let $L/K$ be a finite extension and let $L_{\infty}=L\otimes_KK_{\infty}.$ Let $O_L$ be the integral closure of $A$ in $L.$  Let $S_{\infty}(L)$ be the set of places of $L$ above $\infty.$ For each $v\in S_{\infty}(L),$ let $\iota_v: L\rightarrow \mathbb C_{\infty}$ be the morphism of $K$-algebras corresponding to $v,$ we denote by $L_v$ the field generated over $K_{\infty}$ by $\iota_v(L).$ Let's set:
$$\widetilde{L_{\infty}}= L\otimes_K\widetilde{K_{\infty}}.$$
Observe that we have an isomorphism of $\widetilde{K_{\infty}}$-algebras:
$$\widetilde{L_{\infty}}\simeq \prod_{v\in S_{\infty}(L)} \widetilde{L_v}.$$
Let $\tau:\widetilde{L_{\infty}}\rightarrow \widetilde{L_{\infty}}$ be the continuous morphism of $\mathbb F_q(z)$-algebras such that $\forall x\in L, \tau(x)=x^q.$ Let $\widetilde{O_L}$ be the $\mathbb F_q(z)$-vector space generated by $O_L$ in $\widetilde{L_{\infty}},$ and set $\widetilde{A}=\mathbb F_q(z)[\theta].$  \par
${}$\par

A Drinfeld $\widetilde{A}$-module defined over $\widetilde{O_L},$  $\varphi,$ is a morphism of $\mathbb F_q(z)$-algebras $\varphi : \widetilde{A}\rightarrow \widetilde{L_{\infty}}\{\tau\}$ such that $\varphi_{\theta}\equiv \theta \pmod{\tau}.$ We denote by $\varphi^0$ the trivial $\widetilde{A}$-Drinfeld module, i.e. $\varphi^0_{\theta}=\theta.$\par
${}$\par
Let $\varphi$ be an $\widetilde{A}$-Drinfeld module defined over $\widetilde{O_L},$ then there exists a unique element $\exp_{\varphi}\in L(z)\{\{ \tau\}\}$ such that (\cite{DEM}, Proposition 2.3):
$$\exp_{\varphi}\equiv 1\pmod{\tau},$$
$$\exp_{\varphi}\theta=\varphi_{\theta} \exp_{\varphi}.$$
Let's set:
$$H(\varphi/\widetilde{O_L})= \frac{\widetilde{L_{\infty}}}{\widetilde{O_L}+\exp_{\varphi}(\widetilde{L_{\infty}})}.$$
Then, by \cite{DEM}, Proposition 2.6, $H(\varphi/\widetilde{O_L})$ is a finite dimensional $\mathbb F_q(z)$-vector space  and an $\widetilde A$-module via $\varphi.$ Furthermore, by \cite{DEM}, Proposition 2.6, $\exp_{\varphi}^{-1}(\widetilde{O_L})$ is an $\widetilde{A}$-lattice in $\widetilde{L_{\infty}}.$\par
Let $\frak P$ be a non zero prime ideal of $O_L$ and let us denote by  $\varphi(\frac{\widetilde{O_L}}{\frak P\widetilde{O_L}})$ the $\mathbb F_q(z)$-vector space $\frac{\widetilde{O_L}}{\frak P\widetilde{O_L}}$ viewed as an $\widetilde{A}$-module via $\varphi.$ Then, by \cite{DEM}, Theorem 2.7, the following infinite product converges in $\widetilde{K_{\infty}}^\times:$
$$\mathcal L(\varphi/\widetilde{O_L}):=\prod_{\frak P {\rm \, prime\, ideal \, of \, }O_L, \frak P\not =(0)}\frac{[\frac{\widetilde{O_L}}{\frak P\widetilde{O_L}}]_{\widetilde{A}}}{[\varphi(\frac{\widetilde{O_L}}{\frak P\widetilde{O_L}})]_{\widetilde{A}}}\in \widetilde{K_{\infty}}^\times .$$
Furthermore, we have (\cite{DEM}, Theorem 2.7):
\begin{eqnarray}\label{eqSt1}\mathcal L(\varphi/\widetilde{O_L})= [\widetilde{O_L}:\exp_{\varphi}^{-1}(\widetilde{O_L})]_{\widetilde{A}}[H(\varphi/\widetilde{O_L})]_{\widetilde{A}}.\end{eqnarray}
Observe that $\mathcal L(\varphi^0/\widetilde{O_L})=1,$ $\exp_{\varphi^0}=1,$ $\exp_{\varphi^0}^{-1}(\widetilde{O_L})=\widetilde{O_L},$ $H(\varphi^0/\widetilde{O_L})=\{0\}.$

\subsection{Integrality results}\label{Integrality}${}$\par

Let $\varphi$ be an $\widetilde{A}$-Drinfeld module defined over $\widetilde{O_L}$ such that $\varphi_{\theta}\in O_L[z]\{\tau\}.$
Denote by $\mathbb T_z(L_{\infty})$ the closure of $L_{\infty}[z]$ in $\widetilde{L_{\infty}}.$ Then, we have an isomorphism of $\mathbb T_z(K_{\infty})$-algebras:
$$ \mathbb T_z(L_{\infty})\simeq \prod_{v\in S_{\infty}(L)}\mathbb T_z(L_v).$$
Observe that $\tau\mid_{\mathbb T_z(L_{\infty})}:\mathbb T_z(L_{\infty})\rightarrow \mathbb T_z(L_{\infty})$ is a continuous morphism of $\mathbb F_q[z]$-algebras.
Since $\exp_{\varphi}\theta= \varphi_{\theta} \exp_{\varphi}$ and $\exp_{\varphi}\equiv 1\pmod{\tau},$ we deduce:
$$\exp_{\varphi}\in \mathbb T_z(L_{\infty})\{\{ \tau\}\}.$$
Let's set:
$$H(\varphi/O_L[z])= \frac{\mathbb T_z(L_{\infty})}{O_L[z]+\exp_{\varphi}(\mathbb T_z(L_{\infty}))}.$$
Let $n=\dim_KL$ and let $e_1, \ldots, e_n\in O_L$ such that $O_L=\oplus_{i=1}^n Ae_i.$ Then:
$$\mathbb T_z(L_{\infty})= \oplus_{i=1}^n \mathbb T_z(K_{\infty})e_i.$$
Let $M=\{ x\in \mathbb T_z(K_{\infty}), v_{\infty}(x)\geq 1\},$ then:
$$\mathbb T_z(L_{\infty})= O_L[z]\oplus \oplus_{i=1}^nMe_i.$$
Since, by \cite{DEM}, Proposition 2.3,   there exists an integer $N\geq 0 $ such that $\exp_{\varphi}$ induces an isomorphism of $\mathbb F_q[z]$-module on $ \oplus_{i=1}^n\frac{1}{\theta^N}Me_i,$ we deduce that $H(\varphi /O_L[z])$ is a finitely generated $\mathbb F_q[z]$-module.
\begin{lemma}\label{lemmaSt2}
$$\mathcal L(\varphi/\widetilde{O_L})\in \mathbb T_z(K_{\infty})^\times.$$
\end{lemma}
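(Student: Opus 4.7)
The plan is to extract integrality in $z$ from the hypothesis $\varphi_\theta \in O_L[z]\{\tau\}$ at the level of Euler factors, then conclude using that $\mathbb T_z(K_\infty)$ is a closed Banach subalgebra of $\widetilde{K_\infty}$ and that Demeslay has already proved that the defining infinite product converges in the ambient $\widetilde{K_\infty}^\times$.

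First, I would fix a nonzero prime $\mathfrak P$ of $O_L$ and set $d_\mathfrak P := \dim_{\mathbb F_q}(O_L/\mathfrak P)$. The integrality hypothesis makes $\varphi_\theta$ act as an $\mathbb F_q[z]$-linear endomorphism on the free $\mathbb F_q[z]$-module $O_L[z]/\mathfrak P O_L[z] \simeq (O_L/\mathfrak P)[z]$ of rank $d_\mathfrak P$. Its characteristic polynomial, which after base change to $\mathbb F_q(z)$ coincides with $[\varphi(\widetilde{O_L}/\mathfrak P\widetilde{O_L})]_{\widetilde A}$, is therefore monic in $\theta$ of degree $d_\mathfrak P$ with coefficients in $\mathbb F_q[z]$; the companion term $[\widetilde{O_L}/\mathfrak P\widetilde{O_L}]_{\widetilde A} = \Nm_{O_L/A}(\mathfrak P)$ is monic of the same degree with coefficients in $\mathbb F_q$. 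I would then show that any monic polynomial $P(\theta) = \theta^d + \sum_{i<d}a_i(z)\theta^i$ with $a_i(z) \in \mathbb F_q[z]$ is a unit in $\mathbb T_z(K_\infty)$: writing $P = \theta^d(1+Q)$ with $Q = \sum_{i<d}a_i(z)\theta^{i-d}$, each $a_i(z)$ has Gauss valuation $\geq 0$, so $v_\infty(Q)\geq 1$ and $(1+Q)^{-1} = \sum_{k\geq 0}(-Q)^k$ converges in $\mathbb T_z(K_\infty)$; since $\theta \in K_\infty^\times \subset \mathbb T_z(K_\infty)^\times$, the factor $\theta^d(1+Q)$ is itself a unit. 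Consequently each Euler factor $E_\mathfrak P$ lies in $\mathbb T_z(K_\infty)^\times$, and applying the same Gauss-valuation estimate to the difference of the two monic polynomials of degree $d_\mathfrak P$ yields the uniform bound $v_\infty(E_\mathfrak P - 1)\geq 1$.

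Finally, I would observe that $\mathbb T_z(K_\infty)$, being the $v_\infty$-completion of $K_\infty[z]$, is a closed subspace of $\widetilde{K_\infty}$. Every partial product $\prod_{\deg\mathfrak P\leq N}E_\mathfrak P$ lies in $\mathbb T_z(K_\infty)$ by the previous paragraph, and Demeslay's theorem cited in the excerpt guarantees convergence of this net to $\mathcal L(\varphi/\widetilde{O_L})$ in $\widetilde{K_\infty}^\times$; closedness forces the limit to lie in $\mathbb T_z(K_\infty)$. The ultrametric inequality together with $v_\infty(E_\mathfrak P - 1)\geq 1$ gives $v_\infty(\mathcal L(\varphi/\widetilde{O_L}) - 1)\geq 1$, whence the geometric series $\sum_{k\geq 0}(1-\mathcal L(\varphi/\widetilde{O_L}))^k$ converges in $\mathbb T_z(K_\infty)$ to the inverse, proving invertibility. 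The main technical step is verifying that $\varphi_\theta \in O_L[z]\{\tau\}$ really does propagate to $\mathbb F_q[z]$-integrality of the characteristic polynomials of the residual actions (and not merely $\mathbb F_q(z)$-integrality) --- once this is in hand, the rest is formal from completeness of the Tate algebra and ultrametric estimates.
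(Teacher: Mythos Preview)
Your proposal is correct and follows essentially the same route as the paper: both arguments reduce to showing that each Euler factor is the ratio of two monic polynomials in $\theta$ of the same degree with coefficients in $\mathbb F_q[z]$ (by computing the characteristic polynomial of $\varphi_\theta$ on the free $\mathbb F_q[z]$-module $(O_L/\mathfrak P)[z]$), hence lies in $1+\frac{1}{\theta}\mathbb F_q[z][[\frac{1}{\theta}]]\subset \mathbb T_z(K_\infty)^\times$, and then pass to the limit using Demeslay's convergence. Your explicit appeal to closedness of $\mathbb T_z(K_\infty)$ in $\widetilde{K_\infty}$ and the ultrametric bound $v_\infty(E_{\mathfrak P}-1)\geq 1$ is just a slightly more spelled-out version of the paper's one-line conclusion.
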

\begin{proof} Let $\frak P$ be a non zero prime ideal in $O_L.$ Then we have an isomorphism of $\widetilde{A}$-modules:
$$\frac{\widetilde{O_L}}{\frak P\widetilde{O_L}}\simeq \frac{O_L}{\frak P}[z]\otimes_{\mathbb F_q[z]}\mathbb F_q(z).$$
Since $\frac{O_L}{\frak P}[z]$ is a free $\mathbb F_q[z]$-module, we deduce that:
$$[\varphi(\frac{\widetilde{O_L}}{\frak P\widetilde{O_L}})]_{\widetilde{A}}=\det\, _{\mathbb F_q[z,X]}(X{\rm Id}-\varphi_{\theta}\mid_{\frac{O_L}{\frak P}[z]})\mid_{X=\theta} \in A[z].$$
But :
$$\deg_{\theta} [\varphi(\frac{\widetilde{O_L}}{\frak P\widetilde{O_L}})]_{\widetilde{A}}=\dim_{\mathbb F_q}\frac{\widetilde{O_L}}{\frak P}=\deg_{\theta}[\frac{\widetilde{O_L}}{\frak P}]_{\widetilde{A}}.$$
Therefore $[\varphi(\frac{\widetilde{O_L}}{\frak P\widetilde{O_L}})]_{\widetilde{A}}$ is a monic polynomial in $\theta$ in $A[z]$ having the same degree as $[\frac{\widetilde{O_L}}{\frak P}]_{\widetilde{A}}$ which is a monic polynomial in $A.$ We get:
$$\frac{[\frac{\widetilde{O_L}}{\frak P\widetilde{O_L}}]_{\widetilde{A}}}{[\varphi(\frac{\widetilde{O_L}}{\frak P\widetilde{O_L}})]_{\widetilde{A}}}\in 1+\frac{1}{\theta} \mathbb F_q[z][[\frac{1}{\theta}]].$$
Since , by \cite{DEM}, Theorem 2.7, the infinite product $\prod_{\frak P {\rm \, prime\, ideal \, of \, }O_L, \frak P\not =(0)}\frac{[\frac{\widetilde{O_L}}{\frak P\widetilde{O_L}}]_{\widetilde{A}}}{[\varphi(\frac{\widetilde{O_L}}{\frak P\widetilde{O_L}})]_{\widetilde{A}}} $ converges in $\widetilde{K_{\infty}},$ we deduce that:
$$\mathcal L(\varphi/\widetilde{O_L})\in 1+\frac{1}{\theta}\mathbb F_q[z][[\frac{1}{\theta}]].$$
\end{proof}

\begin{proposition}\label{propositionSt1}${}$\par
\noindent 1) Let $U(\varphi/O_L[z])=\{ x\in \mathbb T_z(L_{\infty}), \exp_{\varphi}(x)\in O_L[z]\}.$  Then $\exp_{\varphi}^{-1}(\widetilde{O_L})$ is the $\mathbb F_q(z)$-vector space generated by $U(\varphi/O_L[z]).$ \par
\noindent 2) $U(\varphi/O_L[z])$ is a finitely generated $A[z]$-module.
\end{proposition}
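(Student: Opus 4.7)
The plan is to establish (1) by a clearing-of-denominators argument leveraging the $\mathbb F_q(z)$-linearity of $\exp_\varphi$ together with the identity $\widetilde{O_L}\cap \mathbb T_z(L_\infty) = O_L[z]$, and then to deduce (2) by combining (1) with the $\widetilde{A}$-lattice structure of $\exp_\varphi^{-1}(\widetilde{O_L})$.

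For (1), one inclusion is immediate because $O_L[z] \subset \widetilde{O_L}$ and $\exp_\varphi^{-1}(\widetilde{O_L})$ is an $\mathbb F_q(z)$-vector space (since $\tau$ is $\mathbb F_q(z)$-linear, so is $\exp_\varphi$). For the reverse, I would first verify
$$\widetilde{O_L} \cap \mathbb T_z(L_\infty) = O_L[z],$$
which, after decomposing in a $K$-basis $e_1,\ldots,e_n$ of $L$, reduces to $\widetilde{A}\cap \mathbb T_z(K_\infty) = A[z]$; this holds because any element of $\mathbb F_q(z)$ admitting a convergent $z$-Tate expansion must in fact be a polynomial. Given $x\in \exp_\varphi^{-1}(\widetilde{O_L})$, write $\exp_\varphi(x)=\sum_i a_i e_i$ with $a_i\in\widetilde{A}$ and choose a common denominator $d\in \mathbb F_q[z]\setminus\{0\}$ so that $da_i\in A[z]$ for every $i$. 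Then $\mathbb F_q(z)$-linearity gives $\exp_\varphi(dx)=d\exp_\varphi(x)\in O_L[z]$; a short Tate-analytic check, using that $\exp_\varphi\in \mathbb T_z(L_\infty)\{\{\tau\}\}$ stabilises $\mathbb T_z(L_\infty)$, yields $dx\in \mathbb T_z(L_\infty)$, so $dx\in U$ and $x\in \mathbb F_q(z)\,U$.

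For (2), part (1) together with the identity above gives $U = \mathbb T_z(L_\infty)\cap \Lambda$, where $\Lambda := \exp_\varphi^{-1}(\widetilde{O_L})$ is a free $\widetilde{A}$-module of rank $n$. Using (1), I would select an $\widetilde{A}$-basis $\lambda_1,\ldots,\lambda_n$ of $\Lambda$ lying in $\mathbb T_z(L_\infty)$ (scale any initial basis by an element of $\mathbb F_q[z]$ into $U$). Since $(\lambda_i)$ is also a $\widetilde{K_\infty}$-basis of $\widetilde{L_\infty}$, any element $u=\sum_i a_i\lambda_i\in U$ must have each $a_i\in \widetilde{A}\cap \mathbb T_z(K_\infty) = A[z]$, so $U=\bigoplus_i A[z]\,\lambda_i$ is a free $A[z]$-module of rank $n$. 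Alternatively, if one cannot ensure that $(\lambda_i)$ is also a $\mathbb T_z(K_\infty)$-basis of $\mathbb T_z(L_\infty)$, one can instead run the $A[z]$-linear short exact sequence
$$0\longrightarrow \ker(\exp_\varphi)\cap \mathbb T_z(L_\infty)\longrightarrow U \xrightarrow{\exp_\varphi} \exp_\varphi(U)\longrightarrow 0,$$
with $O_L[z]$ endowed with the $\varphi$-action on the target (so that $\exp_\varphi$ is $A[z]$-linear): the image is an $A[z]$-submodule of the Noetherian module $O_L[z]$, and the kernel, sitting inside the discrete lattice $\Lambda$, is controlled by a valuation/discreteness argument.

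The hard part will be the integrality/discreteness step: in (1), rigorously verifying that $dx$ actually lies in $\mathbb T_z(L_\infty)$ and not merely in $\widetilde{L_\infty}$; in (2), justifying that the coefficients of an element of $U\subset \mathbb T_z(L_\infty)$ in the chosen lattice basis lie in $\mathbb T_z(K_\infty)$, or equivalently controlling the kernel $\ker(\exp_\varphi)\cap \mathbb T_z(L_\infty)$ in the alternative exact-sequence approach. Both amount to combining Tate-algebraic integrality with the analytic/discrete structure of the exponential and its period lattice.
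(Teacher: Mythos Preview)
You correctly flag the hard step, but you have not bridged it, and the gap in (1) is real. Knowing that $\exp_\varphi$ stabilises $\mathbb T_z(L_\infty)$ only gives the forward direction; it does not force preimages to land there. An arbitrary $x\in\widetilde{L_\infty}$ need not lie in the $\mathbb F_q(z)$-span $W$ of $\mathbb T_z(L_\infty)$: writing elements as $\sum_i c_i\theta^{-i}$ with $c_i\in\mathbb F_q(z)$, membership in $W$ means the $c_i$ share a common denominator in $\mathbb F_q[z]$, and nothing in a pure clearing-of-denominators argument on $\exp_\varphi(x)$ forces this for $x$ itself (for instance $\sum_{i\geq 1}z^{-i}\theta^{-i}\in\widetilde{K_\infty}\setminus W$). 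The paper's fix is analytic: choose a neighbourhood $N$ of $0$ in $\widetilde{L_\infty}$ on which $\exp_\varphi$ is bijective; since $W$ is dense, $\widetilde{L_\infty}=W+N$, so write $x=g+h$ with $g\in W$, $h\in N$. Then $\exp_\varphi(h)=\exp_\varphi(x)-\exp_\varphi(g)\in W\cap N$ (using $\widetilde{O_L}\subset W$ and that $\exp_\varphi$ preserves $W$), and bijectivity of $\exp_\varphi$ on $N$, which also preserves $W\cap N$, gives $h\in W$, hence $x\in W$. Only after this does your denominator-clearing apply.

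For (2), your primary route overreaches: even with $\lambda_i\in U$ an $\widetilde A$-basis of $\Lambda$, the coordinates of $u\in U\subset\mathbb T_z(L_\infty)$ in the $\lambda_i$ need not lie in $\mathbb T_z(K_\infty)$, because the inverse of the change-of-basis matrix from a fixed $O_L$-basis to $(\lambda_i)$ lives only in $GL_n(\widetilde{K_\infty})$. Note also that $A[z]=\mathbb F_q[\theta,z]$ is not a PID, so there is no a priori reason for $U$ to be free. The paper's argument is close to yours but inserts one more step: with $N=\bigoplus A[z]\lambda_i$ and $V=\bigoplus\mathbb T_z(K_\infty)\lambda_i$, the $\mathbb T_z(K_\infty)$-module $W'$ generated by $U$ is free of rank $n$ (sandwiched between $V$ and $\mathbb T_z(L_\infty)$), hence generated by finitely many elements of $U$; since each such element lies in $\mathbb F_q(z)N$, a single $\delta\in\mathbb F_q[z]\setminus\{0\}$ gives $\delta W'\subset V$, whence $\delta U\subset V\cap\mathbb F_q(z)N=N$ and $U\subset\delta^{-1}N$ is finitely generated. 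Your exact-sequence alternative runs into trouble: $O_L[z]$ under the $\varphi$-action is not evidently a Noetherian $A[z]$-module, so submodules of it need not be finitely generated.
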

\begin{proof} ${}$\par
\noindent 1) The proof is similar to that of \cite{APTR}, Proposition 5.4.  For simplicity let's denote by $V$ the $\mathbb F_q(z)$-vector space generated by $U(\varphi/O_L[z]).$ Then clearly $V\subset \exp_{\varphi}^{-1}(\widetilde{O_L}).$ Let $N$ be a suitable neighborhood of zero in $\widetilde{L_{\infty}}$ such that $\exp_{\varphi}: N\rightarrow N$ is an isomorphism of $\mathbb F_q(z)$-vector spaces. Let $W$ be the $\mathbb F_q(z)$-vector space generated by $\mathbb T_z(L_{\infty}),$ then:
$$\widetilde{L_{\infty}}= W+N.$$
Let $f\in \exp_{\varphi}^{-1}(\widetilde{O_L}),$ then we can write:
$$f=g+h, g\in W, h\in N.$$
Then:
$$\exp_{\varphi}(h)= \exp_{\varphi}(g)-\exp_{\varphi}(f)\in W\cap N.$$
Therefore $h\in W.$ This implies that $f\in V.$\par
\noindent 2) Since $\exp_{\varphi}^{-1}(\widetilde{O_L})$ is discrete in $\widetilde{L_{\infty}},$ we deduce that $U(\varphi/O_L[z])$ is discrete in $\mathbb T_z(L_{\infty}).$ Furthermore, the  $\widetilde{K_{\infty}}$-vector space generated by $\exp_{\varphi}^{-1}(\widetilde{O_L})$ is $\widetilde{L_{\infty}},$ thus we can select $f_1, \ldots, f_n\in U(\varphi/O_L[z])$ such that:
$$\exp_{\varphi}^{-1}(\widetilde{O_L})=\oplus_{i=1}^n \widetilde{A}f_i,$$
$$\widetilde{L_{\infty}}=\oplus_{i=1}^n \widetilde{K_{\infty}} f_i.$$
Set $N=\oplus_{i=1}^n A[z]f_i.$ Let $V$ be the $\mathbb T_z(K_{\infty})$-module generated by $N,$ then $V= \oplus_{i=1}^n \mathbb T_z(K_{\infty}) f_i.$ Let $W$ be the  $\mathbb T_z(K_{\infty})$-module generated by $U(\varphi/O_L[z]).$ Then : $V\subset W\subset \mathbb T_z(L_{\infty}).$ In particular, $W$ is a free $\mathbb T_z(K_{\infty})$-module of rank $n=[L:K].$
Observe that, because of 1), $U(\varphi/O_L[z]) \subset \mathbb F_q(z)N$, that is, if $x\in U(\varphi/O_L[z]),$ then there exists $\delta\in \mathbb F_q[z]\setminus\{ 0\}$ such that $\delta x\in N.$ This implies that there exists $\delta\in \mathbb F_q[z]\setminus\{ 0\}$ such that :
$$\delta W\subset V.$$
Thus:
$$\delta U(\varphi/O_L[z])\subset V \cap \mathbb F_q(z)N = N.$$
This implies that $U(\varphi/O_L[z])$ is a finitely generated $A[z]$-module.
\end{proof}

\subsection{The canonical $z$-deformation of a Drinfeld module}${}$\par
Let $\phi :A\rightarrow O_L\{\tau\}$ be a Drinfeld $A$-module, i.e. it is a morphism of $\mathbb F_q$-algebras such that:
$$\phi_{\theta}\equiv \theta\pmod{\tau}.$$
Let $\exp_\phi$ be the unique element in $L\{Ê\{Ê\tau\}\}$ such that $\exp_{\phi}\equiv 1\pmod{\tau }$ and:
$$\exp_{\phi} \theta=\phi_{\theta}\exp_{\phi}.$$
 Following Taelman (\cite{TAE1}), let's set:
$$H(\phi/O_L)=\frac{L_{\infty}}{O_L+\exp_\phi(L_{\infty})}.$$
Then $H(\phi /O_L)$ is a finite $A$-module (via $\phi$). Let's also set:
$$U(\phi /O_L)= \{ x\in L_{\infty}, \exp_\phi (x)\in O_L\}.$$
Then $U(\phi/O_L)$ is an $A$-lattice in $L_{\infty}$ and we have  (see \cite{TAE2}):
\begin{eqnarray}\label{eqSt2}\mathcal L(\phi /O_L)=[O_L:U(\phi /O_L)]_A[H(\phi /O_L)]_A,\end{eqnarray}
where:
$$\mathcal L(\phi /O_L)=\prod_{\frak P {\rm \, prime\, ideal \, of \, }O_L, \frak P\not =(0)}\frac{[\frac{O_L}{\frak P}]_{A}}{[\phi(\frac{O_L}{\frak P})]_{A} }\in  K_{\infty}^\times.$$
Write:
$$\phi_{\theta}=\sum_{j=0}^r \alpha_j \tau^j, \alpha_j \in O_L,j=0, \ldots, r,  \alpha_0=\theta.$$
The canonical $z$-deformation of $\phi,$ $\widetilde{\phi},$ is the $\widetilde {A}$-module defined over $\widetilde{O_L}$ given by:
$$\widetilde{\phi}_{\theta}=\sum_{j=0}^r z^j\alpha_j \tau^j\in O_L[z]\{ \tau\}.$$
Then, one can easily verify that:
$$\exp_{\widetilde{\phi}}=\sum_{j\geq 0} e_jz^j\tau^j,$$
where $\exp_{\phi}=\sum_{j\geq 0} e_j\tau ^j, e_j\in L, j\geq 0.$

\begin{proposition}\label{propositionSt2} $H(\widetilde{\phi}/O_L[z])$ is a finitely generated and torsion $\mathbb F_q[z]$-module. In particular:
$$H(\widetilde{\phi}/\widetilde{O_L})=\{0\}.$$

\end{proposition}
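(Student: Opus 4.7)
The plan is to prove the $\mathbb F_q[z]$-torsion property of $H(\widetilde\phi/O_L[z])$ from the observation that $\exp_{\widetilde\phi}\equiv 1\pmod z$ as an operator on $\mathbb T_z(L_\infty)$, and then deduce the vanishing of $H(\widetilde\phi/\widetilde{O_L})$ from this torsion property by an approximation argument using the density of $\mathbb F_q(z)\cdot \mathbb T_z(L_\infty)$ in $\widetilde{L_\infty}$.

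For the first point I would write
$$\exp_{\widetilde\phi}=\sum_{j\geq 0}e_jz^j\tau^j = 1+zT, \qquad T:=\sum_{j\geq 1} e_jz^{j-1}\tau^j.$$
Since $v_\infty(z)=0$, the series defining $T$ converges on $\mathbb T_z(L_\infty)$ by exactly the estimate that makes $\exp_{\widetilde\phi}$ converge there, namely $v_\infty(e_jz^{j-1}\tau^j(x))=v_\infty(e_j)+q^jv_\infty(x)\to+\infty$ for every $x\in \mathbb T_z(L_\infty)$. Hence $T\bigl(\mathbb T_z(L_\infty)\bigr)\subseteq \mathbb T_z(L_\infty)$ and for every $x$,
$$\exp_{\widetilde\phi}(x)-x = zT(x)\in z\mathbb T_z(L_\infty),$$
so $\mathbb T_z(L_\infty)=\exp_{\widetilde\phi}(\mathbb T_z(L_\infty))+z\mathbb T_z(L_\infty)$. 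Passing to the quotient by $O_L[z]+\exp_{\widetilde\phi}(\mathbb T_z(L_\infty))$ yields $H(\widetilde\phi/O_L[z])=zH(\widetilde\phi/O_L[z])$. Since $H(\widetilde\phi/O_L[z])$ is already known to be finitely generated over the PID $\mathbb F_q[z]$ (Subsection~\ref{Integrality}), the structure theorem forces $H(\widetilde\phi/O_L[z])$ to be an $\mathbb F_q[z]$-torsion module whose elementary divisors are all coprime to $z$.

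For the vanishing of $H(\widetilde\phi/\widetilde{O_L})$, given $y\in \widetilde{L_\infty}$, I would fix a neighborhood $N$ of $0$ in $\widetilde{L_\infty}$ on which $\exp_{\widetilde\phi}$ is bijective (the analogue of Demeslay's Proposition~2.3 for $\widetilde{L_\infty}$), so that $N\subseteq \exp_{\widetilde\phi}(\widetilde{L_\infty})$. Using the density of $\mathbb F_q(z)\cdot \mathbb T_z(L_\infty)$ in $\widetilde{L_\infty}$ recalled in Subsection~\ref{Background}, I pick $y'=\tilde y/c$ with $\tilde y\in \mathbb T_z(L_\infty)$ and $c\in \mathbb F_q[z]\setminus \{0\}$ such that $y-y'\in N$; then $y-y'\in \exp_{\widetilde\phi}(\widetilde{L_\infty})$. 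By the torsion part, there is $p\in \mathbb F_q[z]\setminus\{0\}$ with $p\tilde y\in O_L[z]+\exp_{\widetilde\phi}(\mathbb T_z(L_\infty))$. Dividing by $pc$ and using that $\exp_{\widetilde\phi}$ is $\mathbb F_q(z)$-linear and that $\widetilde{O_L}=\mathbb F_q(z)\cdot O_L$, one obtains $y'\in \widetilde{O_L}+\exp_{\widetilde\phi}(\widetilde{L_\infty})$, whence $y\in \widetilde{O_L}+\exp_{\widetilde\phi}(\widetilde{L_\infty})$, so $H(\widetilde\phi/\widetilde{O_L})=\{0\}$.

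The only point that really needs verification is the convergence of the auxiliary operator $T$ in the first step, which is formally identical to the convergence of $\exp_{\widetilde\phi}$ itself; all the remaining ingredients are soft, so I do not expect any serious obstacle beyond that bookkeeping.
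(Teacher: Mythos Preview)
Your argument is correct and follows essentially the same route as the paper. The first step is identical: from $\exp_{\widetilde\phi}\equiv 1\pmod{z}$ one gets $H(\widetilde\phi/O_L[z])=zH(\widetilde\phi/O_L[z])$, and finite generation over $\mathbb F_q[z]$ then forces the module to be torsion (the paper phrases this via the short exact sequence $0\to H[z]\to H\to H\to 0$, which also yields $H[z]=\{0\}$). For the second step the paper is more terse: it asserts that the inclusion $\mathbb T_z(L_\infty)\subset\widetilde{L_\infty}$ induces an isomorphism $H(\widetilde\phi/O_L[z])\otimes_{\mathbb F_q[z]}\mathbb F_q(z)\simeq H(\widetilde\phi/\widetilde{O_L})$, so the right-hand side vanishes; your density-plus-small-neighborhood computation is exactly what is needed to justify the surjectivity of that map, so you have simply unpacked the same argument.
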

\begin{proof} By section \ref{Integrality}, $H(\widetilde{\phi}/O_L[z])$ is a finitely generated  $\mathbb F_q[z]$-module. Observe that, since $\exp_{\widetilde \phi} \equiv 1 \mod z$ : 
$$\mathbb T_z(L_{\infty})= z\mathbb T_z(L_{\infty})+\exp_{\widetilde{\phi}}(\mathbb T_z(L_{\infty})).$$
Therefore, the multiplication by $z$ on $H(\widetilde{\phi}/O_L[z])$ gives rise to an exact sequnece of finitely generated $\mathbb F_q[z]$-modules:
$$0\rightarrow H(\widetilde{\phi}/O_L[z])[z]\rightarrow H(\widetilde{\phi}/O_L[z])\rightarrow H(\widetilde{\phi}/O_L[z])\rightarrow 0,$$
where 
$$H(\widetilde{\phi}/O_L[z])[z]=\{x\in H(\widetilde{\phi}/O_L[z]), zx=0\}.$$
 This implies that $H(\widetilde{\phi}/O_L[z])[z]=\{0\}$ and that $H(\widetilde{\phi}/O_L[z])$ is a torsion $\mathbb F_q[z]$-module. Since the $\mathbb F_q(z)$-module generated by $\mathbb T_z(L_{\infty})$ is dense in $\widetilde{L_{\infty}},$ we deduce that the inclusion $\mathbb T_z(L_{\infty})\subset \widetilde{L_{\infty}}$ induces an isomorphism of $\widetilde{A}$-modules:
$$H(\widetilde{\phi}/O_L[z])[z]\otimes_{\mathbb F_q[z]}\mathbb F_q(z)\simeq H(\widetilde{\phi}/\widetilde{O_L}).$$
This implies the second assertion of the Lemma.

\end{proof}
\begin{remark} Let $\phi$ be a Drinfeld $A$-module defined over $O_L.$ Let $\frak P$ be a non zero prime ideal of $O_L.$   If $\phi_{\theta} \equiv \theta \pmod{\frak P}$ then :
$$[\phi(\frac{O_L}{\frak P})]_A=[\frac{O_L}{\frak P}]_A= N_{L/K}(\frak P),$$
where for $I\not =\{0\}$ an ideal of $O_L$ we set $N_{L/K}(I)= [\frac{O_L}{I}]_A.$ We also have:
$$[\widetilde{\phi}(\frac{\widetilde{O_L}}{\frak P\widetilde{O_L}})]_{\widetilde{A}}=N_{L/K}(\frak P),$$
In this case, we set:
$$f_{\frak P}(z)=0.$$
Thus we assume that $\phi_{\theta}\not \equiv \theta \pmod{\frak P}.$ Let $\overline{\phi}:A\rightarrow  \frac{O_L}{\frak P}\{\tau\}$ be the reduction modulo $\frak P$ of $\phi.$ Set $\frak L=\frac{O_L}{\frak P}.$ Let $P$ be the monic irreducible element in  $A$ such that $\frak P\cap A= PA.$  Then:
$$N_{L/K}(\frak P)=P^m,$$
where $\dim_{\mathbb F_q} \frak L= m\deg_{\theta} P.$ We have: $\frac{\widetilde{O_L}}{\frak P\widetilde{O_L}}=\frak L(z).$ Let's observe that:
$$g_{\frak P}(z):=[\widetilde{\phi}(\frak L(z))]_{\widetilde{A}}\in A[z].$$
Furthermore:
$$g_{\frak P}(0)= P^m,$$
$$g_{\frak P}(1)=[\phi(\frak L)]_A.$$
Let's make a simple observation. Let $M$ be a finitely generated and free $A$-module. Let $u:M\rightarrow M$ be an injective morphism of $A$-modules. Then:
$$(\det_A u ) A=[\frac{M}{u(M)}]_A A.$$
Let's view $\frak L\{\tau\}$ as a left $A$-module via $\overline{\phi}.$ Then it is a finitely generated  and free $A$-module.  We have an isomorphism of $A$-modules:
$$\frac{\frak L\{\tau \}}{\frak L\{ \tau\} (\tau-1)}\simeq \phi (\frak L).$$
Let $\rho : \frak L\{Ê\tau \} \rightarrow \frak L\{\tau \}$ be the morphism of $A$-modules given by :
$$\rho (x) = x\tau .$$
Then:
$$(\det_A({\rm Id}-\rho)) A=[\phi(\frak L)]_A A.$$
Let $X$ be a  variable, and set:
$$F(X)=\det_{A[X]}((1\otimes X){\rm Id}-\rho\otimes 1\mid_{\frak L\{\tau\}\otimes_{\mathbb F_q}\mathbb F_q[X]})\in A[X].$$
Since $\frac{\frak L\{\tau\} }{\frak L\{Ê\tau \} \tau}$ is isomorphic as an $A$-module to $\frak L,$  we have:
$$F(0)A= P^mA.$$
Now, observe that we have an isomorphism of $A$-modules:
$$\frac{\frak L[z]\{\tau \}}{\frak L[z]\{ \tau\} (\tau-z)}\simeq \widetilde{\phi} (\frak L[z]).$$
Thus:
$$g_{\frak P}(z) A[z]= F(z)A[z].$$
Let's set $f_{\frak P}(z)= P^m-g_{\frak P}(z) \in zA[z].$ If $I$ is a non zero ideal of $O_L,$ write $I=\prod_{j=1}^t\frak P_j^{n_j}$ its decomposition into prime ideals of $O_L,$ and write:
$$f_{I}(z)=\prod_{j=1}^t f_{\frak P_j}(z)^{n_j} \in A[z].$$
Then, we have:
$$\mathcal L(\widetilde{\phi}/\widetilde{O_L})= \sum_{I\not = \{ 0\}}\frac{f_I(z)}{N_{L/K}(I)}\in \mathbb T_z(K_{\infty})^\times.$$
In particular:
$$\mathcal L(\widetilde{\phi}/\widetilde{O_L})\mid_{z=0}=1,$$
$$\mathcal L(\widetilde{\phi}/\widetilde{O_L})\mid_{z=1}= \mathcal L(\phi/O_L)= \sum_{I\not = \{ 0\}}\frac{f_I(1)}{N_{L/K}(I)}\in K_{\infty}^\times.$$
\end{remark}

\subsection{Stark units for Drinfeld modules}${}$\par
Let $\phi:A\rightarrow O_L\{\tau\}$ be a Drinfeld $A$-module. Let $ev:\mathbb T_z(L_{\infty})\rightarrow L_{\infty}$ be the surjective morphism of $\mathbb F_q$-vector spaces given by $\forall f\in \mathbb T_z(L_{\infty}), ev(f)=f\mid_{z=1}.$ We set:
$$U_{St}(\phi/O_L)=ev(U(\widetilde{\phi}/O_L[z])).$$
We call $U_{St}(\phi/O_L)$ the $A$-module of Stark units relative to $\phi$ and $O_L.$ Since $ev(\exp_{\widetilde{\phi}})= \exp_\phi,$ $U_{St}(\phi/O_L)$ is a sub-$A$-module of $U(\phi/O_L).$

Let $\alpha : \mathbb T_z(L_{\infty})\rightarrow \mathbb T_z(L_{\infty})$ be the morphism of $\mathbb F_q[z]$-modules given by:
\begin{eqnarray}\label{eqSt3}\forall x\in\mathbb T_z(L_{\infty}), \alpha(x)=\frac{\exp_{\widetilde{\phi}}(x)-\exp_\phi(x)}{z-1}\in \mathbb T_z(L_{\infty}).\end{eqnarray}
Recall that $H(\widetilde{\phi}/O_L[z])$ is an $A[z]$-module via $\widetilde{\phi}.$ For $f\in A[z],$ we set 
$$H(\widetilde{\phi}/O_L[z])[f]=\{ x\in H(\widetilde{\phi}/O_L[z]), fx=0\}.$$
\begin{proposition}\label{propositionSt3} The map $\alpha$ induces an isomorphism of $A$-modules:
$$\overline{\alpha}: \frac{U(\phi/O_L)}{U_{St}(\phi/O_L)}\simeq H(\widetilde{\phi}/O_L[z])[z-1].$$ 
\end{proposition}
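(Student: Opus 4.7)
The plan is to define $\overline{\alpha}$ by sending $u \in U(\phi/O_L)$ to the class of $\alpha(u) \in \mathbb T_z(L_\infty)$, check it lands in the $(z-1)$-torsion of $H(\widetilde\phi/O_L[z])$, verify $A$-linearity, and finally compute the kernel and prove surjectivity.

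First I would check that $\alpha$ makes sense: writing
$$\exp_{\widetilde\phi} - \exp_\phi = \sum_{j\geq 0} e_j(z^j-1)\tau^j$$
shows that $z-1$ divides $\exp_{\widetilde\phi}(x) - \exp_\phi(x)$ in $\mathbb T_z(L_\infty)$ for every $x$. For $u \in U(\phi/O_L)$, the identity $(z-1)\alpha(u) = \exp_{\widetilde\phi}(u) - \exp_\phi(u)$ exhibits the left-hand side as an element of $\exp_{\widetilde\phi}(\mathbb T_z(L_\infty)) + O_L[z]$, so $[\alpha(u)]$ is killed by $z-1$ in $H(\widetilde\phi/O_L[z])$. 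For $A$-linearity (where $\theta$ acts on $U(\phi/O_L)$ by multiplication and on $H(\widetilde\phi/O_L[z])$ via $\widetilde\phi$), a direct computation using $\exp_{\widetilde\phi}\theta = \widetilde\phi_\theta\exp_{\widetilde\phi}$ and $\exp_\phi\theta = \phi_\theta\exp_\phi$ yields
$$\alpha(\theta u) - \widetilde\phi_\theta\,\alpha(u) = \frac{(\widetilde\phi_\theta - \phi_\theta)\exp_\phi(u)}{z-1}.$$
Since $\widetilde\phi_\theta - \phi_\theta = \sum_{j\geq 1}\alpha_j(z^j-1)\tau^j$, its quotient by $z-1$ lies in $O_L[z]\{\tau\}$; applied to $\exp_\phi(u) \in O_L$ this lands in $O_L[z]$, hence vanishes in $H(\widetilde\phi/O_L[z])$.

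For the kernel: $u \in \ker\overline\alpha$ iff $\alpha(u) = p(z) + \exp_{\widetilde\phi}(y)$ with $p \in O_L[z]$ and $y \in \mathbb T_z(L_\infty)$. Multiplying by $z-1$ and rearranging gives $\exp_{\widetilde\phi}(u - (z-1)y) = \exp_\phi(u) + (z-1)p(z) \in O_L[z]$, so $u' := u - (z-1)y$ lies in $U(\widetilde\phi/O_L[z])$ and $ev(u') = u$, proving $u \in U_{St}(\phi/O_L)$; the reverse inclusion is obtained by essentially reversing this manipulation.

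Surjectivity is the most delicate step. Given $h = [f] \in H(\widetilde\phi/O_L[z])[z-1]$, write $(z-1)f = p(z) + \exp_{\widetilde\phi}(g)$ with $p \in O_L[z]$ and $g \in \mathbb T_z(L_\infty)$; evaluating at $z=1$ gives $\exp_\phi(ev(g)) = -p(1) \in O_L$, so $u := ev(g) \in U(\phi/O_L)$. Writing $g = u + (z-1)h'$ and substituting into both $(z-1)f = p(z) + \exp_{\widetilde\phi}(g)$ and $(z-1)\alpha(u) = \exp_{\widetilde\phi}(u) + p(1)$ yields $(z-1)(f - \alpha(u) - \exp_{\widetilde\phi}(h')) = p(z) - p(1) \in (z-1)O_L[z]$. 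Cancelling $z-1$, which is a non-zero-divisor in $\mathbb T_z(L_\infty) \simeq \prod_v \mathbb T_z(L_v)$ since each factor is a domain, gives $f - \alpha(u) \in O_L[z] + \exp_{\widetilde\phi}(\mathbb T_z(L_\infty))$, that is, $\overline\alpha(u) = h$. The key technical point throughout is that $z-1$ divides all the relevant differences between $\exp_\phi$- and $\exp_{\widetilde\phi}$-data, allowing one to trade $(z-1)$-torsion in $H(\widetilde\phi/O_L[z])$ for genuine Stark units via evaluation at $z=1$.
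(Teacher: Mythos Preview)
Your proof is correct and follows essentially the same approach as the paper's: the well-definedness and $A$-linearity computations are identical, and your arguments for the kernel and for surjectivity (write $(z-1)f = p(z) + \exp_{\widetilde\phi}(g)$, set $u = ev(g)$, decompose $g = u + (z-1)h'$, and cancel $z-1$) match the paper's line by line, with only the order of the kernel and surjectivity steps reversed. Your explicit remark that $z-1$ is a non-zero-divisor in $\mathbb T_z(L_\infty)$ is a nice touch that the paper leaves implicit.
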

\begin{proof}${}$\par
 Let $x\in U(\phi/O_L),$ then:
$$(z-1) \alpha (x) = \exp_{\widetilde{\phi}}(x)-\exp_\phi(x)\in O_L+\exp_{\widetilde{\phi}}(\mathbb  T_z(L_{\infty})).$$
Therefore $(z-1)\alpha(x) =0$ in $H(\widetilde{\phi}/O_L[z]),$ i.e. the image $\alpha(x)$ in $H(\widetilde{\phi}/O_L[z])$ lies in  $H(\widetilde{\phi}/O_L[z])[z-1].$ Furthermore:
$$\forall x\in \mathbb T_z(L_{\infty}), \alpha(\theta x)=\widetilde{\phi}_{\theta}(\alpha(x))+ (\sum_{j=1}^r\alpha_j\frac{z^j-1}{z-1}\tau^j )(\exp_\phi(x)),$$
where $\phi_{\theta}=\sum_{j=0}^r \alpha_j \tau^j, \alpha_j \in O_L.$
Thus $\alpha$ induces  a morphism of $A$-modules $\overline{\alpha}:  U(\phi/O_L)\rightarrow H(\widetilde{\phi}/O_L[z])[z-1].$\par
Let $x\in \mathbb T_z(L_{\infty})$ such that the image of $x$ in $H(\widetilde{\phi}/O_L[z])$ lies in $H(\widetilde{\phi}/O_L[z])[z-1].$ Then:
$$(z-1)x=a+\exp_{\widetilde{\phi}}(h), a\in O_L[z], h\in \mathbb T_z(L_{\infty}).$$
Write $a=b+(z-1)c,$ $b\in O_L, c\in O_L[z],$ $h=g+(z-1)v,$ $g\in L_{\infty}, v\in \mathbb T_z(L_{\infty}).$ Then:
$$0= b+\exp_\phi(g).$$
Thus:
$$b=-\exp_\phi(g)\in O_L.$$
Thus $g\in U(\phi/O_L).$ We get:
$$(z-1)(x-c-\exp_{\widetilde{\phi}}(v))= \exp_{\widetilde{\phi}}(g)-\exp_\phi(g)=(z-1)\alpha(g).$$
Thus the image of $x$ in $H(\widetilde{\phi}/O_L[z])$ is equal to the image of $\alpha(g)$ in $H(\widetilde{\phi}/O_L[z]).$ This implies that  
$\overline{\alpha}:  U(\phi/O_L)\rightarrow H(\widetilde{\phi}/O_L[z])[z-1]$ is a surjective morphism of $A$-modules.\par
Let $x\in U_{St}(\phi/O_L).$ Then, there exist $u\in U(\widetilde{\phi}/O_L[z])$ and $h\in \mathbb T_z(L_{\infty})$ such that:
$$x= u+(z-1)h.$$
We get:
$$\exp_{\widetilde{\phi}}(x)= \exp_{\widetilde{\phi}}(u)+(z-1)\exp_{\widetilde{\phi}}(h).$$
Since $\exp_{\widetilde{\phi}}(u)\in O_L[z]$ and  $ev(\exp_{\widetilde{\phi}}(u))=\exp_\phi (x)\in O_L,$ we get:
\begin{eqnarray*}
\exp_{\widetilde{\phi}}(x)-\exp_\phi(x)&=&(\exp_{\widetilde{\phi}}(u)-\exp_\phi(x))+(z-1)\exp_{\widetilde{\phi}}(h)\\
&\in& (z-1)(O_L[z]+\exp_{\widetilde \phi}(\mathbb T_z(L_{\infty}))),
\end{eqnarray*}
i.e. :
$$\alpha(x)\in O_L[z]+\exp_{\widetilde{\phi}}(\mathbb T_z(L_{\infty})).$$
Thus $U_{St}(\phi/O_L)\subset {\rm Ker}\, \overline{\alpha}.$\par
Now, let $x\in U(\phi/O_L)$ such that $\alpha(x)\in O_L[z]+\exp_{\widetilde{\phi}}( \mathbb T_z(L_{\infty})).$ Then:
$$\exp_{\widetilde{\phi}}(x)\in O_L[z]+\exp_{\widetilde{\phi}}((z-1)\mathbb T_z(L_{\infty})).$$
Thus $x\in U(\widetilde{\phi}/O_L[z])+(z-1)\mathbb T_z(L_{\infty}).$ Thus:
$$x=ev(x)\in ev(U(\widetilde{\phi}/O_L[z]))=U_{St}(\phi/O_L).$$
Therefore ${\rm Ker} \,\overline{\alpha}=U_{St}(\phi/O_L).$
\end{proof}


\begin{theorem}\label{theoremSt1} $\frac{U(\phi/O_L)}{U_{St}(\phi/O_L)}$ is a finite $A$-module  and:
$$[H(\phi/O_L)]_A=[\frac{U(\phi/O_L)}{U_{St}(\phi/O_L)}]_A.$$
In particular:
$$\mathcal L(\phi/O_L)=[O_L:U_{St}(\phi/O_L)]_A.$$
\end{theorem}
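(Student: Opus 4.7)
The approach is to identify $U(\phi/O_L)/U_{St}(\phi/O_L)$ with the $(z-1)$-torsion of the $z$-deformed class module $H(\widetilde{\phi}/O_L[z])$ via Proposition \ref{propositionSt3}, then to relate that torsion module to $H(\phi/O_L)$ by a snake-lemma argument for multiplication by $(z-1)$, and finally to invoke Taelman's class number formula \eqref{eqSt2} to deduce the identity for $\mathcal{L}(\phi/O_L)$.

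The first concrete step is to show that evaluation at $z=1$, which is surjective on $\mathbb{T}_z(L_{\infty}) \to L_{\infty}$ with kernel $(z-1)\mathbb{T}_z(L_{\infty})$, induces a surjection of $A$-modules
$$\overline{ev}\colon H(\widetilde{\phi}/O_L[z]) \twoheadrightarrow H(\phi/O_L)$$
whose kernel is exactly $(z-1)H(\widetilde{\phi}/O_L[z])$. Surjectivity is immediate from $ev(O_L[z])=O_L$ and $ev(\exp_{\widetilde{\phi}})=\exp_{\phi}$. For the kernel, given $x\in \mathbb{T}_z(L_{\infty})$ with $ev(x)=a+\exp_{\phi}(b)$ for some $a\in O_L$, $b\in L_{\infty}$, I would lift $b$ to any $\tilde b\in \mathbb{T}_z(L_{\infty})$ and observe that $x-a-\exp_{\widetilde{\phi}}(\tilde b)\in\ker(ev)=(z-1)\mathbb{T}_z(L_{\infty})$, which places the class of $x$ in $(z-1)H(\widetilde{\phi}/O_L[z])$; the reverse inclusion is trivial.

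By Proposition \ref{propositionSt2}, $H(\widetilde{\phi}/O_L[z])$ is a finitely generated torsion $\mathbb{F}_q[z]$-module, hence a finite-dimensional $\mathbb{F}_q$-vector space. Multiplication by $(z-1)$ therefore fits into the four-term exact sequence of finite $A$-modules
$$0\to H(\widetilde{\phi}/O_L[z])[z-1]\to H(\widetilde{\phi}/O_L[z])\xrightarrow{z-1} H(\widetilde{\phi}/O_L[z])\xrightarrow{\overline{ev}} H(\phi/O_L)\to 0.$$
Splitting this at the image of multiplication by $(z-1)$ and applying the multiplicativity of $[\cdot]_A$ in short exact sequences (Section \ref{Prel}) to both pieces immediately yields $[H(\widetilde{\phi}/O_L[z])[z-1]]_A = [H(\phi/O_L)]_A$. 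Combining with the $A$-module isomorphism of Proposition \ref{propositionSt3} gives both that $U(\phi/O_L)/U_{St}(\phi/O_L)$ is finite and the first displayed equality.

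For the final assertion, $U_{St}(\phi/O_L)$ is then a sublattice of $U(\phi/O_L)$ of finite index, hence an $A$-lattice in $L_{\infty}$, and the multiplicativity of lattice indices together with $[U(\phi/O_L):U_{St}(\phi/O_L)]_A=[U(\phi/O_L)/U_{St}(\phi/O_L)]_A$ gives
$$[O_L:U_{St}(\phi/O_L)]_A = [O_L:U(\phi/O_L)]_A\cdot [H(\phi/O_L)]_A = \mathcal{L}(\phi/O_L)$$
by \eqref{eqSt2}. The only step requiring genuine care is the kernel computation for $\overline{ev}$; the rest is exact-sequence bookkeeping with the formalism of Section \ref{Prel}.
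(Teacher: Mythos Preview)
Your proof is correct and follows essentially the same route as the paper: identify $U(\phi/O_L)/U_{St}(\phi/O_L)$ with $H(\widetilde{\phi}/O_L[z])[z-1]$ via Proposition~\ref{propositionSt3}, use the four-term exact sequence coming from multiplication by $z-1$ together with the identification of the cokernel with $H(\phi/O_L)$ via $ev$, and conclude by multiplicativity of $[\cdot]_A$ and formula~\eqref{eqSt2}. Your write-up is in fact slightly more explicit than the paper's on the kernel of $\overline{ev}$ and on the final lattice-index step.
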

\begin{proof}  ${}$\par
By Proposition  \ref{propositionSt2}, $H(\widetilde{\phi}/O_L[z])$ is a finite dimensional $\mathbb F_q$-vector space, thus a finite $A$-module (via $\widetilde{\phi}$). In particular $H(\widetilde{\phi}/O_L[z])[z-1]$ is a finite $A$-module. Thus Proposition \ref{propositionSt3} implies that $\frac{U(\phi/O_L)}{U_{St}(\phi/O_L)}$ is a finite $A$-module.\par
Observe that the map $ev$ induces an exact sequence of $A$-modules:
$$0\rightarrow (z-1)H(\widetilde{\phi}/O_L[z])\rightarrow H(\widetilde{\phi}/O_L[z])\rightarrow H(\phi/O_L)\rightarrow 0.$$
Therefore the multiplication by $z-1$ on $H(\widetilde{\phi}/O_L[z])$ gives rise to an exact sequence of finite $A$-modules:
$$0\rightarrow H(\widetilde{\phi}/O_L[z])[z-1]\rightarrow H(\widetilde{\phi}/O_L[z])\rightarrow H(\widetilde{\phi}/O_L[z])\rightarrow H(\phi/O_L)\rightarrow 0.$$
This implies that (recall  that if $0\rightarrow M_2\rightarrow M_1\rightarrow M_3\rightarrow 0$ is an exact sequence of finite $A$-modules then $[M_1]_A=[M_2]_A[M_3]_A$):
$$[H(\widetilde{\phi}/O_L[z])[z-1]]_A=[H(\phi/O_L)]_A.$$
It remains to apply Proposition \ref{propositionSt3} and formula (\ref{eqSt2}).
\end{proof}
\begin{lemma}\label{lemmaSt3} Let $\phi$ be an $A$-Drinfeld module defined over $O_L.$ Let $E/L$ be a finite extension. The  inclusion $L\subset E$ induces a natural injective morphism of $K_{\infty}$-algebras: $\iota_{E/L}: L_{\infty}\hookrightarrow E_{\infty}.$ Then:\par
\noindent 1) $\iota_{E/L}(U_{St}(\phi/O_L))\subset U_{St}(\phi/O_E);$\par
\noindent 2) $U_{St}(\phi/O_E)\cap \iota_{E/L}(L_{\infty})= \iota_{E/L}(U_{St}(\phi/O_L));$\par
\noindent 3) $Tr_{E/L}(U_{St}(\phi/O_E))\subset U_{St}(\phi/O_L).$

\end{lemma}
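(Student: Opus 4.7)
My plan is to transport all three assertions to the Tate-algebra level by lifting $\iota_{E/L}$ and $\Tr_{E/L}$ coefficient-wise to $\mathbb{T}_z(L_{\infty})$ and $\mathbb{T}_z(E_{\infty})$, then verifying their compatibility with $\exp_{\widetilde{\phi}}$ and with the integral filtrations $O_L[z]\subset \mathbb{T}_z(L_{\infty})$ and $O_E[z]\subset \mathbb{T}_z(E_{\infty})$. Two background identities will be used repeatedly: $O_E\cap L=O_L$ (which implies $\mathbb{T}_z(L_{\infty})\cap O_E[z]=O_L[z]$), and $\Tr_{E/L}(a^q)=\Tr_{E/L}(a)^q$ (a consequence of characteristic $p$ and $q=p^n$), which forces the extended trace to commute with $\tau$, and hence with $\exp_{\widetilde{\phi}}$.

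For (1), extend $\iota_{E/L}$ to a continuous $\mathbb{F}_q[z]$-algebra embedding $\widetilde{\iota}\colon \mathbb{T}_z(L_{\infty})\hookrightarrow \mathbb{T}_z(E_{\infty})$. This map sends $O_L[z]$ into $O_E[z]$ and commutes with $\exp_{\widetilde{\phi}}$ (whose coefficients lie in $L$), so $\widetilde{\iota}(U(\widetilde{\phi}/O_L[z]))\subset U(\widetilde{\phi}/O_E[z])$; evaluating at $z=1$ yields (1). For (3), similarly extend $\Tr_{E/L}$ to $\widetilde{\Tr}\colon \mathbb{T}_z(E_{\infty})\to \mathbb{T}_z(L_{\infty})$. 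By the identities above, $\widetilde{\Tr}$ sends $O_E[z]$ into $O_L[z]$ and commutes with $\exp_{\widetilde{\phi}}$, hence maps $U(\widetilde{\phi}/O_E[z])$ into $U(\widetilde{\phi}/O_L[z])$; evaluating at $z=1$ yields (3).

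For (2), the containment $\iota_{E/L}(U_{St}(\phi/O_L))\subset U_{St}(\phi/O_E)\cap \iota_{E/L}(L_{\infty})$ follows from (1). Conversely, let $y=\iota_{E/L}(x)\in U_{St}(\phi/O_E)\cap \iota_{E/L}(L_{\infty})$. Since $\exp_\phi(y)=\iota_{E/L}(\exp_\phi(x))\in O_E$, we have $\exp_\phi(x)\in L_{\infty}\cap O_E=O_L$, so $x\in U(\phi/O_L)$. By Proposition \ref{propositionSt3}, showing $x\in U_{St}(\phi/O_L)$ reduces to proving $\alpha(x)\in O_L[z]+\exp_{\widetilde{\phi}}(\mathbb{T}_z(L_{\infty}))$; from $y\in U_{St}(\phi/O_E)$ we already know $\widetilde{\iota}(\alpha(x))=\alpha(y)\in O_E[z]+\exp_{\widetilde{\phi}}(\mathbb{T}_z(E_{\infty}))$, so the task reduces to descending this congruence from $E$ to $L$.

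This descent is the main obstacle. A direct application of $\widetilde{\Tr}$ yields only $[E:L]\alpha(x)\in O_L[z]+\exp_{\widetilde{\phi}}(\mathbb{T}_z(L_{\infty}))$, which is conclusive when $\gcd([E:L],p)=1$ but degenerates otherwise. To handle the wild case I would pick $Y\in U(\widetilde{\phi}/O_E[z])$ with $ev(Y)=y$ and exploit the identity $U(\widetilde{\phi}/O_E[z])\cap \mathbb{T}_z(L_{\infty})=U(\widetilde{\phi}/O_L[z])$ (a direct consequence of $O_E[z]\cap \mathbb{T}_z(L_{\infty})=O_L[z]$): the problem then becomes to modify $Y$ by an element of $U(\widetilde{\phi}/O_E[z])\cap (z-1)\mathbb{T}_z(E_{\infty})$ so that the result lies in $\mathbb{T}_z(L_{\infty})$. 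This is where the bulk of the argument must lie, presumably via a careful analysis of the $(z-1)$-adic expansion of $Y$ or a reduction along a tower of intermediate subextensions.
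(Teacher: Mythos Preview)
Your treatment of (1) and (3) is correct and matches the paper exactly: lift $\iota_{E/L}$ and $\Tr_{E/L}$ coefficient-wise to the Tate algebras, check they commute with $\exp_{\widetilde{\phi}}$ and respect $O_L[z]\subset O_E[z]$, then evaluate at $z=1$.

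For (2) your route diverges from the paper. The paper does \emph{not} invoke Proposition~\ref{propositionSt3} or the map $\alpha$ at all. It argues directly: from $O_E[z]\cap\iota'_{E/L}(\mathbb T_z(L_\infty))=\iota'_{E/L}(O_L[z])$ (proved via a $K$-basis $e_1,\dots,e_k,\dots,e_n$ of $E$ with $e_1,\dots,e_k$ a basis of $L$) one gets
\[
U(\widetilde{\phi}/O_E[z])\cap\iota'_{E/L}(\mathbb T_z(L_\infty))=\iota'_{E/L}(U(\widetilde{\phi}/O_L[z])),
\]
and then simply writes ``Therefore, we get 2).'' Your detour through $H(\widetilde{\phi}/O_L[z])[z-1]$ adds machinery without buying any traction on the actual difficulty, which is the same in both approaches.

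That difficulty is real, and you have located it precisely. Applying $ev$ to the displayed Tate-level identity gives only the inclusion $\iota_{E/L}(U_{St}(\phi/O_L))\subset U_{St}(\phi/O_E)\cap\iota_{E/L}(L_\infty)$, because in general $ev(A\cap B)\subsetneq ev(A)\cap ev(B)$. The reverse inclusion asks: given $Y\in U(\widetilde{\phi}/O_E[z])$ with $ev(Y)\in L_\infty$, can one modify $Y$ by an element of $(z-1)\mathbb T_z(E_\infty)$ so that it lands in $\iota'_{E/L}(\mathbb T_z(L_\infty))$ while remaining in $U(\widetilde{\phi}/O_E[z])$? The paper's ``Therefore, we get 2)'' does not address this, and your trace argument handles it only when $p\nmid[E:L]$. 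So your diagnosis of the gap is accurate; the paper is simply terser at the same point rather than more complete. If you want to close it, working directly with the Tate-level identity (as the paper does) is the cleaner framework—drop the passage through $\alpha$—but the missing step is the same either way.
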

\begin{proof} Let's denote the natural morphism of $\mathbb T_z(K_{\infty})$-algebras induced by the inclusion $L\subset E$ by  $\iota'_{E/L}: \mathbb T_z(L_{\infty})\hookrightarrow \mathbb T_z(E_{\infty}).$ Then:
$$\iota'_{E/L}(U(\widetilde{\phi}/O_L[z]))\subset U(\widetilde{\phi}/O_E[z]).$$
Since $ev\circ \iota'_{E/L}=\iota_{E/L}\circ ev,$ we get 1). Let $Tr_{E/L}: E\rightarrow L$ be the trace map relative to the extension $E/L.$ Then $Tr_{E/L}$ induces a continuous morphism of $K_{\infty}$-algebras still denoted by $Tr_{E/L}:E_{\infty}\rightarrow L_{\infty}$ and a continuous morphism of $\mathbb T_z(K_{\infty})$-algebras $Tr'_{E/L}: \mathbb T_z(E_{\infty})\rightarrow \mathbb T_z(L_{\infty}).$ Observe that:
$$ev\circ Tr'_{E/L}=Tr_{E/L}\circ ev.$$
For $x\in \mathbb T_z(E_{\infty}),$ we have:
$$Tr'_{E/L}(\exp_{\widetilde{\phi}}(x))= \exp_{\widetilde{\phi}}(Tr'_{E/L}(x)).$$
Since $Tr'_{E/L}(O_E[z])\subset O_L[z],$ we get :
$$Tr'_{E/L}(U(\widetilde{\phi}/O_E[z]))\subset U(\widetilde{\phi}/O_L[z]).$$
Thus we get 3). Now, we observe that:
$$O_E[z]\cap \iota'_{E/L}(\mathbb T_z(L_{\infty}))=  \iota'_{E/L}(O_L[z]).$$
This is an easy consequence of the following fact: let $e_1, \ldots, e_k, \ldots e_n$ be a basis of $E$ over $K$ where $e_1, \ldots, e_k$ is a basis of $L$. Then one can write any $x\in \widetilde E_\infty$ in a unique way as $x=\sum_{i=1}^n e_i\otimes \lambda_i$ with $\lambda_i\in \widetilde K_\infty$.
This implies:
$$U(\widetilde{\phi}/O_E[z])\cap \iota'_{E/L}(\mathbb T_z(L_{\infty}))=\iota'_{E/L}(U(\widetilde{\phi}/O_L[z])).$$
Therefore, we get 2).
\end{proof}

\begin{corollary}\label{corollaryStark1}
Let $\phi$ be a Drinfeld $A$-module defined over $O_L.$ let $E/L$ be a finite extension. Then:
$$\frac{[H(\phi/O_E)]_A}{[H(\phi/O_L)]_A}\in A.$$ 
Furthermore $\frac{O_E}{\iota_{E/L}(O_L)}$ and $\frac {U_{St}(\phi/O_E)}{\iota_{E/L}(U_{St}(\phi/O_L))}$ are two $A$-lattices in the finite dimensional $K_{\infty}$-vector space $\frac{E_{\infty}}{\iota_{E/L}(L_{\infty})}$ and we have:
$$[\frac{O_E}{\iota_{E/L}(O_L)}:\frac {U_{St}(\phi/O_E)}{\iota_{E/L}(U_{St}(\phi/O_L))} ]_A=\frac{\mathcal L(\phi/O_E)}{\mathcal L(\phi/O_L)}.$$
\end{corollary}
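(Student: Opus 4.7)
The strategy is to apply the quotient index formula from Subsection \ref{Prel} to pairs of $A$-lattices in $E_\infty$ with respect to the closed $K_\infty$-subspace $W := \iota_{E/L}(L_\infty)$, and then invoke Theorem \ref{theoremSt1} and Lemma \ref{lemmaSt3}. By Theorem \ref{theoremSt1}, $U_{St}(\phi/O_E)$ is a finite-index $A$-sublattice of $U(\phi/O_E)$ and is therefore itself an $A$-lattice in $E_\infty$; the same holds over $L$.

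The first step is to record three intersection identities needed to trigger the quotient formula. The identity $O_E \cap W = \iota_{E/L}(O_L)$ holds because $O_L$ is the integral closure of $A$ in $L$. For $U(\phi/-)$: if $x \in L_\infty$ satisfies $\iota_{E/L}(x) \in U(\phi/O_E)$, then $\iota_{E/L}(\exp_\phi(x)) = \exp_\phi(\iota_{E/L}(x)) \in O_E \cap W = \iota_{E/L}(O_L)$, so $\exp_\phi(x) \in O_L$, and hence $x \in U(\phi/O_L)$; the converse is immediate, giving $U(\phi/O_E) \cap W = \iota_{E/L}(U(\phi/O_L))$. Finally, $U_{St}(\phi/O_E) \cap W = \iota_{E/L}(U_{St}(\phi/O_L))$ is exactly part 2) of Lemma \ref{lemmaSt3}.

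For the first assertion, I would apply the quotient index formula to $M_1 = U(\phi/O_E) \supset M_2 = U_{St}(\phi/O_E)$ with $W$ as above. The intersection identities identify $N_i := M_i \cap W$ with $\iota_{E/L}(U(\phi/O_L))$ and $\iota_{E/L}(U_{St}(\phi/O_L))$, and since $\iota_{E/L}$ is a $K_\infty$-algebra injection one has $[N_1:N_2]_A = [U(\phi/O_L):U_{St}(\phi/O_L)]_A = [H(\phi/O_L)]_A$. Thus
\[
\left[\frac{U(\phi/O_E)}{\iota_{E/L}(U(\phi/O_L))} : \frac{U_{St}(\phi/O_E)}{\iota_{E/L}(U_{St}(\phi/O_L))}\right]_A = \frac{[U(\phi/O_E):U_{St}(\phi/O_E)]_A}{[U(\phi/O_L):U_{St}(\phi/O_L)]_A} = \frac{[H(\phi/O_E)]_A}{[H(\phi/O_L)]_A}.
\]
Since $U_{St}(\phi/O_E) \subset U(\phi/O_E)$ and the corresponding intersections with $W$ are nested the same way, the left-hand side is the index of a sub-lattice inside a lattice and therefore lies in $A$, so $[H(\phi/O_L)]_A$ divides $[H(\phi/O_E)]_A$.

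For the second assertion, apply the same formula to $M_1 = O_E \supset M_2 = U_{St}(\phi/O_E)$. The intersections now yield $N_1 = \iota_{E/L}(O_L)$ and $N_2 = \iota_{E/L}(U_{St}(\phi/O_L))$, and Theorem \ref{theoremSt1} applied to both $L$ and $E$ gives
\[
\left[\frac{O_E}{\iota_{E/L}(O_L)} : \frac{U_{St}(\phi/O_E)}{\iota_{E/L}(U_{St}(\phi/O_L))}\right]_A = \frac{[O_E:U_{St}(\phi/O_E)]_A}{[O_L:U_{St}(\phi/O_L)]_A} = \frac{\mathcal{L}(\phi/O_E)}{\mathcal{L}(\phi/O_L)}.
\]
The only point that is not purely formal is the intersection identity for Stark units (Lemma \ref{lemmaSt3}, part 2)) combined with the Stark class number formula of Theorem \ref{theoremSt1}; once these are in hand, the corollary is a direct application of the index calculus of Subsection \ref{Prel}.
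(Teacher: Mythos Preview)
Your proof is correct and follows essentially the same approach as the paper: both rely on Lemma~\ref{lemmaSt3} (parts 1 and 2), Theorem~\ref{theoremSt1}, and the quotient index formula from Subsection~\ref{Prel}. The only cosmetic difference is that for the first assertion the paper argues via the injection of finite $A$-modules $\frac{U(\phi/O_L)}{U_{St}(\phi/O_L)}\hookrightarrow\frac{U(\phi/O_E)}{U_{St}(\phi/O_E)}$ directly, whereas you recast this through the quotient index formula in $E_\infty/\iota_{E/L}(L_\infty)$; the content is identical.
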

\begin{proof} 
 By Lemma \ref{lemmaSt3}, $\iota_{E/L}$ induces an injective morphism of $A$-modules:
\begin{eqnarray}\label{eqnSt4}\frac{U(\phi/O_L)}{U_{St}(\phi/O_L)}\hookrightarrow\frac{U(\phi/O_E)}{U_{St}(\phi/O_E)} .\end{eqnarray}
 By Theorem \ref{theoremSt1}, the above $A$-modules are finite, thus:
 $$\frac{[\frac{U(\phi/O_E)}{U_{St}(\phi/O_E)} ]_A}{[ \frac{U(\phi/O_L)}{U_{St}(\phi/O_L)}]_A}\in A.$$
 Applying again Theorem \ref{theoremSt1}, we get:
 $$\frac{[H(\phi/O_E)]_A}{[H(\phi/O_L)]_A}\in A.$$
  Furthermore, by (\ref{eqnSt4}), we also have:
 $$[\frac{U(\phi/O_E)}{U_{St}(\phi/O_E)+\iota_{E/L}(U(\phi/O_L))}]_A=\frac{[H(\phi/O_E)]_A}{[H(\phi/O_L)]_A}.$$
 
 Observe that $O_E\cap \iota_{E/L}(L_{\infty})= \iota_{E/L}(O_L).$ 
 Now $\frac{O_E}{\iota_{E/L}(O_L)}$  is an $A$-lattice in $\frac{E_{\infty}}{\iota_{E/L}(L_{\infty})}.$ By Lemma \ref{lemmaSt3} and Theorem \ref{theoremSt1}, we also have that $\frac {U_{St}(\phi/O_E)}{\iota_{E/L}(U_{St}(\phi/O_L))}$  is an $A$-lattice in $\frac{E_{\infty}}{\iota_{E/L}(L_{\infty})}.$ We have:
 $$ [\frac{O_E}{\iota_{E/L}(O_L)}:\frac {U_{St}(\phi/O_E)}{\iota_{E/L}(U_{St}(\phi/O_L))} ]_A=\frac{[O_E:U_{St}(\phi/O_E)]_A}{[O_L:U_{St}(\phi/O_L)]_A}.$$
It remains to apply Theorem \ref{theoremSt1}.
\end{proof}
\begin{lemma}\label{lemmaEqui1} Let $\phi$ be a Drinfeld $A$-module defined over $O_L.$ Let $E/L$ be a finite Galois extension and let $G={\rm Gal}(E/L).$ Then $U(\phi/O_E)$ and $U_{St}(\phi/O_E)$ are $A[G]$-modules and:
$$U(\phi/O_E)^G= \iota_{E/L}(U(\phi/O_L)),$$
$$U_{St}(\phi/O_E)^G= \iota_{E/L}(U_{St}(\phi/O_L)).$$
\end{lemma}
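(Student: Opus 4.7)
The plan is to extend the $G$-action to $E_\infty$ and $\mathbb T_z(E_\infty)$, to verify that the exponentials and the evaluation map commute with this action, and then to identify the $G$-fixed sub-$A$-modules using the basic identity $E_\infty^G = \iota_{E/L}(L_\infty)$ and its Tate-algebra analogue.

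First I would make $G$ act $K_\infty$-linearly on $E_\infty = E \otimes_K K_\infty$ via $\sigma(e \otimes \lambda) = \sigma(e) \otimes \lambda$, and in the same way $\mathbb T_z(K_\infty)$-linearly on $\mathbb T_z(E_\infty) = E \otimes_K \mathbb T_z(K_\infty)$ (picking a $K$-basis of $E$ identifies both sides). Because $\phi$ is defined over $O_L$, both $\phi_\theta$ and $\widetilde\phi_\theta$ have $G$-invariant coefficients, and the same then holds for $\exp_\phi \in L\{\{\tau\}\}$ and $\exp_{\widetilde\phi} \in L[z]\{\{\tau\}\}$ by the uniqueness of the defining recursion. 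Thus $O_E$, $O_E[z]$, $\exp_\phi^{-1}(O_E)$ and $\exp_{\widetilde\phi}^{-1}(O_E[z])$ are all $G$-stable, which makes $U(\phi/O_E)$ and $U(\widetilde\phi/O_E[z])$ into $A[G]$-modules; since $ev$ also commutes with $G$, $U_{St}(\phi/O_E) = ev(U(\widetilde\phi/O_E[z]))$ is an $A[G]$-module too.

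Next I would prove $E_\infty^G = \iota_{E/L}(L_\infty)$ and $\mathbb T_z(E_\infty)^G = \iota'_{E/L}(\mathbb T_z(L_\infty))$. Starting from the left-exact sequence of $K[G]$-modules $0 \to E^G \to E \to \bigoplus_{\sigma \in G} E$ given by $x \mapsto (\sigma x - x)_\sigma$, and using that $K_\infty$ and $\mathbb T_z(K_\infty)$ are flat $K$-modules with trivial $G$-action, tensoring yields both identities (with $E^G = L$). Alternatively, one can argue basis-by-basis as in the proof of Lemma \ref{lemmaSt3}, working with a $K$-basis $e_1,\ldots,e_n$ of $E$ whose first $k$ elements form a $K$-basis of $L$.

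The two fixed-point formulas then drop out. For $x \in U(\phi/O_E)^G$, we have $x \in \iota_{E/L}(L_\infty)$, and $\exp_\phi(x) \in O_E$ is likewise $G$-invariant, so it lies in $O_E \cap \iota_{E/L}(L_\infty) = \iota_{E/L}(O_L)$ (an element of $O_E$ that lies in $L$ is integral over $A$, hence in $O_L$); consequently $x \in \iota_{E/L}(U(\phi/O_L))$, the reverse inclusion being Lemma \ref{lemmaSt3}.1. For the Stark units, the chain $U_{St}(\phi/O_E)^G \subset U(\phi/O_E)^G = \iota_{E/L}(U(\phi/O_L)) \subset \iota_{E/L}(L_\infty)$ combined with part 2 of Lemma \ref{lemmaSt3} immediately gives $U_{St}(\phi/O_E)^G = \iota_{E/L}(U_{St}(\phi/O_L))$. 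The only genuinely non-routine point is the Tate-algebra fixed-point identity; once it is available, everything else is formal.
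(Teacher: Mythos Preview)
Your proposal is correct and follows essentially the same approach as the paper: set up the $G$-action, note that the exponentials and $ev$ are $G$-equivariant, use the identity $E_\infty^G=\iota_{E/L}(L_\infty)$, and then invoke Lemma~\ref{lemmaSt3} (part~2) to finish the Stark-units case. The paper's own proof is terser (it only writes out the Stark-units case and simply asserts $E_\infty^G=\iota_{E/L}(L_\infty)$), whereas you also spell out the flatness/basis argument for the Tate-algebra fixed points and give an explicit argument for $U(\phi/O_E)^G$ via $O_E\cap\iota_{E/L}(L_\infty)=\iota_{E/L}(O_L)$; these are just added details, not a different route.
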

\begin{proof} We prove the assertion for  Stark units. Observe that $\exp_{\widetilde{\phi}}:\mathbb T_z(E_{\infty})\rightarrow \mathbb T_z(E_{\infty})$ is a morphism of $\mathbb F_q[z][G]$-modules. Thus $U(\widetilde{C}/O_E[z])$ is an $A[z][G]$-module.  Since $ev:\mathbb T_z(E_{\infty})\rightarrow E_{\infty}$ is a morphism of $A[G]$-modules, we deduce that $U_{St}(\phi/O_E)$ is a  $A[G]$-module. Now, observe that:
$$E_{\infty}^G=\iota_{E/L}(L_{\infty}).$$
Therefore, by Lemma \ref{lemmaSt3}, we get:
$$U_{St}(\phi/O_E)^G=U_{st}(\phi/O_E)\cap \iota_{E/L}(L_{\infty})=\iota_{E/L}(U_{St}(\phi/O_L)).$$
\end{proof}

\begin{proposition}\label{PropositionEqui1}
Let $\varphi$ be a Drinfeld $\widetilde{A}$-module defined over $\widetilde{O_L}.$ Let $E/L$ be a finite abelian extension of degree prime to the characteristic of $\mathbb F_q,$ and let $G={\rm Gal}(E/L).$ Then:
$$[\widetilde{O_E}:U(\varphi/\widetilde{O_E})]_{\widetilde{A}[G]} [H(\varphi/\widetilde{O_E})]_{\widetilde{A}[G]}=\prod\limits_{\substack{\frak P\, {\rm prime\,  ideal\,  of \, } O_L,\\ \frak P\not =\{0\}}}\frac{[\frac{\widetilde{O_E}}{\frak P\widetilde{O_E}}]_{\widetilde{A}[G]}}{[\varphi(\frac{\widetilde{O_E}}{\frak P\widetilde{O_E}})]_{\widetilde{A}[G]}}\in \widetilde{K_{\infty}}[G]^\times.$$
\end{proposition}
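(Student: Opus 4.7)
The identity to prove lives in $\widetilde{K_\infty}[G]^\times$. The strategy is to reduce to the non-equivariant class number formula (\ref{eqSt1}) of \cite{DEM} by splitting everything into $\chi$-isotypic components. The hypothesis that $|G|$ is prime to $p$ makes this decomposition well-behaved: both sides of the desired equality are defined $\chi$-by-$\chi$ in the formulas of Section~\ref{Prel}.

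By the definitions $[M]_{\widetilde A[G]}=\sum_\chi [M(\chi)]_{\widetilde A(\chi)}e_\chi$ and $[M_1:M_2]_{\widetilde A[G]}=\sum_\chi[M_1(\chi):M_2(\chi)]_{\widetilde A(\chi)}e_\chi$, the claimed equality in $\widetilde{K_\infty}[G]^\times$ is equivalent to proving, for each $\chi\in\widehat G$, the $\chi$-component identity in $\widetilde{K_\infty}(\chi)^\times$:
$$[\widetilde{O_E}(\chi):U(\varphi/\widetilde{O_E})(\chi)]_{\widetilde A(\chi)}\,[H(\varphi/\widetilde{O_E})(\chi)]_{\widetilde A(\chi)}=\prod_{\frak P\neq 0}\frac{[(\widetilde{O_E}/\frak P\widetilde{O_E})(\chi)]_{\widetilde A(\chi)}}{[\varphi(\widetilde{O_E}/\frak P\widetilde{O_E})(\chi)]_{\widetilde A(\chi)}}.$$

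The crucial observation enabling the $\chi$-component analysis is that the coefficients of $\varphi_\theta$ lie in the $G$-fixed subring $\widetilde{O_L}\{\tau\}$, so $\varphi$-multiplication (and $\exp_\varphi$) commutes with $G$ and preserves each $\chi$-isotypic piece. Thus $\widetilde{O_E}(\chi)$ and $U(\varphi/\widetilde{O_E})(\chi)$ are $\widetilde A(\chi)$-lattices in $\widetilde{L_\infty}(\chi)$, while $H(\varphi/\widetilde{O_E})(\chi)$ is a finite-dimensional $\mathbb F_q(\chi)(z)$-vector space. From here one runs the proof of \cite{DEM}, Theorem~2.7, verbatim on the $\chi$-component: semisimplicity of $\mathbb F_q(\chi)[G]$ guarantees that the $\chi$-components of all the exact sequences used in \emph{loc. cit.} remain exact, and $[\cdot]_{\widetilde A(\chi)}$ is multiplicative in short exact sequences. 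Convergence of the $\chi$-component Euler product follows by adapting Lemma~\ref{lemmaSt2}: each local factor lies in $1+\frac1\theta\mathbb F_q(\chi)[z][[\frac1\theta]]$.

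\textbf{Main obstacle.} The subtle point is the bookkeeping of the Euler product, which is indexed by primes $\frak P$ of $O_L$ rather than of $O_E$. Each local factor is the $\widetilde A[G]$-length of the whole $\widetilde{O_E}/\frak P\widetilde{O_E}=\prod_{\frak Q\mid\frak P}\widetilde{O_E}/\frak Q\widetilde{O_E}$, with the $G$-action permuting the primes $\frak Q$ above $\frak P$. Verifying, via a Mackey-type decomposition at each character, that this aggregated local factor $\chi$-decomposes in the expected way---so that after applying $\det_{\widetilde A}$ one recovers the classical product $\mathcal L(\varphi/\widetilde{O_E})$ over primes of $O_E$ from (\ref{eqSt1})---is the core technical task. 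Once this matching of local data is in place, the $\chi$-by-$\chi$ class number identity follows from Step 3 and assembles back into the claimed equality in $\widetilde{K_\infty}[G]^\times$.
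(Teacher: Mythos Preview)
Your overall strategy---reduce to a character-by-character statement and invoke the non-equivariant class formula---is exactly the paper's, but the phrase ``run the proof of \cite{DEM}, Theorem~2.7, verbatim on the $\chi$-component'' hides the real work. The $\chi$-component $e_\chi(\widetilde{O_E}\otimes_k F)$ is \emph{not} the ring of integers of some finite extension of $\widetilde K$, so Theorem~2.7 of \cite{DEM} does not apply to it as stated. What makes the argument go through is that this isotypic piece is a finitely generated \emph{projective} $\widetilde{O_L}$-module (this is where $p\nmid |G|$ is used, via the normal basis theorem), and Taelman's trace formula \cite{TAE2}, Theorem~3, extends to such modules. The paper makes this explicit: one first splits the exact sequence $0\to\widetilde{E_\infty}/U(\varphi/\widetilde{O_E})\to\widetilde{E_\infty}/\widetilde{O_E}\to H(\varphi/\widetilde{O_E})\to 0$ as $\widetilde A[G]$-modules (divisibility of the kernel plus $\widetilde A[G]$ principal), then applies the determinant $\det_{F[[Z]]}(1+\Theta\mid\cdot)$ of \cite{DEM}, \S3.1, to the projective $\widetilde{O_L}$-module $M=e_\chi(\widetilde{O_E}\otimes_k F)$.

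This also resolves what you call the ``main obstacle'' without any Mackey bookkeeping. Because the trace formula is run over $\widetilde{O_L}$ acting on the projective module $M$, the Euler product that emerges is \emph{automatically} indexed by maximal ideals $\frak P$ of $O_L$, with local factor $\det_{F[[Z]]}(1+\Theta\mid M/\frak P M)^{-1}|_{Z=\theta^{-1}}=[M/\frak P M]_{F[\theta]}/[\varphi(M/\frak P M)]_{F[\theta]}$. There is no need to match against the product over primes of $O_E$; the $O_L$-indexing is native to the method once you work with $M$ as an $\widetilde{O_L}$-module rather than trying to view it as the integers of some field. So your proposed detour through a Mackey-type comparison with $\mathcal L(\varphi/\widetilde{O_E})$ is unnecessary, and the missing ingredient in your sketch is precisely the projectivity of $e_\chi\widetilde{O_E}$ over $\widetilde{O_L}$ together with the extension of the trace formula to that setting.
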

\begin{proof} The above result is a consequence of the proof of \cite{ANG&TAE1}, Theorem A. We refer the reader to \cite{TAE2}, \cite{ANG&TAE1}, \cite{DEM}, \cite{FAN}, for the background on the class number formula. We only give a sketch of the proof. We also suspect that this equivariant  formula can also be deduced from \cite{FAN2}, working over $\mathbb F_q(z)$ instead of working over $\mathbb F_q.$ \par
 Recall that $O_E$ is a free $A[G]$-module.   Set $k=\mathbb  F_q(z).$ Let $F/k$ be the finite extension obtained by adjoining to $k$ the elements $\chi(g),$  $\forall \chi \in \widehat{G},\forall  g\in G.$\par
We have an exact sequence of $\widetilde{A}[G]$-modules (the module in the center  is a $\widetilde{A}$-module via ${\varphi}$):
$$O\rightarrow \frac{\widetilde{E_{\infty}}}{U(\varphi/\widetilde{O_E})}\rightarrow \frac{\widetilde{E_{\infty}}}{\widetilde{O_E}}\rightarrow H({\varphi}/\widetilde{O_E})\rightarrow 0.$$
Since $\frac{\widetilde{E_{\infty}}}{U({\varphi}/\widetilde{O_E})}$ is a divisible $\widetilde{A}[G]$-module and $\widetilde{A}[G]$ is a principal ideal ring, the above sequence of $\widetilde{A}[G]$-modules splits. We obtain an isomorphism of $\widetilde{A}[G]$-modules:
$$\gamma : \frac{\widetilde{E_{\infty}}}{U({\varphi}/\widetilde{O_E})}\times H({\varphi}/\widetilde{O_E})
\simeq \frac{\widetilde{E_{\infty}}}{\widetilde{O_E}}.$$
We still denote by $\tau$ the continuous morphism of $F(z)$-algebras: $\tau\otimes 1: \widetilde{E_{\infty}}\otimes_k F \rightarrow \widetilde{E_{\infty}}\otimes_k F.$  Let $\chi \in \widehat{G},$ by \cite{ANG&TAE1}, proof of Theorem A and \cite{DEM}, Proposition 3.7, we deduce that
$$[e_{\chi}(H({\varphi}/\widetilde{O_E})\otimes_kF)]_{F[\theta]}[e_{\chi} (\widetilde{O_E}\otimes_kF):e_{\chi}(U({\varphi}/\widetilde{O_E})\otimes_kF)]_{F[\theta]}$$
is equal to
$$\det_{F[[Z]]}(1+\Theta\mid \frac{\widetilde{L_{\infty}}\otimes_{\widetilde{A}}e_{\chi} (\widetilde{O_E}\otimes_kF)}{e_{\chi} (\widetilde{O_E}\otimes_kF)})\mid_{Z=\theta^{-1}}, $$
where we refer the reader to \cite{DEM}, section 3.1, for the definition of $\det_{F[[Z]]} (.),$ and where :
$$\Theta=\sum_{n\geq 1} (\theta-{\varphi}_{\theta}) \theta^{n-1} Z^n \in \widetilde{O_L}\{\tau\} [[Z]] .$$
Now, observe that $e_{\chi} (\widetilde{O_E}\otimes_kF)$ is a finitely generated and projective $\widetilde{O_L}$-module. Therefore, by an adaptation of \cite{TAE2} , Theorem 3,  similar to that made in  \cite{DEM},  and by \cite{ANG&TAE1}, paragraph 6.4, if we set $M=e_{\chi} (\widetilde{O_E}\otimes_kF),$ we get:
$$\det_{F[[Z]]}(1+\Theta\mid \frac{\widetilde{L_{\infty}}\otimes_{\widetilde{A}}M}{M})=\prod_{\frak P}\det_{F[[Z]]}(1+\Theta\mid \frac{M}{\frak P M})^{-1},$$
where the product is over the maximal ideals of $O_L.$ But observe that:
$$\prod_{\frak P}(\det_{F[[Z]]}(1+\Theta\mid \frac{M}{\frak P M})\mid_{Z=\theta^{-1}})^{-1}=\prod_{\frak P}\frac{[\frac{M}{\frak PM}]_{F[\theta]}}{[{\varphi}(\frac{M}{\frak PM})]_{F[\theta]}}.$$
Thus, we get the desired equivariant formula. The reader will notice that the above reasoning works also for Drinfeld $A$-modules leading to Fang's relative equivariant class number formula (\cite{FAN2}, Theorem 1.13) .
\end{proof}

\begin{corollary}\label{corollaryStark2} Let $\phi$ be a Drinfeld $A$-module defined over $O_L.$  Let $E/L$ be a finite abelian extension of degree prime to the characteristic of $\mathbb F_q,$ and let $G={\rm Gal}(E/L).$ Then:
$$[\frac{U(\phi/O_E)}{U_{St}(\phi/O_E)}]_{A[G]}= [H(\phi/O_E)]_{A[G]}.$$
We also have:
$$[O_E:U_{St}(\phi/O_E)]_{A[G]}=\prod_{\frak P\, {\rm prime\,  ideal\,  of \, } O_L, \frak P\not =\{0\}}\frac{[\frac{O_E}{\frak PO_E}]_{A[G]}}{[\phi(\frac{O_E}{\frak PO_E})]_{A[G]}}\in K_{\infty}[G]^\times.$$

\end{corollary}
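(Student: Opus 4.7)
The plan is to upgrade the proof of Theorem \ref{theoremSt1} to the $G$-equivariant setting for the first identity, and then combine it with Fang's relative equivariant class number formula (established via the reasoning of Proposition \ref{PropositionEqui1} applied to Drinfeld $A$-modules, as indicated in its proof) for the second identity.

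First, I would verify that all constructions of sections \ref{Integrality}--\ref{Prel} upgrade naturally to the $A[G]$-equivariant setting. Since $\phi$ is defined over $O_L$ and $L$ is pointwise fixed by $G$, the coefficients of $\phi_\theta$, $\widetilde{\phi}_\theta$, $\exp_\phi$, and $\exp_{\widetilde{\phi}}$ all lie in $L$ (respectively $L[z]$), so these operators commute with the natural $G$-action on $E_\infty$ and $\mathbb{T}_z(E_\infty)$. Consequently $O_E$, $U(\phi/O_E)$, $U_{St}(\phi/O_E)$, $H(\phi/O_E)$, and $H(\widetilde{\phi}/O_E[z])$ are all $A[G]$-modules (the latter being a finite $A$-module by Proposition \ref{propositionSt2} applied to $E$), the evaluation map $ev$ and the map $\alpha$ of (\ref{eqSt3}) are $G$-equivariant, and the proof of Proposition \ref{propositionSt3} produces an isomorphism of $A[G]$-modules
$$\overline{\alpha}: \frac{U(\phi/O_E)}{U_{St}(\phi/O_E)} \stackrel{\sim}{\longrightarrow} H(\widetilde{\phi}/O_E[z])[z-1].$$

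Next, for the first identity, I would invoke the $A[G]$-equivariant exact sequence of finite $A[G]$-modules from the proof of Theorem \ref{theoremSt1},
$$0 \to H(\widetilde{\phi}/O_E[z])[z-1] \to H(\widetilde{\phi}/O_E[z]) \xrightarrow{z-1} H(\widetilde{\phi}/O_E[z]) \to H(\phi/O_E) \to 0,$$
and use the multiplicativity of $[\cdot]_{A[G]}$ in short exact sequences (from section \ref{Prel}, applied to the two short exact sequences the above splits into) to conclude $[H(\widetilde{\phi}/O_E[z])[z-1]]_{A[G]} = [H(\phi/O_E)]_{A[G]}$. Combined with $\overline{\alpha}$, this yields the first identity.

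For the second identity, I would apply the equivariant class number formula for $\phi$ over $O_E$, which, as the authors indicate in the final paragraph of the proof of Proposition \ref{PropositionEqui1}, is obtained by the same argument as there but replacing $\widetilde{\phi}$ with $\phi$ throughout. This gives
$$[O_E : U(\phi/O_E)]_{A[G]} \cdot [H(\phi/O_E)]_{A[G]} = \prod_{\mathfrak{P}} \frac{[O_E/\mathfrak{P}O_E]_{A[G]}}{[\phi(O_E/\mathfrak{P}O_E)]_{A[G]}}.$$
Applying the chain rule $[O_E : U_{St}(\phi/O_E)]_{A[G]} = [O_E : U(\phi/O_E)]_{A[G]} \cdot [U(\phi/O_E) : U_{St}(\phi/O_E)]_{A[G]}$, together with the identity $[U(\phi/O_E) : U_{St}(\phi/O_E)]_{A[G]} = [U(\phi/O_E)/U_{St}(\phi/O_E)]_{A[G]}$ (valid since $U_{St}(\phi/O_E) \subset U(\phi/O_E)$) and then the first identity, delivers the second identity.

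The main obstacle is the careful verification of $G$-equivariance at every step and the invocation of Fang's equivariant class number formula, which is not proved in detail in the excerpt but only sketched as a variant of the proof of Proposition \ref{PropositionEqui1}; one must check that the determinant computation there, performed over $\mathbb{F}_q(z)$ for the $\widetilde{A}$-case, goes through verbatim over $\mathbb{F}_q$ for the $A$-case, with $e_\chi(O_E \otimes_{\mathbb{F}_q} F)$ replacing $e_\chi(\widetilde{O_E} \otimes_k F)$.
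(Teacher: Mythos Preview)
Your proposal is correct and follows essentially the same approach as the paper: both upgrade Proposition \ref{propositionSt3} and the exact-sequence argument of Theorem \ref{theoremSt1} to the $A[G]$-equivariant setting for the first identity, and then combine it with Fang's equivariant class number formula (\cite{FAN2}, Theorem 1.13, also obtainable as the $A$-module variant of Proposition \ref{PropositionEqui1}) for the second. The paper's proof is terser, citing Lemma \ref{lemmaEqui1} for the lattice statement and leaving the $G$-equivariance checks implicit, but the substance is the same.
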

\begin{proof} By Lemma \ref{lemmaEqui1}, we have that  $U(\phi/O_E)$ and  $U_{St}(\phi/O_E)$ are $A[G]$-lattices in $E_{\infty}.$ Now, observe that the map $\overline{\alpha}$ of Proposition \ref{propositionSt3} induces an isomorphism of $A[G]$-modules:
$$\frac{U(\phi/O_E)}{U_{St}(\phi/O_E)}\simeq H(\widetilde{\phi}/O_E[z])[z-1].$$
Thus, using similar arguments as that used   in the proof of Theorem \ref{theoremSt1}, we get the first assertion. The second assertion is a consequence of the first assertion and \cite{FAN2}, Theorem 1.13 (see also the proof of Proposition \ref{PropositionEqui1}).
\end{proof}
 Let $\phi$ be a Drinfeld $A$-module defined over $O_L.$ Let $P$ be a monic irreducible element in $A,$ and let $ \frak P_1, \ldots ,\frak P_t$ be the distinct maximal ideals of $O_L$ above $P.$ Write $PO_L =\frak P_1^{e_1}\cdots \frak P_t^{e_t}.$  Then we have an exact sequence of $A$-modules: 
 $$0\rightarrow \prod_{j=1}^t\frac{\frak P_j}{\frak P_j ^{e_j}}\rightarrow \frac{O_L}{ PO_L}\rightarrow \prod_{j=1}^t\frac{O_L}{\frak P_j}\rightarrow 0.$$
Now, observe that $\forall n\geq 1,$ for $j=1, \ldots, t,$ $\tau (\frak P_j^n)\subset \frak P_j^{n+1}.$ This implies:
$$\frac{[\frac{O_L}{PO_L}]_A}{[\phi (\frac{O_L}{PO_L})]_A}= \prod_{j=1}^t\frac{[\frac{O_L}{\frak P_j}]_A}{[\phi (\frac{O_L}{\frak P_j})]_A}.$$
Therefore:
$$\mathcal L(\phi/O_L)=\prod_{P}\frac{[\frac{O_L}{PO_L}]_A}{[\phi (\frac{O_L}{PO_L})]_A},$$
where $P$ runs through the monic irreducible elements in $A.$ Now let $E/L$ be a finite abelian extension of degree prime to the characteristic of $\mathbb F_q.$ Then, the above discussion implies:
$$\mathcal L(\phi/(O_E/O_L),G):= \prod_{\frak P\, {\rm prime\,  ideal\,  of \, } O_L, \frak P\not =\{0\}}\frac{[\frac{O_E}{\frak PO_E}]_{A[G]}}{[\phi(\frac{O_E}{\frak PO_E})]_{A[G]}}=\prod_{P}\frac{[\frac{O_E}{ PO_E}]_{A[G]}}{[\phi(\frac{O_E}{ PO_E})]_{A[G]}}.$$
Let $P$ be a monic irreducible element in $A,$ since $O_E$ is a free $A[G]$-module, we have:
$$[\frac{O_E}{ PO_E}]_{A[G]}=P^{[L:K]}1_G,$$
$$[\phi(\frac{O_E}{ PO_E})]_{A[G]}=\det_{\mathbb F_q[G][Z]}((1\otimes Z) {\rm Id}-\phi_{\theta}\otimes 1\mid_{\frac{O_E}{ PO_E}\otimes_{\mathbb F_q}\mathbb F_q[Z]})_{Z=\theta}.$$
If $\widetilde{\phi}$ is the canonical $z$-deformation of $\phi,$ then:
$$\mathcal L(\widetilde{\phi}/\widetilde{O_L})= \prod_{P}\frac{[\frac{O_L[z]}{PO_L[z]}]_{A[z]}}{[\phi (\frac{O_L[z]}{PO_L[z]})]_{A[z]}},$$
where, for $P$ a monic irreducible prime of $A,$ we have:
$$[\frac{O_L[z]}{PO_L[z]}]_{A[z]}= P^{[L:K]},$$
$$[\phi (\frac{O_L[z]}{PO_L[z]})]_{A[z]}=\det_{\mathbb F_q[z][Z]}((1\otimes Z) {\rm Id}-\phi_{\theta}\otimes 1\mid_{\frac{O_L[z]}{ PO_L[z]}\otimes_{\mathbb F_q}\mathbb F_q[Z]})_{Z=\theta}.$$
And also:
$$\mathcal L(\widetilde{\phi}/(\widetilde{O_E}/\widetilde{O_L}),G):=\prod_{\frak P\, {\rm prime\,  ideal\,  of \, } O_L, \frak P\not =\{0\}}\frac{[\frac{\widetilde{O_E}}{\frak P\widetilde{O_E}}]_{\widetilde{A}[G]}}{[\widetilde{\phi}(\frac{\widetilde{O_E}}{\frak P\widetilde{O_E}})]_{\widetilde{A}[G]}}=\prod_{P}\frac{[\frac{O_E[z]}{ PO_E[z]}]_{A[z][G]}}{[\widetilde{\phi}(\frac{O_E[z]}{ PO_E[z]})]_{A[z][G]}},$$
where, for a monic irreducible element $P$ of $A,$ we have:
$$[\frac{O_E[z]}{ PO_E[z]}]_{A[z][G]}=P^{[L/K]}1_G,$$
$$[\widetilde{\phi}(\frac{O_E[z]}{ PO_E[z]})]_{A[z][G]}=\det_{\mathbb F_q[G][z,Z]}((1\otimes Z) {\rm Id}-\widetilde{\phi}_{\theta}\otimes 1\mid_{\frac{O_E[z]}{ PO_E[z]}\otimes_{\mathbb F_q}\mathbb F_q[Z]})_{Z=\theta}.$$
In particular, since $\mathcal L(\widetilde{\phi}/(\widetilde{O_E}/\widetilde{O_L}),G)$ converges in $\widetilde{K_{\infty}}[G] ,$  we get that:
$$\mathcal L(\widetilde{\phi}/(\widetilde{O_E}/\widetilde{O_L}),G)\in (\mathbb T_z(K_{\infty})[G])^\times.$$
\begin{theorem}\label{TheoremSt2}
Let $\phi$ be a Drinfeld $A$-module defined over $A.$ Let $E/K$ be a finite abelian extension of degree prime to the characteristic of $\mathbb F_q.$ Let $G={\rm Gal}(E/K).$ Then:
$$\mathcal L(\widetilde{\phi}/(\widetilde{O_E}/\widetilde{A}), G)\, O_E[z]= U(\widetilde{\phi}/O_E[z]).$$
In particular:
$$U_{St}(\phi/O_E)=\mathcal L(\phi/(O_E/A), G) \, O_E.$$
\end{theorem}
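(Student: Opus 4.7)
The plan is to establish the identity $\mathcal L(\widetilde\phi/(\widetilde{O_E}/\widetilde A),G)\cdot \widetilde{O_E}=\exp_{\widetilde\phi}^{-1}(\widetilde{O_E})$ as an equality of $\widetilde A[G]$-lattices inside $\widetilde{E_\infty}$ via the equivariant class number formula, and then to descend this equality to the Tate algebra by intersecting with $\mathbb T_z(E_\infty)$. The ``In particular'' assertion will then follow at once from applying the evaluation map $\mathrm{ev}$ at $z=1$.

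First, since $[E:K]$ is prime to $\mathrm{char}\,\mathbb F_q$, the abelian extension $E/K$ is tame, and Noether's normal integral basis theorem combined with the fact that $A$ is a PID yields that $O_E$ is a free $A[G]$-module of rank one; hence $\widetilde{O_E}$ is free of rank one over $\widetilde A[G]$ and $O_E[z]$ is free of rank one over $A[z][G]$. Put $\widetilde U:=\exp_{\widetilde\phi}^{-1}(\widetilde{O_E})$. Applying Proposition \ref{PropositionEqui1} to the $\widetilde A$-Drinfeld module $\widetilde\phi$ and using $H(\widetilde\phi/\widetilde{O_E})=\{0\}$ from Proposition \ref{propositionSt2}, I would obtain
$$[\widetilde{O_E}:\widetilde U]_{\widetilde A[G]}=\mathcal L(\widetilde\phi/(\widetilde{O_E}/\widetilde A),G).$$
Decomposing $\widetilde A[G]=\prod_{[\chi]}e_{[\chi]}\widetilde A[G]$ into its character blocks, each of which is a PID because $|G|$ is prime to $p$, forces $\widetilde U$ to be a free $\widetilde A[G]$-module of rank one, so $\widetilde U=f\cdot \widetilde{O_E}$ for some $f\in\widetilde{K_\infty}[G]^\times$ uniquely determined by the requirement that each of its $\chi$-components be monic in the sense of \S\ref{Prel}. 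This is precisely what defines the equivariant index, so $f=\mathcal L(\widetilde\phi/(\widetilde{O_E}/\widetilde A),G)$ and therefore $\widetilde U=\mathcal L(\widetilde\phi/(\widetilde{O_E}/\widetilde A),G)\cdot \widetilde{O_E}$. Intersecting both sides with $\mathbb T_z(E_\infty)$, using the identity $\widetilde{O_E}\cap\mathbb T_z(E_\infty)=O_E[z]$ together with the fact that $\mathcal L\in \mathbb T_z(K_\infty)[G]^\times$ preserves this intersection under scaling, would yield the first equality $\mathcal L\cdot O_E[z]=U(\widetilde\phi/O_E[z])$. The ``In particular'' then follows from $\mathrm{ev}(U(\widetilde\phi/O_E[z]))=U_{St}(\phi/O_E)$ by definition and $\mathrm{ev}(\mathcal L(\widetilde\phi/(\widetilde{O_E}/\widetilde A),G))=\mathcal L(\phi/(O_E/A),G)$ from the discussion preceding the theorem.

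The principal obstacle I anticipate is pinning down $f$ to equal $\mathcal L$ exactly, not merely modulo an element of $\widetilde A[G]^\times$; this requires verifying that the Euler-product expression for $\mathcal L$ is genuinely monic on every character block. Concretely, one must check (using Lemma \ref{lemmaSt2} and the identities for $[\widetilde\phi(O_E[z]/PO_E[z])]_{A[z][G]}$ recorded just before the theorem) that each local factor lies in $1+\theta^{-1}\mathbb F_q[z][[\theta^{-1}]][G]$, so that $\mathcal L$ itself lies in that set and its projection to each $e_{[\chi]}$-block is monic in $\widetilde{K_\infty}(\chi)$, matching the normalization built into $[\cdot:\cdot]_{\widetilde A[G]}$. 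The auxiliary identity $\widetilde{O_E}\cap\mathbb T_z(E_\infty)=O_E[z]$ is routine but also deserves some unpacking: writing $\widetilde{O_E}=\mathbb F_q(z)\otimes_{\mathbb F_q}O_E$, an element of the left-hand side can only be integral in the Tate sense if the $z$-denominators cancel, i.e.\ if it already lies in $\mathbb F_q[z]\otimes_{\mathbb F_q}O_E=O_E[z]$.
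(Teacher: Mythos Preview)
Your argument is correct and, in fact, a bit cleaner than the paper's. The two proofs use the same ingredients (freeness of $O_E$ over $A[G]$, Proposition~\ref{PropositionEqui1}, the vanishing $H(\widetilde\phi/\widetilde{O_E})=0$ from Proposition~\ref{propositionSt2}, and $\mathcal L\in(\mathbb T_z(K_\infty)[G])^\times$), but they are organized differently. You first establish the lattice identity $\exp_{\widetilde\phi}^{-1}(\widetilde{O_E})=\mathcal L\cdot\widetilde{O_E}$ over $\widetilde A[G]$ and then descend by intersecting with $\mathbb T_z(E_\infty)$, using that $\mathcal L$ is a unit in $\mathbb T_z(K_\infty)[G]$ together with $\widetilde{O_E}\cap\mathbb T_z(E_\infty)=O_E[z]$. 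The paper instead picks explicit generators $\eta$ of $\widetilde{O_E}$ and $\varepsilon\in U(\widetilde\phi/O_E[z])$ of $\exp_{\widetilde\phi}^{-1}(\widetilde{O_E})$, writes $\mathcal L=\delta f$ with $\delta\in(\mathbb F_q(z)[G])^\times$, uses integrality of $\mathcal L$ to force $\delta^{-1}\in\mathbb F_q[z][G]$, and then shows $u:=\mathcal L\,\eta\in U(\widetilde\phi/O_E[z])$ generates the latter by a divisibility argument in $\mathbb F_q[z]$. Your descent-by-intersection packages these divisibility steps into a single stroke.

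One remark on your ``principal obstacle'': you do not actually need to verify monicity of $\mathcal L$ from the Euler product. Proposition~\ref{PropositionEqui1} together with $[H(\widetilde\phi/\widetilde{O_E})]_{\widetilde A[G]}=1$ already gives the \emph{equality} $[\widetilde{O_E}:\widetilde U]_{\widetilde A[G]}=\mathcal L$, and the left-hand side is monic on every $\chi$-block by definition. From there, writing $\widetilde U=\widetilde A[G]\,h\eta$ with $h\in\widetilde{K_\infty}[G]^\times$, you get $h=c\cdot\mathcal L$ for some $c\in(\mathbb F_q(z)[G])^\times\subset\widetilde A[G]^\times$, whence $\widetilde U=\mathcal L\cdot\widetilde{O_E}$ immediately. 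Your Euler-factor check is a valid independent confirmation, just not logically required.
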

\begin{proof} Since $O_E$ is a free $A[G]$-module of rank one, we observe that $\widetilde{O_E}$ is a free $\widetilde{A}[G]$-module of rank one. It is also the case for $U(\widetilde{\phi}/\widetilde{O_E}).$  Write:
$$\widetilde{O_E}= \widetilde{A}[G] \eta,$$ for some $\eta \in O_E,$ and:
$$U(\widetilde{\phi}/\widetilde{O_E})=\widetilde{A}[G] \varepsilon,$$
for some $\varepsilon \in U(\widetilde{\phi}/O_E[z])$ (we can find such an element by Proposition \ref{propositionSt1}). Then:
$$\widetilde{E_{\infty}}= \widetilde{K_{\infty}}[G] \eta = \widetilde{K_{\infty}}[G]\varepsilon.$$
This implies that there exists $f\in (\widetilde{K_{\infty}}[G])^\times$ such that:
$$\varepsilon = f\eta.$$
We have:
$$\mathbb T_z(E_{\infty})=\mathbb T_z(K_{\infty})[G]\eta.$$
Thus $f\in \mathbb T_z(K_{\infty})[G].$ 
Furthermore, by Proposition \ref{PropositionEqui1}, we have:
$$\mathcal L(\widetilde{\phi}/(\widetilde{O_E}/\widetilde{A}), G)=\delta f,$$
for some $\delta \in (\mathbb F_q(z)[G])^\times.$ But we have that $\mathcal L(\widetilde{\phi}/(\widetilde{O_E}/\widetilde{A}), G)\in (\mathbb T_z(K_{\infty})[G])^\times.$  This implies that $x: = \frac{1}{\delta}\in \mathbb  F_q[z][G] .$ We get:
$$\exp_{\widetilde{\phi}}(\varepsilon)\in x\mathbb T_z(E_{\infty})\cap O_E[z]= xO_E[z].$$
The  above equality comes from the fact that $x\mathbb T_z(K_{\infty})[G]\cap A[z][G]= xA[z][G].$ 
Set:
$$u=\frac{1}{x}\varepsilon=\delta \varepsilon \in U(\widetilde{\phi}/\widetilde{O_E}),$$
then:
$$\exp_{\widetilde{\phi}}(u)\in O_E[z].$$
We have:
$$u= \mathcal L(\widetilde{\phi}/(\widetilde{O_E}/\widetilde{A}), G) \eta\in \mathbb T_z(E_{\infty}).$$
Thus:
$$u\in U(\widetilde{\phi}/O_E[z]).$$
Therefore:
$$\mathcal L(\widetilde{\phi}/(\widetilde{O_E}/\widetilde{A}), G)O_E[z]= A[z][G] u\subset U(\widetilde{\phi}/O_E[z]).$$
This also implies that:
$$\mathbb T_z(E_{\infty})=\mathbb T_z(K_{\infty})[G]u.$$
Now, let $m\in U(\widetilde{\phi}/O_E[z]).$ Since $U(\widetilde{\phi}/\widetilde{O_E})=\widetilde{A}[G]u,$ by Proposition \ref{propositionSt1}, there exists $x\in \mathbb F_q[z]\setminus \{0\}$ such that $xm \in A[z][G]u.$ But :
$$A[z][G]\cap x\mathbb T_z(K_{\infty})[G]= xA[z][G].$$
Thus $m\in A[z][G]u.$ Therefore :
$$U(\widetilde{\phi}/O_E[z])=A[z][G] u=\mathcal L(\widetilde{\phi}/(\widetilde{O_E}/\widetilde{A}), G)O_E[z].$$
\end{proof}


\subsection{An example}\label{Anexample}${}$\par

Let $\widetilde{C}: \widetilde{A}\rightarrow \widetilde{A}\{\tau \}$ be the Drinfeld $\widetilde A$-module defined over $\widetilde A$  given by:
$$\widetilde{C}_{\theta}=z\tau +\theta.$$
Then:
 $$\exp_{\widetilde{C}}=\sum_{j\geq 0}\frac{z^j}{D_j}\tau^j \in K(z)\{ \{ \tau\}\},$$
 where $D_0=1,$ and for $i\geq 1,$ $D_i=(\theta^{q^i}-\theta) D_{i-1}^q.$\par
   
 Let $L/K$ be a finite extension and let $E/L$ be a finite abelian extension of degree prime to the characteristic of $\mathbb F_q.$ Let $G={\rm Gal}(E/L).$  Let $\frak P$ be a maximal ideal of $O_L.$ Let $G_{\frak P}\subset G$ be the decomposition group of $\frak P$ in $E/L.$ Fix $\frak M$ a maximal ideal of $O_E$ above $\frak P.$ Le $I_{\frak P}$ be the inertia group of $\frak P$ in $E/L,$ and let's define:
 $$\sigma_{\frak P}=\frac{1}{\mid I_{\frak P}\mid }\sum {\delta} \in  \mathbb F_q[G_{\frak P}],$$
 where the sum is over the elements $\delta \in G_{\frak P}$ such that the image of $\delta $ in $\frac{G_{\frak P}}{I_{\frak P}}$ is the Frobenius of the extension $\frac{O_E}{\frak M}/\frac{O_L}{\frak P}.$

\begin{lemma}\label{lemmaSt1} Let $\frak P$ be a non zero prime ideal of $O_L$ and let $P$ be the prime of $A$ such that $PA=\frak P\cap A.$ Write $[\frac{O_L}{\frak P}]_A=P^m,$ $m\geq 1.$ Then:
$$[\frac{\widetilde{O_E}}{\frak P\widetilde{O_E}}]_{\widetilde{A}[G]}= P^m1_G,$$
$$[\widetilde{C}(\frac{\widetilde{O_E}}{\frak P\widetilde{O_E}})]_{\widetilde{A}[G]}= P^m1_G-z^{m{\rm deg}_{\theta} P}\sigma_{\frak P}.$$
Furthermore, if $\frak P$ is unramified in $E,$ then we have an isomorphism of $\widetilde{A}[G]$-modules:
$$\widetilde{C}(\frac{\widetilde{O_E}}{\frak P\widetilde{O_E}})\simeq \frac{\widetilde{A}[G]}{ (P^m 1_G-z^{m\deg_{\theta} P} \sigma_{\frak P})\widetilde{A}[G]} .$$
\end{lemma}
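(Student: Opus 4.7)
My plan is to reduce each of the three claims to a character-by-character calculation on $\widehat G$, relying on two essential ingredients: Noether's normal basis theorem in the tame case, and the Carlitz–Fermat congruence $\widetilde C_P \equiv z^{\deg_\theta P}\tau^{\deg_\theta P}\pmod{P\widetilde A\{\tau\}}$ (a consequence of $C_P \equiv \tau^{\deg_\theta P}\pmod P$ in $A\{\tau\}$, easily verified from $C_\theta = \theta+\tau$).

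For part (1), Noether's theorem (available because the standing assumption $\gcd(|G|,p)=1$ makes $E/L$ tame at $\frak P$) gives $O_E/\frak P O_E$ free of rank one over $(O_L/\frak P)[G]$, hence $\widetilde{O_E}/\frak P\widetilde{O_E}$ free of rank one over $(O_L/\frak P)(z)[G]$. Each $\chi$-component then has $k(\chi)$-dimension $m\deg_\theta P$, with $\theta$ acting by multiplication by $\vartheta:=\theta\bmod\frak P$; a block-diagonal calculation using $m$ copies of the companion matrix of $P$ gives characteristic polynomial $P(X)^m$, whence $[M(\chi)]_{\widetilde A(\chi)}=P^m$ and the first formula after summing over $\chi$. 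For part (2), since $P$ acts as $0$ on $\widetilde{O_E}/\frak P\widetilde{O_E}$ the congruence upgrades to the operator identity $\widetilde C_{P^m} = z^{m\deg_\theta P}\tau^{m\deg_\theta P}$. I would then establish the further identity $\tau^{m\deg_\theta P} = \sigma_\frak P$: both are $(O_L/\frak P)[G]$-linear endomorphisms of the free rank-one module, corresponding to elements of $(O_L/\frak P)[G]$, so it suffices to compare them on each $M(\chi)$. For $\chi$ trivial on $I_\frak P$, both act as $\chi(\mathrm{Frob}_\frak P) = \chi(\sigma_\frak P)$; for $\chi$ non-trivial on $I_\frak P$, both vanish — $\sigma_\frak P$ by orthogonality $\sum_{\delta\in I_\frak P}\chi(\delta)=0$, and $\tau^{m\deg_\theta P}$ because the tame bound $|I_\frak P| < q^{m\deg_\theta P}$ (from $I_\frak P\hookrightarrow(O_L/\frak P)^\times$) sends the ramified part $\frak M/\frak M^{e_\frak P}$ into $\frak M^{e_\frak P}=(0)$. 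Thus $\widetilde C_{P^m}$ acts on $N(\chi)$ as the scalar $z^{m\deg_\theta P}\chi(\sigma_\frak P)$, so $\widetilde C_\theta$ satisfies $P(X)^m - z^{m\deg_\theta P}\chi(\sigma_\frak P)$ which, being of degree $m\deg_\theta P = \dim_{k(\chi)}N(\chi)$, is its characteristic polynomial. Summing over $\chi$ gives the second formula.

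For part (3), in the unramified case $\sigma_\frak P$ is a single group element, so $P^m 1_G - z^{m\deg_\theta P}\sigma_\frak P$ is $\theta$-monic of degree $m\deg_\theta P$, and the cyclic module $\widetilde A[G]/(P^m 1_G - z^{m\deg_\theta P}\sigma_\frak P)$ has $\mathbb F_q(z)$-dimension $m\deg_\theta P\cdot|G|$, matching that of $\widetilde{O_E}/\frak P\widetilde{O_E}$. Taking a Noether generator $\beta$ of $O_E/\frak P O_E$ over $(O_L/\frak P)[G]$, part (2) gives $(P^m 1_G - z^{m\deg_\theta P}\sigma_\frak P)\cdot\beta = 0$, producing a surjection from the cyclic module onto $\widetilde A[G]\cdot_{\widetilde C}\beta$. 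To upgrade this to the claimed isomorphism I would verify that $\beta$ actually generates $\widetilde{O_E}/\frak P\widetilde{O_E}$ via $\widetilde C$: the identity $\tau^{m\deg_\theta P}\beta = \sigma_\frak P(\beta)$ closes the Frobenius orbit under $G$, and $\mathbb F_q(z)$-linear combinations of the $\widetilde C_{\theta^j}(\beta)$ (extracting successive $z$-order terms) recover both $\vartheta^j\beta$ and $\beta^{q^i}$, producing a spanning set; dimension equality then upgrades the surjection to the stated isomorphism.

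The main obstacle is the operator identity $\tau^{m\deg_\theta P}=\sigma_\frak P$ in the ramified case — showing that $\tau^{m\deg_\theta P}$ annihilates the $\chi$-components for $\chi$ non-trivial on $I_\frak P$ and coincides with the Frobenius lift on the $I_\frak P$-invariant part — which requires the tame valuation bound above together with the description $(O_E/\frak M^{e_\frak P})^{I_\frak P} = O_E/\frak M$.
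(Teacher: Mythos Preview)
Your overall strategy is sound and close to the paper's: Noether's theorem for the free rank-one structure, the Carlitz congruence $\widetilde C_P\equiv z^d\tau^d\pmod{P}$, and the operator identity $\tau^{m\deg_\theta P}=\sigma_{\frak P}$ on $\widetilde{O_E}/\frak P\widetilde{O_E}$ are exactly the right ingredients. There is, however, a genuine gap in part~(2).

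\medskip

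From $\widetilde C_{P^m}=z^{m\deg_\theta P}\chi(\sigma_{\frak P})$ on $N(\chi)$ you conclude that $\widetilde C_\theta$ satisfies $Q(X)=P(X)^m-z^{m\deg_\theta P}\chi(\sigma_{\frak P})$, and then assert that since $\deg Q=\dim N(\chi)$ this must be the characteristic polynomial. That implication is false in general: an operator on a space of dimension $n$ may satisfy many monic polynomials of degree $n$ (any multiple of the minimal polynomial of the right degree works), only one of which is the characteristic polynomial. For instance $\mathrm{diag}(1,1,2)$ satisfies $(X-1)(X-2)^2$, which is not its characteristic polynomial. What is actually needed is that $N(\chi)$ is \emph{cyclic} for $\widetilde C_\theta$, i.e.\ that the minimal polynomial has full degree. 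This is precisely the content the paper imports from \cite{APTR}, Lemmas~5.7--5.8: it first proves the module isomorphism $\widetilde C(\widetilde{O_E}/\frak M\widetilde{O_E})\simeq\widetilde A[\overline{G_{\frak P}}]/(P^m-z^{dm}\overline{\sigma_{\frak P}})$ over the residue field (using that $\{x:\tau x=x\}=\mathbb F_q(z)$), and only then reads off the Fitting ideal. Your part~(3) sketch would supply cyclicity in the unramified case, but the justification there (``extracting successive $z$-order terms'' from $\mathbb F_q(z)$-linear combinations) is not a valid operation over $\mathbb F_q(z)$ and needs a real argument.

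For $\chi$ nontrivial on $I_{\frak P}$ the gap is easily repaired, and the paper's method coincides with the natural fix: your observation that $\tau(\frak M^j)\subset\frak M^{qj}$ shows $\widetilde C_\theta$ preserves the $\frak M$-adic filtration on $\frak M/\frak M^e$ and acts as plain multiplication by $\theta$ on each graded piece, whence the characteristic polynomial on $N(\chi)\subset\frak M/\frak M^e$ is the product of the graded characteristic polynomials, i.e.\ $P(X)^m$. (Knowing only $\tau^{m\deg_\theta P}=0$ is not enough.) For $\chi$ trivial on $I_{\frak P}$ you must genuinely prove cyclicity; one clean route is to show that $\tau$ acts on $N(\chi)$ with \emph{distinct} eigenvalues (the $m\deg_\theta P$ roots of $Y^{m\deg_\theta P}=\chi(\sigma_{\frak P})$, each simple because $\tau$ is already cyclic on $O_E/\frak M$ over $\mathbb F_q$), which forces the characteristic polynomial of $z\tau+\theta$ to be $\prod_\omega(X-z\omega-\text{(conjugate of }\vartheta))=P(X)^m-z^{m\deg_\theta P}\chi(\sigma_{\frak P})$.
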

\begin{proof} The computation of equivariant Fitting ideals can be done by an adaptation of the results in  \cite{FAN2}, section 6. However, we present here an alternative proof.  Fix a maximal ideal $\frak M$ of $O_E$ above $\frak P$  and let $e$ be the ramification index of $\frak M/\frak P.$Then we have an isomorphism of $\widetilde{A}[G]$-modules:
$$\frac{O_E}{\frak P O_E}\simeq \frac{O_E}{\frak M ^e}\otimes _{\mathbb F_q[G_{\frak P}]}\mathbb  F_q[G].$$
Observe that we have an isomorphism of $\widetilde{A}$-modules:
$$\widetilde{O_E}\simeq O_E[z]\otimes_{\mathbb F_q[z]}\mathbb F_q(z).$$
Therefore, we have an isomorphism of $\widetilde{A}$-modules:
$$\frac{\widetilde{O_E}}{\frak M^e\widetilde{O_E}}\simeq \frac{O_E}{\frak M^e}[z]\otimes_{\mathbb F_q[z]}\mathbb F_q(z).$$
Now, by \cite{NOE},  $\frac{O_E}{\frak M^e}[z]$ is a free $\frac{O_L}{\frak P}[z][G_{\frak P}]$-module. Thus:
$$[\frac{\widetilde{O_E}}{\frak P\widetilde{O_E}}]_{\widetilde{A}[G]}=P^m1_G.$$

We observe that:
$$\widetilde{C}_{P}\equiv z^d \tau ^d \pmod{P\widetilde A\{\tau\}},$$
where $d=deg_{\theta}P.$  Set:
$$\overline{G_{\frak P}}= \frac {G_{\frak P}}{I_{\frak P}}.$$
The natural morphism $G_{\frak P}\rightarrow \overline{G_{\frak P}}$ induces a surjective morphism of $\mathbb F_q$-algebras $\mathbb F_q[G_{\frak P}]\rightarrow \mathbb F_q[\overline{G_{\frak P}}].$ If $x \in \mathbb F_q[G_{\frak P}],$ we denote its image in $\mathbb F_q[\overline{G_{\frak P}}]$ by $\overline x.$
We have :
$$(\widetilde{C}_{P^m}-z^{dm}\overline{\sigma_{\frak P}})(\frac{\widetilde{O_E}}{\frak M\widetilde{O_E}})=\{0\}.$$
Furthermore, observe that:
$$\{ x\in \frac{\widetilde{O_E}}{\frak M\widetilde{O_E}}, \tau(x)=x\}=\mathbb F_q(z).$$
Since $\frac{\widetilde{O_E}}{\frak M\widetilde{O_E}}$ is a free $\frac{\widetilde{O_L}}{\frak P\widetilde{O_L}}[\overline{G_{\frak P}}] $-module  of rank one   and $md=\dim_{\mathbb F_q(z)}\frac{\widetilde{O_L}}{\frak P\widetilde{O_L}},$ by  an adaptation to our case of the proof of  \cite{APTR}, Lemma 5.7, and the  proof of \cite{APTR}, Lemma 5.8, we  deduce that there exists a finite extension $F/\mathbb F_q$ (simply take the finite extension of $\mathbb F_q$ obtained by adjoining the values of the characters of $\overline{G_{\frak P}}$) such that  we have an isomorphism of $\widetilde{A}[\overline{G_{\frak P}}]\otimes_{ \mathbb F_q} F$-modules:
$$\widetilde{C}(\frac{\widetilde{O_E}}{\frak M\widetilde{O_E}})\otimes _{\mathbb F_q}F\simeq \frac{\widetilde{A}[\overline{G_{\frak P}}]\otimes_{\mathbb F_q}F}{((P^m1_{\overline{G_{\frak P}}}-z^{dm}\overline{\sigma_{\frak P}})\otimes 1)\widetilde{A}[\overline{G_{\frak P}}]\otimes_{\mathbb F_q} F}.$$
Therefore, we have an isomorphism of $\widetilde{A}[\overline{G_{\frak P}}]$-modules:
$$\widetilde{C}(\frac{\widetilde{O_E}}{\frak M\widetilde{O_E}})\simeq \frac{\widetilde{A}[\overline{G_{\frak P}}]}{(P^m1_{\overline{G_{\frak P}}}-z^{dm}\overline{\sigma_{\frak P}})\widetilde{A}[\overline{G_{\frak P}}]} .$$
This implies that we have  an isomorphism of $\widetilde{A}[G_{\frak P}]$-modules:
$$\widetilde{C}(\frac{\widetilde{O_E}}{\frak M\widetilde{O_E}})\simeq \frac{\widetilde{A}[{G_{\frak P}}]}{(1_{G_\frak P}+(P^m-1)u-z^{dm}{\sigma_{\frak P}})\widetilde{A}[{G_{\frak P}}]},$$
where $u=\frac{1}{\mid I_{\frak P}\mid } \sum_{\delta\in I_{\frak P}} \delta \in \mathbb  F_q[I_{\frak P}].$
Since, for $n\geq 1,$ $\tau (\frak M^n) \subset \frak M^{n+1}, $  we have:
$$[\widetilde{C}(\frac{\frak M\widetilde{O_E}}{\frak M^e\widetilde{O_E}})]_{\widetilde{A}[G_{\frak P}]}=[\frac{\frak M\widetilde{O_E}}{\frak M^e\widetilde{O_E}}]_{\widetilde{A}[G_{\frak P}]}.$$
Now, observe that:
$$[\frac{\widetilde{O_E}}{\frak M\widetilde{O_E}}]_{\widetilde{A}[G_{\frak P}]}= 1_{G_{\frak P}} +(P^m-1) u,$$
and:
$$[\frac{\frak M\widetilde{O_E}}{\frak M^e\widetilde{O_E}}]_{\widetilde{A}[G_{\frak P}]}= (P^m-1)(1_{G_{\frak P}}-u)+1_{G_{\frak P}}.$$
Thus:
$$[\widetilde{C}(\frac{\widetilde{O_E}}{\frak M^e\widetilde{O_E}})]_{\widetilde{A}[G_{\frak P}]}=P^m 1_{G_{\frak P}}-z^{dm}{\sigma_{\frak P}}.$$
But we have an isomorphism of $\widetilde{A}[G]$-modules:
$$\widetilde{C}(\frac{\widetilde{O_E}}{\frak P \widetilde{O_E}})\simeq \widetilde{C}(\frac{\widetilde{O_E}}{\frak M ^e\widetilde{O_E}})\otimes _{\mathbb F_q[G_{\frak P}]}\mathbb  F_q[G].$$
We have:
$$[\widetilde{C}(\frac{\widetilde{O_E}}{\frak P \widetilde{O_E}})]_{\widetilde{A}[G]}=\det_{\mathbb F_q[G][Z,z]}((1\otimes Z) {\rm Id}-\widetilde{C}_{\theta} \otimes 1\mid_{\frac{\widetilde{O_E}}{\frak P \widetilde{O_E}}\otimes_{\mathbb F_q}\mathbb F_q[Z]})_{Z=\theta}.$$
Thus:
$$[\widetilde{C}(\frac{\widetilde{O_E}}{\frak P \widetilde{O_E}})]_{\widetilde{A}[G]}=\det_{\mathbb F_q[G_{\frak P}][Z,z]}((1\otimes Z) {\rm Id}-\widetilde{C}_{\theta} \otimes 1\mid_{\frac{\widetilde{O_E}}{\frak M^e \widetilde{O_E}}\otimes_{\mathbb F_q}\mathbb F_q[Z]})_{Z=\theta}\otimes 1_G.$$


\end{proof}
If $I$ is a non-zero ideal of $O_L,$ recall that we have set:
$$N_{L/K}(I)=[\frac{O_L}{I}]_A.$$
Write $I=\frak P_1^{e_1}\cdots, \frak P_t^{e_t}$ its decomposition into maximal ideals of $O_L,$ we set:
$$\sigma_I=\prod_{j=1}^t \sigma_{\frak P_j}^{e_j} \in \mathbb F_q[G].$$
By the above Lemma, we get:
$$\mathcal L(\widetilde{C}/(\widetilde{O_E}/\widetilde{O_L}), G)=\sum_{I{\rm \, ideal\,  of \,} O_L, I\not =(0)}\frac{z^{\deg_{\theta}N_{L/K}( I)}\sigma_I}{N_{L/K}(I)} \in \mathbb (T_z(K_{\infty})[G])^\times.$$
${}$\par
Let $C$ be the Carlitz module, i.e. $C: A\rightarrow A\{\tau \}$ is the morphism of $\mathbb F_q$-algebras given by:
$$C_{\theta}= \tau +\theta.$$
Then Theorem \ref{TheoremSt2} implies Anderson's log-algebraicity Theorem (\cite{AND}, see also \cite{APTR}, \cite{APTR2}). More precisely we have:
\begin{corollary}\label{CorollarySt3} Let $L/K$ be a finite extension and let $m\geq 1$ be an integer. Let $x\in L_{\infty},$ and set:
$$\ell_m(x)=\sum_{a\in A_{+}}\frac{C_a(x)^m}{a} z^{\deg_{\theta} a}\in L_{\infty}[[z]].$$
If $\ell_m(x)$ converges in $\mathbb T_z(L_{\infty}),$ then:
$$\exp_{\widetilde{C}}(\ell_m(x))\in A[z][x]\subset \mathbb  T_z(L_{\infty}).$$
In that case:
$$ev(\exp_{\widetilde{C}}(\ell_m(x)))= \exp_C (ev(\ell_m(x))) \in A[x]\subset L_{\infty}.$$
\end{corollary}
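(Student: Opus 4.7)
The plan is to deduce the log-algebraicity identity from Theorem \ref{TheoremSt2} applied to Carlitz cyclotomic extensions, and then to extend the result from Carlitz torsion to arbitrary $x$ by a polynomial-identity / Zariski-density argument.

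First, regard $x$ as an indeterminate. The series $\ell_m(x)=\sum_{d\geq 0}z^d\sum_{a\in A_{+,d}}C_a(x)^m/a$ lies in $K[x][[z]]$, with the $z^d$-coefficient a polynomial in $x$ of degree at most $mq^d$. The claim $\exp_{\widetilde C}(\ell_m(x))\in A[z][x]$ therefore amounts, for each $z$-coefficient, to polynomial identities in the coefficients of the $x$-expansion. It suffices to verify these identities when $x$ runs over a Zariski-dense subset of $K^{ac}$; the natural choice is the union of Carlitz torsion modules $\bigcup_{\mathfrak a\in A_+}C[\mathfrak a]$.

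Fix $\mathfrak a\in A_+$ of degree $\geq 1$ and let $\lambda=\lambda_{\mathfrak a}$ generate $C[\mathfrak a]$. Then $E=K(\lambda)/K$ is a finite abelian extension with Galois group $G\simeq (A/\mathfrak a A)^\times$, whose order is prime to $p$. Theorem \ref{TheoremSt2} gives
$$U(\widetilde C/O_E[z])=\mathcal L(\widetilde C/(\widetilde{O_E}/\widetilde A),G)\cdot O_E[z].$$
Setting $u=\mathcal L(\widetilde C/(\widetilde{O_E}/\widetilde A),G)\cdot\lambda^m\in U(\widetilde C/O_E[z])$, we deduce $\exp_{\widetilde C}(u)\in O_E[z]$. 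Using the explicit formula for $\mathcal L$ recorded just before Theorem \ref{TheoremSt2} (in terms of Frobenius elements $\sigma_I$ and norms $N_{E/K}(I)$), together with the Carlitz reciprocity identity $\sigma_b(\lambda)=C_b(\lambda)$ for $b$ coprime to $\mathfrak a$, one identifies $u$ with $\ell_m(\lambda)$ up to contributions from primes dividing $\mathfrak a$, which are controlled by Lemma \ref{lemmaSt1}. A further refinement from $\exp_{\widetilde C}(\ell_m(\lambda))\in O_E[z]$ to $\exp_{\widetilde C}(\ell_m(\lambda))\in A[z][\lambda]$ uses the explicit shape $\exp_{\widetilde C}=\sum_j(z^j/D_j)\tau^j$ and a valuation estimate at $\infty$ to bound the $\lambda$-degree of the resulting polynomial.

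Since the identity $\exp_{\widetilde C}(\ell_m(x))\in A[z][x]$ holds at every Carlitz torsion element and these form a Zariski-dense family, it holds formally in $x$, hence at every $x\in L_\infty$ for which $\ell_m(x)$ converges in $\mathbb T_z(L_\infty)$. The second assertion is immediate from applying the continuous $\mathbb F_q$-algebra map $ev:\mathbb T_z(L_\infty)\to L_\infty$, using $ev\circ\exp_{\widetilde C}=\exp_C$ and $ev(A[z][x])=A[x]$. The main obstacle is the refinement from $O_E[z]$ to $A[z][\lambda]$ in the third paragraph: extracting that the integral Stark-unit image lies in the natural subring $A[z][\lambda]$ rather than the full $O_E[z]$ requires both a precise accounting of the ramified-prime contributions to the equivariant $L$-value and a tight $v_\infty$-bound on $\exp_{\widetilde C}(u)$.
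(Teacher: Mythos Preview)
Your strategy matches the paper's: apply Theorem~\ref{TheoremSt2} to Carlitz cyclotomic extensions, then pass to a formal identity in $X$ by density. But two points in your execution are off.

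First, the ``main obstacle'' you identify---refining from $O_E[z]$ to $A[z][\lambda]$---is not an obstacle at all if you restrict to \emph{prime} conductor. The paper takes $\mathfrak a = P$ irreducible, so that $O_M = A[\lambda_P]$ by standard cyclotomic theory, and the containment $\exp_{\widetilde C}(\ell_n(\lambda_P)) \in O_M[z] = A[\lambda_P][z]$ is exactly what is needed for each $z$-coefficient $f_m(X)$ to satisfy $f_m(\lambda_P)\in A[\lambda_P]$. Letting $P$ vary over all irreducible monics then forces $f_m(X)\in A[X]$. The paper also dispatches the ramified-prime correction in one line: since $\sigma_{PA} = -\sum_{g\in G}g$ and $\lambda_P^n$ has trivial $G$-trace for $1\le n\le q^d-2$, the Euler factor at $P$ acts trivially on $\lambda_P^n$, so $\mathcal L(\widetilde C/(\widetilde{O_M}/\widetilde A),G)\cdot\lambda_P^n = \ell_n(\lambda_P)$ exactly, with no residual correction. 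Your general-$\mathfrak a$ setup forces you to control both ramification and the gap between $O_E$ and $A[\lambda_{\mathfrak a}]$, neither of which is needed.

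Second, and more seriously, you have not addressed \emph{termination in $z$}. Knowing $f_m(X)\in A[X]$ for every $m$ only gives $\exp_{\widetilde C}(\ell_n(x))\in A[x][[z]]$, not $A[x][z]$. The paper supplies a separate argument: for $\lambda$ any Carlitz torsion point one has the uniform bound $v_\infty(C_a(\lambda))\ge -1/(q-1)$ for all $a\in A_+$, so $\ell_n(\lambda)$ converges in $\mathbb T_z(\mathbb C_\infty)$ and $\exp_{\widetilde C}(\ell_n(\lambda))$ decomposes as (tail with $v_\infty\ge 1$) $+$ (polynomial in $z$ independent of $\lambda$). Hence $v_\infty(f_m(\lambda))\ge 1$ for $m$ beyond a bound depending only on $n$; combined with $f_m(X)\in A[X]$ this forces $f_m(X)=0$ for large $m$. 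You gesture at a $v_\infty$-bound but attach it to the wrong step (the $O_E$-to-$A[\lambda]$ refinement) rather than to termination.
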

\begin{proof} Let $P$ be a monic irreducible element in $A$ and let $M$ be the $P$th cyclotomic function field (\cite{ROS}, chapter 12). Recall that $M/K$ is a finite abelian extension of degree $q^d-1$ where $d=\deg_{\theta}P.$ Select $\lambda_P\in M^\times$  such that $C_P(\lambda_P)=0.$ Then (\cite{ROS}, Proposition 12.9):
$$O_M=A[\lambda_P].$$
Furthermore, for $a\in A\setminus PA,$ we have (\cite{ROS}, Theorem 12.10):
$$\sigma_{aA}(\lambda_P)=C_a(\lambda_P).$$
Also observe, that :
$$\sigma _{PA}= -\sum_{g\in G} g,$$
where $G={\rm Gal}(M/K).$ For $a \in A_+,$ we set:
$$\sigma _a=\sigma_{aA}.$$
We get:
$$\mathcal L(\widetilde{C}/(\widetilde{O_M}/\widetilde{A}), G)=
(\sum_{a\in A_+\setminus PA_+} \frac{\sigma_a z^{\deg_{\theta} a}}{a})(1+ \frac{z^d \sum_{g\in G }g}{P})^{-1}.$$
Let $n\in \{ 1, \ldots, q^d-2\},$ then :
$$(1+ \frac{z^d \sum_{g\in G }g}{P})\lambda_P^n= \lambda_P^n .$$
Therefore, for $n\in \{ 1, \ldots, q^d-2\},$ by Theorem \ref{TheoremSt2}, we have:
$$\mathcal L(\widetilde{C}/(\widetilde{O_M}/\widetilde{A}), G)\lambda_P^n=\sum_{a\in A_+} \frac{C_a(\lambda_P)^nz^{\deg_{\theta}a}}{a}\in U(\widetilde{C}/O_M[z]).$$
This implies that, for $n\in \{ 1, \ldots, q^d-2\},$ we have:
$$\forall m\geq 0, \sum_{i+j=m} \frac{1}{D_i} \sum_{a\in A_{+,j}}\frac{(C_a(\lambda_P)^n)^{q^i}}{a^{q^i}} \in A[\lambda_P].$$
Now, let $X$ be an undeterminate over $K.$ Let $\tau : K[ X]\rightarrow K[ X]$ be the morphism of $\mathbb F_q$-algebras such that $\forall x\in K[X],\tau (x)=x^q.$ Let $n\geq 1,$  and set:
$$\forall m\geq 0, f_m(X)=\sum_{i+j=m} \frac{1}{D_i} \sum_{a\in A_{+,j}}\frac{(C_a(X)^n)^{q^i}}{a^{q^i}}\in K[X].$$ 
Fix an integer $m\geq 0.$ Then, for  all   irreducible monic polynomials $P$ in $A:$
$$f_m(\lambda_P) \in A[\lambda_P].$$
This easily implies that:
$$ f_m(X) \in A[X].$$
We have:
$$\exp_{\widetilde{C}}(\sum_{a\in A_+}\frac{C_a(X)^nz^{\deg_{\theta}a}}{a})=\sum_{m\geq 0} f_m(X) z^m.$$
Therefore, for all $n\geq 1:$
$$\exp_{\widetilde{C}}(\sum_{a\in A_+}\frac{C_a(X)^nz^{\deg_{\theta}a}}{a}) \in X^nA[X][[z]].$$
Now, we work in $\mathbb T_z(\mathbb C_{\infty}).$ Fix an integer $n\geq 1.$ Then if $\lambda$ is a torsion point of the Carlitz module:
$$\ell_n(\lambda)\in \mathbb  T_z(\mathbb C_{\infty}).$$
Observe that, for all $a\in A_+,$ we have:
$$v_{\infty}(C_a(\lambda))\geq \frac{-1}{q-1}.$$
Thus there exist $x_{\lambda}\in \mathbb T_z(\mathbb  C_{\infty})$ and $y(X)\in K[X][z]$ which does not depend on $\lambda,$  such that :
$$v_{\infty}(x_{\lambda}) \geq 1,$$
and:
$$\exp_{\widetilde{C}}(\ell_n(\lambda))=x_{\lambda}+y\mid_{X=\lambda}.$$
Thus, we have:
$$\forall m\geq  C(n),\forall \lambda,  v_{\infty}(f_m(\lambda))\geq 1,$$
where $C(n)$ is a constant depending only on $n.$ But, since for all $m\geq 0$  $f_m(X) \in A[X],$ we get:
$$\forall m\geq  C(n), f_m(X)=0.$$
\end{proof}


\section{Deformation of Drinfeld modules over several variable Tate algebras}\label{section2}${}$\par
 The notion of deformation of a Drinfeld module over Tate algebras  as introduced in section \ref{Basicproperties} has its roots in a remarkable formula obtained by F. Pellarin in \cite{PEL}. This formula links a certain one variable $L$-series to Anderson-Thakur special function. This formula was fully understood when  F. Pellarin and the authors (\cite{APTR}) found a  connection with Taelman's  work (\cite{TAE2}). In this section, we show how the ideas developed in section \ref{section1} can be extended to the situation of deformations of Drinfeld modules   and we will study in details the case of the Carlitz module leading to the proof of the discrete Greenberg Conjectures. In the last section, we will construct $P$-adic $L$-series attached to deformations of Drinfeld modules.\par

\subsection{Basic properties}\label{Basicproperties}${}$\par
We fix some notation. Let $n\geq 0$ be a fixed integer and let $t_1, \ldots, t_n, z$ be $n+1$ indeterminates over $K_{\infty}.$ We set $k=\mathbb F_q(t_1, \ldots, t_n), $ $\mathbb A= k[\theta],$ $\widetilde{\mathbb A}= k(z)[\theta],$ $\mathbb K_{\infty}=k((\frac{1}{\theta})),$ $\widetilde{\mathbb K_{\infty}}=k(z)((\frac{1}{\theta})).$ We denote by $v_{\infty}$ the normalized $\frac{1}{\theta}$-adic valuation on $\widetilde{\mathbb K_{\infty}}.$ Let 
$\mathbb T_{n,z}(K_{\infty})$ be the closure of $K_{\infty}[t_1, \ldots, t_n,z]$ in $\widetilde{\mathbb K_{\infty}},$  and $\mathbb T_n(K_{\infty})$ the closure of $K_{\infty}[t_1, \ldots, t_n]$ in $\mathbb K_{\infty}.$ Let $\tau : \widetilde{\mathbb K_{\infty}}\rightarrow \widetilde{\mathbb K_{\infty}}$ be the continuous morphism of $k(z)$-algebras given by $\tau (\theta)= \theta^q.$ Finally, we set $b_0(z)=1,$ and for $m\geq 1,$ $b_m(z)=\prod_{j=0}^{m-1} (z-\theta^{q^j}).$\par
${}$\par
Let $\phi: A\rightarrow A\{\tau\}$ be a Drinfeld $A$-module defined over $A,$ and write:
$$\phi_{\theta}= \sum_{j=0}^r \alpha_j \tau^j,\, \alpha_0=\theta, \alpha_j\in A, \alpha_r\not =0,  r\geq 1.$$
${}$\par
Let $ \varphi: \mathbb A\rightarrow \mathbb A\{\tau\}$ be the morphism of $k$-algebras given by:
$$\varphi_{\theta}= \sum_{j=0}^r \alpha_j b_j(t_1)\cdots b_j(t_n)\tau^j.$$
We call $\varphi$ the canonical deformation of $\phi$ over the Tate algebra $\mathbb T_n(K_{\infty}).$\par
There exists a unique element $\exp_{\varphi}\in \mathbb K_{\infty}\{\{ \tau\}\}$ such that $\exp_{\varphi}\equiv 1\pmod{\tau}$ and:
$$\exp_{\varphi} \theta = \varphi_{\theta} \exp_{\varphi}.$$
If we write $\exp_\phi=\sum_{j\geq 0} e_j \tau^j,$ we get:
$$\exp_{\varphi}=\sum_{j\geq 0} b_j(t_1)\cdots b_j(t_n)e_j \tau^j.$$
Observe that by \cite{DEM}, Proposition 2.3, $\exp_{\varphi}$ converges on $\mathbb K_{\infty}$ and furthermore $\exp_{\varphi}(\mathbb T_n(K_{\infty}))Ê\subset \mathbb T_n(K_{\infty}).$ Let's set:
$$U(\varphi/\mathbb A)=\{ x\in \mathbb K_{\infty}, \exp_{\varphi}(x)\in \mathbb A\}.$$
Then, by \cite{DEM}, Proposition 2.6, $U(\varphi/\mathbb A)$  is a $\mathbb A$-lattice in $\mathbb K_{\infty},$ thus a free $\mathbb A$-module of rank one. Let's also set:
$$H(\varphi/\mathbb A)= \frac{\mathbb K_{\infty}}{\mathbb A+\exp_{\varphi}(\mathbb K_{\infty})}.$$
Then, by \cite{DEM}, Proposition 2.6, $H(\varphi/\mathbb A)$ is a finite dimensional $k$-vector space and also a $\mathbb A$-module via $\varphi.$ 
By \cite{DEM}, Theorem 2.7, the following infinite product converges in $\mathbb  K_{\infty}:$
$$\mathcal L(\varphi/\mathbb A):=\prod_P\frac{[\frac{\mathbb A}{P\mathbb A}]_{\mathbb A}}{[\varphi(\frac{\mathbb A}{P\mathbb A})]_{\mathbb A}}\in \mathbb K_{\infty}^\times,$$
where the product is over the monic irreducible polynomials in $A.$ Since $\varphi_{\theta} \in A[t_1, \ldots, t_n]\{Ê\tau\},$ we deduce that in fact:
$$\mathcal L(\varphi/\mathbb A)\in \mathbb T_n(K_{\infty})^\times.$$
We have the following result (\cite{DEM}, theorem 2.7):
\begin{eqnarray}\label{EqS2-1}\mathcal L(\varphi/\mathbb A) \mathbb A= [H(\varphi/\mathbb A)]_{\mathbb A}U(\varphi/\mathbb A)\end{eqnarray} 
Let's set:
$$U(\varphi/A[t_1, \ldots, t_n])=\{ x\in \mathbb T_n(K_{\infty}), \exp_{\varphi}(x)\in A[t_1, \ldots, t_n]\}.$$
\begin{lemma}\label{LemmaS2-1} We have that $[H(\varphi/\mathbb A)]_{\mathbb A} \in A[t_1, \ldots, t_n],$ and: 
$$[H(\varphi/\mathbb A)]_{\mathbb A} U(\varphi/A[t_1, \ldots, t_n])=\mathcal L(\varphi/\mathbb A) A[t_1, \ldots, t_n].$$
\end{lemma}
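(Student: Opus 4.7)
My plan is to reduce both assertions to the generic class number formula \eqref{EqS2-1} by choosing a generator of $U(\varphi/\mathbb A)$ that lies inside the integral unit module, then carrying out an $\infty$-adic leading-term comparison. First, I would establish the analog of Proposition \ref{propositionSt1}(1) in the present setting: the $k$-vector space generated by $U(\varphi/A[t_1,\ldots,t_n])$ inside $\mathbb K_{\infty}$ coincides with $U(\varphi/\mathbb A)$. The proof mirrors the earlier one, using that $\exp_{\varphi}$ is an $\mathbb F_q$-linear isomorphism on some $\infty$-adic neighborhood of $0$ in $\mathbb K_{\infty}$ (by \cite{DEM}, Proposition~2.3) and that the $k$-span of $\mathbb T_n(K_{\infty})$ is dense in $\mathbb K_{\infty}$. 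Because $U(\varphi/\mathbb A)$ is a free $\mathbb A$-module of rank one by \cite{DEM}, rescaling any generator by a suitable element of $k^{\times}$ yields $\varepsilon \in U(\varphi/A[t_1,\ldots,t_n])$ with $U(\varphi/\mathbb A) = \mathbb A\varepsilon$.

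Inserting $\varepsilon$ into \eqref{EqS2-1} produces a unique $\mu \in k^{\times}$ with $\mathcal L(\varphi/\mathbb A) = \mu\,[H(\varphi/\mathbb A)]_{\mathbb A}\,\varepsilon$, in which $\mathcal L(\varphi/\mathbb A) \in \mathbb T_n(K_{\infty})^{\times}$, $\varepsilon \in \mathbb T_n(K_{\infty})$, and $[H(\varphi/\mathbb A)]_{\mathbb A} = \theta^d + \sum_{i<d} h_i \theta^i$ with $h_i \in k$ only \emph{a priori}. The crucial step is a leading-term analysis. An analog of Lemma \ref{lemmaSt2}, obtained by revisiting the Euler product in the Tate-algebra setting, gives $\mathcal L(\varphi/\mathbb A) \in 1 + \theta^{-1}\,A[t_1,\ldots,t_n][[\theta^{-1}]]$. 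Expanding $\varepsilon = \sum_{j\geq N}\varepsilon_j \theta^{-j}$ with $\varepsilon_j \in \mathbb F_q[t_1,\ldots,t_n]$ (forced by $\varepsilon \in \mathbb T_n(K_{\infty})$) and equating powers of $\theta$ on both sides, one finds $N = d$ and $\varepsilon_d = \mu^{-1}$; since $\mu \in k^{\times}$ and $\varepsilon_d \in \mathbb F_q[t_1,\ldots,t_n]$, this forces $\mu \in \mathbb F_q[t_1,\ldots,t_n]^{\times} = \mathbb F_q^{\times}$. Inductively comparing coefficients of $\theta^{-1}, \theta^{-2}, \ldots$ then expresses each $h_i$ as an $\mathbb F_q[t_1,\ldots,t_n]$-linear combination of coefficients of $\mathcal L(\varphi/\mathbb A)$ and of $\varepsilon$, all of which lie in $\mathbb F_q[t_1,\ldots,t_n]$; hence $h_i \in \mathbb F_q[t_1,\ldots,t_n]$ and $[H(\varphi/\mathbb A)]_{\mathbb A} \in A[t_1,\ldots,t_n]$.

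For the second assertion, the normalized identity $[H(\varphi/\mathbb A)]_{\mathbb A}\,\varepsilon = \mu^{-1}\mathcal L(\varphi/\mathbb A)$ multiplied by $A[t_1,\ldots,t_n]$ gives the desired equality of ideals, provided one also knows $A[t_1,\ldots,t_n]\cdot\varepsilon = U(\varphi/A[t_1,\ldots,t_n])$. The latter follows from the same type of leading-term argument: since $\varepsilon$ has unit leading $\infty$-adic coefficient $\varepsilon_d \in \mathbb F_q^{\times}$, any $b \in \mathbb A$ with $b\varepsilon \in \mathbb T_n(K_{\infty})$ must already lie in $A[t_1,\ldots,t_n]$, extracted coefficient by coefficient. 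The main obstacle is the leading-term analysis of the middle paragraph, which rests on first establishing the refined integrality $\mathcal L(\varphi/\mathbb A) \in 1 + \theta^{-1}\,A[t_1,\ldots,t_n][[\theta^{-1}]]$ — the analog of Lemma \ref{lemmaSt2} in this Tate-algebra context.
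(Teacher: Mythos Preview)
Your outline matches the paper's proof in its first move (producing a generator $\varepsilon\in U(\varphi/A[t_1,\ldots,t_n])$ of the rank-one $\mathbb A$-module $U(\varphi/\mathbb A)$ and feeding it into \eqref{EqS2-1}), but the leading-term analysis in your middle paragraph contains a genuine gap. From $\mathcal L(\varphi/\mathbb A)=\mu\,[H(\varphi/\mathbb A)]_{\mathbb A}\,\varepsilon$ and the monic normalisation of $[H(\varphi/\mathbb A)]_{\mathbb A}$, comparing leading $\theta^{-1}$-adic terms indeed gives $\mu\,\varepsilon_d=1$. But this only yields $\mu^{-1}=\varepsilon_d\in\mathbb F_q[t_1,\ldots,t_n]$; it does \emph{not} force $\mu\in\mathbb F_q[t_1,\ldots,t_n]$, let alone $\mu\in\mathbb F_q^\times$. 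Nothing you have assumed about $\varepsilon$ prevents, say, $\varepsilon_d=t_1$ and $\mu=t_1^{-1}$. Once this step fails, your inductive extraction of the $h_i$ also fails: the $\theta^{-1}$ coefficient gives $h_{d-1}\varepsilon_d=\varepsilon_d\ell_1-\varepsilon_{d+1}$, so you can only conclude $h_{d-1}\in\varepsilon_d^{-1}\mathbb F_q[t_1,\ldots,t_n]$, and the denominators may proliferate. The same issue contaminates your argument for $A[t_1,\ldots,t_n]\varepsilon=U(\varphi/A[t_1,\ldots,t_n])$, which explicitly relies on $\varepsilon_d\in\mathbb F_q^\times$.

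The paper fills exactly this gap with two extra ingredients you omitted. First, it normalises $\varepsilon$ to be \emph{primitive} in $\mathbb T_n(K_\infty)$: using $A[t_1,\ldots,t_n]\cap\delta\,\mathbb T_n(K_\infty)=\delta\,A[t_1,\ldots,t_n]$ and the $\mathbb F_q[t_1,\ldots,t_n]$-linearity of $\exp_{\varphi}$, any $\delta\in\mathbb F_q[t_1,\ldots,t_n]$ dividing $\varepsilon$ can be stripped off while staying inside $U(\varphi/A[t_1,\ldots,t_n])$. Second, it invokes the fact that $\mathbb T_n(K_\infty)$ is a \emph{unique factorisation domain} (\cite{PUT}). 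With $\varepsilon$ primitive and $\mathcal L(\varphi/\mathbb A)$ a unit, the identity $\mathcal L(\varphi/\mathbb A)=x\,[H(\varphi/\mathbb A)]_{\mathbb A}\,\varepsilon$ in this UFD, together with monicity of $[H(\varphi/\mathbb A)]_{\mathbb A}$, forces $[H(\varphi/\mathbb A)]_{\mathbb A}\in A[t_1,\ldots,t_n]$, $\varepsilon\in\mathbb T_n(K_\infty)^\times$, and $x\in\mathbb F_q^\times$ simultaneously. The second assertion is then handled by the same UFD property rather than a coefficient chase. Your coefficient-by-coefficient method could be repaired, but only after importing one of these ideas (primitivity plus the primeness of irreducibles of $\mathbb F_q[t_1,\ldots,t_n]$ in the Tate algebra), at which point the UFD argument is both shorter and cleaner.
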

\begin{proof} By a similar argument to that used in the proof of Proposition \ref{propositionSt1}, we have that $U(\varphi/\mathbb A)$ if equal to the $k$-vector space spanned by $U(\varphi/A[t_1, \ldots, t_n]).$  Recall that $\mathbb T_n(K_{\infty})$ is a unique factorization domain (\cite{PUT}, chapter 3). Now, select $\varepsilon \in U(\varphi/A[t_1, \ldots, t_n])$ such that:
$$U(\varphi/\mathbb A)= \mathbb A \varepsilon.$$
Observe that for $\delta\in \mathbb F_q[t_1, \ldots, t_n]\setminus\{0\}, $ we have:
$$A[t_1, \ldots, t_n]\cap \delta \mathbb T_n(K_{\infty})= \delta A[t_1, \ldots, t_n].$$
This implies that we can assume that $\varepsilon$ is a primitive element in $\mathbb T_n(K_{\infty}),$ i.e. the elements of $\mathbb F_q[t_1, \ldots, t_n]\setminus\{0\}$ that divides $\varepsilon$ in $\mathbb T_n(K_{\infty})$ are only the elements in $\mathbb F_q^\times.$
Then by formula (\ref{EqS2-1}), there exists $x\in k^\times$ such that:
$$\mathcal L(\varphi/\mathbb A) = [H(\varphi/\mathbb A)]_{\mathbb A} x\varepsilon.$$
But $\mathcal L( \varphi/\mathbb  A) \in \mathbb T_n(K_{\infty})^\times$ and $\varepsilon$ is primitive, since $[H(\varphi/\mathbb A)]_{\mathbb A}$ is monic, this implies that $[H(\varphi/\mathbb A)]_{\mathbb A} \in A[t_1, \ldots, t_n],$ $  \varepsilon \in \mathbb T_n(K_{\infty})^\times ,$ and: 
$$ \mathcal L(\varphi/\mathbb A) = \lambda[H(\varphi/\mathbb A)]_{\mathbb A} \varepsilon,$$
for some $\lambda \in \mathbb F_q^\times.$
 Now let $m\in U(\varphi/A[t_1, \ldots, t_n]).$ By formula (\ref{EqS2-1}), there exists $x\in \mathbb F_q[t_1, \ldots, t_n]\setminus\{0\}$ such that $x[H(\varphi/\mathbb A)]_{\mathbb A} m\in A[t_1, \ldots, t_n] \mathcal L(\varphi/\mathbb A).$ But $\mathcal L(\varphi/\mathbb A) \in \mathbb T_n(K_{\infty})^\times$ and $\mathbb T_n(K_{\infty})$ is a unique factorization domain, thus $[H(\varphi/\mathbb A)]_{\mathbb A}m\in A[t_1, \ldots, t_n]\mathcal L(\varphi/\mathbb A).$
\end{proof}
Let's set:
$$H(\varphi/A[t_1, \ldots, t_n])= \frac{\mathbb T_n(K_{\infty})}{A[t_1, \ldots, t_n]+\exp_{\varphi}(\mathbb T_n(K_{\infty})}.$$
We observe that there exists an integer $N\geq 1$ such that $\exp_{\varphi}$ induces a bijective morphism of $\mathbb F_q[t_1, \ldots, t_n]$-modules on $\frac{1}{\theta^N} \mathbb F_q[t_1, \ldots, t_n][[\frac{1}{\theta}]].$ Since we have $\mathbb T_n(K_{\infty})= A[t_1, \ldots, t_n]\oplus \frac 1 \theta \mathbb F_q[t_1, \ldots, t_n][[\frac{1}{\theta}]],$ we deduce that $H(\varphi/A[t_1, \ldots, t_n])$ is a finitely generated $\mathbb F_q[t_1, \ldots, t_n]$-module and also an $A[t_1, \ldots, t_n]$-module via $\varphi.$ Furthermore, the $k$-vector space spanned by $\mathbb T_n(K_{\infty})$ is dense in $\mathbb K_{\infty},$ thus the natural inclusion $\mathbb T_n(K_{\infty})\subset \mathbb K_{\infty}$ induces an isomorphism of $\mathbb A$-modules:
\begin{eqnarray}\label{EqS2-2}H(\varphi/A[t_1, \ldots, t_n])\otimes_{\mathbb F_q[t_1, \ldots, t_n]}k\simeq H(\varphi/\mathbb A).\end{eqnarray}
However, note that the map $H(\varphi/A[t_1, \ldots, t_n])\rightarrow H(\varphi/\mathbb A)$ induced by the inclusion $ \mathbb T_n(K_{\infty})\subset \mathbb K_{\infty}$  is not injective in general.\par
${}$\par
Let $\widetilde{\varphi}$ be the canonical $z$-deformation of $\varphi,$ i.e. : $\widetilde{\varphi}: \widetilde{\mathbb A}\rightarrow \widetilde{\mathbb A}\{ \tau\}$ is the morphism of $k(z)$-algebras given by:
$$\widetilde{\varphi}_{\theta}= \sum_{j=0}^r\alpha_j b_j(t_1)\cdots b_j(t_n)z^j \tau^j.$$ 
There exists a unique element $\exp_{\widetilde{\varphi}}\in \widetilde{\mathbb K_{\infty}}\{\{ \tau\}\}$ such that $\exp_{\widetilde{\varphi}}\equiv 1\pmod{\tau}$ and:
$$\exp_{\widetilde{\varphi}} \theta = \widetilde{\varphi}_{\theta} \exp_{\varphi}.$$
We have:
$$\exp_{\widetilde{\varphi}}=\sum_{j\geq 0} b_j(t_1)\cdots b_j(t_n)e_jz^j \tau^j.$$
By \cite{DEM}, Proposition 2.3, $\exp_{\widetilde{\varphi}}$ converges on $\widetilde{\mathbb K_{\infty}}$ and furthermore $\exp_{\widetilde{\varphi}}(\mathbb T_{n,z}(K_{\infty}))Ê\subset \mathbb T_{n,z}(K_{\infty}).$ Let's set:
$$U(\widetilde{\varphi}/\widetilde{\mathbb A})=\{ x\in \widetilde{\mathbb K_{\infty}}, \exp_{\widetilde{\varphi}}(x)\in \widetilde{\mathbb A}\}.$$
Then, by \cite{DEM}, Proposition 2.6, $U(\widetilde{\varphi}/\widetilde{\mathbb A})$  is an $\widetilde{\mathbb A}$-lattice in $\widetilde{\mathbb K_{\infty}}.$  Let's also set:
$$H(\widetilde{\varphi}/\widetilde{\mathbb A})= \frac{\widetilde{\mathbb K_{\infty}}}{\widetilde{\mathbb A}+\exp_{\widetilde{\varphi}}(\widetilde{\mathbb K_{\infty}})}.$$
Then, by \cite{DEM}, Proposition 2.6, $H(\widetilde{\varphi}/\widetilde{\mathbb A})$ is a finite dimensional $k(z)$-vector space. By \cite{DEM}, Theorem 2.7, the following infinite product converges in $\widetilde{\mathbb  K_{\infty}}:$
$$\mathcal L(\widetilde{\varphi}/\widetilde{\mathbb A}):=\prod_P\frac{[\frac{\widetilde{\mathbb A}}{P\widetilde{\mathbb A}}]_{\widetilde{\mathbb A}}}{[\widetilde{\varphi}(\frac{\widetilde{\mathbb A}}{P\widetilde{\mathbb A}})]_{\widetilde{\mathbb A}}}\in \widetilde{\mathbb K_{\infty}}^\times,$$
where the product is over the monic irreducible polynomials in $A.$ Since $\widetilde{\varphi}_{\theta} \in A[t_1, \ldots, t_n,z]\{Ê\tau\},$ we  have:
$$\mathcal L(\widetilde{\varphi}/\widetilde{\mathbb A})\in \mathbb T_{n,z}(K_{\infty})^\times.$$
We have the following formula (\cite{DEM}, theorem 2.7):
\begin{eqnarray}\label{EqS2-3}\mathcal L(\widetilde{\varphi}/\widetilde{\mathbb A}) \widetilde{\mathbb A}= [H(\widetilde{\varphi}/\widetilde{\mathbb A})]_{\widetilde{\mathbb A}}U(\widetilde{\varphi}/\widetilde{\mathbb A})\end{eqnarray} 
Let's set:
$$U(\widetilde{\varphi}/A[t_1, \ldots, t_n,z])=\{ x\in \mathbb T_{n,z}(K_{\infty}), \exp_{\widetilde{\varphi}}(x)\in A[t_1, \ldots, t_n,z]\},$$
and:
$$H(\widetilde{\varphi}/A[t_1, \ldots, t_n,z])= \frac{\mathbb T_{n,z}(K_{\infty})}{A[t_1, \ldots, t_n,z]+\exp_{\widetilde{\varphi}}(\mathbb T_{n,z}(K_{\infty}))}.$$
As for $\varphi,$ $H(\widetilde{\varphi}/A[t_1, \ldots, t_nz])$ is a finitely generated $\mathbb F_q[t_1, \ldots, t_n, z]$-module and the inclusion $\mathbb T_{n,z}(K_{\infty}) \subset \widetilde{\mathbb K_{\infty}}$ induces  an isomorphism of $\widetilde{A}$-modules:
$$H(\widetilde{\varphi}/A[t_1, \ldots, t_nz])\otimes_{\mathbb F_q[t_1, \ldots, t_n,z]}k(z)\simeq H(\widetilde{\varphi}/ \widetilde{\mathbb A}).$$
\begin{proposition}\label{PropositionS2-1}  We have:
$$H(\widetilde{\varphi}/ \widetilde{\mathbb A})=\{0\}.$$ 
Furthermore:
$$U(\widetilde{\varphi}/ A[t_1, \ldots, t_n,z])= \mathcal L(\widetilde{\varphi}/\widetilde{\mathbb A}) A[t_1, \ldots, t_n,z].$$
\end{proposition}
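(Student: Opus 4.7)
The plan is to mimic the proof of Proposition~\ref{propositionSt2}: the canonical $z$-deformation forces $\exp_{\widetilde{\varphi}}\equiv 1\pmod z$ at the operator level, so multiplication by $z$ will be surjective on $H:=H(\widetilde{\varphi}/A[t_1,\ldots,t_n,z])$. Combined with the finite generation of $H$ over $R:=\mathbb F_q[t_1,\ldots,t_n,z]$, this pins $H$ down as an $R$-torsion module. Tensoring with $k(z)=\mathrm{Frac}(R)$ kills it, and the isomorphism $H\otimes_R k(z)\simeq H(\widetilde{\varphi}/\widetilde{\mathbb A})$ recorded just before the statement yields the first assertion. Once this vanishing is in hand, formula~(\ref{EqS2-3}) collapses to $U(\widetilde{\varphi}/\widetilde{\mathbb A})=\mathcal L(\widetilde{\varphi}/\widetilde{\mathbb A})\,\widetilde{\mathbb A}$, and a single integrality argument downgrades this to the polynomial ring $A[t_1,\ldots,t_n,z]$.

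Concretely, the expansion $\exp_{\widetilde{\varphi}}=1+\sum_{j\geq 1}b_j(t_1)\cdots b_j(t_n)\,e_j\,z^j\tau^j$ gives $\exp_{\widetilde{\varphi}}(x)-x\in z\,\mathbb T_{n,z}(K_\infty)$ for every $x\in\mathbb T_{n,z}(K_\infty)$, whence
\[
\mathbb T_{n,z}(K_\infty)=\exp_{\widetilde{\varphi}}(\mathbb T_{n,z}(K_\infty))+z\,\mathbb T_{n,z}(K_\infty).
\]
Reducing modulo $A[t_1,\ldots,t_n,z]+\exp_{\widetilde{\varphi}}(\mathbb T_{n,z}(K_\infty))$ this becomes $H=zH$. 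Choosing $R$-generators $m_1,\ldots,m_N$ of $H$ and writing $m_i=z\sum_j a_{ij}m_j$ with $a_{ij}\in R$, the determinant trick produces $\det(I-zA)\in R$ annihilating $H$; its specialization at $z=0$ equals $1$, so it is nonzero in $R$. Hence $H$ is $R$-torsion, and $H(\widetilde{\varphi}/\widetilde{\mathbb A})=H\otimes_R k(z)=\{0\}$. Then $[H(\widetilde{\varphi}/\widetilde{\mathbb A})]_{\widetilde{\mathbb A}}=1$ in~(\ref{EqS2-3}), giving $U(\widetilde{\varphi}/\widetilde{\mathbb A})=\mathcal L\,\widetilde{\mathbb A}$, where $\mathcal L:=\mathcal L(\widetilde{\varphi}/\widetilde{\mathbb A})$.

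For the integral refinement, $\mathcal L\in\mathbb T_{n,z}(K_\infty)^\times$ and $\exp_{\widetilde{\varphi}}$ preserves $\mathbb T_{n,z}(K_\infty)$, so $\exp_{\widetilde{\varphi}}(\mathcal L)\in\widetilde{\mathbb A}\cap\mathbb T_{n,z}(K_\infty)$. The identity $\widetilde{\mathbb A}\cap\mathbb T_{n,z}(K_\infty)=A[t_1,\ldots,t_n,z]$—a polynomial in $\theta$ with $k(z)$-coefficients that lies in the Tate algebra has its $\theta$-coefficients already in $R$—puts $\mathcal L$ in $U(\widetilde{\varphi}/A[t_1,\ldots,t_n,z])$, establishing the inclusion $\mathcal L\,A[t_1,\ldots,t_n,z]\subset U(\widetilde{\varphi}/A[t_1,\ldots,t_n,z])$. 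Conversely, each $m\in U(\widetilde{\varphi}/A[t_1,\ldots,t_n,z])\subset\mathcal L\,\widetilde{\mathbb A}$ factors as $m=\mathcal L a$ with $a\in\widetilde{\mathbb A}$; dividing by the unit $\mathcal L$ inside $\mathbb T_{n,z}(K_\infty)$ shows $a\in\mathbb T_{n,z}(K_\infty)\cap\widetilde{\mathbb A}=A[t_1,\ldots,t_n,z]$, closing the argument.

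The most delicate point I expect is the integrality identity $\widetilde{\mathbb A}\cap\mathbb T_{n,z}(K_\infty)=A[t_1,\ldots,t_n,z]$: it forces one to read off the coefficients of an element of $k(z)[\theta]$ from its $\theta$-adic expansion inside the Tate algebra, which is where the precise definition of $\mathbb T_{n,z}(K_\infty)$ (and closedness of $zT$ under $v_\infty$) must be unpacked. This is, however, the direct analogue of the intersections used in Lemma~\ref{LemmaS2-1} and in the proof of Theorem~\ref{TheoremSt2}, so only a careful unwinding of the definitions should be needed; the remainder of the proof is a clean transcription of the $z$-deformation philosophy of section~\ref{section1}.
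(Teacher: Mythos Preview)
Your proof is correct and follows essentially the same approach as the paper. The first assertion is proved exactly as in the paper (surjectivity of multiplication by $z$ on $H$, then torsion via finite generation); for the second assertion the paper simply refers to ``a similar proof to that of Lemma~\ref{LemmaS2-1}'', and your direct argument via the intersection $\widetilde{\mathbb A}\cap\mathbb T_{n,z}(K_\infty)=A[t_1,\ldots,t_n,z]$ is a clean unpacking of that reference in the simplified situation $[H(\widetilde{\varphi}/\widetilde{\mathbb A})]_{\widetilde{\mathbb A}}=1$.
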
 
\begin{proof} The proof of the first assertion is similar to that of Proposition \ref{propositionSt2}. For the convenience of the reader, we give its proof.\par
We have:
$$\mathbb T_{n,z}(K_{\infty})= \mathbb T_n(K_{\infty})\oplus z\mathbb T_{n,z}(K_{\infty}).$$
Since $\forall x\in \mathbb T_{n,z}(K_{\infty}), \exp_{\widetilde{\varphi}}(x)\equiv x\mid_{z=0} \pmod{ z\mathbb T_{n,z}(K_{\infty})},$ we deduce that the multiplication by $z$ gives rise to  an exact sequence of $A[t_1, \ldots, t_n,z]$-modules:
$$0\rightarrow M \rightarrow H(\widetilde{\varphi}/A[t_1, \ldots, t_n,z]) \rightarrow H(\widetilde{\varphi}/A[t_1, \ldots, t_n,z]) \rightarrow 0,$$
where $M=\{ x\in H(\widetilde{\varphi}/A[t_1, \ldots, t_n,z]), zx=0\}.$\par
 \noindent This implies that $H(\widetilde{\varphi}/A[t_1, \ldots, t_n,z])\otimes_{\mathbb F_q[t_1, \ldots, t_n]}k$ is a finitely generated and torsion $k[z]$-module. Thus we get the desired assertion.\par
We deduce the second assertion  by formula (\ref{EqS2-3}) and a similar proof to that  of Lemma \ref{LemmaS2-1}.
\end{proof}
We set:
\begin{eqnarray}\label{unitz} u_{\phi}(t_1, \ldots, t_n; z)=\exp_{\widetilde{\varphi}}(\mathcal L(\widetilde{\varphi}/\widetilde{\mathbb A}))\in A[t_1, \ldots, t_n,z].\end{eqnarray}
Since $\exp_{\widetilde{\phi}}: \mathbb T_{n,z}(K_{\infty})\rightarrow \mathbb T_{n,z}(K_{\infty})$ is an injective morphism of $\mathbb F_q[t_1, \ldots, t_n,z]$-modules, we have that:
$$u_{\phi}(t_1, \ldots, t_n; z)\not =0.$$
\begin{remark}\label{remark2} Select $\widetilde{\pi}$ a period of the Carlitz module (well-defined modulo $\mathbb F_q^\times$). Let $h\in \{ 0, \ldots, q-2\}$ such that $h\equiv n\pmod{q-1}.$ Observe that $\widetilde{\pi}^hK_{\infty}$ is an $A$-module via $\phi$ and that $\exp_{\phi}(\widetilde{\pi}^h K_{\infty})\subset \widetilde{\pi}^h K_{\infty}.$ Therefore if $\exp_{\phi}$ is not injective on $\widetilde{\pi}^hK_{\infty},$ there exists $\pi_{\phi, h} \in \widetilde{\pi}^h K_{\infty}\setminus\{0\}$ such that:
$${\rm Ker}(\exp_{\phi}:\widetilde{\pi}^hK_{\infty}\rightarrow  \widetilde{\pi}^h K_{\infty})=A\pi_{\phi,h}.$$
Now observe that:
$$\frac{\omega(t_1)\cdots \omega(t_n)}{\widetilde{\pi}^h} \in \mathbb T_n(K_{\infty}),$$
where for $j=1, \ldots, n,$ we have set:
$$\omega(t_j)= \exp_C(\frac{\widetilde{\pi}}{\theta})\prod_{k\geq 0} (1-\frac{t_j}{\theta^{q^k}})^{-1} \in \widetilde{\pi}\mathbb T_n(K_{\infty})^\times.$$
Furthermore, for $f\in \mathbb T_n(K_{\infty}),$ we have:
$$\omega(t_1)\cdots \omega(t_n)\exp_{\varphi}(f)= \exp_{\phi}(\omega(t_1)\cdots \omega(t_n)f).$$
Therefore, $\exp_{\varphi}$ is injective on $\mathbb T_n(K_{\infty})$ if and only if $\exp_{\phi}$ is injective on $ \widetilde{\pi}^h K_{\infty}.$ Otherwise :
$${\rm Ker}( \exp_{\varphi}: \mathbb T_n(K_{\infty})\rightarrow \mathbb T_n(K_{\infty}))= A[t_1, \ldots, t_n] \frac{\pi_{\phi,h}}{\omega(t_1)\cdots \omega(t_n)}.$$
We assume now that $\exp_{\varphi}$ is not injective on $\mathbb T_n(K_{\infty}).$ 
Observe that:
$$\frac{\pi_{\phi, h}}{\omega(t_1)\cdots \omega(t_n)}\in \mathbb T_n(K_{\infty})^\times.$$
By similar arguments than those used in the proof of Lemma \ref{LemmaS2-1}, we deduce that there exists $a\in A[t_1, \ldots, t_n]$ which is monic as a polynomial in $\theta$ such that:
$$U(\varphi/A[t_1, \ldots, t_n])=A[t_1, \ldots, t_n] \frac{\pi_{\phi,h}}{a\omega(t_1)\cdots \omega(t_n)}.$$
 Therefore, there exists $\lambda \in \mathbb F_q^\times$ such that:
 $$\frac{\mathcal L(\varphi/\mathbb A)}{[H(\varphi/\mathbb A)]_{\mathbb A}}= \lambda \frac {\pi_{\phi,h}}{a\omega(t_1)\cdots \omega(t_n)}.$$
 \end{remark}

\subsection{Evaluation at $z=1$}\label{Evaluationatz=1}${}$\par
We keep the hypothesis and notation of section \ref{Basicproperties}. Let's consider the continuous morphism of $\mathbb F_q[t_1, \ldots,t_n]$-algebras $ev: \mathbb T_{n,z}(K_{\infty})\rightarrow \mathbb T_n(K_{\infty})$ given by:
$$\forall f\in \mathbb T_{n,z}(K_{\infty}), ev(f)=f\mid_{z=1}.$$
Then:
$${\rm Ker}\, ev= (z-1) \mathbb T_{n,z}(K_{\infty}).$$
Furthermore, for $f\in \mathbb T_{n,z}(K_{\infty}),$ we have:
$$ev(\exp_{\widetilde{\varphi}}(f))= \exp_{\varphi}(ev(f)),$$
$$ev( \widetilde{\varphi}_{\theta}(f))=\varphi_{\theta}(ev(f)).$$
This implies that $ev$ gives rise to an isomorphism of $A[t_1, \ldots, t_n]$-modules:
$$\frac{H(\widetilde{\varphi}/A[t_1, \ldots, t_n,z])}{(z-1)H(\widetilde{\varphi}/A[t_1, \ldots, t_n,z])}\simeq H(\varphi/A[t_1, \ldots, t_n]).$$ 
Let's set:
$$U_{St}(\varphi/A[t_1, \ldots, t_n])=ev(U(\widetilde{\varphi}/A[t_1, \ldots, t_n,z])).$$
Clearly $U_{St}(\varphi/A[t_1, \ldots, t_n])$ is a sub-$A[t_1, \ldots, t_n]$-module of $U(\varphi/A[t_1, \ldots,t_n]),$ and by Proposition \ref{PropositionS2-1}, we have:
$$U_{St}(\varphi/A[t_1, \ldots, t_n])= \mathcal L(\varphi/\mathbb A)A[t_1, \ldots, t_n].$$
In particular, by Lemma \ref{LemmaS2-1}, we have an isomorphism of $A[t_1, \ldots, t_n]$-modules:
$$\frac{U(\varphi/A[t_1, \ldots,t_n])}{U_{St}(\varphi/A[t_1, \ldots, t_n])}\simeq \frac{A[t_1, \ldots, t_n]}{[H(\varphi/\mathbb A)]_{\mathbb A}A[t_1, \ldots, t_n]}.$$
Let $\alpha: \mathbb T_{n,z}(K_{\infty})\rightarrow \mathbb T_{nz}(K_{\infty})$ be the morphism of $\mathbb F_q[t_1, \ldots, t_n,z]$-modules given by:
\begin{eqnarray}\label{alpha}\forall f\in \mathbb T_{n,z}(K_{\infty}), \alpha(f) =\frac{\exp_{\widetilde{\varphi}}(f)-\exp_{\varphi}(f)}{z-1}.\end{eqnarray}
Let's set:
$$H(\widetilde{\varphi}/A[t_1, \ldots, t_n,z])[z-1]=\{ x\in H(\widetilde{\varphi}/A[t_1, \ldots, t_n,z]), (z-1)x=0\}.$$
Observe that $H(\widetilde{\varphi}/A[t_1, \ldots, t_n,z])[z-1]$ is an $A[t_1, \ldots, t_n]$-module via $\widetilde{\varphi}.$
\begin{proposition}\label{PropositionS2-2} The map $\alpha$ induces an isomorphism of $A[t_1, \ldots, t_n]$-modules:
$$\frac{U(\varphi/A[t_1, \ldots,t_n])}{U_{St}(\varphi/A[t_1, \ldots, t_n])}\simeq H(\widetilde{\varphi}/A[t_1, \ldots, t_n,z])[z-1].$$
\end{proposition}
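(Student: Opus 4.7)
The plan is to mirror, essentially verbatim, the argument of Proposition~\ref{propositionSt3}, since the present setting is the exact analogue with $O_L$ replaced by $A[t_1,\ldots,t_n]$, $L_{\infty}$ by $\mathbb T_n(K_{\infty})$, $\mathbb T_z(L_{\infty})$ by $\mathbb T_{n,z}(K_{\infty})$, and the Drinfeld pair $(\phi,\widetilde\phi)$ by $(\varphi,\widetilde\varphi)$. All the key inputs are already in place: the definition of $\alpha$ in \eqref{alpha}, the identification $U_{St}(\varphi/A[t_1,\ldots,t_n])=ev(U(\widetilde\varphi/A[t_1,\ldots,t_n,z]))$, the relation $ev\circ\exp_{\widetilde\varphi}=\exp_{\varphi}\circ ev$, and the commutation $ev\circ\widetilde\varphi_\theta=\varphi_\theta\circ ev$.

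First I will check that $\alpha$ lands in the $(z-1)$-torsion: for $x\in U(\varphi/A[t_1,\ldots,t_n])$ one has $(z-1)\alpha(x)=\exp_{\widetilde\varphi}(x)-\exp_\varphi(x)\in A[t_1,\ldots,t_n]+\exp_{\widetilde\varphi}(\mathbb T_{n,z}(K_\infty))$. To upgrade this from an $\mathbb F_q[t_1,\ldots,t_n,z]$-linear map to an $A[t_1,\ldots,t_n]$-linear map (through $\widetilde\varphi$ on the target and $\varphi$ on the source) I will compute $\alpha(\theta x)$ directly from the intertwining $\exp_{\widetilde\varphi}\theta=\widetilde\varphi_\theta \exp_{\widetilde\varphi}$ and $\exp_\varphi\theta=\varphi_\theta\exp_\varphi$; the difference $\widetilde\varphi_\theta-\varphi_\theta$ is divisible by $z-1$, so an explicit "correction term" of the form $\bigl(\sum_{j\ge 1}\alpha_j b_j(t_1)\cdots b_j(t_n)\tfrac{z^j-1}{z-1}\tau^j\bigr)\exp_\varphi(x)$ appears, applied to $\exp_\varphi(x)\in A[t_1,\ldots,t_n]$; this correction vanishes in $H(\widetilde\varphi/A[t_1,\ldots,t_n,z])$.

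Next I will prove surjectivity onto $H(\widetilde\varphi/A[t_1,\ldots,t_n,z])[z-1]$: given $x\in\mathbb T_{n,z}(K_\infty)$ with $(z-1)x=a+\exp_{\widetilde\varphi}(h)$, write $a=b+(z-1)c$ with $b\in A[t_1,\ldots,t_n]$, $c\in A[t_1,\ldots,t_n,z]$, and $h=g+(z-1)v$ with $g\in\mathbb T_n(K_\infty)$, $v\in\mathbb T_{n,z}(K_\infty)$. Evaluating at $z=1$ forces $b+\exp_{\varphi}(g)=0$, hence $g\in U(\varphi/A[t_1,\ldots,t_n])$, and a straightforward subtraction yields $(z-1)(x-c-\exp_{\widetilde\varphi}(v))=(z-1)\alpha(g)$, identifying the class of $x$ with that of $\alpha(g)$.

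Finally the kernel is identified with $U_{St}(\varphi/A[t_1,\ldots,t_n])$ by a double inclusion. For $U_{St}\subset\ker\overline\alpha$: write $x=u+(z-1)h$ with $u\in U(\widetilde\varphi/A[t_1,\ldots,t_n,z])$; since $ev(\exp_{\widetilde\varphi}(u))=\exp_\varphi(x)\in A[t_1,\ldots,t_n]$, the element $\exp_{\widetilde\varphi}(u)-\exp_\varphi(x)\in A[t_1,\ldots,t_n,z]$ is divisible by $z-1$, whence $\alpha(x)\in A[t_1,\ldots,t_n,z]+\exp_{\widetilde\varphi}(\mathbb T_{n,z}(K_\infty))$. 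Conversely, if $\alpha(x)$ is trivial in $H(\widetilde\varphi/A[t_1,\ldots,t_n,z])$, then $\exp_{\widetilde\varphi}(x)\in A[t_1,\ldots,t_n,z]+\exp_{\widetilde\varphi}((z-1)\mathbb T_{n,z}(K_\infty))$, so $x\in U(\widetilde\varphi/A[t_1,\ldots,t_n,z])+(z-1)\mathbb T_{n,z}(K_\infty)$ and $x=ev(x)\in U_{St}(\varphi/A[t_1,\ldots,t_n])$. The only step requiring care is the decomposition of elements of $\mathbb T_{n,z}(K_\infty)$ as $(z-1)$-Taylor expansions around $z=1$ with leading coefficient in $\mathbb T_n(K_\infty)$, which is immediate from the identification $\mathbb T_{n,z}(K_\infty)=\mathbb T_n(K_\infty)[z]\,\widehat{\otimes}\,\cdots$; no genuine obstacle arises beyond bookkeeping.
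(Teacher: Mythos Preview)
Your proof is correct and is precisely the approach the paper itself acknowledges works: the authors open their proof of Proposition~\ref{PropositionS2-2} by remarking that ``the above result can be proved along the same lines as that of the proof of Proposition~\ref{propositionSt3}'', which is exactly what you have done. Every step you outline---the $A[t_1,\ldots,t_n]$-linearity via the correction term $\sum_j\alpha_j b_j(t_1)\cdots b_j(t_n)\tfrac{z^j-1}{z-1}\tau^j$ applied to $\exp_\varphi(x)\in A[t_1,\ldots,t_n]$, the surjectivity via the $(z-1)$-Taylor decomposition of $a$ and $h$, and the two inclusions for the kernel---goes through unchanged, and the splitting $\mathbb T_{n,z}(K_\infty)=\mathbb T_n(K_\infty)\oplus(z-1)\mathbb T_{n,z}(K_\infty)$ you need is immediate from the section of $ev$.

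The paper, however, chooses to present an \emph{alternative} proof via the snake lemma: one applies the snake lemma to the short exact sequence $0\to M_1\to M_2\to M_3\to 0$ induced by $\exp_{\widetilde\varphi}$, where $M_1=\mathbb T_{n,z}(K_\infty)/U(\widetilde\varphi/A[t_1,\ldots,t_n,z])$, $M_2=\mathbb T_{n,z}(K_\infty)/A[t_1,\ldots,t_n,z]$, $M_3=H(\widetilde\varphi/A[t_1,\ldots,t_n,z])$, with respect to multiplication by $z-1$. Using $M_2[z-1]=0$ and identifying the cokernels via $ev$, the connecting homomorphism $\delta:M_3[z-1]\to M_1/(z-1)M_1$ is unwound and shown to be inverse to $\overline\alpha$. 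Your direct approach is shorter and perhaps more transparent; the snake-lemma argument has the advantage of packaging the isomorphism inside a four-term exact sequence
\[
0\to H(\widetilde\varphi/A[t_1,\ldots,t_n,z])[z-1]\to \frac{\mathbb T_n(K_\infty)}{U_{St}(\varphi/A[t_1,\ldots,t_n])}\to\frac{\mathbb T_n(K_\infty)}{A[t_1,\ldots,t_n]}\to H(\varphi/A[t_1,\ldots,t_n])\to 0,
\]
which the paper then exploits in the proof of Corollary~\ref{CorollaryS2-1}.
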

\begin{proof} 
The  above result can be proved along the same lines as that of the proof of Proposition \ref{propositionSt3}. For the convenience of the reader, we give a proof. The map $\exp_{\widetilde{\varphi}}$ induces the   exact sequence of $A[t_1, \ldots, t_n, z]$-modules :
$$0\rightarrow M_1\rightarrow M_2 \rightarrow M_3 \rightarrow 0.$$
where
$$M_1:= \frac{\mathbb T_{n,z}(K_{\infty})}{U(\widetilde{\varphi}/A[t_1, \ldots, t_n,z])}; \ M_2:=\frac{\mathbb T_{n,z}(K_{\infty})}{A[t_1, \ldots, t_n,z]}; \ M_3:=H(\widetilde{\varphi}/A[t_1, \ldots, t_n,z]).$$
Thus, by the snake Lemma, we get a long exact sequence:
\begin{align*}
0\longrightarrow M_1[z-1]\longrightarrow M_2[z-1]\longrightarrow M_3[z-1]\longrightarrow\\ \longrightarrow \frac{M_1}{(z-1)M_1}\longrightarrow \frac{M_2}{(z-1)M_2} \longrightarrow \frac{M_3}{(z-1)M_3}\longrightarrow 0.
\end{align*}
Now, we observe that the map $ev$ induces  isomorphisms of $A[t_1, \ldots, t_n]$-modules:
$$\frac{M_1}{(z-1)M_1} \simeq \frac{\mathbb T_n(K_{\infty})}{U_{St}(\varphi/A[t_1, \ldots, t_n])},$$
$$\frac{M_2}{(z-1)M_2}\simeq \frac{\mathbb T_n(K_{\infty})}{A[t_1, \ldots, t_n]},$$
$$\frac{M_3}{(z-1)M_3}\simeq H(\varphi /A[t_1, \ldots, t_n]).$$
Observe also  that:
$$(z-1) \mathbb T_{n,z}(K_{\infty})\cap A[t_1, \ldots, t_n,z]= (z-1)A[t_1, \ldots, t_n,z].$$
Therefore:
$$M_2[z-1]=\{0\}.$$
Let $\delta: M_3[z-1]\rightarrow \frac{M_1}{(z-1)M_1}$ be the map given by the snake Lemma. Let $x\in M_3[z-1]$ and select $y\in \mathbb T_{n,z}(K_{\infty})$ such that $x\equiv y\pmod{A[t_1, \ldots, t_n,z]+\exp_{\widetilde{\varphi}}(\mathbb T_{n,z}(K_{\infty})}.$ Then:
$$(z-1)y=\exp_{\widetilde{\varphi}}(m)+ h,$$
where $m\in \mathbb T_{n,z}(K_{\infty})$ and $h\in A[t_1, \ldots, t_n,z].$ We get:
$$\delta(x)= m\pmod{(z-1)\mathbb T_{n,z}(K_{\infty})+U(\widetilde{\varphi}/A[t_1, \ldots, t_n,z]) }.$$
But we have:
$$ev((z-1)y)=\exp_{\varphi}(ev(m))+ev(h)=0.$$
Thus $\beta=ev(m)\in U(\varphi/A[t_1, \ldots, t_n]). $ Thus:
$$\delta(x)\equiv \beta\pmod{(z-1)\mathbb T_{n,z}(K_{\infty})+U(\widetilde{\varphi}/A[t_1, \ldots, t_n,z]) }.$$
Furthermore:
$$(z-1)y\equiv \exp_{\widetilde{\varphi}}(\beta)-\exp_{\varphi}(\beta) \pmod{(z-1)\exp_{\widetilde{\varphi}}( \mathbb T_{n,z}(K_{\infty}))+(z-1) A[t_1, \ldots, t_n,z]},$$
and therefore:
$$\alpha(\beta)\equiv x\pmod{\exp_{\widetilde{\varphi}}(\mathbb T_{n,z}(K_{\infty}))+ A[t_1, \ldots,t_n,z]}.$$
Therefore, we get an exact sequence of $A[t_1, \ldots, t_n]$-modules:
\begin{align*}
0\longrightarrow H(\widetilde{\varphi}/A[t_1, \ldots, t_n,z])[z-1] \longrightarrow \frac{\mathbb T_n(K_{\infty})}{U_{St}(\varphi/A[t_1, \ldots, t_n])} \longrightarrow\\ \longrightarrow\frac{\mathbb T_n(K_{\infty})}{A[t_1, \ldots, t_n]} \longrightarrow  H(\varphi /A[t_1, \ldots, t_n])\longrightarrow 0,
\end{align*}
where the exact sequence $\frac{\mathbb T_n(K_{\infty})}{U_{St}(\varphi/A[t_1, \ldots, t_n])} \rightarrow\frac{\mathbb T_n(K_{\infty})}{A[t_1, \ldots, t_n]} \rightarrow  H(\varphi /A[t_1, \ldots, t_n])\rightarrow 0$  is induced by $\exp_{\varphi},$ and where the map $H(\widetilde{\varphi}/A[t_1, \ldots, t_n,z])[z-1] \rightarrow \frac{\mathbb T_n(K_{\infty})}{U_{St}(\varphi/A[t_1, \ldots, t_n])} $  sends $x\in H(\widetilde{\varphi}/A[t_1, \ldots, t_n,z])[z-1]$ to $\beta$ for some $\beta \in U(\varphi/A[t_1, \ldots, t_n])$ such that $\alpha(\beta)\equiv x\pmod{\mathbb T_{n,z}(K_{\infty})+ A[t_1, \ldots,t_n,z]}.$
\end{proof}

Let $\frac{d}{dz}: \mathbb T_{n,z}(K_{\infty})\{\{\tau\}\} \rightarrow \mathbb T_{n,z}(K_{\infty})\{\{\tau\}\}$ be the map defined by:
$$\frac{d}{dz}(\sum_{i\geq 0} f_i \tau^i)=\sum_{i\geq 0}\frac{d}{dz}(f_i) \tau^i, f_i\in \mathbb T_{n,z},$$
where $\frac{d}{dz}(\sum_{i\geq 0} x_i z^i)= \sum_{i\geq 1} ix_i z^{i-1}, x_i \in \mathbb T_n(K_{\infty}).$ We set:
\begin{eqnarray}\label{exp1} \exp_{\varphi}^{(1)}=\sum_{j\geq 1} jb_j(t_1)\cdots b_j(t_n)e_j  \tau^j.\end{eqnarray}
\begin{corollary}\label{CorollaryS2-1} The map $\exp_{\varphi}^{(1)}$ induces a morphism of $A[t_1, \ldots, t_n]$-modules:
$$\frac{U(\varphi/A[t_1, \ldots, t_n])}{U_{St}(\varphi/A[t_1, \ldots, t_n])}\rightarrow H(\varphi/A[t_1, \ldots, t_n]).$$
Let $H^{(1)}(\varphi/A[t_1, \ldots, t_n])$ be the image of the above map. The kernel of the above map is isomorphic as an $A[t_1, \ldots, t_n]$-module to:
$$\frac{H(\widetilde{\varphi}/A[t_1, \ldots, t_n,z])[(z-1)^2]}{H(\widetilde{\varphi}/A[t_1, \ldots, t_n,z])[z-1] },$$
where  $H(\widetilde{\varphi}/A[t_1, \ldots, t_n,z])[(z-1)^2]=
\{x\in H(\widetilde{\varphi}/A[t_1, \ldots, t_n,z]), (z-1)^2x=0\}.$
In particular if $H(\widetilde{\varphi}/A[t_1, \ldots, t_n,z])[(z-1)^2]=H(\widetilde{\varphi}/A[t_1, \ldots, t_n,z])[z-1],$ then we have an isomorphism of $A[t_1, \ldots,   t_n]$-modules:
$$H^{(1)}(\varphi/A[t_1, \ldots, t_n])\simeq \frac{A[t_1, \ldots, t_n]}{[H(\varphi/\mathbb A)]_{\mathbb A} A[t_1, \ldots, t_n]}.$$
Furthermore, in this case, $\displaystyle\frac{H(\varphi/A[t_1, \ldots, t_n])}{H^{(1)}(\varphi/A[t_1, \ldots, t_n])}$ is a finitely generated and torsion $\mathbb F_q[t_1, \ldots, t_n]$-module.
\end{corollary}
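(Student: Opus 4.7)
The plan is to identify the induced map $\overline{u} \mapsto \overline{\exp_{\varphi}^{(1)}(u)}$ as the composite of (a) the isomorphism $\overline{\alpha}$ of Proposition \ref{PropositionS2-2}, (b) the inclusion $H(\widetilde{\varphi}/A[t_1,\ldots,t_n,z])[z-1] \hookrightarrow H(\widetilde{\varphi}/A[t_1,\ldots,t_n,z])$, and (c) the surjection $H(\widetilde{\varphi}/A[t_1,\ldots,t_n,z]) \twoheadrightarrow H(\varphi/A[t_1,\ldots,t_n])$ coming from the isomorphism $H(\widetilde{\varphi}/A[t_1,\ldots,t_n,z])/(z-1)H(\widetilde{\varphi}/A[t_1,\ldots,t_n,z]) \simeq H(\varphi/A[t_1,\ldots,t_n])$ of section \ref{Evaluationatz=1}. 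This identification, and hence the well-definedness of the $\exp_{\varphi}^{(1)}$-map, follows from the direct calculation: for $f \in \mathbb{T}_n(K_{\infty})$, expanding $z^j-1=(z-1)(1+z+\cdots+z^{j-1})$ yields
$$\alpha(f) = \sum_{j \geq 1} b_j(t_1)\cdots b_j(t_n)\, e_j\,(1+z+\cdots+z^{j-1})\,\tau^j(f),$$
so that $ev(\alpha(f)) = \sum_{j \geq 1} j\, b_j(t_1)\cdots b_j(t_n)\, e_j\,\tau^j(f) = \exp_{\varphi}^{(1)}(f)$. By definition the image of this composite is $H^{(1)}(\varphi/A[t_1,\ldots,t_n])$.

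Since $\overline{\alpha}$ is bijective, the kernel of the composite equals the kernel of the last two arrows, namely $H(\widetilde{\varphi})[z-1] \cap (z-1)H(\widetilde{\varphi})$, where I abbreviate $H(\widetilde{\varphi}) := H(\widetilde{\varphi}/A[t_1,\ldots,t_n,z])$. An element of this intersection has the form $x=(z-1)y$ with $(z-1)^2y=0$, i.e.\ $y \in H(\widetilde{\varphi})[(z-1)^2]$, and conversely every such $y$ produces an element. Multiplication by $z-1$ therefore defines a surjection $H(\widetilde{\varphi})[(z-1)^2] \twoheadrightarrow H(\widetilde{\varphi})[z-1] \cap (z-1)H(\widetilde{\varphi})$ with kernel $H(\widetilde{\varphi})[z-1]$, which (being $A[t_1,\ldots,t_n]$-linear) yields the asserted isomorphism.

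Under the hypothesis this kernel vanishes, so $H^{(1)} \simeq U(\varphi/A[t_1,\ldots,t_n])/U_{St}(\varphi/A[t_1,\ldots,t_n])$, which equals $A[t_1,\ldots,t_n]/[H(\varphi/\mathbb{A})]_{\mathbb{A}}A[t_1,\ldots,t_n]$ by the displayed isomorphism recalled in section \ref{Evaluationatz=1}. For the torsion assertion, tensor the short exact sequence $0 \to H^{(1)} \to H(\varphi/A[t_1,\ldots,t_n]) \to H(\varphi/A[t_1,\ldots,t_n])/H^{(1)} \to 0$ with $k = \mathbb{F}_q(t_1,\ldots,t_n)$ over $\mathbb{F}_q[t_1,\ldots,t_n]$; this is exact since $k$ is a localization. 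Since $[H(\varphi/\mathbb{A})]_{\mathbb{A}}$ is monic in $\theta$, $H^{(1)} \otimes k \simeq \mathbb{A}/[H(\varphi/\mathbb{A})]_{\mathbb{A}}\mathbb{A}$ has $k$-dimension $\deg_{\theta}[H(\varphi/\mathbb{A})]_{\mathbb{A}}$, and by (\ref{EqS2-2}) the middle term tensored with $k$ is $H(\varphi/\mathbb{A})$, of the same $k$-dimension by the definition of $[\cdot]_{\mathbb{A}}$. Hence the quotient vanishes after tensoring with $k$ and is $\mathbb{F}_q[t_1,\ldots,t_n]$-torsion; finite generation is inherited from $H(\varphi/A[t_1,\ldots,t_n])$ (section \ref{Basicproperties}). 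The main subtlety is the identification of $\exp_{\varphi}^{(1)}$ with the snake-type composite in the first paragraph; once this is in hand, the remaining arguments are formal.
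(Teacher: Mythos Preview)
Your proof is correct and follows essentially the same route as the paper: both identify the map induced by $\exp_{\varphi}^{(1)}$ as the composite $ev\circ\overline{\alpha}$ (using the computation $ev(\alpha(f))=\exp_{\varphi}^{(1)}(f)$), then read off the kernel as $(z-1)H(\widetilde{\varphi})[(z-1)^2]\simeq H(\widetilde{\varphi})[(z-1)^2]/H(\widetilde{\varphi})[z-1]$ and invoke Proposition~\ref{PropositionS2-2} and (\ref{EqS2-2}). Your write-up simply unpacks more of the details (the explicit expansion of $\alpha$, the multiplication-by-$(z-1)$ isomorphism, and the dimension count after tensoring with $k$) that the paper leaves implicit.
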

\begin{proof} Observe that $ev$ induces a morphism of $A[t_1, \ldots, t_n]$-modules:
$$H(\widetilde{\varphi}/A[t_1, \ldots, t_n,z])[z-1]\rightarrow H(\varphi/A[t_1, \ldots, t_n]).$$
The kernel of the above map being:
$$(z-1)H(\widetilde{\varphi}/A[t_1, \ldots, t_n,z])[(z-1)^2].$$
Now observe that, by formula (\ref{alpha}), we have :
$$\forall \beta \in U(\varphi/A[t_1, \ldots, t_n]), 
ev\circ \alpha (\beta) = \exp_{\varphi}^{(1)}(\beta).$$
It remains to apply Proposition \ref{PropositionS2-2} to obtain the first three assertions. The last  assertion is a  direct consequence of  (\ref{EqS2-2})
\end{proof}


\begin{question} Let $n\geq 1.$ ${}$\par
\noindent 1) Is it true that $H(\widetilde{\varphi}/A[t_1, \ldots, t_n,z])[(z-1)^2]=H(\widetilde{\varphi}/A[t_1, \ldots, t_n,z])[z-1]?$\par
\noindent 2) Is $H(\varphi/\mathbb A)$  a cyclic $\mathbb A$-module?\par
Observe that by the above Corollary if 1) is true then   2) is true. Also observe that 1) has a positive answer if and only if $\frac{H(\varphi/A[t_1, \ldots, t_n])}{H^{(1)}(\varphi/A[t_1, \ldots, t_n])}$ is a finitely generated and torsion $\mathbb F_q[t_1, \ldots, t_n]$-module. We are going to show in the next section  that 1) is true  when $\phi$ is the Carlitz module.
\end{question}
\subsection{The case of the Carlitz module}$\label{Thecaseofthecarlitzmodule}{}$\par
In this paragraph $\phi=C$ is the Carlitz module, $\varphi$ is the canonical deformation of $C$ over $\mathbb T_n(K_{\infty}),$ and $\widetilde{\varphi}$ is the canonical $z$-deformation of $\varphi.$
\subsubsection{Some  results on  units}${}$\par
Observe that in our case (by \cite{APTR}, Proposition 5.9):
$$ \mathcal L(\widetilde{\varphi}/\widetilde{\mathbb A})=\sum_{a\in A_+} \frac{a(t_1)\cdots a(t_n)}{a} z^{\deg_{\theta} a}\in \mathbb T_{n,z}(K_{\infty})^\times,$$
and:
$$\mathcal L(\varphi/\mathbb A)=ev(\mathcal L(\widetilde{\varphi}/\mathbb A))= \sum_{a\in A_+} \frac{a(t_1)\cdots a(t_n)}{a} \in \mathbb T_{n}(K_{\infty})^\times.$$

Recall that we have  set:
\begin{eqnarray}\label{unit}u_C(t_1, \ldots, t_n;z):= \exp_{\widetilde{\varphi}}(\mathcal L(\widetilde{\varphi}/\widetilde{\mathbb A}))\in A[t_1, \ldots, t_n,z].\end{eqnarray}
\begin{lemma}\label{LemmaS2-2} If $n\in \{ 0, \ldots, q-1\},$ then:
$$u_C(t_1, \ldots, t_n; z)= 1.$$
\end{lemma}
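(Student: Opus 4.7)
The plan is to show that $u_C - 1$ lies in $A[t_1, \ldots, t_n, z] = \mathbb{F}_q[\theta, t_1, \ldots, t_n, z]$ and has strictly positive $\frac{1}{\theta}$-adic Gauss valuation $v_\infty$, which forces it to vanish. Recall that $u_C = \exp_{\widetilde{\varphi}}(\mathcal{L}(\widetilde{\varphi}/\widetilde{\mathbb A}))$, where for the Carlitz module
$$\exp_{\widetilde{\varphi}}(f) = \sum_{j \geq 0} \frac{b_j(t_1) \cdots b_j(t_n)}{D_j}\, z^j\, f^{q^j}, \qquad \mathcal{L} = \sum_{a \in A_+} \frac{a(t_1) \cdots a(t_n)}{a}\, z^{\deg_\theta a}.$$

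First I would compute the valuations of the building blocks. Since $D_j = \prod_{i=1}^{j} [i]^{q^{j-i}}$ with $v_\infty([i]) = -q^i$ we get $v_\infty(D_j) = -jq^j$, and since $b_j(t) = \prod_{k=0}^{j-1}(t - \theta^{q^k})$ with $t, z$ of Gauss norm $1$ we get $v_\infty(b_j(t)) = -\frac{q^j - 1}{q-1}$. Hence
$$v_\infty\!\left(\tfrac{b_j(t_1) \cdots b_j(t_n)\, z^j}{D_j}\right) = jq^j - \tfrac{n(q^j - 1)}{q-1}.$$

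Next, observe that $v_\infty\bigl(\frac{a(t_1)\cdots a(t_n)}{a} z^{\deg_\theta a}\bigr) = \deg_\theta a$, so $\mathcal{L} = 1 + y$ with $v_\infty(y) \geq 1$; as we are in characteristic $p$, $\mathcal{L}^{q^j} = 1 + y^{q^j}$, so $v_\infty(\mathcal{L}^{q^j}) = 0$. Splitting off the $j=0$ term,
$$u_C - 1 = (\mathcal{L} - 1) + \sum_{j \geq 1} \tfrac{b_j(t_1) \cdots b_j(t_n)\, z^j}{D_j}\, \mathcal{L}^{q^j},$$
the first summand has $v_\infty \geq 1$, and under the hypothesis $n \leq q-1$ we have $\frac{n(q^j-1)}{q-1} \leq q^j - 1$, so each $j \geq 1$ summand satisfies
$$v_\infty(\cdot) \geq jq^j - (q^j - 1) = (j-1)q^j + 1 \geq 1.$$

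Therefore $v_\infty(u_C - 1) \geq 1$. Since $u_C \in A[t_1, \ldots, t_n, z]$ by (\ref{unit}), writing $u_C - 1 = \sum_k P_k(t_1, \ldots, t_n, z)\, \theta^k$ with $P_k \in \mathbb{F}_q[t_1, \ldots, t_n, z]$ gives $v_\infty(u_C - 1) = -\max\{k : P_k \neq 0\}$, so $v_\infty(u_C - 1) \geq 1$ forces $u_C = 1$. The proof is really a crude valuation bound; there is no serious obstacle, provided one is careful that the negative contribution from the $b_j$'s (which grows with $n$) is dominated by the positive contribution from $1/D_j$ precisely when $n \leq q-1$.
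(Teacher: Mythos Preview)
Your argument is correct and is exactly the paper's approach: bound $v_\infty(u_C-1)\geq 1$ by the same term-by-term valuation estimate, then use $u_C\in A[t_1,\dots,t_n,z]$. One small slip: in this setting $\tau$ is the $\mathbb F_q[t_1,\dots,t_n,z]$-linear Frobenius on $K_\infty$-coefficients, so $\exp_{\widetilde\varphi}(\mathcal L)$ involves $\tau^j(\mathcal L)$ rather than $\mathcal L^{q^j}$; this does not affect your estimate, since $v_\infty(\tau^j(f))=q^j v_\infty(f)$ and hence $v_\infty(\tau^j(\mathcal L))=0$ just as you need.
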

\begin{proof} Note that:
$$\forall j\geq 0, v_{\infty} (\frac{b_j(t_1)\cdots b_j(t_n)}{D_j} z^j )=jq^j-n\frac{q^j-1}{q-1},$$
and
$$v_{\infty}(\mathcal L(\widetilde{\varphi}/\widetilde{\mathbb A})-1)\geq 1.$$
Thus, under the assumption of the Lemma, a direct computation gives:
$$v_{\infty}(\exp_{\widetilde{\varphi}}(\mathcal L(\widetilde{\varphi}/\widetilde{\mathbb A}))-1)\geq 1.$$
Since $u_C(t_1, \ldots, t_n;z)\in A[t_1, \ldots, t_n,z],$ we get the desired result.
\end{proof}
\begin{proposition}\label{PropositionS2-3} 
\noindent Let $n\geq q.$ Write $n=q+r+\ell (q-1),$ $\ell \in \mathbb N,$ $r\in \{ 0, \ldots, q-2\}.$Then $u_C(t_1, \ldots, t_n;z)$ viewed as a polynomial in $\theta$ is of degree $n(\frac{q^{\ell}-1}{q-1})-\ell q^{\ell}$ if $r=0$ and $n(\frac{q^{\ell +1}-1}{q-1})-(\ell +1)q^{\ell +1}$ if $r\not =0.$ Furthermore the leading coefficient of $u_C(t_1,\ldots, t_n;z)$ viewed as a polynomial in $\theta $ is :
$$ {\rm if }\, r=0, (-1)^{\ell}z^{\ell}(1-z).$$
$${\rm if}\, r\not =0, (-1)^{n(\ell +1)}z^{\ell +1}     .$$
\end{proposition}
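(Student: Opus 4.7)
The approach is a direct valuation computation. I would expand $u_C = \exp_{\widetilde\varphi}(\mathcal L(\widetilde\varphi/\widetilde{\mathbb A}))$ as a double sum indexed by $(j, a) \in \mathbb N \times A_+$: since $\tau$ is $k(z)$-linear and fixes each $t_i$ and $z$, applying $\tau^j$ to the $L$-series just changes $a$ to $a^{q^j}$ in the denominator, yielding
$$u_C = \sum_{j \geq 0}\sum_{a \in A_+} \frac{b_j(t_1)\cdots b_j(t_n)\,a(t_1)\cdots a(t_n)}{D_j\,a^{q^j}}\,z^{j + \deg a}.$$
A term-by-term count — using $v_\infty(b_j(t_i)) = -(q^j-1)/(q-1)$ (attained by the constant-in-$t_i$ term $(-1)^j\theta^{(q^j-1)/(q-1)}$), $v_\infty(D_j) = -jq^j$, $v_\infty(a(t_i)) = 0$, $v_\infty(a) = -\deg a$ — shows the $(j,a)$-term has valuation $N_j + q^j \deg a$, where $N_j := jq^j - n(q^j-1)/(q-1)$.

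Next I would minimize $N_j$, since the degree in $\theta$ of $u_C$ equals $-\min v_\infty$. A short calculation gives $N_{j+1} - N_j = q^j(j(q-1) + q - n)$. Substituting $n = q + r + \ell(q-1)$, the critical index is $j = \ell + r/(q-1)$: if $r = 0$ one has $N_\ell = N_{\ell+1}$ as the common minimum; if $r \neq 0$ the unique minimum is $N_{\ell+1}$. Either way the minimum matches the claimed degree. To conclude that only the $(j_\ast, 1)$-terms contribute to the leading coefficient in $\theta$, I would verify that corrections with $\deg a \geq 1$ are strictly dominated: for any $j$ and $d \geq 1$, $N_j + q^j d \geq N_j + q^j > N_{j_\ast}$, since $N_j \geq N_{j_\ast}$ outside the plateau and the extra $q^j$ is strictly positive, while for $j < j_\ast$ already $N_j > N_{j_\ast}$.

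Finally the leading coefficient computation: each $b_j(t_i)$ has $\theta$-leading coefficient $(-1)^j \in \mathbb F_q$ independent of $t_i$, and $D_j$ is monic of $\theta$-degree $jq^j$, so the term $\tfrac{\prod_i b_j(t_i)}{D_j}z^j$ contributes leading $\theta$-coefficient $(-1)^{jn}z^j$. When $r \neq 0$, only $j = \ell+1$ survives, giving exactly $(-1)^{n(\ell+1)}z^{\ell+1}$. When $r = 0$, the two contributions add to $(-1)^{\ell n}z^\ell + (-1)^{(\ell+1)n}z^{\ell+1}$; a parity observation (when $p$ is odd, $n = q + \ell(q-1)$ is odd, so $(-1)^{mn} = (-1)^m$; when $p = 2$ signs are trivial) rewrites this as $(-1)^\ell z^\ell(1-z)$. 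The main delicacy is the $r = 0$ case: one must check that the two simultaneously minimizing indices do not cancel, i.e.\ that the resulting polynomial $1 - z \in \mathbb F_q[z]$ is nonzero, which is where the formula takes a shape different from the $r \neq 0$ case.
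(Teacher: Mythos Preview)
Your proposal is correct and follows essentially the same approach as the paper's proof: both compute the $\theta$-degree by minimizing $N_j = jq^j - n(q^j-1)/(q-1)$ over $j$, observe the tie $N_\ell = N_{\ell+1}$ when $r=0$ versus a unique minimum at $j=\ell+1$ when $r\neq 0$, and extract the leading coefficient from the minimizing terms. The only cosmetic difference is that the paper bundles the sum over $a$ into the single monic factor $\tau^i(\mathcal L(\widetilde\varphi/\widetilde{\mathbb A}))$ (which has $v_\infty = 0$), whereas you expand it as $\sum_{a\in A_+} a(t_1)\cdots a(t_n)/a^{q^j}$ and then separately dispose of the $\deg a\geq 1$ terms; this is the same argument at a finer granularity.
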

\begin{proof} We prove the result for $r=0,$ the proof of the Proposition being similar in the remaining cases.  For $i\geq 0,$ observe that $\tau^i(\mathcal L(\widetilde{\varphi}/\widetilde{\mathbb A}))$ and $
(-1)^i \frac{b_i(t_1)\cdots b_i(t_n)}{D_i}$ are  monic. We have:
$$\forall i\geq 0, v_{\infty}(\frac{b_i(t_1)\cdots b_i(t_n)z^i}{D_i}\tau^i (\mathcal L(\widetilde{\varphi}/\widetilde{\mathbb A})))=i q^i-n(\frac{q^i-1}{q-1}) .$$
Set, for $i\geq 0,$  $\alpha_i= (-1)^{ i}\frac{b_i(t_1)\cdots b_i(t_n)}{D_i}\tau^i (\mathcal L(\widetilde{\varphi}/\widetilde{\mathbb A})),$  then $\alpha_i$ is  monic. We have:
$$u(t_1, \ldots, t_n;z)=\sum_{i\geq 0}(-1)^{ i}\alpha_i z ^i.$$
For $i\geq 0,$ we have :
$$((i+1) q^{i+1}-n(\frac{q^{i+1}-1}{q-1})) -(i q^i-n(\frac{q^i-1}{q-1}))=q^i(q+i(q-1)-n)    .$$
Recall that  $n=q+\ell (q-1), $ $\ell \in \mathbb N.$ We get :
$$v_{\infty}(u_C(t_1, \ldots, t_n;z)-((-1)^{\ell }\alpha_{\ell} z^{\ell }+ (-1)^{(\ell+1)}\alpha_{\ell +1}z^{\ell +1}))>v_{\infty}(u(t_1, \ldots, t_n;z)).$$
But $\alpha_{\ell}$ and $\alpha_{\ell +1}$ are monic, thus:
$$v_{\infty}(u_C(t_1, \ldots, t_n;z)-(z^{\ell}(1-z)(-1)^{ \ell}\theta^{n(\frac{q^{\ell}-1}{q-1})- \ell q^{\ell}})>v_{\infty}(u(t_1, \ldots, t_n;z)).$$
This proves the  assertion. \end{proof}

\begin{lemma}\label{LemmaS2-3} Let $n\geq q,$ $n\equiv 1\pmod{q-1}.$ Then:
$$u_C(t_1, \ldots, t_n; z)\in (z-1)A[t_1, \ldots, t_n,z].$$
\end{lemma}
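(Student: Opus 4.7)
The plan is to reduce the divisibility to a single evaluation: since $u_C(t_1,\ldots,t_n;z)\in A[t_1,\ldots,t_n,z]$, we have $u_C\in (z-1)A[t_1,\ldots,t_n,z]$ if and only if $u_C(t_1,\ldots,t_n;1)=0$. Because $\tau$ is $k(z)$-linear (so $ev$ commutes with $\tau$) and $\widetilde\varphi_\theta|_{z=1}=\varphi_\theta$, the evaluation map $ev\colon \mathbb T_{n,z}(K_\infty)\to\mathbb T_n(K_\infty)$ satisfies $ev\circ\exp_{\widetilde\varphi}=\exp_\varphi\circ ev$, and $ev(\mathcal L(\widetilde\varphi/\widetilde{\mathbb A}))=\mathcal L(\varphi/\mathbb A)$ by the explicit series formula. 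Therefore
$$u_C(t_1,\ldots,t_n;1)=\exp_\varphi\bigl(\mathcal L(\varphi/\mathbb A)\bigr),$$
and the lemma reduces to the vanishing $\exp_\varphi(\mathcal L(\varphi/\mathbb A))=0$.

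To prove this vanishing I would combine two inputs already available in the paper. First, the Pellarin--Anderson--Thakur-type identity recalled in the introduction (Theorem~7.7 of \cite{APTR}) gives, under the hypothesis $n\ge q$, $n\equiv 1\pmod{q-1}$,
$$\mathcal L(\varphi/\mathbb A)=(-1)^{\frac{n-1}{q-1}}\,\mathbb B(t_1,\ldots,t_n)\,\frac{\widetilde\pi}{\omega(t_1)\cdots\omega(t_n)}$$
for some $\mathbb B(t_1,\ldots,t_n)\in A[t_1,\ldots,t_n]$. Second, under the same congruence $n\equiv 1\pmod{q-1}$, the residue $h$ in Remark~\ref{remark2} equals $1$, so $\pi_{C,1}=\widetilde\pi$, and Remark~\ref{remark2} applied to $\phi=C$ yields
$$\ker\bigl(\exp_\varphi\colon \mathbb T_n(K_\infty)\to \mathbb T_n(K_\infty)\bigr)=A[t_1,\ldots,t_n]\cdot\frac{\widetilde\pi}{\omega(t_1)\cdots\omega(t_n)}.$$
Putting these together, $\mathcal L(\varphi/\mathbb A)$ lies in $\ker(\exp_\varphi)$, so $\exp_\varphi(\mathcal L(\varphi/\mathbb A))=0$, which is what we need.

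The main point that requires attention is verifying that both inputs apply cleanly in the stated range. The kernel description in Remark~\ref{remark2} presupposes $\exp_\varphi$ is not injective on $\mathbb T_n(K_\infty)$, which holds precisely because $\widetilde\pi/(\omega(t_1)\cdots\omega(t_n))$ is a nonzero element of $\mathbb T_n(K_\infty)$ annihilated by $\exp_\varphi$; and the Pellarin--Anderson--Thakur factorization is exactly the hypothesis $n\ge q$, $n\equiv 1\pmod{q-1}$ singled out in the lemma. Aside from these invocations, the argument is purely formal. One consistency check to make along the way is that the leading coefficient of $u_C$ in $\theta$ computed in Proposition~\ref{PropositionS2-3} for $r=0$, namely $(-1)^{\ell}z^{\ell}(1-z)$, is already divisible by $z-1$ --- matching the conclusion of the present lemma.
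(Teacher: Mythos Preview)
Your proof is correct and follows essentially the same route as the paper: reduce to $u_C(t_1,\ldots,t_n;1)=\exp_\varphi(\mathcal L(\varphi/\mathbb A))$ and then show $\mathcal L(\varphi/\mathbb A)\in\ker\exp_\varphi$. The only difference is the citation used for this last step: the paper invokes \cite{APTR}, Proposition~7.2, which gives directly $U(\varphi/A[t_1,\ldots,t_n])=\ker\exp_\varphi\mid_{\mathbb T_n(K_\infty)}$ (so $\mathcal L(\varphi/\mathbb A)\in U_{St}\subset U=\ker\exp_\varphi$), whereas you invoke the explicit factorization of $\mathcal L(\varphi/\mathbb A)$ from \cite{APTR}, Theorem~7.7, to place it in the kernel via Remark~\ref{remark2}. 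Both are valid and the arguments are interchangeable.
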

\begin{proof} By \cite{APTR}, Lemma 6.8 and Remark \ref{remark2},  $\exp_{\varphi}: \mathbb T_n(K_{\infty})\rightarrow \mathbb T_n(K_{\infty})$ is not injective if and only if $n \equiv 1\pmod{q-1}.$ If $n\equiv 1\pmod{q-1},$ then:
$${\rm Ker}(\exp_{\varphi}: \mathbb T_n(K_{\infty})\rightarrow \mathbb T_n(K_{\infty}))=A[t_1, \ldots, t_n]\frac{\widetilde{\pi}}{\omega(t_1)\cdots \omega(t_n)}.$$
Furthermore, if $n\geq q,$ $n\equiv 1\pmod{q-1},$  we have (\cite{APTR}, Proposition 7.2):
\begin{eqnarray}\label{kernel}U(\varphi/A[t_1, \ldots, t_n])= {\rm Ker}\exp_{\varphi}\mid_{\mathbb T_n(K_{\infty})}=A[t_1, \ldots, t_n]\frac{\widetilde{\pi}}{\omega(t_1)\cdots \omega(t_n)}.\end{eqnarray}
Recall that  $\mathcal L(\varphi/\mathbb A)A[t_1, \ldots, t_n]=U_{St}(\varphi/A[t_1, \ldots, t_n])\subset U(\varphi/A[t_1, \ldots, t_n])  .$ For $n\geq q,$ $n\equiv 1\pmod{q-1},$ we get by (\ref{unit}):
$$u_C(t_1, \ldots, t_n; 1)=ev(u_C(t_1, \ldots,t_n; z))= \exp_{\varphi}(\mathcal L(\varphi/\mathbb A))=0.$$
\end{proof}
Let's set:
$$\mathbb B(t_1, \ldots, t_n)=[H(\varphi/\mathbb A)]_{\mathbb A} \in A[t_1, \ldots, t_n].$$
We recall that if $n\leq 2q-2,$ by \cite {APTR}, Remark 5.3,  we have $H(\phi/A[t_1, \ldots, t_n])=\{ 0\}$ and therefore $\mathbb B(t_1, \ldots, t_n)=1.$
\subsubsection{The case $n\not \equiv 1\pmod{q-1}$}${}$\par
In this section, we assume that $n\not \equiv 1\pmod{q-1}.$ 
\begin{theorem}\label{TheoremS2-1}
$H(\varphi/A[t_1, \ldots, t_n])$ is a finitely generated and torsion $\mathbb F_q[t_1, \ldots, t_n]$-module. In particular:
$$\mathbb B(t_1, \ldots,t_n)=1.$$
\end{theorem}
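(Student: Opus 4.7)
The plan is to reduce to showing $\mathbb B(t_1,\ldots,t_n)=1$; the torsion statement then follows from the isomorphism \eqref{EqS2-2} combined with the finite generation of $H(\varphi/A[t_1,\ldots,t_n])$ over $\mathbb F_q[t_1,\ldots,t_n]$ established in section \ref{Basicproperties}, since $\mathbb B=1$ forces $H(\varphi/\mathbb A)=0$.

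Combining Proposition \ref{PropositionS2-2}, Lemma \ref{LemmaS2-1}, and the discussion in section \ref{Evaluationatz=1}, one obtains a chain of $A[t_1,\ldots,t_n]$-module isomorphisms
$$\frac{A[t_1,\ldots,t_n]}{\mathbb B\cdot A[t_1,\ldots,t_n]}\;\simeq\;\frac{U(\varphi/A[t_1,\ldots,t_n])}{U_{St}(\varphi/A[t_1,\ldots,t_n])}\;\simeq\;H(\widetilde\varphi/A[t_1,\ldots,t_n,z])[z-1],$$
so proving $\mathbb B=1$ reduces to showing $H(\widetilde\varphi/A[t_1,\ldots,t_n,z])[z-1]=0$, equivalently, that $U=U_{St}$, equivalently that every $u\in U(\varphi/A[t_1,\ldots,t_n])$ lifts to $\widetilde u\in U(\widetilde\varphi/A[t_1,\ldots,t_n,z])$ under the evaluation map $ev$. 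Once such lifts exist, Proposition \ref{PropositionS2-1} identifies $ev(U(\widetilde\varphi/A[t_1,\ldots,t_n,z]))=\mathcal L(\varphi/\mathbb A)A[t_1,\ldots,t_n]=U_{St}$.

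The hypothesis $n\not\equiv 1\pmod{q-1}$ enters through Remark \ref{remark2} combined with \cite{APTR}, Lemma 6.8: under this assumption $\exp_\varphi\colon\mathbb T_n(K_\infty)\to\mathbb T_n(K_\infty)$ is injective. To construct the lift of a given $u\in U(\varphi/A[t_1,\ldots,t_n])$, one views $u$ as a constant-in-$z$ element of $\widetilde{\mathbb K}_\infty$; by Proposition \ref{PropositionS2-1} (giving $H(\widetilde\varphi/\widetilde{\mathbb A})=0$) one may decompose $u=\widetilde a+\exp_{\widetilde\varphi}(\widetilde v)$ with $\widetilde a\in\widetilde{\mathbb A}$ and $\widetilde v\in\widetilde{\mathbb K}_\infty$. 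The lift exists provided the pair $(\widetilde a,\widetilde v)$ can be chosen inside $A[t_1,\ldots,t_n,z]$ and $\mathbb T_{n,z}(K_\infty)$ respectively, in which case $\widetilde u:=u-\widetilde a=\exp_{\widetilde\varphi}(\widetilde v)$ (or more precisely $\widetilde v$ suitably modified) lies in $U(\widetilde\varphi/A[t_1,\ldots,t_n,z])$ with $ev(\widetilde u)=u$.

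The principal obstacle is the $(z-1)$-denominator clearing: one must show that the ambiguity in the decomposition (modulo $\ker\exp_{\widetilde\varphi}$ and $\widetilde{\mathbb A}$) suffices to remove every $(z-1)$-pole. The injectivity of $\exp_\varphi$ is the crucial lever, since it implies that a pole of order $k\geq 1$ in $\widetilde v$ at $z=1$ produces a pole of exactly the same order in $\exp_{\widetilde\varphi}(\widetilde v)$, forcing a compensating pole of the same order in $\widetilde a$; the surjectivity of $\exp_{\widetilde\varphi}$ modulo $\widetilde{\mathbb A}$ then allows one to subtract off a corrective element that lowers the pole order. Iterating this process (and using the finite generation of $H(\widetilde\varphi/A[t_1,\ldots,t_n,z])$ over $\mathbb F_q[t_1,\ldots,t_n,z]$ to guarantee termination) produces the lift and hence the equality $U=U_{St}$. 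The hard part is making this iterative pole-clearing rigorous, in particular verifying that the corrections at each step can be chosen within $\mathbb T_{n,z}(K_\infty)$ rather than merely in $\widetilde{\mathbb K}_\infty$.
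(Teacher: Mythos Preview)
Your reduction to the equality $U(\varphi/A[t_1,\ldots,t_n])=U_{St}(\varphi/A[t_1,\ldots,t_n])$ is correct, and you correctly identify that injectivity of $\exp_\varphi$ (from $n\not\equiv 1\pmod{q-1}$) is relevant.  However, the pole-clearing scheme you sketch is circular.  Write $\widetilde v=(z-1)^{-k}v_0+\text{(lower order)}$ with $v_0\in\mathbb K_\infty$, $v_0\neq 0$.  Since $\exp_{\widetilde\varphi}$ is $k(z)$-linear and $\exp_\varphi$ is injective, the leading pole of $\exp_{\widetilde\varphi}(\widetilde v)$ is $(z-1)^{-k}\exp_\varphi(v_0)$, and matching with $\widetilde a\in\widetilde{\mathbb A}$ forces $\exp_\varphi(v_0)\in\mathbb A$, i.e.\ $v_0\in U(\varphi/\mathbb A)$.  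To lower the pole order you must replace $\widetilde v$ by $\widetilde v-(z-1)^{-k}v_0$ and absorb $(z-1)^{-k}\exp_{\widetilde\varphi}(v_0)$ into $\widetilde a$; this requires $\exp_{\widetilde\varphi}(v_0)\in\widetilde{\mathbb A}$, i.e.\ $v_0\in U(\widetilde\varphi/\widetilde{\mathbb A})=\mathcal L(\widetilde\varphi/\widetilde{\mathbb A})\,\widetilde{\mathbb A}$.  But $U(\varphi/\mathbb A)=\mathbb B^{-1}\mathcal L(\varphi/\mathbb A)\,\mathbb A$, so the gap between ``$v_0\in U(\varphi/\mathbb A)$'' and ``$v_0$ lifts'' is governed precisely by $\mathbb B$---the very thing you are trying to show equals $1$.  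Injectivity of $\exp_\varphi$ alone cannot close this; indeed, the paper's Question after Corollary~\ref{CorollaryS2-1} leaves open whether the analogous statement holds for general Drinfeld modules $\phi$, so a proof using only the formal machinery of \S\ref{Basicproperties}--\S\ref{Evaluationatz=1} would be surprising.

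The paper's proof is entirely different and constructive.  It fixes the neighborhood $N=\{v_\infty\geq\frac{n-r}{q-1}\}$ on which $\exp_\varphi$ is bijective, reduces to showing that a certain determinant in $\mathbb F_q[t_1,\ldots,t_n]$ is nonzero, and produces an explicit witness: using the $r$-variable deformation $\psi$ (for which $u_C=1$ by Lemma~\ref{LemmaS2-2}) and the Anderson--Thakur functions $\omega(t_{r+1}),\ldots,\omega(t_n)$, one builds $\lambda=(-\theta)^{(n-r)/(q-1)}\mathcal L(\psi/\mathbb A)\,/\,\omega(t_{r+1})\cdots\omega(t_n)$ and then \emph{specializes at $t_1=\cdots=t_n=0$} to reduce to an elementary rank-one computation over $K_\infty$.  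The specific arithmetic of the Carlitz module---the shape of $\omega$, the triviality of the unit for $n\leq q-1$, and the triangularity of the specialized matrix---does the real work.
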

\begin{proof} ${}$\par
Let $r\in \{ 2, \ldots, q-1\}$ 
such that $n\equiv r\pmod{q-1}.$ We can assume $n>r.$ Set :
$$N=\{ x\in \mathbb T_n(K_{\infty}), v_{\infty}(x)\geq \frac{n-r}{q-1}\}.$$
Then, since $\frac{n-r}{q-1}>\frac{n-q}{q-1},$  we have :
$$N=\exp_{\varphi}(N)\subset \exp_{\varphi}(\mathbb T_n(K_{\infty})).$$
Observe that:
$$\mathbb T_n(K_{\infty})= \frac{1}{\theta^{\frac{n-r}{q-1}-1}}A[t_1, \ldots, t_n]\oplus N.$$
Set :
$$E=\frac{\mathbb T_n(K_{\infty})}{A[t_1, \ldots, t_n]+N}.$$ 
Observe that $E$ is a free and finitely generated $\mathbb F_q[t_1, \ldots, t_n]$-module of rank $\frac{n-r}{q-1}-1.$ In order to prove the Theorem, it is enough to prove that there exists $\lambda \in \mathbb T_n(K_{\infty}),$ $v_{\infty}(\lambda)=0,$ such that if we write in $E:$
$$k\in \{ 1, \ldots, \frac{n-r}{q-1}-1\}, \, \exp_{\varphi}(\theta^{-k}\lambda)=\sum_{j=1}^{\frac{n-r}{q-1}-1} \beta_{j,k}(\lambda)\theta^{-j}, \, \beta_{j,k}(\lambda)\in \mathbb F_q[t_1, \ldots, t_n],$$
then $det((\beta_{j,k}(\lambda))_{j,k})\in \mathbb  F_q[t_1, \ldots,t_n]\setminus \{0\}.$\par
${}$\par

 Let $\psi:A\rightarrow \mathbb T_n(K_{\infty})\{Ê\tau \}$ be the morphism of $\mathbb F_q$-algebras given by :
$$\psi_{\theta}= (t_1-\theta)\cdots (t_r-\theta)\tau +\theta.$$
Let $\exp_{\psi}= 1+\sum_{j\geq 1} \frac{b_j(t_1)\cdots b_j(t_r)}{D_j}\tau^j .$ By Lemma \ref{LemmaS2-2}, we have:
$$\exp_{\psi}(\mathcal L(\psi/\mathbb A))=1.$$
Set:
$$\eta=\frac{1}{\omega(t_{r+1})\cdots \omega(t_n)}\in \mathbb T_n(K_{\infty})^\times.$$
We refer the reader to Remark \ref{remark2} for the definition of $\omega(t).$
Observe that $v_{\infty}(\eta)=\frac{n-r}{q-1}.$
We have :
$$\forall x \in \mathbb T_n(K_{\infty}), \exp_{\phi}(\eta x)=\eta\exp_{\psi}(x).$$ 
Thus, for $k\geq 0:$
$$\exp_{\phi}(\theta^k  \mathcal L(\psi/\mathbb A)\eta)=\eta {\psi_{\theta^k}(1)}.$$
We set $\lambda = (-\theta) ^{\frac{n-r}{q-1}} \mathcal L(\psi/\mathbb A) \eta,$ note that $\lambda\in \mathbb T_n(K_{\infty}),$ and $v_{\infty}(\lambda)=0.$ For $k\in \{ 1, \ldots, \frac{n-r}{q-1}-1\},$ recall that  we write in $E:$ 
$$\exp_{\phi}(\theta^{-k}\lambda)=\sum_{j=1}^{\frac{n-r}{q-1}-1} \beta_{j,k}(\lambda)\theta^{-j}, \, \beta_{j,k}(\lambda)\in \mathbb F_q[t_1, \ldots, t_n].$$
${}$\par
Let $ev_0:\mathbb T_n(K_{\infty})\rightarrow K_{\infty}$ be the surjective morphism of $\mathbb F_q$-algebras given by $ev_0(f)=f\mid_{t_1=\cdots= t_n=0}.$ Then $ev_0((-\theta) ^{\frac{n-r}{q-1}}\eta )=1.$  Let $\varrho : A\rightarrow A\{\tau\}$ be the morphism of $\mathbb F_q$-algebras given by:
$$ \varrho_{\theta}=\varphi_{\theta}\mid_{t_1=\cdots= t_n=0}= (-1)^n \theta^n \tau +\theta.$$
Then clearly:
$$\exp_{\varrho}=\exp_{\varphi}\mid_{t_1=\cdots= t_n=0}.$$
We have:
$$ \{ x\in K_{\infty}, v_{\infty}(x)\geq \frac{n-r}{q-1}\}=ev_0(N)\subset \exp_{\varrho}(K_{\infty}).$$
Furthermore $E':=\frac{K_{\infty}}{A\oplus ev_0(N)}$ is a finite dimensional $\mathbb F_q$-vector space of dimension $\frac{n-r}{q-1}-1.$ Therefore, for $k\in \{ 1, \ldots, \frac{n-r}{q-1}-1\},$ we have   in $E':$ $$\exp_{\varrho}(\theta^{-k})=\sum_{j=1}^{\frac{n-r}{q-1}-1} ev_0(\beta_{j,k}(\lambda))\theta^{-j}.$$
A direct computation yields for $k\geq 0$ :
$$ev_0(\exp_{\varphi}(\theta^k  \mathcal L(\psi/\mathbb A)\eta))\in (-\theta)^{-\frac{n-r}{q-1}}\theta^k+(-\theta)^{k+1-\frac{n-r}{q-1}}A.$$
 This implies: for $k,j\in \{ 1, \ldots, \frac{n-r}{q-1}-1\},$ $ev_0(\beta_{k,k}(\lambda))\not =0$ and $ev_0(\beta_{j,k}(\lambda))=0$ if $j>k.$
 Thus $\det((ev_0(\beta_{j,k}(\lambda))_{j,k})\in \mathbb F_q^\times,$ and therefore:
 $$det((\beta_{j,k}(\lambda))_{j,k})\in \mathbb  F_q[t_1, \ldots,t_n]\setminus \{0\}.$$
 The last assertion of the Theorem is then a consequence of (\ref{EqS2-2}).
  
\end{proof}
Let's set:
$$\mathcal M(\varphi/A[t_1, \ldots, t_n])=\{ \varphi_a(u_C(t_1, \ldots, t_n; 1)), a\in A[t_1, \ldots, t_n]\},$$
and its radical $\sqrt{\mathcal M(\varphi/A[t_1, \ldots,t_n])}$ is defined as
$$\{ b\in A[t_1, \ldots, t_n], \exists a\in A[t_1, \ldots, t_n]\setminus\{ 0\}, \varphi_a(b)\in \mathcal M(\phi/A[t_1, \ldots, t_n])\}.$$
Note that, for $n\geq 1,$ $\mathcal M(\varphi/A[t_1, \ldots, t_n])$ is a free $A[t_1, \ldots, t_n]$-module of rank one. We will need the following Lemma:
\begin{lemma}\label{LemmaS2-4}  Let $n\geq 2$ be an integer (we don't assume in this Lemma that $n\not \equiv 1\pmod{q-1}$). Let $u\in A[t_1, \ldots,t_n]\setminus \{0\}$ be such that its leading coefficient as a polynomial in $\theta$ is in $\mathbb F_q^\times.$ Let $c\in \mathbb A$ such that $\varphi_c(u)\in A[t_1, \ldots, t_n],$ then $c\in A[t_1, \ldots,t_n].$
\end{lemma}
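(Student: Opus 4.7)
The plan is to proceed by induction on $m := \deg_\theta c$. Write $c = \sum_{i=0}^m c_i \theta^i$ with $c_i \in k$ and $c_m \neq 0$, and let $\lambda \in \mathbb{F}_q^\times$ and $d \geq 0$ denote respectively the leading coefficient and the $\theta$-degree of $u$. The base case $m = 0$ is immediate: $\varphi_c(u) = c_0 u$ has leading $\theta$-coefficient $c_0 \lambda$, which must lie in $\mathbb{F}_q[t_1,\ldots,t_n]$ by hypothesis, forcing $c_0 \in \mathbb{F}_q[t_1,\ldots,t_n]$.

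For the inductive step, set $B := (t_1 - \theta)\cdots(t_n - \theta)$, so that $\varphi_\theta = \theta + B\tau$, $\deg_\theta B = n$, and $B$ has leading $\theta$-coefficient $(-1)^n$. A short induction on $i$ shows that the unique term of maximal $\theta$-degree appearing in $\varphi_\theta^i(u)$ is $B^{1 + q + \cdots + q^{i-1}}\, u^{q^i}$, of $\theta$-degree $dq^i + n(q^i-1)/(q-1)$ and with leading $\theta$-coefficient $(-1)^{n(q^i-1)/(q-1)}\,\lambda \in \mathbb{F}_q^\times$ (using $\lambda^{q^i} = \lambda$). The step of the induction compares applying $\theta$, which raises the current $\theta$-degree by $1$, with applying $B\tau$, which raises it by $n + (q-1)e$ when the current degree is $e \geq 0$; the hypothesis $n \geq 2$ is used exactly here to guarantee that $B\tau$ always strictly dominates.

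Consequently, the $\theta$-degrees of the summands $c_i \varphi_\theta^i(u)$ are strictly increasing in $i$, so the leading $\theta$-coefficient of $\varphi_c(u) = \sum_{i=0}^m c_i \varphi_\theta^i(u)$ equals $c_m$ times a nonzero scalar in $\mathbb{F}_q$. By hypothesis $\varphi_c(u) \in A[t_1,\ldots,t_n] = \mathbb{F}_q[t_1,\ldots,t_n][\theta]$, so this leading coefficient lies in $\mathbb{F}_q[t_1,\ldots,t_n]$, whence $c_m \in \mathbb{F}_q[t_1,\ldots,t_n]$. Setting $c' := c - c_m \theta^m$, we have $c' \in k[\theta]$ of $\theta$-degree $< m$, and $\varphi_{c'}(u) = \varphi_c(u) - c_m \varphi_\theta^m(u)$ belongs to $A[t_1,\ldots,t_n]$ since $\varphi_\theta$ stabilises $A[t_1,\ldots,t_n]$ and $c_m \in \mathbb{F}_q[t_1,\ldots,t_n]$. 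The inductive hypothesis applied to $c'$ then yields $c' \in A[t_1,\ldots,t_n]$, and therefore $c \in A[t_1,\ldots,t_n]$. The only point requiring care is the strict-dominance argument at the beginning of the second paragraph, which is precisely where the assumption $n \geq 2$ is consumed.
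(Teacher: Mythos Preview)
Your argument is correct and follows essentially the same route as the paper: both proofs rest on the observation that, since $n\geq 2$, the $\tau$-top term dominates in $\varphi_{\theta^i}$, so the leading $\theta$-coefficient of $\varphi_a(u)$ is (up to $\mathbb F_q^\times$) the product of the leading coefficients of $a$ and $u$; you package this as an induction on $\deg_\theta c$ peeling off $c_m$, while the paper clears denominators and uses a one-step division $a=\delta b+d$. One small slip: the top term of $\varphi_{\theta^i}(u)$ is $\prod_{k=0}^{i-1}\tau^k(B)\cdot \tau^i(u)$, not $B^{1+q+\cdots+q^{i-1}}u^{q^i}$, because here $\tau$ is $\mathbb F_q[t_1,\ldots,t_n]$-linear and fixes the $t_j$ rather than acting as the $q$-th power; fortunately the $\theta$-degree and leading coefficient you extract are unaffected, so the proof stands.
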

\begin{proof}${}$\par
For $k\geq 1,$ let's write:
$$\varphi_{\theta^k}=\sum_{j=0}^k[\theta^k,j]\tau^j,\, [\theta^k,j]\in A[t_1, \ldots, t_n].$$
Then:
$${\rm deg}_{\theta}[\theta^k, j]= q^j(k-j)+n(\frac{q^j-1}{q-1}),$$
and the leading coefficient of $[\theta^k, j]$ (viewed as a polynomial in $\theta$) lies in $\mathbb F_q^\times.$ Observe that, since $n\geq 2,$ we have:
$$j=0, \ldots, k-1, {\rm deg}_{\theta}[\theta^k, k]>{\rm deg}_{\theta}[\theta^k, j].$$
This implies that if $a,b\in A[t_1, \ldots, t_n]\setminus\{0\},$ we have:
$${\rm deg}_{\theta} \varphi_a(b) = q^{{\rm deg}_{\theta}(a)}{\rm deg}_{\theta} b+ n(\frac{q^{\rm deg_{\theta} a}-1}{q-1}),$$
and the leading coefficient of $\varphi_{a}(b)$ viewed as a polynomial in $\theta$ is (up to an element in $\mathbb F_q^\times$) the leading coefficient of $a$ times the leading coefficient of $b.$\par
Write $c=\delta^{-1} a,$ $\delta\in \mathbb F_q[t_1, \ldots, t_n]\setminus\{0\}$ and $a\in A[t_1, \ldots, t_n].$
Write $a=\delta b+d,$  where $b,d\in A[t_1, \ldots, t_n]$ such that $d=0$ or the leading coefficient of $d$ is not divisible by $\delta.$ Then:
$$\varphi_{d}(u)\in \delta A[t_1, \ldots, t_n].$$
But, by the above discussion, if $d\not =0,$ the leading coefficient of $\varphi_d(u)$ is equal (up to an element in $\mathbb F_q^\times$) to the leading coefficient of $d.$ Thus $\delta$ would divide the leading coefficient of $d$ which is a contradiction and therefore $d=0$ and $c\in A[t_1, \ldots,t_n].$
\end{proof}

\begin{corollary}\label{CorollaryS2-2} 
Let $n\geq 1,$  $n\not \equiv 1\pmod{q-1}.$ We have:  $$\sqrt{\mathcal M(\varphi/A[t_1, \ldots,t_n])}=\mathcal M(\varphi/A[t_1, \ldots,t_n]).$$
\end{corollary}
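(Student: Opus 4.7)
\emph{Proof plan.} The inclusion $\mathcal{M}(\varphi/A[t_1,\ldots,t_n]) \subseteq \sqrt{\mathcal{M}(\varphi/A[t_1,\ldots,t_n])}$ is automatic, so everything reduces to the reverse inclusion. Set $u = u_C(t_1,\ldots,t_n;1) = \exp_\varphi(\lambda)$ with $\lambda = \mathcal{L}(\varphi/\mathbb{A}) \in \mathbb{K}_\infty^\times$, so that $\varphi_c(u) = \exp_\varphi(c\lambda)$ for every $c$. By Theorem~\ref{TheoremS2-1} we have $\mathbb{B}(t_1,\ldots,t_n) = 1$, hence Lemma~\ref{LemmaS2-1} yields $U(\varphi/\mathbb{A}) = \lambda\mathbb{A}$; and by Remark~\ref{remark2}, combined with the hypothesis $n \not\equiv 1 \pmod{q-1}$, the map $\exp_\varphi : \mathbb{K}_\infty \to \mathbb{K}_\infty$ is injective. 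Given $b \in \sqrt{\mathcal{M}}$ with $\varphi_a(b) = \varphi_c(u)$ for some $a \in A[t_1,\ldots,t_n]\setminus\{0\}$ and $c \in A[t_1,\ldots,t_n]$, the strategy is first to produce $c' \in \mathbb{A}$ with $b = \varphi_{c'}(u)$ by working in $\mathbb{K}_\infty$, and then to upgrade $c'$ to an element of $A[t_1,\ldots,t_n]$ via Lemma~\ref{LemmaS2-4}.

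For the first half, rewrite $\varphi_a(b) = \exp_\varphi(c\lambda)$ and set $w = c\lambda/a \in \mathbb{K}_\infty$. Then $\varphi_a(\exp_\varphi(w)) = \exp_\varphi(aw) = \exp_\varphi(c\lambda) = \varphi_a(b)$, so $b - \exp_\varphi(w) \in \ker\varphi_a \cap \mathbb{K}_\infty$. If this kernel is trivial, then $b = \exp_\varphi(w)$ lies in $\exp_\varphi(\mathbb{K}_\infty) \cap \mathbb{A} = \exp_\varphi(U(\varphi/\mathbb{A})) = \exp_\varphi(\lambda\mathbb{A})$; by injectivity of $\exp_\varphi$ we obtain $w = c'\lambda$ for some $c' \in \mathbb{A}$, and hence $b = \varphi_{c'}(u)$.

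The main obstacle is thus to establish $\ker\varphi_a \cap \mathbb{K}_\infty = \{0\}$ for every $a \in \mathbb{A}\setminus\{0\}$. The plan is to apply a Newton polygon argument at $\infty$ to the separable $q$-polynomial $\varphi_a(x) = \sum_{j=0}^d [a,j] x^{q^j}$, where $d = \deg_\theta a$. The degree formula used in the proof of Lemma~\ref{LemmaS2-4} gives $v_\infty([a,j]) = -q^j(d-j) - n(q^j-1)/(q-1)$, and a direct computation then yields that the slope of the segment from $(q^j, v_\infty([a,j]))$ to $(q^{j+1}, v_\infty([a,j+1]))$ equals $-(d-j) + (q-n)/(q-1)$. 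These slopes are strictly increasing in $j$, so the piecewise linear graph is already the lower convex hull, and every nonzero root of $\varphi_a$ satisfies $v_\infty(x) \in \tfrac{n-1}{q-1} + \mathbb{Z}$. Since $(n-1)/(q-1) \notin \mathbb{Z}$ by hypothesis, no such root can lie in $\mathbb{K}_\infty$, and the kernel vanishes.

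To conclude, observe that $u$ has leading coefficient in $\mathbb{F}_q^\times$ as a polynomial in $\theta$: for $n \leq q-1$ this is immediate since $u = 1$ by Lemma~\ref{LemmaS2-2}, while for $n \geq q$ Proposition~\ref{PropositionS2-3} applied with $r \neq 0$ (the case $r = 0$ coinciding with $n \equiv 1 \pmod{q-1}$) gives the leading coefficient of $u_C(t_1,\ldots,t_n;z)$ in $\theta$ as a nonzero power of $z$ up to sign, equal to $\pm 1$ after specialising $z = 1$. Since $\varphi_{c'}(u) = b \in A[t_1,\ldots,t_n]$ with $c' \in \mathbb{A}$, Lemma~\ref{LemmaS2-4} forces $c' \in A[t_1,\ldots,t_n]$, so $b \in \mathcal{M}(\varphi/A[t_1,\ldots,t_n])$ and the proof is complete.
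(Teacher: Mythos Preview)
Your proof is correct and follows essentially the same route as the paper: use injectivity of $\varphi_a$ on $\mathbb{K}_\infty$ to deduce $b=\exp_\varphi(c\lambda/a)$, invoke $U(\varphi/\mathbb{A})=\lambda\mathbb{A}$ (from $\mathbb{B}=1$) to get $c/a\in\mathbb{A}$, and finish with Lemma~\ref{LemmaS2-4}. The only substantive difference is that the paper simply asserts the injectivity of $\varphi_a$ on $\mathbb{K}_\infty$ when $n\not\equiv 1\pmod{q-1}$, whereas you supply a Newton-polygon justification for it; your computation of the slopes is correct (note that the degree formula from Lemma~\ref{LemmaS2-4} indeed yields $v_\infty([a,j])=-q^j(d-j)-n(q^j-1)/(q-1)$ for any $a\in A[t_1,\ldots,t_n]$ of $\theta$-degree $d$, since the top-degree contribution comes from $[\theta^d,j]$ with a nonzero coefficient).
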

\begin{proof}
Let $b\in \sqrt{\mathcal M(\varphi/A[t_1, \ldots, t_n])}\setminus\{ 0\}, $ then there exist $a,c \in A[t_1, \ldots, t_n]\setminus \{0\},$ such that:
$$\varphi_a(b)=\varphi_c(u_C(t_1, \ldots, t_n; 1)).$$
By Proposition \ref{PropositionS2-3}, the leading coefficient of $u_C(t_1, \ldots, t_n; 1)$ is in $\mathbb F_q^\times.$ By (\ref{unit}):
$$u_C(t_1, \ldots, t_n; 1)=\exp_{\varphi}(\mathcal L(\varphi/\mathbb A)).$$
Since (recall that $n\not \equiv 1\pmod{q-1}$) $\varphi_a: \mathbb K_{\infty}\rightarrow \mathbb K_{\infty}$ is an injective map, we get:
$$b=\exp_{\varphi} (\frac{c\mathcal L(\varphi/\mathbb A)}{a}).$$
 But by Theorem \ref{TheoremS2-1} and (\ref{EqS2-1}), we have:
$$\exp_{\varphi^{-1}}(\mathbb A)=\mathcal L(\varphi/\mathbb A)\mathbb A.$$
This implies:
$$\frac{c\mathcal L(\varphi/\mathbb A)}{a}\in \mathcal L(\varphi/\mathbb A)\mathbb A. $$
Thus $a$ must divide $c$ in $\mathbb A.$ By Lemma \ref{LemmaS2-4}, we have that $a$ divides $c$ in $A[t_1, \ldots, t_n]$ and therefore $b\in  \mathcal M(\varphi/A[t_1, \ldots, t_n]).$
\end{proof}
\begin{remark}\label{Remark3} We briefly treat the case $n=0.$ In this case $\varphi=C$ is the Carlitz module and $\mathcal M(C/A)=\{ C_a(1),\, a\in A\}.$ Observe that:
$$K_{\infty}=\log_C(1) A\oplus M_{\infty},$$
where $M_{\infty}=\{x\in K_{\infty}, v_{\infty}(x)\geq 1\}$ and where $\log_C\in K\{\{ \tau\}\}$ is the Carlitz logarithm (see \cite{GOS}, section 3.4). Therefore:
$$\exp_C(K_{\infty})=\mathcal M(C/A)\oplus M_{\infty}.$$
This implies:
$$A\cap \exp_C(K_{\infty})=\mathcal M(C/A).$$
Thus:
$$\sqrt{\mathcal M(C/A)}=\mathcal M(C/A).$$
Furthermore observe that if $q\geq 3,$ $\mathcal M(C/A)$ is a free $A$-module of rank one and if $q=2,$ we have an isomorphismm of $A$-modules:
$$\mathcal M(C/A)\simeq \frac {A}{(\theta(\theta+1))A}.$$
\end{remark}

\subsubsection{The case $n\equiv 1\pmod{q-1}$}\label{nodd}${}$\par
In this section, we assume that $n\geq q$ and $n\equiv 1\pmod{q-1}$. Recall that, by Lemma \ref{LemmaS2-3}, we have :
$$u_C(t_1, \ldots, t_n; 1)=0.$$
Therefore, for $n\geq q,$ we set:
$$u_C^{(1)}(t_1, \ldots, t_n; z)=\frac{d}{dz} u_C(t_1, \ldots, t_n; z)\in A[t_1, \ldots, t_n,z].$$ Observe  that, if we set $N= \{ x\in \mathbb T_n( K_{\infty}), v_{\infty}(x)>\frac{n-q}{q-1}\},$ then:
$$ \mathbb T_n(K_{\infty})= \frac{\widetilde{\pi}}{\omega(t_1)\cdots \omega (t_n)} A[t_1, \ldots, t_n]\oplus N.$$
Furthermore , by (\ref{kernel}), we get:
$$N=\exp_{\varphi}(N)=\exp_{\varphi}(\mathbb T_n(K_{\infty})).$$
\begin{proposition}\label{PropositionS2-4}${}$\par
We  set $E=\frac{\mathbb T_n(K_{\infty})}{N}$ viewed as an $A[t_1, \ldots, t_n]$-module via $\varphi.$ Then   $\exp_{\varphi}^{(1)}$ induces an injective morphism of $A[t_1, \ldots, t_n]$-modules : $$\exp_{\varphi}^{(1)}: E\hookrightarrow E.$$
\end{proposition}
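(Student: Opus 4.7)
The plan is to prove injectivity by leading-term ($\theta$-adic) analysis, exploiting the parity constraint imposed by $n \equiv 1 \pmod{q-1}$. Setting $m = (n-1)/(q-1)$, so that $N = \{v_\infty \geq m\}$, the first task is to show $\exp_\varphi^{(1)}(N) \subseteq N$ so the map descends to $E$. Using $v_\infty(D_j) = -jq^j$, $v_\infty(b_j(t_i)) = -(q^j-1)/(q-1)$, and $n = 1 + (q-1)m$, the $j$-th summand of $\exp_\varphi^{(1)}(x)$ has valuation $m + g(j) + v_\infty(j)$, where $g(j) := q^j(j-k) - (q^j-1)/(q-1)$ and $k := m - v_\infty(x)$. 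For $x \in N$ (so $k \leq 0$) one checks $g(j) \geq q-1$ for all $j \geq 1$, whence every term has valuation $>m$.

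For injectivity I argue by contrapositive: let $x \in \mathbb T_n(K_\infty)\setminus N$, so $k \geq 1$, and let $\xi\theta^{k-m}$ be its leading term with $\xi \in \mathbb F_q[t_1,\ldots,t_n]\setminus\{0\}$. From the identity $g(j+1) - g(j) = q^j(q-1)(j+1-k)$, the function $g$ is minimized on $\{j \geq 1\}$ at $j = k$ and, when $k \geq 2$, also at $j = k-1$, both giving the value $-(q^k-1)/(q-1)$. So the smallest valuation attained by any summand of $\exp_\varphi^{(1)}(x)$ is $m - (q^k-1)/(q-1) < m$, and it remains only to show that the leading $\theta$-coefficient of $\exp_\varphi^{(1)}(x)$ at that valuation does not vanish.

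Since $\tau$ fixes $\mathbb F_q[t_1,\ldots,t_n]$ and $b_j(t_i)$ has leading $\theta$-coefficient $(-1)^j$, the leading contribution of the $j$-th summand is $j(-1)^{jn}\xi$ times a fixed power of $\theta$. For $k \geq 2$ with $p \nmid k(k-1)$, combining the $j = k-1$ and $j = k$ contributions yields $[(k-1)(-1)^{(k-1)n} + k(-1)^{kn}]\xi$: in characteristic $2$ this collapses to $(2k-1)\xi = \xi$, while in odd characteristic the congruence $n \equiv 1 \pmod{q-1}$ (with $q-1$ even) forces $n$ odd, making the bracket $(-1)^k$. When $p$ divides exactly one of $\{k-1, k\}$ (never both, since they differ by $1$) the surviving index alone contributes non-trivially; for $k = 1$ only $j = 1$ contributes, yielding $(-1)^n\xi$. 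In every case the leading coefficient is a non-zero scalar multiple of $\xi$, so $v_\infty(\exp_\varphi^{(1)}(x)) = m - (q^k-1)/(q-1) < m$ and thus $\exp_\varphi^{(1)}(x) \notin N$.

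The main obstacle is the sign bookkeeping in the simultaneous two-term contribution from $j = k-1$ and $j = k$: the parity condition $n \equiv 1 \pmod{q-1}$ is exactly what ensures the two leading coefficients do not cancel in odd characteristic, and is thus essential for the argument to close.
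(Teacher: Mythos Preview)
Your argument for injectivity is correct and genuinely different from the paper's. You give a direct $\theta$-adic leading-term analysis: after checking $\exp_\varphi^{(1)}(N)\subseteq N$, you show that for $x\notin N$ with leading term $\xi\theta^{k-m}$, the minimal-valuation contribution to $\exp_\varphi^{(1)}(x)$ comes from $j\in\{k-1,k\}$ and has nonzero coefficient, the parity constraint $n\equiv 1\pmod{q-1}$ (forcing $n$ odd in odd characteristic) being exactly what prevents cancellation. One small point you leave implicit is that the lower-order terms of $x$ contribute only at valuation $\geq m-(q^{k-1}-1)/(q-1)>m-(q^k-1)/(q-1)$, so they cannot interfere; this follows from the same bound on $g(j)$ with $k$ replaced by $k-1$.

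The paper instead argues module-theoretically: it first records that on $U(\varphi/A[t_1,\dots,t_n])=\ker\exp_\varphi$ one has $\exp_\varphi^{(1)}(\theta x)\equiv\varphi_\theta(\exp_\varphi^{(1)}(x))\pmod N$, then invokes (from \cite{APTR}) that $E$ has no nonzero $\varphi$-torsion, and finally uses Proposition~\ref{PropositionS2-3} to see that $\varphi_{\mathbb B}(\exp_\varphi^{(1)}(U))\equiv u_C^{(1)}(t_1,\dots,t_n;1)A[t_1,\dots,t_n]\not\equiv 0\pmod N$. Your approach is more elementary and self-contained, avoiding the special unit and the cited torsion-freeness; the paper's approach has the advantage of tying the injectivity directly to the arithmetic object $u_C^{(1)}$ used in Theorem~\ref{TheoremS2-2}. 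Note also that you do not address the $A[t_1,\dots,t_n]$-linearity claim in the statement; the paper verifies this only for representatives in $U$ (where $\exp_\varphi(x)=0$), which is what is actually needed downstream.
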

\begin{proof} Recall that by (\ref{kernel}), we have:
$$U(\varphi/A[t_1, \ldots,t_n])=\frac{\widetilde{\pi}}{\omega(t_1)\cdots \omega (t_n)}A[t_1, \ldots, t_n].$$
By (\ref{unit}), we have:
$$\exp_{\varphi}^{(1)}(\mathcal L(\varphi/\mathbb A))\equiv u_C^{(1)}(t_1, \ldots, t_n; 1)\pmod{N}.$$
Furthermore, for $x\in U(\varphi/A[t_1, \ldots,t_n]),$ using the fact that $\exp_{\widetilde{\varphi}} \theta =\widetilde{\varphi}_{\theta} \exp_{\widetilde{\varphi}}$ and that   $\exp_{\varphi}(x)=0,$  we have:
$$\exp_{\varphi}^{(1)}(\theta x)\equiv \varphi_{\theta} (\exp_{\varphi}^{(1)}(x))\pmod{N}.$$
 Also observe  that:
$$\exp_{\varphi}^{(1)}(N)\subset N.$$
 Let $x\in \mathbb T_n(K_{\infty})$ such that there exists $a\in A[t_1, \ldots,t_n]\setminus\{0\},$ $a$ monic as a polynomial in $\theta,$  with $\varphi_a(x)=0$ in $E.$ Then, by \cite{APTR}, proof of Corollary 6.5,   we  have:
$$x\in \exp_{\varphi}(T_n(K_{\infty}))=N.$$
 By Proposition \ref{PropositionS2-3},we have that  $u_C^{(1)}(t_1, \ldots, t_n; 1)\in A[t_1, \ldots, t_n]\setminus \{ 0\},$ and since 
 $$\varphi_{\mathbb B(t_1, \ldots, t_n)}(\exp_{\varphi}^{(1)}(U(\varphi/A[t_1, \ldots, t_n]))\equiv u_C^{(1)}(t_1, \ldots, t_n; 1)A[t_1, \ldots, t_n])\pmod{N},$$
 and $A[t_1, \ldots,t_n]\cap N=\{0\},$   we deduce the assertion of the Proposition.
\end{proof}

\begin{lemma}\label{LemmaS2-5} 
 We have:
$$\exp_{\varphi}^{(1)}(\frac{\widetilde{\pi}}{\omega(t_1)\cdots \omega(t_n)})\equiv 
\frac{(-1)^{\frac{n-q}{q-1}}}{\theta^{\frac{n-q}{q-1}}}\pmod{ N}.$$
\end{lemma}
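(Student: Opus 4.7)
The natural strategy is to apply $\exp_\varphi^{(1)}$ directly to $u:=\widetilde{\pi}/(\omega(t_1)\cdots\omega(t_n))$ and exploit the functional equation $\tau(\omega(t))=(t-\theta)\omega(t)$, which follows from $\tau(\exp_C(\widetilde{\pi}/\theta))=-\theta\exp_C(\widetilde{\pi}/\theta)$. Iterating gives $\tau^j(\omega(t_i))=b_j(t_i)\omega(t_i)$, so the factors $b_j(t_1)\cdots b_j(t_n)$ appearing in the definition \eqref{exp1} of $\exp_\varphi^{(1)}$ cancel exactly against those produced by $\tau^j$ on the denominator. This yields the closed form
\[
\exp_\varphi^{(1)}(u)\;=\;\frac{1}{\omega(t_1)\cdots\omega(t_n)}\,\sum_{j\geq 1}\frac{j\,\widetilde{\pi}^{\,q^j}}{D_j}.
\]

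Next I would extract the dominant contribution modulo $N$. A direct valuation calculation gives $v_\infty(j\widetilde{\pi}^{\,q^j}/D_j)=q^j(j(q-1)-q)/(q-1)$, equal to $-q/(q-1)$ at $j=1$ and strictly larger for every $j\geq 2$; combined with $v_\infty(\omega(t_1)\cdots\omega(t_n))=-n/(q-1)$, all terms beyond $j=1$ have $v_\infty>m:=(n-q)/(q-1)$ and therefore lie in $N$. Hence
\[
\exp_\varphi^{(1)}(u)\;\equiv\;\frac{\widetilde{\pi}^{\,q}}{(\theta^q-\theta)\,\omega(t_1)\cdots\omega(t_n)}\;=\;u\cdot\frac{\widetilde{\pi}^{\,q-1}}{\theta^q-\theta}\pmod N.
\]

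To identify this with $(-1)^m/\theta^m$, set $\xi:=\exp_C(\widetilde{\pi}/\theta)$. Then $C_\theta(\xi)=\exp_C(\widetilde{\pi})=0$ gives $\xi^{q-1}=-\theta$. Using $\omega(t)=\xi\prod_{k\geq 0}(1-t/\theta^{q^k})^{-1}$, I rewrite $u=\widetilde{\pi}\,\xi^{-n}\,Q$ with $Q:=\prod_{i,k}(1-t_i/\theta^{q^k})$. Writing $n-1=(m+1)(q-1)$ and applying $\xi^{-(q-1)}=-1/\theta$ gives $\widetilde{\pi}\xi^{-n}=(-1)^{m+1}(\widetilde{\pi}/\xi)/\theta^{m+1}$, and combining with the identity $\widetilde{\pi}^{\,q-1}=-\theta(\widetilde{\pi}/\xi)^{q-1}$ (again from $\xi^{q-1}=-\theta$) produces
\[
u\cdot\frac{\widetilde{\pi}^{\,q-1}}{\theta^q-\theta}\;=\;\frac{(-1)^m}{\theta^m}\cdot\frac{(\widetilde{\pi}/\xi)^{q}\,Q}{\theta^q-\theta}.
\]

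The main obstacle is the final valuation estimate $(\widetilde{\pi}/\xi)^{q}Q/(\theta^q-\theta)\equiv 1$ modulo terms of $v_\infty>0$, which after multiplication by $(-1)^m/\theta^m$ places the error precisely in $N$. Here the expansion $\exp_C(\widetilde{\pi}/\theta)=(\widetilde{\pi}/\theta)(1+A)$ with $v_\infty(A)\geq q-1$ gives $\widetilde{\pi}/\xi=\theta+r$ with $v_\infty(r)\geq q-2$; in characteristic $p$ this yields $(\widetilde{\pi}/\xi)^{q}=\theta^{q}+r^{q}$ with $v_\infty(r^{q})\geq q(q-2)$. Combined with $v_\infty(Q-1)\geq 1$ and $\theta^{q}/(\theta^{q}-\theta)=1+O(\theta^{1-q})$, a careful bookkeeping shows that all three error contributions have strictly positive valuation uniformly in $q\geq 2$ (including the delicate case $q=2$). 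This completes the proof.
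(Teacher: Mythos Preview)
Your proof is correct and follows essentially the same strategy as the paper: isolate the $j=1$ term of $\exp_\varphi^{(1)}(u)$ via the functional equation $\tau(\omega(t))=(t-\theta)\omega(t)$, show the $j\ge 2$ terms lie in $N$, and identify the leading term of the $j=1$ contribution. Your closed form $\exp_\varphi^{(1)}(u)=\frac{1}{\omega(t_1)\cdots\omega(t_n)}\sum_{j\ge 1}\frac{j\,\widetilde{\pi}^{\,q^j}}{D_j}$ and the explicit $\xi$-substitution make the final identification more transparent than the paper's terse valuation assertions, but the underlying argument is the same.
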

\begin{proof} We first notice  that :
$$v_{\infty}(\frac{\widetilde{\pi}}{\omega(t_1)\cdots \omega(t_n)} -\frac{(-1)^{\frac{n-q}{q-1}}}{\theta^{\frac{n-q}{q-1}}})> \frac{n-q}{q-1}.$$
For $i\geq 2,$ we also have that:
$$v_{\infty}( \frac{b_i(t_1)\cdots b_i(t_n)\tau^i(\frac{\widetilde{\pi}}{\omega(t_1)\cdots \omega(t_n)}  )}{D_i})>\frac{n-q}{q-1}.$$
Furthermore, observe that:
$$ \tau (\frac{\widetilde{\pi}}{\omega(t_1)\ldots \omega(t_n)})=\frac{\widetilde{\pi}^q}{(t_1-\theta)\cdots (t_n-\theta) \omega(t_1)\cdots \omega(t_n)}.$$
Finally, observe that :
$$v_{\infty}(\frac{\widetilde{\pi}^q}{(\theta^q-\theta)\omega(t_1)\cdots \omega(t_n)}-\frac{(-1)^{\frac{n-q}{q-1}}}{\theta^{\frac{n-q}{q-1}}})> \frac{n-q}{q-1}.$$
\end{proof}

\begin{lemma}\label{LemmaS2-6}  Let $a\in A[t_1, \ldots, t_n]\setminus\{ 0\},$ $a$ monic as a polynomial in $\theta.$  If 
$$\varphi_{a}(\frac{1}{\theta^{\frac{n-q}{q-1}}})\in A[t_1, \ldots, t_n]+\exp_{\varphi}(\mathbb T_n(K_{\infty})),$$
then $deg_{\theta}(a)\geq \frac{n-q}{q-1}.$
\end{lemma}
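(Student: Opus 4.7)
The plan is to reinterpret the hypothesis as a vanishing condition in a quotient module and to reduce the lemma to a linear independence statement. Set $m := (n-q)/(q-1)$. Since $n \equiv 1 \pmod{q-1}$, the discussion just before Proposition~\ref{PropositionS2-4} gives $\exp_\varphi(\mathbb T_n(K_\infty)) = N$ with $N = \{x \in \mathbb T_n(K_\infty) : v_\infty(x) > m\}$, so the hypothesis becomes $\varphi_a[\theta^{-m}] = 0$ in
$$
Q := \frac{\mathbb T_n(K_\infty)}{A[t_1,\ldots,t_n] + N}.
$$
A direct computation with the $1/\theta$-adic expansion shows that $Q$ is a free $\mathbb F_q[t_1,\ldots,t_n]$-module of rank $m$, with basis $[\theta^{-1}], \ldots, [\theta^{-m}]$.

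Writing $a = \theta^d + \sum_{i<d} a_i\theta^i$ with $a_i \in \mathbb F_q[t_1,\ldots,t_n]$ (and $a_d = 1$), the hypothesis becomes $\sum_{i=0}^d a_i v_i = 0$ in $Q$, where $v_i := \varphi_{\theta^i}[\theta^{-m}]$. Hence it suffices to prove that $v_0, v_1, \ldots, v_{m-1}$ are linearly independent over $\mathbb F_q[t_1,\ldots,t_n]$, as this precludes any monic relation of degree $<m$. Equivalently, I must show that the determinant $D \in \mathbb F_q[t_1,\ldots,t_n]$ of the matrix whose columns are $v_0,\ldots,v_{m-1}$ in the basis $([\theta^{-1}], \ldots, [\theta^{-m}])$ is a nonzero polynomial.

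To establish $D \neq 0$, I specialize at $t_1 = \cdots = t_n = 0$. There $\varphi_\theta$ becomes $\varrho_\theta := (-1)^n \theta^n \tau + \theta$ (the Drinfeld module appearing in the proof of Theorem~\ref{TheoremS2-1}), and since $e_j(0, \ldots, 0) = 0$ for $j \geq 1$ (where $e_j$ denotes the $j$-th elementary symmetric polynomial), one computes
$$
\varrho_\theta[\theta^{-j}] = [\theta^{-(j-1)}] + (-1)^n [\theta^{-(jq-n)}] \quad \text{in } Q|_{t=0},
$$
with the convention that $[\theta^{-i}] = 0$ whenever $i \notin \{1, \ldots, m\}$. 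Reindexing via $b_i := [\theta^{-(m-i)}]$, $i = 0, \ldots, m-1$, and using $n = m(q-1) + q$, one obtains $\varrho_\theta(b_i) = b_{i+1} + (-1)^n b_{(i+1)q}$. The key point, which is the main technical obstacle, is that $(i+1)q > i+1$ for $q \geq 2$, so this matrix is strictly lower triangular in the $b$-basis, with $1$'s on the subdiagonal. A straightforward induction on $k$ then shows $v_k|_{t=0} = b_k + \sum_{j > k} c_{k,j} b_j$, so the matrix $(v_0|_{t=0}, \ldots, v_{m-1}|_{t=0})$ in the $b$-basis is lower triangular with $1$'s on the diagonal, giving $D|_{t=0} = \pm 1 \neq 0$ and therefore $D \neq 0$.
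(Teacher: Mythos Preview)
Your proof is correct and follows essentially the same approach as the paper: both specialize at $t_1=\cdots=t_n=0$ to the rank-one Drinfeld module $\varrho_\theta=(-1)^n\theta^n\tau+\theta$ and exploit the triangularity $\varrho_{\theta^i}(\theta^{-m})\equiv\theta^{-(m-i)}$ modulo lower-order terms. The only difference is presentational: the paper specializes $a$ first (using that $a$ is monic in $\theta$ so $b=a|_{t=0}$ has the same degree) and argues directly with leading terms in $K_\infty$, while you package the same computation as the nonvanishing of a determinant over $\mathbb F_q[t_1,\ldots,t_n]$; your reindexing via $b_i=[\theta^{-(m-i)}]$ is exactly the paper's observation rewritten in matrix form.
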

\begin{proof} Write $a=\theta^r+\sum_{i=0}^{r-1} a_i \theta ^i, $ $ a_i \in \mathbb F_q[t_1, \ldots, t_n].$  Set $b=a\mid_{t_j=0, j= 1, \cdots n}.$ 
Let $\varrho :A\rightarrow A\{ \tau\}$ be the Drinfeld $A$-module of rank one given by $\varrho_{\theta}= (-1)^n\theta^n\tau +\theta.$ We have:
$${\rm Ker}(\exp_{\varrho}: K_{\infty}\rightarrow K_{\infty}) =\frac{\widetilde{\pi}}{\exp_C(\frac{\widetilde{\pi}}{\theta}) ^n}A$$ 
We easily deduce that:
$$\exp_{\varrho}(K_{\infty})=\{ x\in K_{\infty}, v_{\infty}(x)>\frac{n-q}{q-1}\}.$$
Since $\varphi_{a}(\frac{1}{\theta^{\frac{n-q}{q-1}}})\in A[t_1, \ldots, t_n]+\exp_{\varphi}(\mathbb T_n(K_{\infty})),$  we have :
$$\varrho_b(\frac{1}{\theta^{\frac{n-q}{q-1}}})\in A+\exp_{\varrho}(K_{\infty}).$$
We observe that, for  $\frac{n-q}{q-1}\geq \ell \geq 1,$ we have :
$$\varrho_{\theta}(\frac{1}{\theta^{\ell}})\equiv \frac{1}{\theta^{\ell-1}} \pmod{\frac{1}{\theta^{\ell-2}}A}.$$
Thus, for $0\leq i\leq \frac{n-q}{q-1}-1,$ we get:
$$\varrho_{\theta^ i}(\frac{1}{\theta^{(n-q)/(q-1)}})\equiv \frac{1}{\theta^{(n-q)/(q-1)-i}}\pmod{\frac{1}{\theta^{(n-q)/(q-1)-i-1}}A}.$$
This implies that $r\geq \frac{n-q}{q-1}.$
\end{proof}

\begin{theorem} \label{TheoremS2-2} The module $H^{(1)}(\varphi/A[t_1, \ldots, t_n])$ is the sub-$A[t_1, \ldots, t_n]$-module of $H(\varphi/A[t_1, \ldots, t_n])$ generated by the image of $\frac{1}{\theta^{(n-q)/(q-1)}}$ in $H(\varphi/A[t_1, \ldots, t_n]).$ We have an isomorphism of $A[t_1, \ldots, t_n]$-modules:
$$H^{(1)}(\varphi/A[t_1, \ldots, t_n])\simeq \frac{A[t_1, \ldots, t_n]}{\mathbb B(t_1, \ldots, t_n)A[t_1, \ldots, t_n]}.$$
Furthermore,  $\frac{H(\varphi/A[t_1, \ldots, t_n])}{H^{(1)}(\varphi/A[t_1, \ldots, t_n])}$ is a finitely generated and torsion $\mathbb F_q[t_1, \ldots, t_n]$-module.
\end{theorem}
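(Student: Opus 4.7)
The plan is to combine Corollary \ref{CorollaryS2-1} with the structural results of Lemmas \ref{LemmaS2-5} and \ref{LemmaS2-6} and the identification of units in (\ref{kernel}), so as to recognize $H^{(1)}(\varphi/A[t_1,\ldots,t_n])$ as a cyclic module and determine its annihilator.

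First I would identify the generator. By Corollary \ref{CorollaryS2-1}, $H^{(1)}$ is the image of the $A[t_1,\ldots,t_n]$-linear map $\exp_\varphi^{(1)}\colon U(\varphi/A[t_1,\ldots,t_n])/U_{St}(\varphi/A[t_1,\ldots,t_n])\to H(\varphi/A[t_1,\ldots,t_n])$. By (\ref{kernel}), $U(\varphi/A[t_1,\ldots,t_n])=A[t_1,\ldots,t_n]\cdot u$ with $u=\widetilde{\pi}/(\omega(t_1)\cdots\omega(t_n))$, and Lemma \ref{LemmaS2-1} together with Remark \ref{remark2} (applied to the Carlitz situation) gives $U_{St}(\varphi/A[t_1,\ldots,t_n])=\mathbb{B}(t_1,\ldots,t_n)A[t_1,\ldots,t_n]\cdot u$, so that $U/U_{St}\simeq A[t_1,\ldots,t_n]/\mathbb{B}(t_1,\ldots,t_n)A[t_1,\ldots,t_n]$ with $\bar u$ corresponding to $\bar 1$. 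Lemma \ref{LemmaS2-5} then identifies the image of $\bar u$ in $H$ as $(-1)^{(n-q)/(q-1)}$ times the class of $1/\theta^{(n-q)/(q-1)}$, so $H^{(1)}$ is generated by this class and one obtains a canonical surjection $\psi\colon A[t_1,\ldots,t_n]/\mathbb{B}(t_1,\ldots,t_n)A[t_1,\ldots,t_n]\twoheadrightarrow H^{(1)}$.

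Next I would deduce both the isomorphism $\psi$ and the torsion property of $H/H^{(1)}$ in one stroke via a dimension count after tensoring with $k=\mathbb{F}_q(t_1,\ldots,t_n)$. Since $\mathbb{B}$ is monic in $\theta$, the source of $\psi$ is a free $\mathbb{F}_q[t_1,\ldots,t_n]$-module of rank $(n-q)/(q-1)$, so $\ker\psi$ is $\mathbb{F}_q[t_1,\ldots,t_n]$-torsion-free. Flatness of $k$ over $\mathbb{F}_q[t_1,\ldots,t_n]$ combined with (\ref{EqS2-2}) yields an inclusion $H^{(1)}\otimes_{\mathbb{F}_q[t_1,\ldots,t_n]}k\hookrightarrow H(\varphi/\mathbb{A})$, and both $\mathbb{A}/\mathbb{B}\mathbb{A}$ and $H(\varphi/\mathbb{A})$ are $k$-vector spaces of dimension $\deg_\theta\mathbb{B}=(n-q)/(q-1)$. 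Hence to obtain simultaneously $\ker\psi\otimes k=0$ (which forces $\ker\psi=0$) and $(H/H^{(1)})\otimes k=0$ (which forces $H/H^{(1)}$ torsion), it suffices to show that $H(\varphi/\mathbb{A})$ itself is cyclic, generated by the class of $1/\theta^{(n-q)/(q-1)}$. Finite generation of $H/H^{(1)}$ is automatic from section \ref{Basicproperties}.

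The main obstacle, and the heart of the argument, is this cyclicity of $H(\varphi/\mathbb{A})$. Since $\mathbb{A}=k[\theta]$ is a PID, the annihilator in $\mathbb{A}$ of the class of $1/\theta^{(n-q)/(q-1)}$ is a principal ideal $\pi\mathbb{A}$ with $\pi$ monic in $\theta$ dividing $\mathbb{B}$, and cyclicity is equivalent to $\deg_\theta\pi\geq(n-q)/(q-1)$. To obtain this bound I would extend Lemma \ref{LemmaS2-6} from $A[t_1,\ldots,t_n]$ to $\mathbb{A}$ by specialization: pick $(\eta_1,\ldots,\eta_n)\in\overline{\mathbb{F}_q}^n$ at which every denominator appearing in the coefficients of $\pi$ and in a chosen witness of the identity $\varphi_\pi(1/\theta^{(n-q)/(q-1)})\in\mathbb{A}+\exp_\varphi(\mathbb{K}_\infty)$ is regular, which exists by denseness of the regular locus in $\overline{\mathbb{F}_q}^n$. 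The specialization $\pi|_\eta\in\overline{\mathbb{F}_q}[\theta]$ is monic of the same degree as $\pi$, and the specialized Drinfeld $A$-module $\varphi^\eta$ has $\varphi^\eta_\theta=\theta+\prod_j(\eta_j-\theta)\tau$, whose leading $\theta$-behavior in the $\tau$-coefficient matches that of the auxiliary module $\varrho$ appearing in the proof of Lemma \ref{LemmaS2-6}. The inductive identity $\varrho_{\theta^i}(1/\theta^k)\equiv 1/\theta^{k-i}\pmod{1/\theta^{k-i-1}A}$ continues to hold up to negligible lower-order $\theta$-terms for $\varphi^\eta$, so the degree argument of Lemma \ref{LemmaS2-6} carries over and yields $\deg_\theta\pi\geq(n-q)/(q-1)$. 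The main technical point to verify carefully is the existence of a specialization $\eta$ compatible with the annihilation identity, and the faithfulness of the specialization of that identity.
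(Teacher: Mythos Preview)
Your reduction is correct and close in spirit to the paper's argument: identifying the generator via Lemma~\ref{LemmaS2-5}, using the surjection from $A[t_1,\ldots,t_n]/\mathbb{B}$ onto $H^{(1)}$, and reducing everything (via flatness and the dimension $\deg_\theta\mathbb{B}=(n-q)/(q-1)$) to the degree bound $\deg_\theta\pi\ge (n-q)/(q-1)$ for the annihilator $\pi$ of the class of $1/\theta^{(n-q)/(q-1)}$. This is exactly where the paper also lands.

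The gap is in your proposed proof of that degree bound. Specializing at a generic $\eta\in\overline{\mathbb{F}_q}^n$ does \emph{not} preserve the inductive identity of Lemma~\ref{LemmaS2-6}. That identity relies on the very particular shape $\varrho_\theta=(-\theta)^n\tau+\theta$ obtained at $\eta=0$: one has $\varrho_\theta(1/\theta^\ell)=1/\theta^{\ell-1}+(-1)^n\theta^{n-q\ell}$, a single extra monomial lying in $\theta^{2-\ell}A$. For general $\eta$, the extra term is $\prod_j(\eta_j-\theta)/\theta^{q\ell}$, which contributes at \emph{all} positions $\theta^{j-q\ell}$ for $0\le j\le n$. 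In particular, at $\ell=m:=(n-q)/(q-1)$ the coefficient of $\theta^{-m}$ is $(-1)^{\,n-q}e_q(\eta)$, the $q$th elementary symmetric function of the $\eta_j$, which is nonzero for generic $\eta$. So $\varphi^\eta_\theta(1/\theta^m)$ already has a nonzero component at $1/\theta^m$ modulo $A+N'$, destroying the triangular structure that drives the degree argument. Your ``negligible lower-order $\theta$-terms'' are precisely the ones that matter.

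The fix is simpler than specialization and is what the paper does: you already know $\pi\mid\mathbb{B}$ in $\mathbb{A}=k[\theta]$, and $\mathbb{B}\in A[t_1,\ldots,t_n]$ is monic in $\theta$; since $\mathbb{F}_q[t_1,\ldots,t_n]$ is integrally closed, any monic factor of $\mathbb{B}$ in $k[\theta]$ already lies in $A[t_1,\ldots,t_n]$. Hence $\pi\in A[t_1,\ldots,t_n]$, and because $\varphi_\pi(1/\theta^m)\in\mathbb{T}_n(K_\infty)$ and $(\mathbb{A}+\exp_\varphi(\mathbb{K}_\infty))\cap\mathbb{T}_n(K_\infty)=A[t_1,\ldots,t_n]+\exp_\varphi(\mathbb{T}_n(K_\infty))$, Lemma~\ref{LemmaS2-6} applies verbatim to give $\deg_\theta\pi\ge m$. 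The paper organizes this by working with the submodule $V=\{x\in U:\exp_\varphi^{(1)}(x)\in A[t_1,\ldots,t_n]+N\}$, writing $V=G\cdot U$ with $G\mid\mathbb{B}$ monic (hence $G\in A[t_1,\ldots,t_n]$), and invoking Lemma~\ref{LemmaS2-6} on $G$; together with Proposition~\ref{PropositionS2-4} this gives directly that $\exp_\varphi^{(1)}:U/U_{St}\to H$ is injective, without the detour through $H(\varphi/\mathbb{A})$.
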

\begin{proof} Set:
$$V=\{ x\in U(\varphi/A[t_1, \ldots, t_n]), \exp_{\varphi}^{(1)}(x)\in A[t_1, \ldots,t_n]+N\}.$$
We have:
$$U_{St}(\varphi/A[t_1, \ldots, t_n])=\mathcal L(\varphi/\mathbb A) A[t_1, \ldots, t_n]\subset V.$$
Observe that the $k$-vector space $W$ generated by $V$ is a free  $\mathbb A$-module of rank one and, by (\ref{EqS2-1}),  that $G:=[\frac{U(\varphi/\mathbb A)}{W}]_{\mathbb A}$ divides $\mathbb B(t_1, \ldots, t_n)$ in $\mathbb A.$ Thus $G\in A[t_1, \ldots, t_n].$ This implies:
$$V=GU(\varphi/A[t_1, \ldots, t_n]).$$
We have :
$$\exp_{\varphi}^{(1)} (\frac{G\widetilde{\pi}}{\omega(t_1)\cdots \omega(t_n)}) \in A[t_1, \ldots, t_n]+N.$$
Therefore, by Lemma \ref{LemmaS2-5}:
$$\varphi_{G}(\frac{1}{\theta^{\frac{n-q}{q-1}}})\in A[t_1, \ldots, t_n]+N.$$
Thus by Lemma \ref{LemmaS2-6}, we get that $deg_{\theta} G\geq \frac{n-q}{q-1},$ and since $G$ divides $\mathbb B(t_1, \ldots, t_n),$ we get $G=\mathbb B(t_1, \ldots, t_n),$ i.e.:
$$V=U_{St}(\varphi/A[t_1, \ldots, t_n]).$$
By Proposition \ref{PropositionS2-4} and the above equality, the map  $\exp_{\varphi}^{(1)}$ induces an injective morphism of $A[t_1, \ldots, t_n]$-modules:
$$\frac{U(\varphi/A[t_1, \ldots, t_n])}{U_{St}(\varphi/A[t_1, \ldots, t_n])}\hookrightarrow H(\varphi/A[t_1, \ldots, t_n]).$$
\end{proof}
Let's set (recall that $n\geq q$):
$$\mathcal M(\varphi/A[t_1, \ldots, t_n])=\{ \varphi_{a}(u_C^{(1)}(t_1, \ldots, t_n; 1)), a\in A[t_1, \ldots, t_n]\},$$
and define its radical $\sqrt{\mathcal M(\varphi/A[t_1, \ldots, t_n])}$ by
$$\{ x\in A[t_1, \ldots, t_n],\exists a\in A[t_1, \ldots, t_n]\setminus\{ 0\}, \varphi_a(x)\in \mathcal M(\varphi/A[t_1, \ldots,t_n])\}.$$
By the proof of Proposition \ref{PropositionS2-4}, $\mathcal M(\varphi/A[t_1, \ldots, t_n])$ is a free $A[t_1, \ldots, t_n]$-module of rank one.
\begin{corollary}\label{CorollaryS2-3} We have:
$$\sqrt{\mathcal M(\varphi/A[t_1, \ldots, t_n])}=\mathcal M(\varphi/A[t_1, \ldots,t_n]).$$
\end{corollary}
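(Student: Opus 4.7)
The plan is to adapt the proof of Corollary \ref{CorollaryS2-2} to the present case $n\equiv 1 \pmod{q-1}$, $n\geq q$. First I would verify that $u_1:=u_C^{(1)}(t_1,\ldots,t_n;1)$ has leading coefficient in $\mathbb F_q^\times$ as a polynomial in $\theta$. By Proposition \ref{PropositionS2-3} applied with $r=0$ and $n=q+\ell(q-1)$, the $\theta$-leading coefficient of $u_C(t_1,\ldots,t_n;z)$ equals $(-1)^\ell z^\ell(1-z)$; differentiating in $z$ and setting $z=1$ yields $(-1)^{\ell+1}\in\mathbb F_q^\times$, so $u_1$ has the same $\theta$-degree as $u_C$ with unit leading coefficient.

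Given $b\in\sqrt{\mathcal M(\varphi/A[t_1,\ldots,t_n])}\setminus\{0\}$ and $a,c\in A[t_1,\ldots,t_n]\setminus\{0\}$ with $\varphi_a(b)=\varphi_c(u_1)$, the heart of the argument is to show that $a$ divides $c$ in $\mathbb A=k[\theta]$. In Corollary \ref{CorollaryS2-2} this was immediate from the injectivity of $\varphi_a$ on $\mathbb K_\infty$ combined with $\exp_\varphi^{-1}(\mathbb A)=\mathcal L(\varphi/\mathbb A)\mathbb A$; here $\exp_\varphi$ has non-trivial kernel $A[t_1,\ldots,t_n]\,u_0$, so I would lift the argument to the canonical $z$-deformation $\widetilde\varphi$. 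Since $u_C(t_1,\ldots,t_n;1)=0$ by Lemma \ref{LemmaS2-3}, write $u_C=(z-1)v$ with $v\in A[t_1,\ldots,t_n,z]$ and $v(1)=u_1$; then $\widetilde\varphi_a(b)-\widetilde\varphi_c(v)\in(z-1)A[t_1,\ldots,t_n,z]$. The clean analogue of $\exp_\varphi^{-1}(\mathbb A)=\mathcal L(\varphi/\mathbb A)\mathbb A$ is Proposition \ref{PropositionS2-1}: $U(\widetilde\varphi/A[t_1,\ldots,t_n,z])=\mathcal L(\widetilde\varphi/\widetilde{\mathbb A})\,A[t_1,\ldots,t_n,z]$, together with the injectivity of $\exp_{\widetilde\varphi}$ on $\mathbb T_{n,z}(K_\infty)$ that comes from $H(\widetilde\varphi/\widetilde{\mathbb A})=0$. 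This lets one trade the $(z-1)$-adic vanishing for a divisibility $c/a\in\widetilde{\mathbb A}$, which on specializing $z=1$ gives $c/a\in\mathbb A$.

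Once $a$ divides $c$ in $\mathbb A$, write $c=ar$ with $r\in\mathbb A$. A $\theta$-degree argument on leading terms shows that $\varphi_a$ is injective on $A[t_1,\ldots,t_n]$ (its kernel in the fraction field would consist of algebraic $\varphi$-torsion, which is not contained in $A[t_1,\ldots,t_n]$), so the equation $\varphi_a(b)=\varphi_a(\varphi_r(u_1))$ forces $b=\varphi_r(u_1)$. Since $b\in A[t_1,\ldots,t_n]$ and $u_1$ has leading coefficient in $\mathbb F_q^\times$, Lemma \ref{LemmaS2-4} upgrades $r\in\mathbb A$ to $r\in A[t_1,\ldots,t_n]$, so $b=\varphi_r(u_1)\in\mathcal M(\varphi/A[t_1,\ldots,t_n])$. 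The main obstacle is the lifting step in the second paragraph: extracting $c/a\in\widetilde{\mathbb A}$ from the $(z-1)$-adic vanishing requires carefully tracking how $\widetilde\varphi$-action interacts with the derivative in $z$, and is where the argument most substantively diverges from Corollary \ref{CorollaryS2-2}.
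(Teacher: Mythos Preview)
Your outline has a genuine gap at the ``lifting'' step. You correctly identify that the heart of the matter is showing $a\mid c$ in $\mathbb A$, and you propose to deduce this from the relation $\widetilde{\varphi}_a(b)-\widetilde{\varphi}_c(v)\in (z-1)A[t_1,\ldots,t_n,z]$ together with Proposition~\ref{PropositionS2-1}. But you never explain how a $(z-1)$-adic congruence between $\widetilde{\varphi}_a(b)$ and $\widetilde{\varphi}_c(v)$ yields a divisibility $c/a\in\widetilde{\mathbb A}$: the element $b$ has no evident relation to $U(\widetilde{\varphi}/A[t_1,\ldots,t_n,z])=\mathcal L(\widetilde{\varphi}/\widetilde{\mathbb A})A[t_1,\ldots,t_n,z]$, so Proposition~\ref{PropositionS2-1} gives you no direct handle on it. You flag this as ``the main obstacle'' but do not resolve it, and I do not see a clean way to do so along these lines.

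The paper avoids the $z$-lift entirely. It works in the quotient $E'=\mathbb K_\infty/N'$ with $N'=\{x\in\mathbb K_\infty: v_\infty(x)>\tfrac{n-q}{q-1}\}$, and uses (via an adaptation of Proposition~\ref{PropositionS2-4} and Theorem~\ref{TheoremS2-2} over $\mathbb K_\infty$) that
\[
\mathcal L(\varphi/\mathbb A)\mathbb A+N'=\{x\in\mathbb K_\infty:\exp_\varphi^{(1)}(x)\in\mathbb A+N'\},
\]
so that $\exp_\varphi^{(1)}$ induces an isomorphism $\mathbb K_\infty/(\mathcal L(\varphi/\mathbb A)\mathbb A+N')\simeq \mathbb K_\infty/(\mathbb A+N')$. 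Since $\varphi_d(u_1)\equiv\exp_\varphi^{(1)}(d\mathcal L(\varphi/\mathbb A))\pmod{N'}$ for $d\in\mathbb A$, this isomorphism produces $d\in\mathbb A$ with $b=\varphi_d(u_1)$ in $E'$, hence in $\mathbb K_\infty$ (both sides lie in $\mathbb A$ and $\mathbb A\cap N'=\{0\}$). The fact that $E'$ has no nonzero $\varphi$-torsion then forces $ad=c$, so $c/a=d\in\mathbb A$. From there your final paragraph (Lemma~\ref{LemmaS2-4} to pass from $\mathbb A$ to $A[t_1,\ldots,t_n]$) matches the paper.
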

\begin{proof} Let $b\in \sqrt{\mathcal M(\varphi/A[t_1, \ldots, t_n])}\setminus\{ 0\}, $ then there exist $a,c \in A[t_1, \ldots, t_n]\setminus \{0\},$ such that:
$$\varphi_a(b)=\varphi_c(u_C^{(1)}(t_1, \ldots, t_n; 1)).$$
Let $E'=\frac{\mathbb K_{\infty}}{N'}$ where $N'=\{ x\in \mathbb K_{\infty}, v_{\infty}(x)> \frac{n-q}{q-1}\}.$
By an adaptation of the proofs of  Proposition \ref{PropositionS2-4} and Theorem \ref{TheoremS2-2},  we have:
$$\mathcal L(\varphi /\mathbb A)\mathbb A+N'=\{ x\in  \mathbb K_{\infty}, \exp_{\varphi}^{(1)}(x) \in \mathbb A+N'\}.$$
This implies that $\exp_{\varphi}^{(1)}$ induces an isomorphism of $k$-vector spaces:
$$\frac{\mathbb K_{\infty}}{\mathcal L(\varphi /\mathbb A)\mathbb A+N'}\simeq \frac{\mathbb K_{\infty}}{ \mathbb A+N'}.$$
Therefore,   there exists $d\in \mathbb A$ such that we have the following equality in $E':$
$$b=\varphi_d(u_C^{(1)}(t_1, \ldots, t_n; 1)).$$
Since $b, u_C^{(1)}(t_1, \ldots, t_n; 1) \in \mathbb A,$ this implies the equality in $\mathbb K_{\infty}:$ 
$$b=\varphi_d(u_C^{(1)}(t_1, \ldots, t_n; 1)).$$
 By an adaptation of the  proof of Proposition \ref{PropositionS2-4}, for $x\in \mathbb A\setminus\{0\},$ $\phi_x:E'\rightarrow E'$ is an injective morphism of $\mathbb A$-modules. In particular,  we get:
$$ad=c.$$
 Again, since $E'$ has no torsion point for $\varphi,$ we get in $E':$
$$b=\varphi_{c/a}(u_C^{(1)}(t_1, \ldots, t_n; 1)).$$
Since $\mathbb A\cap N'=\{0\},$ this equality is also true in $\mathbb K_{\infty}.$  By Proposition \ref{PropositionS2-3}, the leading coefficient of $u_C^{(1)}(t_1, \ldots, t_n; 1)$ is in $\mathbb F_q^\times.$ Now apply Lemma \ref{LemmaS2-4}, we get $\frac{c}{a} \in A[t_1, \ldots, t_n]$ and therefore $b\in \mathcal M(\varphi/A[t_1, \ldots, t_n]).$

\end{proof}

\begin{remark}\label{remark4} When $n=1,$ then we already mentioned that $H(\phi/A[t_1])=\{0\}.$ Furthermore, by Lemma \ref{LemmaS2-2}, we have:
$$u_C(t_1; 1)=1.$$
So we set, in this case:
$$\mathcal M(\varphi/A[t_1])=\{ \varphi_a(1), a\in A[t_1]\},$$
$$\sqrt{\mathcal M(\varphi/A[t_1])}=\{ x\in A[t_1], \exists b\in A[t_1]\setminus\{0\}, \varphi_b(x)\in \mathcal M(\varphi/A[t_1])\}.$$
We have:
$$\varphi_{\theta-t_1}(1) =0.$$
Furthermore:
$$ U(\varphi/\mathbb A)= \mathcal L(\varphi/\mathbb A) \mathbb A,$$
and:
$${\rm Ker}(\exp_{\phi}:\mathbb T_1(K_{\infty})\rightarrow \mathbb T_1(K_{\infty}))=\frac{\widetilde{\pi}}{\omega(t_1)}A[t_1].$$
By \cite{APTR}, Lemma 7.1 , we have an isomorphism of $A[t_1]$-modules:
$$\mathcal M(\varphi/A[t_1])\simeq \frac{A[t_1]}{(\theta-t_1)A[t_1]}.$$
In particular $\mathcal M(\varphi/A[t_1])=\mathbb F_q[t_1].$
But, since $\mathcal M(\varphi/A[t_1])\subset \exp_{\varphi}(\mathbb T_1(K_{\infty})), $ we get:  $\sqrt{\mathcal M(\varphi/A[t_1])}\subset \exp_{\varphi}(\mathbb K_{\infty})\cap \mathbb A=\exp_{\varphi}(U(\varphi/\mathbb A)).$ Thus: $\sqrt{\mathcal M(\varphi/A[t_1])}\subset \exp_{\varphi}(U(\varphi/\mathbb A)).$ Note that we have an isomorphism of $\mathbb A$-modules:
$$\exp_{\varphi}(U(\varphi/\mathbb A))\simeq \frac{\mathbb A}{(\theta-t_1)\mathbb A},$$
and $\exp_{\varphi}(U(\varphi/\mathbb A))$is the $\mathbb A$-module (via $\varphi$) generated by $1.$  In particular, we have $\exp_{\varphi}(U(\varphi/\mathbb A))=\mathbb F_q(t_1).$ Thus:
$$\exp_{\varphi}(U(\varphi/\mathbb A))\cap \mathbb T_1(K_{\infty})=\mathcal M(\varphi/A[t_1]).$$
Therefore, we also have in this case:
$$\sqrt{\mathcal M(\varphi/A[t_1])}=\mathcal M(\varphi/A[t_1]).$$

\end{remark}
\subsection{$P$-adic $L$-series}$\label{PadicLseries}{}$\par
Let $n\geq 0$ be an integer. Let $\phi:A\rightarrow A\{\tau\}$ be a Drinfeld $A$-module defined over $A,$ and let $\varphi$ be its canonical deformation over $\mathbb T_n(K_{\infty}),$ $\widetilde{\varphi}$ be the canonical $z$-deformation of $\varphi$ (see section \ref{Basicproperties}).\par
Let $\log_\phi\in K\{\{\tau\}\}$ (respectively $\log_\varphi\in \mathbb T_n(K_{\infty})\{\{\tau\}\},$ $\log_{\widetilde{\varphi}}\in \mathbb T_{n,z}(K_{\infty})\{\{\tau\}\}$)  be the unique element such that $\log_\phi \equiv 1\pmod{\tau}$ and :
$$\log_\phi \phi_{\theta}= \theta \log_\phi \, ({\rm respectively\, } \log_\varphi \varphi_{\theta}= \theta \log_\varphi,\, \log_{\widetilde{\varphi}} \widetilde{\varphi}_{\theta}= \theta \log_{\widetilde{\varphi}} ).$$
Write:
$$\log_\phi =\sum_{j\geq 0} l_j\tau ^j , l_j \in K,$$
then:
$$\log_\varphi =\sum_{j\geq 0} b_j(t_1) \ldots b_j(t_n)l_j\tau^j \, {\rm and }\, \log_{\widetilde{\varphi}} =\sum_{j\geq 0} b_j(t_1) \ldots b_j(t_n)l_jz^j\tau^j .$$
Now, we fix $P$ a monic irreducible element of $A$ of degree $d\geq 1.$ Let $v_P:K\rightarrow \mathbb Z\cup \{ +\infty\}$ be the $P$-adic valuation on $K.$
\begin{lemma}\label{LemmaS2-7} For $n\geq 0,$ we have:
$$v_P({l_n})\geq-[\frac{n}{d}] ,$$
where for $x\in \mathbb R,$ $[x]$ denotes the integer part of $x.$
\end{lemma}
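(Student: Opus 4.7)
The idea is to extract a recursion for the coefficients $l_n$ from the functional equation $\log_{\phi}\phi_\theta = \theta \log_{\phi}$, and then argue by induction on $n$, treating separately the cases $d\mid n$ and $d\nmid n$.

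First I would write $\phi_\theta = \sum_{k=0}^r \alpha_k \tau^k$ with $\alpha_0=\theta$ and $\alpha_k\in A$, and expand both sides of $\log_\phi \phi_\theta = \theta\log_\phi$ in the twisted power series ring $K\{\{\tau\}\}$, using $\tau\cdot c = c^q\tau$. Comparing the coefficient of $\tau^n$ on both sides and isolating the $k=0$ term yields the recursion
$$l_n\,(\theta^{q^n}-\theta) = -\sum_{k=1}^{\min(r,n)} l_{n-k}\,\alpha_k^{q^{n-k}}.$$
Since $\alpha_k\in A$, the right-hand side lies in $A[l_1,\ldots,l_{n-1}]$, so everything is controlled by the earlier $l_m$.

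Next I would use the well-known factorization: $\theta^{q^n}-\theta$ is the product of all monic irreducibles of $A$ whose degree divides $n$. Hence $v_P(\theta^{q^n}-\theta)=1$ if $d\mid n$ and $v_P(\theta^{q^n}-\theta)=0$ otherwise.

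The induction has base case $l_0=1$ (so $v_P(l_0)=0 = -\lfloor 0/d\rfloor$). For the inductive step, assume $v_P(l_m)\geq -\lfloor m/d\rfloor$ for all $m<n$. In the case $d\nmid n$, the denominator is a $P$-adic unit, and for each $k\geq 1$ we have $v_P(l_{n-k}\alpha_k^{q^{n-k}})\geq -\lfloor (n-k)/d\rfloor \geq -\lfloor n/d\rfloor$ since $\lfloor\cdot/d\rfloor$ is nondecreasing and $\alpha_k\in A$; hence $v_P(l_n)\geq -\lfloor n/d\rfloor$. In the case $d\mid n$, the denominator contributes $-1$ to $v_P$, so I need the sum on the right to have $P$-valuation at least $-\lfloor n/d\rfloor +1$. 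Here one exploits that when $d\mid n$ and $k\geq 1$, one has $\lfloor (n-k)/d\rfloor = n/d - \lceil k/d\rceil \leq \lfloor n/d\rfloor -1$, so by induction $v_P(l_{n-k})\geq -\lfloor n/d\rfloor +1$, which is exactly the required gain.

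The main (and only) subtlety is the case $d\mid n$: one has to check that the loss of one unit of $P$ from the denominator $\theta^{q^n}-\theta$ is compensated by the fact that \emph{all} indices $n-k$ with $k\geq 1$ satisfy $d\nmid(n-k)$ or have strictly smaller quotient, yielding the required improvement of $1$ in the induction hypothesis. Once this bookkeeping on floor functions is recorded, the rest of the argument is a straightforward ultrametric estimate.
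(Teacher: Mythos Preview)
Your argument is correct and follows essentially the same route as the paper: both derive the recursion $(\theta-\theta^{q^n})l_n=\sum_{j=0}^{n-1}l_j\alpha_{n-j}^{q^j}$ from $\log_\phi\phi_\theta=\theta\log_\phi$ and then control $v_P(l_n)$ inductively. The paper packages your induction more compactly by observing that the recursion gives $l_n=a_n/\ell_n$ with $a_n\in A$ and $\ell_n=\prod_{k=1}^n(\theta-\theta^{q^k})$, and then notes $v_P(\ell_n)=\lfloor n/d\rfloor$; your case split $d\mid n$ versus $d\nmid n$ is exactly what is hidden in that last valuation computation.
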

\begin{proof} We have for $n\geq 1:$
$$(\theta-\theta^{q^n}){l_n}=\sum_{j=0}^{n-1} {l_j} \alpha_{n-j}^{q^j},$$
where $\phi_{\theta}= \sum_{j\geq 0} \alpha_j \tau^j,$ and $\alpha_j \in A, $ $\alpha_j=0$ for $j\geq r+1$ where $r$ is the rank of $\phi.$ Set $\ell_0=1$ and for $j\geq 1,$ $\ell_j= \prod_{k=1}^{j-1} (\theta-\theta^{q^k}).$ Then, we deduce that:
$$\forall n\geq 0, {l_n}=\frac{a_n}{\ell_n}, a_n \in A.$$
\end{proof}
Let $K_P^{ac}$ be an algebraic closure of $K_P,$ where $K_P$ is the $P$-adic completion of $K.$ Let $\mathbb C_P$ be the completion of $K_P^{ac}$ equipped with the $P$-adic valuation $v_P:\mathbb C_P\rightarrow \mathbb Q\cup \{+\infty\}$ ($v_P(P)=1$). We denote by $v_P$ the $P$-adic Gauss valuation on $\mathbb C_P[t_1, \ldots, t_n,z],$ i.e. if
$f=\sum_{i_1, \ldots, i_n ,j \in \mathbb  N} x_{i_1, \ldots, i_n,j} t_1^{i_1}\cdots t_n^{i_n} z^j,  x_{i_1, \ldots, i_n,j}\in \mathbb C_P$, then
$$v_P(f)={ \rm Inf}\{ v_P(x_{i_1, \ldots, i_n,j}),i_1, \ldots, i_n ,j \in \mathbb  N\}.$$
Let $\mathbb T_{n,z}(\mathbb C_P)$ be the completion of $\mathbb C_P[t_1, \ldots, t_n ,z]$ for $v_P,$ and let $\mathbb T_n(\mathbb C_P) $ be the closure of $\mathbb C_P[t_1, \ldots, t_n]$  in  $\mathbb T_{n,z}(\mathbb C_P).$\par
We denote by 
$\mathbb  T_{n,z}(K_P)$ the closure of $K[t_1, \ldots, t_n,z]$ in $\mathbb T_{n,z}(\mathbb C_P),$ and $\mathbb T_n(K_P)$ the closure of $K[t_1, \ldots, t_n]$ in $\mathbb T_{n,z}(\mathbb C_P).$ We also denote by $\mathbb A_P$ the closure of $A[t_1, \ldots, t_n]$ in $\mathbb T_n(K_P),$ and $\widetilde{\mathbb A_P}$ the closure of $A[t_1, \ldots, t_n,z]$ in $\mathbb T_{n,z}(K_P).$ Finally let $O=\{ x\in \mathbb C_P, v_P(x)\geq 0\},$ and let $\widetilde{\mathbb  O}$ be the closure  of $O[t_1, \ldots, t_n,z]$ in $\mathbb T_{n,z}(\mathbb C_P),$ and $\mathbb O$ be the closure of $O[t_1, \ldots, t_n]$ in $\mathbb T_{n,z}(\mathbb C_P).$\par
We still denote by  $\tau $ the  continuous morphism of $\mathbb F_q(t_1, \ldots, t_n,z)$-algebras  $\tau: \mathbb T_{n,z}(\mathbb C_P) \rightarrow \mathbb T_{n,z}(\mathbb C_P),$ $\forall x\in \mathbb C_P, \tau(x)=x^q.$\par
\begin{lemma}\label{LemmaS2-8} $\widetilde{\varphi}$ extends by continuity  to a morphism of $\mathbb F_q[t_1, \ldots, t_n,z]$-algebras~:
$$\widetilde{\varphi}: \widetilde{\mathbb A_P}\rightarrow \widetilde{\mathbb A_P}\{\{Ê\tau\}\}.$$
\end{lemma}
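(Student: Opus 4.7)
The plan is to extend $\widetilde{\varphi}$, already defined from $A[t_1, \ldots, t_n, z]$ into $A[t_1, \ldots, t_n, z]\{\tau\}$, to the $P$-adic Gauss completion $\widetilde{\mathbb A_P}$, landing in $\widetilde{\mathbb A_P}\{\{\tau\}\}$ equipped with the topology of coefficient-wise $v_P$-convergence. First I would verify that $\tau$ itself extends continuously to $\widetilde{\mathbb A_P}$: since $v_P(c^q) = q\, v_P(c)$ for $c \in \mathbb C_P$, the $q$-th power operation on coefficients preserves the Tate condition, and the resulting Frobenius is continuous. Consequently the multiplication in $\widetilde{\mathbb A_P}\{\{\tau\}\}$, which uses $\tau^k c = c^{(q^k)} \tau^k$, is well defined.

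The heart of the proof is the following estimate. For $a \in A[t_1, \ldots, t_n, z]$, writing $\widetilde{\varphi}_a = \sum_{j\geq 0}[\widetilde{\varphi}_a]_j\, \tau^j$ with $[\widetilde{\varphi}_a]_j \in A[t_1, \ldots, t_n, z]$, I claim that
\[
v_P\bigl([\widetilde{\varphi}_a]_j\bigr) \;\geq\; \max\bigl(0,\; v_P(a) - j\bigr) \quad \text{for every } j \geq 0.
\]
To prove it, I would first treat the case $a = P^m$ by induction on $m$: the base case is $\widetilde{\varphi}_1 = 1$, and for the inductive step, the factorization $\widetilde{\varphi}_{P^{m+1}} = \widetilde{\varphi}_P \cdot \widetilde{\varphi}_{P^m}$ together with the product formula
\[
[\widetilde{\varphi}_{P^{m+1}}]_j = \sum_{k+l=j}[\widetilde{\varphi}_P]_k \cdot \tau^k\bigl([\widetilde{\varphi}_{P^m}]_l\bigr)
\]
and the identity $v_P(\tau^k(c)) = q^k v_P(c)$ deliver the bound. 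The $k=0$ contribution is $v_P([\widetilde{\varphi}_P]_0) + v_P([\widetilde{\varphi}_{P^m}]_j) \geq 1 + \max(0, m-j) \geq \max(0, m+1-j)$, using $[\widetilde{\varphi}_P]_0 = P$ (this comes from $\widetilde{\varphi}_\theta \equiv \theta \pmod{\tau}$), while the terms with $k\geq 1$ only reinforce the bound thanks to the $q^k$-factor. The general case $a = P^m b$ with $b \in A[t_1,\ldots, t_n, z]$ then follows by one more application of the same product formula, since $v_P([\widetilde{\varphi}_b]_l) \geq 0$.

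Once the estimate is in hand, each map $a \mapsto [\widetilde{\varphi}_a]_j$ is $v_P$-continuous from $A[t_1, \ldots, t_n, z]$ to itself and extends uniquely to a continuous map $\widetilde{\mathbb A_P} \to \widetilde{\mathbb A_P}$. For $a \in \widetilde{\mathbb A_P}$, one sets $\widetilde{\varphi}_a := \sum_{j\geq 0}[\widetilde{\varphi}_a]_j\, \tau^j \in \widetilde{\mathbb A_P}\{\{\tau\}\}$. Additivity and $\mathbb F_q[t_1, \ldots, t_n, z]$-linearity are inherited from each coefficient. For multiplicativity $\widetilde{\varphi}_{ab} = \widetilde{\varphi}_a\, \widetilde{\varphi}_b$, one observes that each $\tau^j$-coefficient of the product is a \emph{finite} sum of jointly continuous expressions in the coefficients of $\widetilde{\varphi}_a$ and $\widetilde{\varphi}_b$, and passes to the limit from the identity already known on the dense polynomial subring $A[t_1,\ldots, t_n,z]$.

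The main obstacle is the inductive $v_P$-estimate; the crucial driver is the normalization $\widetilde{\varphi}_\theta \equiv \theta \pmod{\tau}$, which produces the constant $\tau$-coefficient $[\widetilde{\varphi}_P]_0 = P$ and hence a genuine gain of one unit in $v_P$ at each multiplication by $\widetilde{\varphi}_P$. Everything else is a routine density-and-continuity argument.
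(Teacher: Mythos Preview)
Your proof is correct and follows essentially the same route as the paper: both establish the key estimate $v_P\bigl([\widetilde{\varphi}_{P^m}]_j\bigr)\geq m-j$ (for $j<m$) by induction on $m$, using that $\widetilde{\varphi}_P$ has constant term $P$, and then pass to general $a\in P^mA[t_1,\ldots,t_n,z]$. You are more explicit than the paper about verifying that the limit is genuinely an $\mathbb F_q[t_1,\ldots,t_n,z]$-algebra morphism (via coefficient-wise continuity and density), which the paper leaves implicit.
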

\begin{proof} Observe that:
$$\widetilde{\varphi}_P= P+f\tau, f\in A[t_1, \ldots, t_n,z]\{\tau\}.$$
This implies that, for $m\geq 1,$ we have:
$$\widetilde{\varphi}_{P^m}= \sum_{k=0}^{m-1} a_k \tau^k +f_m \tau^m, a_0, \ldots, a_{m-1}\in A[t_1, \ldots, t_n,z], f_m\in A[t_1, \ldots, t_n,z]\{\tau\},$$
with the property:
$$k\in \{ 0, \ldots, m-1\}, v_P(a_k) \geq m-k.$$
This implies that if $x\in P^mA[t_1, \ldots, t_n,z]$ for some $m\geq 1,$ we get:
$$\widetilde{\varphi}_{x}= \sum_{k=0}^{m-1} b_k \tau^k +g_m \tau^m, b_0, \ldots, b_{m-1}\in A[t_1, \ldots, t_n,z], g_m\in A[t_1, \ldots, t_n,z]\{\tau\},$$
with the property:
$$k\in \{ 0, \ldots, m-1\}, v_P(
b_k) \geq m-k.$$
Thus, if $(a_n)_{n\geq 0}$ is a Cauchy sequence for $v_P$ of elements in $A[t_1, \ldots, t_n,z],$ then the sequence $(\widetilde{\varphi}_{a_n})_{n\geq 0}$  converges in $\widetilde{\mathbb A_P}\{\{Ê\tau\}\}.$
\end{proof}
Observe that $\widetilde{\mathbb O}$ is an $A[t_1, \ldots, t_n,z]$-module via $\widetilde{\varphi},$ and by the above Lemma $\{x\in \widetilde{\mathbb O}, v_P(x)>0\}$ is an $\widetilde{\mathbb A_P}$-module via $\widetilde{\varphi}.$

\begin{lemma}\label{LemmaS2-9} $\log_{\widetilde{\varphi}}$ induces an injective  morphism of $\mathbb F_q[t_1, \ldots, t_n,z]$-modules:
$$\log_{\widetilde{\varphi},P}: \widetilde{\mathbb O}\rightarrow \mathbb T_{n,z}(\mathbb C_P) ,$$
such that:
$$\forall a\in A[t_1, \ldots, t_n,z] , \forall x\in  \widetilde{\mathbb O},
\log_{\widetilde{\varphi},P}(\widetilde{\varphi}_a(x))= a\log_{\widetilde{\varphi},P}(x).$$
Furthermore, if $x\in \widetilde{\mathbb O},$ $v_P(x)>0,$  we have:
$$\log_{\widetilde{\varphi},P}(x)=\sum_{j\geq 0}{b_j(t_1)\cdots b_j(t_n) z^j}{l_j}\tau^j(x),$$
and:
$$\forall a\in \widetilde{\mathbb A_P} , 
\log_{\widetilde{\varphi},P}(\widetilde{\varphi}_a(x))= a\log_{\widetilde{\varphi},P}(x).$$
\end{lemma}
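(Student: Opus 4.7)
The plan is to build the $P$-adic logarithm in three stages: prove convergence of the defining series on the open disk $\widetilde{\mathbb O}^+ := \{x \in \widetilde{\mathbb O} : v_P(x) > 0\}$, transfer the formal identity $\log_{\widetilde\varphi} \cdot \widetilde\varphi_\theta = \theta \cdot \log_{\widetilde\varphi}$ to this analytic setting, and then use the functional equation to extend to all of $\widetilde{\mathbb O}$ and to obtain injectivity.

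For convergence, I would let $x \in \widetilde{\mathbb O}^+$ and estimate the $P$-adic Gauss valuation of the $j$-th term of $\sum_{j\geq 0} b_j(t_1)\cdots b_j(t_n) l_j z^j \tau^j(x)$. Each $b_j(t_i) = \prod_{k=0}^{j-1}(t_i - \theta^{q^k})$ is monic in $t_i$ with other coefficients in $A$, hence has Gauss valuation $0$; clearly $v_P(z^j) = 0$; Lemma \ref{LemmaS2-7} gives $v_P(l_j) \geq -[j/d]$ with $d = \deg_\theta P$; and $v_P(\tau^j(x)) = q^j v_P(x)$ because $\tau$ acts as the $q$-th power on $\mathbb C_P$ while fixing the $t_i$ and $z$. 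Summing, the $j$-th term has valuation at least $q^j v_P(x) - j/d$, which diverges to $+\infty$ since $v_P(x) > 0$. So the series converges to an element of $\mathbb T_{n,z}(\mathbb C_P)$, defining an $\mathbb F_q[t_1,\ldots,t_n,z]$-linear map $L : \widetilde{\mathbb O}^+ \to \mathbb T_{n,z}(\mathbb C_P)$.

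Next I would check that $\widetilde\varphi_\theta(\widetilde{\mathbb O}^+) \subset \widetilde{\mathbb O}^+$ ($\theta x$ has $v_P \geq v_P(x) > 0$, and each higher $\tau$-term has $v_P \geq q^k v_P(x) > 0$) and then transfer the formal identity $\log_{\widetilde\varphi} \cdot \widetilde\varphi_\theta = \theta \cdot \log_{\widetilde\varphi}$ to give $L(\widetilde\varphi_\theta(x)) = \theta L(x)$ by term-by-term comparison in the convergent series. Iterating and using the $\mathbb F_q[t_1,\ldots,t_n,z]$-linearity of $L$ yields $L(\widetilde\varphi_a(x)) = a L(x)$ for all $a \in A[t_1,\ldots,t_n,z]$; continuity of $\widetilde\varphi_{(-)}$ from Lemma \ref{LemmaS2-8} together with $P$-adic continuity of $L$ on $\widetilde{\mathbb O}^+$ then extends this to $a \in \widetilde{\mathbb A_P}$. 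To extend $L$ from the open disk to all of $\widetilde{\mathbb O}$, for $x \in \widetilde{\mathbb O}$ one finds a non-zero-divisor $a \in \widetilde{\mathbb A_P}$ with $\widetilde\varphi_a(x) \in \widetilde{\mathbb O}^+$ and sets $\log_{\widetilde\varphi, P}(x) := a^{-1} L(\widetilde\varphi_a(x))$, with well-definedness guaranteed by the functional equation.

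Injectivity reduces to injectivity of $L$ on $\widetilde{\mathbb O}^+$: on a small enough sub-disk, $\exp_{\widetilde\varphi}$ also converges $P$-adically and the formal identity $\exp_{\widetilde\varphi} \circ \log_{\widetilde\varphi} = \mathrm{id}$ forces injectivity there, which propagates to the full open disk via the functional equation and injectivity of $\widetilde\varphi_\theta$. The main obstacle I foresee is the extension step: showing that every $x \in \widetilde{\mathbb O}$ can be mapped into $\widetilde{\mathbb O}^+$ by some $\widetilde\varphi_a$ with $a$ a non-zero-divisor in $\widetilde{\mathbb A_P}$. This is delicate because a single application of $\widetilde\varphi_P$ need not raise $v_P$, so one must iterate carefully and control the high-$\tau$ tail $f_m \tau^m$ in the expansion of $\widetilde\varphi_{P^m}$ from Lemma \ref{LemmaS2-8}.
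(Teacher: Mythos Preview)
Your convergence argument and derivation of the functional equation on $\widetilde{\mathbb O}^+$ are correct and match the paper's treatment of $\mathcal M:=\widetilde{\mathbb O}^+$. The genuine gap is in the extension step, and your proposed fix does not work.

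Iterating $\widetilde\varphi_{P^m}$ cannot push a unit into $\widetilde{\mathbb O}^+$. For $x\in\overline{\mathbb F_q}^\times\subset\widetilde{\mathbb O}$ one has $v_P(x)=0$, and in the expansion $\widetilde\varphi_{P^m}=\sum_{k<m}a_k\tau^k+f_m\tau^m$ of Lemma~\ref{LemmaS2-8} the low part indeed lands in $\widetilde{\mathbb O}^+$, but the tail $f_m\tau^m$ has coefficients in $A[t_1,\ldots,t_n,z]$ with no $P$-divisibility and is applied to $x^{q^m},x^{q^{m+1}},\ldots$, all of valuation $0$. There is no mechanism forcing this tail to vanish or to acquire positive valuation, so $v_P(\widetilde\varphi_{P^m}(x))$ can stay $0$ for every $m$. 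You also cannot enlarge the pool of $a$'s to $\widetilde{\mathbb A_P}$: by the remark after Lemma~\ref{LemmaS2-8}, $\widetilde\varphi_a$ for $a\in\widetilde{\mathbb A_P}$ is only defined on $\widetilde{\mathbb O}^+$, not on all of $\widetilde{\mathbb O}$.

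The paper's key idea here is different. It uses the splitting $\widetilde{\mathbb O}=\overline{\mathbb F_q}[t_1,\ldots,t_n,z]\oplus\mathcal M$ and notes that any $x$ has residue in some $\mathbb F_{q^m}[t_1,\ldots,t_n,z]$. The quotient $\overline M=(\mathbb F_{q^m}[t_1,\ldots,t_n,z]\oplus\mathcal M)/\mathcal M$ is a finitely generated \emph{free} $\mathbb F_q[t_1,\ldots,t_n,z]$-module on which $\widetilde\varphi_\theta$ acts linearly; hence (Cayley--Hamilton) $\overline M$ is $A[t_1,\ldots,t_n,z]$-torsion via $\widetilde\varphi$, and one gets $a\in A[t_1,\ldots,t_n,z]\setminus\{0\}$, monic in $\theta$, with $\widetilde\varphi_a(x)\in\mathcal M$. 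This torsion argument is the missing ingredient in your proposal.

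A smaller point: your injectivity argument relies on $P$-adic convergence of $\exp_{\widetilde\varphi}$, which would require bounds on $v_P(e_j)$ that are not available in the paper. The paper instead iterates $\widetilde\varphi_P$ \emph{within} $\mathcal M$ (where $v_P$ does increase) until the leading term dominates, giving $v_P(\log_{\widetilde\varphi}(y))=v_P(y)$ for $y$ of large valuation, and then invokes torsion-freeness of $\widetilde{\mathbb O}$ as an $A[t_1,\ldots,t_n,z]$-module via $\widetilde\varphi$.
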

\begin{proof} Let's set:
$$\mathcal M=\{ x \in \widetilde{\mathbb O}, v_P(x)>0\}.$$
Observe that we have a direct sum  of $\mathbb F_q[t_1, \ldots, t_n,z]$-modules:
$$\widetilde{\mathbb O}= \overline{\mathbb F_q}[t_1, \ldots, t_n,z]\oplus \mathcal M,$$
where $\overline{\mathbb F_q}$ is the algebraic closure of $\mathbb F_q$ in $\mathbb C_P.$ By Lemma \ref{LemmaS2-7},
$\log_{\widetilde{\varphi}}$ converges on $\mathcal M$ and we have the last assertion of the Lemma by Lemma \ref{LemmaS2-8}. Furthermore, for $x\in \mathcal M,$ we have:
$$v_P(\widetilde{\varphi}_P(x))\geq{\rm Inf}\{qv_P(x), v_P(x)+1\}.$$
This implies that there exists an integer $m$ depending on $x$ such  that:
$$\log_{\widetilde{\varphi}}(x)= \frac{1}{P^m}\log_{\widetilde{\varphi}}(\widetilde{\varphi}_{P^m}(x))\in \frac{1}{P^m}\mathcal M,$$
and again by Lemma \ref{LemmaS2-7}:
$$v_P(\log_{\widetilde{\varphi}}(\widetilde{\varphi}_{P^m}(x))= v_P(\widetilde{\varphi}_{P^m}(x)).$$
Now, we observe that $\widetilde{\mathbb  O}$ is an $A[t_1, \ldots, t_n,z]$-module via $\widetilde{\varphi}$ without torsion. This implies that $\log_{\widetilde{\varphi}}:\mathcal M\rightarrow \mathbb T_{n,z}(\mathbb C_P)$ is injective.\par
Now, let $m\geq 1, m\equiv 0\pmod{d}.$ Let:
$$\overline{M}=\frac{\mathbb F_{q^m}[t_1, \ldots, t_n,z]\oplus \mathcal M}{\mathcal M}.$$
Then $\overline{M}$ is a finitely generated  and a free $\mathbb F_q[t_1, \ldots, t_n,z]$-module and also an $A[t_1, \ldots, t_n,z]$-module via $\widetilde{\varphi}.$ This implies that $\overline{M}$ is a torsion $A[t_1, \ldots, t_n,z]$-module. Thus there exists $a\in A[t_1, \ldots, t_n,z]\cap \mathbb T_{n,z}(K_P)^\times$ such that:
$$\widetilde{\varphi}_a(\mathbb F_{q^m})\subset \mathcal M.$$
Now, let $x\in \widetilde{\mathbb O}.$ By the above discussion, there exists $b\in A[t_1, \ldots, t_n,z]\cap \mathbb T_{n,z}(K_P)^\times $ such that:
$$\widetilde{\varphi}_b(x)\in \mathcal M.$$
We set:
$$\log_{\widetilde{\varphi}, P}(x) =\frac{1}{b}\log_{\widetilde{\varphi}}(\widetilde{\varphi}_b(x)).$$
This does not depend on the choice of $b.$ 
\end{proof}
Recall that:
$$\mathcal L(\widetilde{\varphi}/\widetilde{\mathbb A})= \prod_{Q}\frac{[\frac{\widetilde{\mathbb A}}{Q\widetilde{\mathbb A}}]_{\widetilde{\mathbb A}}}{[\widetilde{\varphi}(  \frac{\widetilde{\mathbb A}}{Q\widetilde{\mathbb A}}      )]_{\widetilde{\mathbb A}}} \in \mathbb T_{n,z}(K_{\infty})^\times,$$
where $Q$ runs through the monic irreducible elements in $A.$
Thus:
$$(\frac{[\frac{\widetilde{\mathbb A}}{P\widetilde{\mathbb A}}]_{\widetilde{\mathbb A}}}{[\widetilde{\varphi}(  \frac{\widetilde{\mathbb A}}{P\widetilde{\mathbb A}}      )]_{\widetilde{\mathbb A}}})^{-1}\mathcal L(\widetilde{\varphi}/\widetilde{\mathbb A})= \prod_{Q\not =P}\frac{[\frac{\widetilde{\mathbb A}}{Q\widetilde{\mathbb A}}]_{\widetilde{\mathbb A}}}{[\widetilde{\varphi}(  \frac{\widetilde{\mathbb A}}{Q\widetilde{\mathbb A}}      )]_{\widetilde{\mathbb A}}} \in \mathbb T_{n,z}(K_{\infty})^\times.$$
Now, by (\ref{unitz}), we have the formal equality in $K(t_1, \ldots, t_n)[[z]]:$
$$\mathcal L(\widetilde{\varphi}/\widetilde{\mathbb A})=\log_{\widetilde{\varphi}}(u_\phi(t_1, \ldots, t_n; z)).$$
Therefore, we define $\mathcal L_P(\widetilde{\varphi}/\widetilde{\mathbb A})$ by the formal equality in $K(t_1, \ldots, t_n)[[z]]:$
$$\mathcal L_P(\widetilde{\varphi}/\widetilde{\mathbb A})= (\frac{[\frac{\widetilde{\mathbb A}}{P\widetilde{\mathbb A}}]_{\widetilde{\mathbb A}}}{[\widetilde{\varphi}(  \frac{\widetilde{\mathbb A}}{P\widetilde{\mathbb A}}      )]_{\widetilde{\mathbb A}}})^{-1}\mathcal L(\widetilde{\varphi}/\widetilde{\mathbb A}).$$

\begin{theorem}\label{TheoremS2-3} $\mathcal L_P(\widetilde{\varphi}/\widetilde{\mathbb A})$ converges in $\widetilde{\mathbb A_P} $ and we have the equality in $\mathbb T_{n,z}(K_P):$
$$\mathcal L_P(\widetilde{\varphi}/\widetilde{\mathbb A})= \frac{[\widetilde{\varphi}(  \frac{\widetilde{\mathbb A}}{P\widetilde{\mathbb A}}      )]_{\widetilde{\mathbb A}}} {P}\log_{\widetilde{\varphi},P}(u_{\phi}(t_1, \ldots, t_n; z)).$$
\end{theorem}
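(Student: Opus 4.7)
The plan is to leverage the formal identity
$$\mathcal L(\widetilde{\varphi}/\widetilde{\mathbb A}) = \log_{\widetilde{\varphi}}(u_{\phi}(t_1, \ldots, t_n; z))$$
in $K(t_1, \ldots, t_n)[[z]]$, which comes from (\ref{unitz}) combined with the formal inversion of $\exp_{\widetilde{\varphi}}$. Setting $E_P := [\widetilde{\varphi}(\widetilde{\mathbb A}/P\widetilde{\mathbb A})]_{\widetilde{\mathbb A}} \in A[t_1, \ldots, t_n, z]$ and observing that $[\widetilde{\mathbb A}/P\widetilde{\mathbb A}]_{\widetilde{\mathbb A}} = P$, the definition of $\mathcal L_P$ reads $\mathcal L_P = (E_P/P)\log_{\widetilde{\varphi}}(u_{\phi})$; the content of the theorem is that this formal expression actually makes $P$-adic sense and matches the $P$-adic logarithm $\log_{\widetilde{\varphi},P}$.

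The decisive input is Fitting-ideal annihilation (Cayley--Hamilton): $E_P$ is the characteristic polynomial of $\widetilde{\varphi}_\theta$ acting on $\widetilde{\mathbb A}/P\widetilde{\mathbb A}$, so $\widetilde{\varphi}_{E_P}$ vanishes on this module and hence $\widetilde{\varphi}_{E_P}(u_{\phi}) \in P\cdot A[t_1, \ldots, t_n, z]$. Write $\widetilde{\varphi}_{E_P}(u_{\phi}) = Pw$ with $w \in A[t_1, \ldots, t_n, z] \subset \widetilde{\mathbb A_P}$. The functional equation of Lemma \ref{LemmaS2-9} then gives
$$E_P\,\log_{\widetilde{\varphi},P}(u_{\phi}) \;=\; \log_{\widetilde{\varphi},P}(Pw) \;=\; \sum_{j \geq 0} b_j(t_1)\cdots b_j(t_n)\,l_j\,P^{q^j}\,z^j\,\tau^j(w),$$
the last equality being valid because $v_P(Pw)\geq 1$ places $Pw$ in the convergence locus $\mathcal M$ of the explicit series.

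By Lemma \ref{LemmaS2-7}, $v_P(l_j) \geq -[j/d]$, so each term above has $v_P$-valuation at least $q^j - [j/d] \geq 1$, with the bound tending to $+\infty$. Hence $\log_{\widetilde{\varphi},P}(Pw) \in P\widetilde{\mathbb A_P}$ and division by $P$ yields $(E_P/P)\log_{\widetilde{\varphi},P}(u_{\phi}) \in \widetilde{\mathbb A_P}$, giving the convergence half of the statement. The identification with $\mathcal L_P$ follows by carrying out the same sequence of manipulations (functional equation, Fitting annihilation) purely formally in $K(t_1, \ldots, t_n)[[z]]$: the formal identity $E_P \log_{\widetilde{\varphi}}(u_{\phi}) = \log_{\widetilde{\varphi}}(Pw)$ produces $\mathcal L_P = (1/P)\log_{\widetilde{\varphi}}(Pw)$, whose coefficients in $z^j$ agree, one by one, with those of the $P$-adic expression just computed.

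The main subtlety will be coordinating two different completions of the same formal object: $\mathcal L$ is built as an $\infty$-adic limit in $\widetilde{\mathbb K_\infty}$, whereas $\mathcal L_P$ must be placed $P$-adically in $\widetilde{\mathbb A_P}$. The glue is the common ring $K(t_1, \ldots, t_n)[[z]]$, in which the functional-equation and Fitting-ideal identities are genuinely formal and therefore simultaneously valid in either topology once the $E_P$ factor has absorbed the $P$-adic denominators of $\log_{\widetilde{\varphi}}$.
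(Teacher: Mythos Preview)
Your argument is correct and takes a genuinely different route from the paper. The paper introduces an auxiliary Drinfeld module $\rho$ with $\rho_\theta=\sum_j \alpha_j P^{q^j-1}\tau^j$ (the ``twist of $\phi$ by $P$''), observes that multiplication by $P$ identifies $\widetilde{\varrho}(\widetilde{\mathbb A}/Q\widetilde{\mathbb A})\simeq\widetilde{\varphi}(\widetilde{\mathbb A}/Q\widetilde{\mathbb A})$ for $Q\neq P$ while the $P$-Euler factor of $\widetilde{\varrho}$ is trivial, hence $\mathcal L(\widetilde{\varrho}/\widetilde{\mathbb A})=\mathcal L_P(\widetilde{\varphi}/\widetilde{\mathbb A})$ formally; convergence then drops out of the built-in factor $P^{q^j-1}$ in $\log_{\widetilde{\varrho}}$ together with Lemma~\ref{LemmaS2-7}, and the second assertion follows from (\ref{unitz}) and Lemma~\ref{LemmaS2-9}. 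You bypass the auxiliary module entirely: Cayley--Hamilton for $\widetilde{\varphi}_\theta$ on $(A/PA)[t_1,\dots,t_n,z]$ gives $\widetilde{\varphi}_{E_P}(u_\phi)=Pw$ directly, and the factor $P^{q^j}$ arising from $\tau^j(Pw)$ plays the same role as the paper's $P^{q^j-1}$. Your approach is more self-contained and proves both assertions in one stroke; the paper's approach has the conceptual payoff of exhibiting $\mathcal L_P(\widetilde{\varphi}/\widetilde{\mathbb A})$ as the $L$-series of another Drinfeld module, which is of independent interest.
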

\begin{proof} Let $\rho:A\rightarrow A\{\tau\}$ be the Drinfeld $A$-module given by:
$$\rho_{\theta}=\sum_{j=0}^r \alpha_j P^{q^j -1} \tau^j,$$
where $\phi_{\theta}= \sum_{j=0}^r \alpha_j\tau^j.$ Let $\varrho$ be the canonical deformation of $\rho$ over $\mathbb T_n(K_{\infty})$ and $\widetilde{\varrho}$ be the canonical $z$-deformation of $\varrho.$ Then:
$$\log_{\widetilde{\varrho}}=\sum_{j\geq  0} {b_j(t_1) \cdots b_j(t_n) P^{q^j -1}}{l_j} z^j\tau ^j,$$
where $\log_{\phi}= \sum_{j\geq 0}{l_i} \tau ^i.$ By (\ref{unitz}), we have the formal identity in $K(t_1, \ldots, t_n)[[z]]:$
$$\mathcal L(\widetilde{\varrho}/\widetilde{\mathbb A})= \log_{\widetilde{\varrho}}(u_{\rho}(t_1, \ldots, t_n; z)).$$
But, by Lemma \ref{LemmaS2-7}, we have:
$$\forall j\geq 0, v_P({b_j(t_1) \cdots b_j(t_n) P^{q^j -1}}{l_j} )\geq q^j -1-[\frac{j}{d}].$$
This implies that $\mathcal L(\widetilde{\varrho}/\widetilde{\mathbb A})$ converges in $\widetilde{\mathbb A_P}.$ Let $Q$ be a monic irreducible element in $A$ prime to $P$.  The multiplication by $P$ gives rise to an isomorphism of $\widetilde{\mathbb A}$-modules:
$$\widetilde{\varrho}(\frac{\widetilde{\mathbb A}}{Q\widetilde{\mathbb A}})\simeq \widetilde{\varphi}(\frac{\widetilde{\mathbb A}}{Q\widetilde{\mathbb A}}).$$
This implies that we have the following equality in $K(t_1, \ldots, t_n)[[z]]:$
$$\mathcal L(\widetilde{\varrho}/\widetilde{\mathbb A})= \mathcal L_P(\widetilde{\varphi}/\widetilde{\mathbb A}).$$
Therefore we get the first assertion of the Theorem. The second assertion is a consequence of (\ref{unitz}) and Lemma \ref{LemmaS2-9}.

\end{proof}
Observe that  we have analogous constructions for $\varphi$ as that made in Lemma \ref{LemmaS2-8} and Lemma \ref{LemmaS2-9}. Thus, we have a continuous morphism of $\mathbb F_q[t_1, \ldots, t_n]$-modules: 
$$\log_{\varphi, P}: \mathbb O\rightarrow \mathbb T_n(\mathbb C_P),$$
such that:
$$\forall a \in \mathbb A, \log_{\varphi, P}(\varphi_a(x))=a\log_{\varphi, P}(x).$$
But, $\log_{\varphi,P}$  is no longer injective.  \par
Let $ev: \mathbb T_{n,z}(\mathbb C_P)\rightarrow \mathbb T_n(\mathbb C_P), y\mapsto y\mid_{z=1}.$ Then:
$$\forall y\in \widetilde{\mathbb O}, ev(\log_{\widetilde{\varphi}, P}(y))= \log_{\varphi, P}(ev(y)).$$
We set:
$$\mathcal L_P(\varphi/\mathbb A)= ev(\mathcal L_P(\widetilde{\varphi}/\widetilde{\mathbb A}))\in \mathbb A_P.$$
We get:
\begin{corollary}\label{CorollaryS2-4} We have the following equality in $\mathbb T_n(K_P):$
$$\mathcal L_P(\varphi/\mathbb A)= \frac{[\varphi(  \frac{\mathbb A}{P\mathbb A}      )]_{\mathbb A}} {P}\log_{\varphi,P}(u_{\phi}(t_1, \ldots, t_n; 1)).$$
In particular $\mathcal L_P(\varphi/\mathbb A)= 0$ if and only if $u_{\phi}(t_1, \ldots, t_n; 1)$ is a torsion point for $\varphi.$
\end{corollary}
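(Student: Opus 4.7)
The strategy is to obtain the statement by specializing the equation of Theorem \ref{TheoremS2-3} at $z=1$. Apply the continuous morphism $ev\colon\mathbb T_{n,z}(\mathbb C_P)\to\mathbb T_n(\mathbb C_P),\ y\mapsto y|_{z=1}$, to the identity
$$\mathcal L_P(\widetilde{\varphi}/\widetilde{\mathbb A})=\frac{[\widetilde{\varphi}(\widetilde{\mathbb A}/P\widetilde{\mathbb A})]_{\widetilde{\mathbb A}}}{P}\log_{\widetilde{\varphi},P}(u_{\phi}(t_1,\ldots,t_n;z)).$$
On the left-hand side we get $\mathcal L_P(\varphi/\mathbb A)$ by the very definition preceding the corollary. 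On the right-hand side, $ev$ commutes with the logarithm, i.e. $ev\circ\log_{\widetilde{\varphi},P}=\log_{\varphi,P}\circ ev$, as noted in the paragraph immediately before the statement, so the logarithm factor becomes $\log_{\varphi,P}(u_\phi(t_1,\ldots,t_n;1))$.

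For the remaining factor, I would observe that since $\widetilde{\varphi}|_{z=1}=\varphi$ and the Fitting ideal is computed by a determinant that is polynomial in the coefficients of $\widetilde{\varphi}_\theta$, one has $ev([\widetilde{\varphi}(\widetilde{\mathbb A}/P\widetilde{\mathbb A})]_{\widetilde{\mathbb A}})=[\varphi(\mathbb A/P\mathbb A)]_{\mathbb A}$; this is essentially the same remark as used in the proof of Theorem \ref{TheoremS2-3}. Combining these three computations yields the first displayed equation.

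For the second assertion, the coefficient $\frac{[\varphi(\mathbb A/P\mathbb A)]_{\mathbb A}}{P}$ is a nonzero element of the integral domain $\mathbb T_n(K_P)$ (note that $[\varphi(\mathbb A/P\mathbb A)]_{\mathbb A}$ is a monic polynomial in $\theta$ and hence nonzero), so multiplication by it is injective on $\mathbb T_n(K_P)$. Therefore $\mathcal L_P(\varphi/\mathbb A)=0$ if and only if $\log_{\varphi,P}(u_{\phi}(t_1,\ldots,t_n;1))=0$. The final equivalence comes from the claim that, for $y\in\mathbb O$, one has $\log_{\varphi,P}(y)=0$ if and only if $y$ is a torsion point for $\varphi$: if $\varphi_a(y)=0$ for some $a\in A[t_1,\ldots,t_n]\setminus\{0\}$, then the functional equation $\log_{\varphi,P}(\varphi_a(y))=a\log_{\varphi,P}(y)$ forces $\log_{\varphi,P}(y)=0$ since $\mathbb T_n(\mathbb C_P)$ is torsion-free over $\mathbb A$.

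The main (and only mildly nontrivial) obstacle is the converse direction: if $\log_{\varphi,P}(y)=0$ for $y\in\mathbb O$, show $y$ is torsion. I would argue as in the proof of Lemma \ref{LemmaS2-9} (which establishes injectivity of $\log_{\widetilde{\varphi},P}$ on $\mathcal M$): choose $a\in A[t_1,\ldots,t_n]\cap\mathbb T_n(K_P)^\times$ so that $\varphi_a(y)\in\mathcal M=\{x\in\mathbb O, v_P(x)>0\}$, then pick $m\gg 0$ so that $\varphi_{P^m a}(y)$ lies in the convergence range where $\log_\varphi$ and $\exp_\varphi$ are mutually inverse isometries; the hypothesis $\log_{\varphi,P}(y)=0$ gives $\log_{\varphi,P}(\varphi_{P^ma}(y))=0$, and injectivity of $\log_\varphi$ on this small disk forces $\varphi_{P^ma}(y)=0$, so $y$ is torsion. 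This finishes the proof.
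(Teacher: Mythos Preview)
Your proof is correct and follows exactly the approach the paper intends: the paper's own proof consists of the single sentence ``This is a direct consequence of Theorem~\ref{TheoremS2-3},'' and you have simply unpacked that by applying $ev$ to both sides and using the compatibility $ev\circ\log_{\widetilde{\varphi},P}=\log_{\varphi,P}\circ ev$ stated just before the corollary. Your additional justification of the torsion equivalence (via the argument of Lemma~\ref{LemmaS2-9} transported to $\varphi$) is more detailed than what the paper writes but is the natural elaboration and is correct.
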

\begin{proof} This is a direct consequence of Theorem \ref{TheoremS2-3}.
\end{proof}
We set:
$$\mathcal L_P^{(1)}(\varphi/\mathbb A)=ev(\frac{d}{dz} \mathcal L_P(\widetilde{\varphi}/\widetilde{\mathbb A}))\in \mathbb A_P,$$
$$u_{\phi}^{(1)}(t_1, \ldots, t_n; 1) =ev(\frac{d}{dz} u_{\phi}(t_1, \ldots, t_n;z))\in A[t_1, \ldots, t_n],$$
$$\forall a\in A[t_1, \ldots, t_n], \varphi^{(1)}_a= ev(\frac{d}{dz} \widetilde{\varphi}_a)\in A[t_1, \ldots, t_n]\{\tau\}.$$
\begin{proposition}\label{PropositionS2-5} Let's assume that $u_{\phi}(t_1, \ldots, t_n; 1)$ is a torsion point for $\varphi.$
Then, in $\mathbb T_n(K_P)$, $\mathcal L_P^{(1)}(\varphi/\mathbb A)$ is equal to
$$\frac{[\varphi(  \frac{\mathbb A}{P\mathbb A}      )]_{\mathbb A}} {P}\log_{\varphi,P}(u_{\phi}^{(1)}(t_1, \ldots, t_n; 1))+ \frac{[\varphi(  \frac{\mathbb A}{P\mathbb A}      )]_{\mathbb A}} {aP}\log_{\varphi,P}(\varphi_a^{(1)}(u_{\phi}(t_1, \ldots, t_n; 1))),$$
for any  $a\in A[t_1, \ldots, t_n]$ which is monic as a polynomial in $\theta$ and such that $\varphi_a(u_{\phi}(t_1, \ldots, t_n; 1))=0.$
\end{proposition}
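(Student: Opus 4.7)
The plan is to differentiate the identity of Theorem \ref{TheoremS2-3} with respect to $z$, evaluate at $z=1$, and compute the resulting $\log_{\widetilde{\varphi},P}$-term by exploiting the functional equation of the logarithm and the torsion hypothesis. Write $v(z)=u_{\phi}(t_1,\ldots,t_n;z)$ and $E(z)=[\widetilde{\varphi}(\widetilde{\mathbb A}/P\widetilde{\mathbb A})]_{\widetilde{\mathbb A}}$, so that Theorem \ref{TheoremS2-3} reads
$$\mathcal L_P(\widetilde{\varphi}/\widetilde{\mathbb A})=\frac{E(z)}{P}\log_{\widetilde{\varphi},P}(v(z))$$
as an identity in $\mathbb T_{n,z}(K_P)$. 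Applying $\tfrac{d}{dz}$ and then $ev$ gives
$$\mathcal L_P^{(1)}(\varphi/\mathbb A)=\frac{E'(1)}{P}\log_{\varphi,P}(v(1))+\frac{E(1)}{P}\,ev\!\Bigl(\tfrac{d}{dz}\log_{\widetilde{\varphi},P}(v(z))\Bigr).$$

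By hypothesis $v(1)=u_\phi(t_1,\ldots,t_n;1)$ is a torsion point for $\varphi$, so there is $a\in A[t_1,\ldots,t_n]$, monic in $\theta$, with $\varphi_a(v(1))=0$. Then $a\log_{\varphi,P}(v(1))=\log_{\varphi,P}(\varphi_a(v(1)))=0$ by Lemma \ref{LemmaS2-9}; since $a$ is monic in $\theta$ it is not a zero divisor in $\mathbb T_n(\mathbb C_P)$, so $\log_{\varphi,P}(v(1))=0$ and the $E'(1)$ term disappears. Thus the task reduces to evaluating $ev\bigl(\tfrac{d}{dz}\log_{\widetilde{\varphi},P}(v(z))\bigr)$.

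For this, I use that $\varphi_a(v(1))=0$ implies $\widetilde{\varphi}_a(v(z))\in (z-1)\widetilde{\mathbb A_P}$; write $\widetilde{\varphi}_a(v(z))=(z-1)g(z)$. Since $\tau$ is $\mathbb F_q(z)$-linear it commutes with $\tfrac{d}{dz}$, so the Leibniz rule applied term-by-term to $\widetilde{\varphi}_a=\sum_k c_k(z)\tau^k$ gives
$$\tfrac{d}{dz}\bigl(\widetilde{\varphi}_a(v(z))\bigr)=\bigl(\tfrac{d}{dz}\widetilde{\varphi}_a\bigr)(v(z))+\widetilde{\varphi}_a(v'(z)),$$
and evaluating at $z=1$ yields
$$g(1)=\varphi_a^{(1)}(v(1))+\varphi_a(u_\phi^{(1)}(t_1,\ldots,t_n;1)).$$
Applying $\log_{\widetilde{\varphi},P}$ to $a\cdot v(z)=\widetilde{\varphi}_a(v(z))\cdot a/\widetilde{\varphi}_a$ — more precisely, to the identity $\widetilde{\varphi}_a(v(z))=(z-1)g(z)$ — and using the functional equation of Lemma \ref{LemmaS2-9} together with the $\mathbb F_q[z]$-linearity of $\log_{\widetilde{\varphi},P}$, we obtain
$$a\,\log_{\widetilde{\varphi},P}(v(z))=(z-1)\,\log_{\widetilde{\varphi},P}(g(z)).$$
Differentiating in $z$ and evaluating at $z=1$ gives
$$a\cdot ev\!\Bigl(\tfrac{d}{dz}\log_{\widetilde{\varphi},P}(v(z))\Bigr)=\log_{\varphi,P}(g(1))=\log_{\varphi,P}\bigl(\varphi_a^{(1)}(u_\phi(t_1,\ldots,t_n;1))\bigr)+a\,\log_{\varphi,P}\bigl(u_\phi^{(1)}(t_1,\ldots,t_n;1)\bigr).$$

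Dividing by $a$ (legitimate in the fraction field of $\mathbb T_n(\mathbb C_P)$) and multiplying by $E(1)/P=[\varphi(\mathbb A/P\mathbb A)]_{\mathbb A}/P$ yields exactly the stated formula. The main obstacle is essentially bookkeeping: justifying that $\tfrac{d}{dz}$ commutes with $\tau$ and distributes via the Leibniz rule over the operator $\widetilde{\varphi}_a$ (so that the composite $\widetilde{\varphi}_a\circ v$ may be differentiated as a product), and checking that all intermediate expressions live in the Tate algebras where Lemma \ref{LemmaS2-9} applies; both reduce to the fact that $\tau$ is $\mathbb F_q(z)$-linear and that the logarithm is $\mathbb F_q[z]$-linear.
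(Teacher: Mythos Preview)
Your proof is correct and follows essentially the same strategy as the paper's: multiply through by $a$ via the functional equation of the logarithm, differentiate, and use the torsion hypothesis at $z=1$ to kill the unwanted term. The only organizational difference is that the paper works with the formal logarithm $\log_{\widetilde\varphi}$ in $K(t_1,\ldots,t_n)[[z]]$ and applies a single three-term Leibniz expansion to $\log_{\widetilde\varphi}(\widetilde\varphi_a(u_\phi))$, whereas you work $P$-adically throughout and split the computation into two steps (first compute $g(1)$, then differentiate the scalar identity $a\,\log_{\widetilde\varphi,P}(v)=(z-1)\log_{\widetilde\varphi,P}(g)$); your factorization $\widetilde\varphi_a(v(z))=(z-1)g(z)$ cleanly avoids having to differentiate through the operator $\log_{\widetilde\varphi,P}$ itself.
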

\begin{proof} Let's start with the equality in $K(t_1, \ldots, t_n)[[z]]:$
$$\mathcal L(\widetilde{\varphi}/\widetilde{\mathbb A})=\log_{\widetilde{\varphi}}(u_\phi(t_1, \ldots, t_n; z)).$$
We get, for $a\in A[t_1, \ldots t_n]:$
$$a\mathcal L(\widetilde{\varphi}/\widetilde{\mathbb A})=\log_{\widetilde{\varphi}}(\widetilde{\varphi}_a(u_\phi(t_1, \ldots, t_n; z))).$$
Thus
\begin{eqnarray*}
a\frac{d}{dz}\mathcal L(\widetilde{\varphi}/\widetilde{\mathbb A}) &=& (\frac{d}{dz}\log_{\widetilde{\varphi}})(\widetilde{\varphi}_a(u_\phi(t_1, \ldots, t_n; z))) + \log_{\widetilde{\varphi}}(\frac{d}{dz}(\widetilde{\varphi}_a)(u_\phi(t_1, \ldots, t_n; z)))\\ && +\log_{\widetilde{\varphi}}(\widetilde{\varphi}_a(\frac{d}{dz}(u_\phi(t_1, \ldots, t_n; z)))).
\end{eqnarray*}
Now, by Remark \ref{remark2}, there exists an element $a\in A[t_1, \ldots, t_n]$ which is monic as a polynomial in $\theta$ such that:
$$\varphi_{a}(u_{\phi}(t_1, \ldots, t_n; z))=0.$$
The assertion of the Proposition follows from Corollary \ref{CorollaryS2-4} and Theorem \ref{TheoremS2-3}.
\end{proof}
\section{Arithmetic of cyclotomic function fields}\label{section3}
Let $p$ be an odd prime number and let $X$ be the $p$-Sylow subgroup of the ideal class group of $\mathbb Q(e^{\frac{2i\pi}{p}}).$ Let $\Delta={\rm Gal}(\mathbb Q(e^{\frac{2i\pi}{p}})/\mathbb Q)\simeq (\frac{\mathbb Z}{p\mathbb Z})^\times$ and let $\widehat{\Delta} ={\rm Hom}(\Delta, \mathbb Z_p^\times).$ Observe that $X$ is a $\mathbb Z_p[\Delta]$-module.  For $\chi \in \widehat {\Delta},$ let:
$$X(\chi)=e_\chi X,$$
where $e_\chi =\frac{1}{\mid \Delta\mid} \sum_{\delta \in \Delta} \chi( \delta) \delta^{-1}\in \mathbb Z_p[\Delta].$\par
Let $\omega_p \in \widehat{\Delta}$ be the $p$-adic Teichm\" uller character. Let $\chi\in \widehat{\Delta}$ be an odd character. Then by the famous Leopoldt's Spiegelungssatz (\cite{LEO}), we have:
$$\dim_{\mathbb F_p} \frac{X(\omega_p\chi^{-1})}{pX(\omega_p\chi^{-1})}\leq \dim_{\mathbb F_p} \frac{X(\chi)}{pX(\chi)}\leq \dim_{\mathbb F_p} \frac{X(\omega_p\chi^{-1})}{pX(\omega_p\chi^{-1})}+1.$$
Therefore if $X(\omega_p\chi^{-1})=\{0\}$, $X(\chi)$ is a cyclic $\mathbb Z_p$-module. In particular, if Vandiver's Conjecture is true for the prime $p$ then $X(\chi)$ is a cyclic $\mathbb Z_p$-module for every odd character $\chi$ of conductor $p.$ The cyclicity of $X$ as a $\mathbb Z_p[\Delta]$-module is still an open problem  and is conjectured to be true. This problem is sometimes referred as  "The Iwasawa-Leopoldt Conjecture" (\cite{LAN}, page 80), and by the above discussion Vandiver's Conjecture implies this cyclicity statement.\par
In this section, using the ideas developed in sections  \ref{section1} and \ref{section2}, we study an analogous question for  the case of the $P$th cyclotomic function field where $P$ is a monic irreducible element in $A.$ Note that the analogue of Vandiver's Conjecture is false in this context (\cite{ANG&TAE1}). Furthermore, we are not convinced that the isotypic components of the " $P$-Sylow subgroup " of Taelman's class module associated to the $P$th cyclotomic function field are cyclic $A_P$-modules, where $A_P$ is the $P$-adic completion of $A,$ even in the case of  odd characters (it would be very interesting to exhibit, if they exist, such examples). However we are able to prove a cyclicity result (Theorem \ref{TheoremS3-2})  that involves the "derivatives" of Goss $P$-adic $L$-series for odd characters. This result does not seem to have an analogous counterpart in the classical case.

\subsection{Dirichlet characters}$\label{Dirichletcharacters}{}$\par
Let $\overline{\mathbb F_q}$ be the algebraic closure of $\mathbb F_q$ in $\mathbb C_\infty.$ Let $\zeta \in \overline {\mathbb F_q}$ and let $Q$ be the monic irreducible element in $A$ such that $Q(\zeta)=0.$ The morphism of $\mathbb F_q$-algebras  $\rho_\zeta: A\rightarrow \mathbb C_\infty, a\mapsto a(\zeta),$ induces an injective  morphism of groups still denoted by $\rho_\zeta:$
$$\rho_\zeta: (\frac{A}{QA})^\times\rightarrow \mathbb C_\infty^\times.$$
A Dirichlet character is a morphism of groups $\chi:(\frac{A}{gA})^\times \rightarrow \mathbb C_\infty^\times,$ for some $g\in A_+.$ We observe that the conductor of $\chi,$ $f_\chi \in A_+,$ is a square-free monic polynomial. In this article all Dirichlet characters are assumed to be primitive, i.e. viewed as defined modulo their conductors. For example, if $\zeta \in \overline{\mathbb F_q},$ $\rho_\zeta$ is a Dirichlet character of conductor $Q,$ where $Q$ is the monic irreducible polynomial in $A$ such that $Q(\zeta)=0.$\par
Let $\chi$ be a (primitive) Dirichlet character.  Then there exist $\zeta_1(\chi), \ldots, \zeta_m(\chi) \in \overline{\mathbb F_q}, $ $n_1(\chi), \ldots, n_m(\chi)\in \{ 1, \ldots, q-1\},$ $m\geq 0,$ such that:
$$\forall a\in A, \chi(a) =\prod_{j=1}^m a(\zeta_j(\chi))^{n_j(\chi)}.$$
Note that the conductor of $\chi$ is the least common multiple of the conductors of the $\rho_{\zeta_j(\chi)}, j=1, \ldots,m.$
The type of $\chi,$ $s(\chi),$ is defined by:
$$s(\chi)=\sum_{j=1}^m n_j(\chi).$$
For example, the only Dirichlet character of type $0$ is the trivial character, and the Dirichlet characters of type $1$ are exactly the $\rho_\zeta,$ $\zeta\in \overline{\mathbb F_q}.$ A Dirichlet character, $\chi,$ is called odd if $s(\chi)\equiv 1\pmod{q-1},$ and even otherwise.\par
${}$\par
Let's select $\lambda_\theta \in \mathbb C_\infty$ such that $\lambda_\theta^{q-1}=-\theta.$ We set:
$$\widetilde{\pi} =\lambda_\theta \theta  \prod_{j\geq 1} (1-\theta^{1-q^j})^{-1}\in K_\infty(\lambda_\theta)^\times.$$
Recall that $\tau :\mathbb C_\infty\rightarrow \mathbb C_\infty$ is the continuous morphism of $\mathbb F_q$-algebras such that $\tau(\theta)=\theta^q.$ Let $C$ be the Carlitz module (recall that $C_\theta=\tau +\theta$). Then:
$${\rm Ker}(\exp_C: \mathbb C_\infty \rightarrow \mathbb C_\infty)= \widetilde{\pi} A.$$
Observe that:
$$\lambda_\theta= \exp_C(\frac{\widetilde{\pi}}{\theta}).$$
For $a\in A_+,$ we set: $\lambda_a=\exp_C(\frac{\widetilde{\pi}}{a})\in \mathbb F_q((\frac{1}{\lambda_\theta})).$ Let $\sigma:\overline{\mathbb F_q}((\frac{1}{\lambda_\theta}))\rightarrow \overline{\mathbb F_q}((\frac{1}{\lambda_\theta}))$ be the continuous morphism of $\overline{\mathbb F_q}$-algebras such that $\sigma (\lambda_\theta)= \lambda_\theta^q.$\par
${}$\par
Finally, we recall the definition of the Gauss-Thakur sum attached to a Dirichlet character. Let $\zeta\in \overline{\mathbb F_q}$ and let $Q$ be the monic irreducible element in $A$ such that $Q(\zeta)=0.$ We set (see also \cite{THA}):
$$g(\rho_\zeta)= -\sum_{n=0}^{\deg_\theta Q-1}\sum_{a\in A_{+,n}} \rho_\zeta (a)^{-1}C_a(\lambda_Q) \in \overline{\mathbb F_q}((\frac{1}{\lambda_\theta}))^\times.$$
One easily verifies that:
$$\sigma (g(\rho_\zeta))= (\zeta-\theta) g(\rho_\zeta).$$
In particular:
$$\tau^{\deg_\theta Q}(g(\rho_\zeta))= \sigma^{\deg_\theta Q}(g(\rho_\zeta))= (-1)^{\deg_\theta Q}Q g(\rho_\zeta).$$
Also, we have (\cite{THA}, Proposition I ):
\begin{eqnarray}\label{eqS3-1}\sum_{k=0}^{\deg_\theta Q -1} g(\rho_{\zeta^{q^k}})= \lambda_Q.\end{eqnarray}
Now, let $\chi$ be a Dirichlet character. Let $\zeta_1(\chi), \ldots, \zeta_m(\chi) \in \overline{\mathbb F_q}, $ $n_1(\chi), \ldots, n_m(\chi)\in \{ 1, \ldots, q-1\},$ $m\geq 0,$ such that: $\forall a\in A, \chi(a) =\prod_{j=1}^m a(\zeta_j(\chi))^{n_j(\chi)}.$ We set (see also \cite{ANG&PEL1}):
$$g(\chi)=\prod_{j=1}^m g(\rho_{\zeta_j(\chi)})^{n_j(\chi)}.$$
Then:
\begin{eqnarray}\label{eqS3-2}\sigma(g(\chi))=\prod_{j=1}^m(\zeta_j(\chi)-\theta)^{n_j(\chi)}\, g(\chi).\end{eqnarray}

\subsection{$P$-adic Dirichlet-Goss $L$-series}$\label{P-adicDirichlet-GossL-series}{}$\par
Let $P$ be a monic irreducible polynomial in  $A$ of degree $d.$ Let $K^{ac}$ be the algebraic closure of $K$ in $\mathbb C_\infty$ and we fix once for all a $K$-embedding of $K^{ac}$ in $\mathbb C_P.$ Let $n\geq 0$ be an integer and let $t_1, \ldots, t_n,z$ be $n+1$ indeterminates over $\mathbb C_P.$ Let's set:
$$L_P(t_1, \ldots, t_n; z)=\sum_{n\geq 0}\sum_{a\in A_{+,n}, a\not \equiv 0\pmod{P}}\frac{a(t_1)\ldots a(t_n)}{a} z^n\in K[t_1, \ldots, t_n][[z]].$$
Then by Theorem \ref{TheoremS2-3}:
$$L_P(t_1, \ldots, t_n; z)\in \mathbb T_{n,z}(K_P).$$\par
Let $\chi$ be a Dirichlet character. Let $\zeta_1(\chi), \ldots, \zeta_m(\chi) \in \overline{\mathbb F_q}, $ $n_1(\chi), \ldots, n_m(\chi)\in \{ 1, \ldots, q-1\},$ $m\geq 0,$ such that: $\forall a\in A, \chi(a) =\prod_{j=1}^m a(\zeta_j(\chi))^{n_j(\chi)}.$ Let $n=n_1(\chi)+\cdots+n_m(\chi)$ be the type of $\chi.$ Let's recall that the value at one of the Dirichlet-Goss $P$-adic $L$-series attached to $\chi$ is defined by (see \cite{GOS4}):
$$L_P(1, \chi) =\sum_{n\geq 0}\sum_{a\in A_{+,n}, a\not \equiv 0\pmod{P}}\frac{\chi(a)}{a} \in \mathbb C_P.$$
Now let $\eta_1, \ldots, \eta_n \in \overline{\mathbb F_q}$ such that  $\eta_{n_1(\chi)+\cdots n_{j-1}(\chi)+k }= \zeta_j,$ for $k\in \{ 1, \ldots, n_j(\chi)\}, $ $j=1, \ldots, m.$ Let $ev_\chi: K[t_1, \ldots, t_n][[z]]\rightarrow \overline{\mathbb F_q}(\theta)[[z]],$ be the $K$-linear map defined by:
$$ev_\chi (f)=f\mid_{t_1=\zeta_1, \ldots, t_n=\zeta_n}.$$
It is clear that we have:
$$L_P(1, \chi) =ev_\chi(L_P(t_1, \ldots, t_n;z))\mid_{z=1}.$$
This implies that, by Corollary \ref{CorollaryS2-4} and Lemma \ref{LemmaS2-2} and Lemma \ref{LemmaS2-3}, if $\chi $ is odd then:
$$L_P(1, \chi)=0.$$
Thus, if $\chi$ is odd, we set:
$$L_P^{(1)}(1, \chi)= \frac{d}{dz}(ev_\chi(L_P(t_1, \ldots, t_n; z))\mid_{z=1}\in \mathbb C_P.$$
\begin{theorem}
\label{TheoremS3-1} Let $\chi$ be a Dirichlet character. Then $L_P(1, \chi)=0$ if and only if $\chi$ is odd. Furthermore, if $\chi$ is odd, we have: 
$$L_P^{(1)}(1, \chi)\not =0.$$
\end{theorem}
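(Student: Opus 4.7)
The plan is to reduce the theorem to vanishing/non-vanishing statements about the polynomial $\mathcal L_P(\varphi/\mathbb A)$ and its $z$-derivative $\mathcal L_P^{(1)}(\varphi/\mathbb A)$ constructed in Section \ref{section2}, where $\varphi$ is the canonical deformation of the Carlitz module. An Euler product expansion at $P$, combined with Lemma \ref{lemmaSt1}, identifies $L_P(t_1,\ldots,t_n;z)$ with $\mathcal L_P(\widetilde\varphi/\widetilde{\mathbb A})$ in the Carlitz case, so $L_P(1,\chi)=ev_\chi(\mathcal L_P(\varphi/\mathbb A))$ and $L_P^{(1)}(1,\chi)=ev_\chi(\mathcal L_P^{(1)}(\varphi/\mathbb A))$.

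\emph{Vanishing for odd $\chi$}, already sketched before the statement: if $n\equiv 1\pmod{q-1}$ and $n\geq q$, Lemma \ref{LemmaS2-3} gives $u_C(t_1,\ldots,t_n;1)=0$; if $n=1$, Lemma \ref{LemmaS2-2} gives $u_C(t_1;z)\equiv 1$, while a direct computation using $\varphi_\theta=(t_1-\theta)\tau+\theta$ shows $\varphi_{\theta-t_1}(1)=0$, so $1$ is a $\varphi$-torsion point. In both cases Corollary \ref{CorollaryS2-4} forces $\mathcal L_P(\varphi/\mathbb A)=0$, whence $L_P(1,\chi)=0$ after evaluating at $\chi$.

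\emph{Non-vanishing for even $\chi$.} For $n\not\equiv 1\pmod{q-1}$, Theorem \ref{TheoremS2-1} gives $\mathbb B(t_1,\ldots,t_n)=1$ and Corollary \ref{CorollaryS2-2} shows $u_C(t_1,\ldots,t_n;1)$ generates a saturated cyclic $A[t_1,\ldots,t_n]$-submodule; the leading-coefficient information from Proposition \ref{PropositionS2-3} (or from Lemma \ref{LemmaS2-2} when $n<q$) should propagate through $ev_\chi$ and show that $ev_\chi(u_C(t_1,\ldots,t_n;1))$ is a non-zero polynomial in $\theta$ with top coefficient in $\overline{\mathbb F_q}^\times$, which is not a $\varphi_\chi$-torsion point of $\overline{\mathbb F_q}[\theta]$. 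Corollary \ref{CorollaryS2-4} then yields $L_P(1,\chi)\neq 0$. For the derivative when $\chi$ is odd, apply Proposition \ref{PropositionS2-5} with $a$ annihilating $u_C(t_1,\ldots,t_n;1)$: take $a=1$ when $n\geq q$ (since $u_C(\cdots;1)=0$) and $a=\theta-t_1$ when $n=1$. In both cases one of the two summands is trivially zero and the formula collapses to
\begin{equation*}
\mathcal L_P^{(1)}(\varphi/\mathbb A) \;=\; \frac{[\varphi(\mathbb A/P\mathbb A)]_{\mathbb A}}{aP}\,\log_{\varphi,P}(v),
\end{equation*}
with $v=u_C^{(1)}(t_1,\ldots,t_n;1)$ if $n\geq q$ and $v=\varphi_{\theta-t_1}^{(1)}(1)=t_1-\theta$ if $n=1$. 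Proposition \ref{PropositionS2-3} produces an explicit non-zero leading coefficient in $\mathbb F_q^\times$ for $v$ in the large-type case, while Theorem \ref{TheoremS2-2} together with Corollary \ref{CorollaryS2-3} ensures this element is $\varphi$-saturated; the case $n=1$ is transparent since $t_1-\theta$ is manifestly not $\varphi$-torsion.

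The main obstacle is the last step: transferring generic non-torsion-ness of $v$ over $A[t_1,\ldots,t_n]$ to each individual specialization, i.e.\ showing $\log_{\varphi_\chi,P}(ev_\chi(v))\neq 0$ in $\mathbb C_P$ for every admissible $\chi$. The plan to bridge this gap is to exploit the Gauss--Thakur formalism of Section \ref{Dirichletcharacters}: identity \eqref{eqS3-2} identifies $ev_\chi(v)$ with an explicit element of the Carlitz cyclotomic extension built from $g(\chi)$ and $\lambda_P$, and the series expansion of $\log_{\widetilde\varphi,P}$ from Lemma \ref{LemmaS2-9} together with the valuation bounds of Lemma \ref{LemmaS2-7} should permit a term-by-term $P$-adic valuation estimate showing that the leading contribution survives. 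This is where the genuine arithmetic input, as opposed to the formal $P$-adic algebra of Section \ref{section2}, enters the proof.
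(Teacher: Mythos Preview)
Your outline correctly handles the vanishing direction and correctly identifies the genuine obstacle: passing from generic non-torsion over $A[t_1,\ldots,t_n]$ to non-torsion at each individual specialisation $t_i=\eta_i$. However, your proposed fix---a term-by-term $P$-adic valuation estimate based on Lemmas \ref{LemmaS2-7} and \ref{LemmaS2-9}---does not close the gap. The bound $v_P(l_j)\geq -[j/d]$ is far too crude to isolate a dominant term of $\log_{\varphi_\chi,P}(ev_\chi(v))$, and indeed $\log_{\varphi,P}$ is explicitly noted to be non-injective on $\mathbb O$ just before Corollary \ref{CorollaryS2-4}; its kernel is precisely the $\varphi$-torsion you are trying to rule out, so valuation bookkeeping alone cannot decide the question.

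The paper's proof supplies two ingredients you are missing. First, instead of working with the specialised module $\varphi_\chi$, one multiplies by the Gauss--Thakur sum $g(\chi)$ and then sums over the Frobenius orbit $[\chi]=\{\chi^{q^i}\}$: this converts $\log_{\sigma,z}$ into the honest Carlitz logarithm $\log_{C,z}$ acting on $u_{[\chi]}(z)=\sum_{\psi\in[\chi]}u_\psi(z)\in A[\lambda_f][z]$ (Proposition \ref{PropositionS3-1}). Now one only needs that $u_{[\chi]}(1)$ (resp.\ $u_{[\chi]}^{(1)}(1)$, resp.\ $\lambda_f^q$) is not a torsion point for the \emph{Carlitz} module, which follows from the parity of $\chi$ and Proposition \ref{PropositionS2-3}. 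This yields only $L_P(1,\psi)\neq 0$ for \emph{some} $\psi\in[\chi]$. Second, to propagate to every $\psi$, the paper uses a change of ground field $K\leadsto K^{(q)}=\mathbb F_q(\theta^q)$: the Artin-map compatibility gives $L_P^{(q)}(1,\chi)=\bigl(L_P(1,\chi^{1/q})\bigr)^q$, so $L_P(1,\chi)\neq 0\Rightarrow L_P(1,\chi^{1/q})\neq 0$, and one climbs around the orbit. Neither the orbit-averaging nor the $K^{(q)}$ descent appears in your plan, and they are what make the argument go through.
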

Note that the case of characters of conductor $P$ and $\chi$ even  is treated in \cite{ANG&TAE1} and the   non-vanishing result  uses  Bosser's $P$-adic Baker Brumer Theorem (see the appendix of \cite{ANG&TAE1}). In what follows, we propose a new approach which does not use the $P$-adic Baker-Brumer Theorem. \par
\noindent We  will now work in $K[t_1, \ldots, t_n][[z]].$ Let 
$$\tau : K[t_1, \ldots, t_n][[z]]\rightarrow K[t_1, \ldots, t_n][[z]]$$
 be the continuous morphism (for the $z$-adic topology) of $\mathbb F_q[t_1,\ldots, t_n][[z]]$-algebras such that $$\forall x\in K, \tau(x)=x^q.$$  Set :
$$\log_{n,z}=\sum_{k\geq 0} \frac{b_i(t_1)\cdots b_i(t_n)}{\ell_k}z^k \tau ^k,$$
where for $j=1, \ldots, n,$  we recall that $b_0(t_j)=1$ and for $i\geq 1:$ $b_i(t_j)=\prod_{k=0}^{i-1} (t_j-\theta^{q ^k}),$ $\ell_0=1$ and for $n\geq 1,$ $\ell_n =(\theta-\theta ^{q^n})\ell_{n-1}.$
Then, by (\ref{unit}), there exists $u_C(t_1, \ldots, t_n;z) \in A[t_1, \ldots, t_n,z]$ such that we have in $K[t_1, \ldots, t_n][[z]]:$
\begin{eqnarray}\label{eqS3-3}\sum_{k\geq 0}\sum_{a\in A_{+,k}}\frac{a(t_1)\cdots a(t_n)}{a} z^k = \log_{n,z}( u_C(t_1, \ldots, t_n;z)).\end{eqnarray}
\noindent Let $\chi$ be a Dirichlet character of type $n$ and conductor $f.$ Recall that, by (\ref{eqS3-2}),  there exists $\eta_1, \ldots, \eta_n\in \overline{\mathbb F_q}$ such that :
$$\sigma (g(\chi))=(\eta_1-\theta)\cdots  (\eta_n-\theta) g(\chi).$$
We set :
$$u_{\chi}(z) = g(\chi)u_C(t_1, \ldots, t_n; z)\mid_{t_1=\eta_1, \ldots , t_n=\eta_n}\in g(\chi)\mathbb  F_q(\eta_1, \ldots , \eta_n)[\theta][z].$$
Let $[\chi]= \{ \chi^{q^i}, i\geq 0\}.$ Observe that for $\psi \in [\chi],$ $\psi$ is of type $n$ and conductor $f.$ We set :
$$u_{[\chi]}(z)=\sum_{\psi \in [\chi]} u_{\psi}(z) \in A[\lambda_f][z],$$
and :
$$u^{(1)}_{[\chi]}(z)= \frac{d}{dz} u_{[\chi]}(z).$$
Finally, let $\Delta_f={\rm Gal}(K(\lambda_f)/K).$\par
Let $\log_{C,P}:  O\rightarrow \mathbb C_P$ be the $P$-adic logarithm  for the Carlitz module introduced in section \ref{PadicLseries}, where $O=\{ x\in \mathbb C_P, v_P(x)\geq 0\}$.

\begin{proposition}\label{PropositionS3-1}
Let $\chi$ be a Dirichlet character of type $n$ and conductor $f.$\par
\noindent 1) If $n\not \equiv 1\pmod{q-1},$ then :
$$L_P(1,\chi)g(\chi) =(1-P^{-1}\chi (P))\frac{1}{\mid \Delta_f\mid }\sum_{\mu \in \Delta_f}\chi^{-1}(\mu)\log_{C,P} (\mu(u_{[\chi]}(1)))                           .$$
2) If $n\geq q, $ $n\equiv 1\pmod{q-1},$  then :
$$   L_P^{(1)}(1, \chi) g(\chi)=(1-P^{-1}\chi (P))\frac{1}{\mid \Delta_f\mid }\sum_{\mu \in \Delta_f}\chi^{-1}(\mu)\log_{C,P} (\mu(u^{(1)}_{[\chi]}(1)))                       .$$
3) If $n=1,$ we have :
$$   L_P^{(1)}(1,\chi)g(\chi )=(1-P^{-1}\chi (P))\frac{1}{\mid \Delta_f\mid }\frac{1}{\theta- \chi(\theta)}\sum_{\mu \in \Delta_f}\chi^{-1}(\mu) \log_{C,P}( \mu (\lambda_f^q))                        .$$
\end{proposition}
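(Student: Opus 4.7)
The general strategy is to specialize the formulas from Corollary \ref{CorollaryS2-4} and Proposition \ref{PropositionS2-5} (applied with $\phi = C$ the Carlitz module and $\varphi$ its canonical deformation over $\mathbb{T}_n(K_\infty)$) at $t_i = \eta_i(\chi)$, multiply through by the Gauss-Thakur sum $g(\chi)$ to convert $\log_{\varphi,P}$ into $\log_{C,P}$, and finally average over $\Delta_f$ to extract the $\chi$-isotypic component.

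For case (1), I would start from the identity
$$\mathcal{L}_P(\varphi/\mathbb{A}) = \tfrac{[\varphi(\mathbb{A}/P\mathbb{A})]_{\mathbb{A}}}{P}\log_{\varphi,P}(u_C(t_1,\ldots,t_n;1))$$
of Corollary \ref{CorollaryS2-4} and apply $ev_\chi$. The left side gives $L_P(1,\chi)$ by construction of the $P$-adic $L$-series, while a direct Fitting-ideal computation (an $n$-variable variant of the trivial-$G$ case of Lemma \ref{lemmaSt1}) shows $ev_\chi([\varphi(\mathbb{A}/P\mathbb{A})]_{\mathbb{A}}) = P - \chi(P)$, producing the Euler factor $(1 - P^{-1}\chi(P))$. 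To convert $\log_{\varphi,P}$ into $\log_{C,P}$, I multiply by $g(\chi)$: iterating the Gauss-Thakur rule (\ref{eqS3-2}) yields $\tau^k(g(\chi)) = b_k(\eta_1)\cdots b_k(\eta_n)\, g(\chi)$, so term by term the series $g(\chi)\log_{\varphi,P}(u_C(t;1))|_{t=\eta}$ collapses to $\log_{C,P}(u_\chi(1))$. The averaged form is then obtained from $\sigma_a(g(\chi)) = \chi(a)g(\chi)$ and character orthogonality on $\Delta_f$ (legitimate because $|\Delta_f|$ is coprime to $p$), giving $u_\chi(1) = |\Delta_f|^{-1}\sum_\mu \chi^{-1}(\mu)\mu(u_{[\chi]}(1))$, after which $\log_{C,P}$ commutes with the Galois action. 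Case (2) is handled identically after noting that $u_C(t;1) = 0$ (Lemma \ref{LemmaS2-3}) so that Proposition \ref{PropositionS2-5} with trivial annihilator $a=1$ collapses to $\mathcal{L}_P^{(1)}(\varphi/\mathbb{A}) = \tfrac{[\varphi(\mathbb{A}/P\mathbb{A})]_{\mathbb{A}}}{P}\log_{\varphi,P}(u_C^{(1)}(t;1))$, and the same Gauss-sum manipulation is applied to $u_C^{(1)}$.

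The case $n = 1$ of (3) is the main obstacle: by Lemma \ref{LemmaS2-2} one has $u_C(t_1;z) = 1$ identically, so $u_C^{(1)}(t_1;1) = 0$ and the previous approach degenerates. I would instead invoke Proposition \ref{PropositionS2-5} with the nontrivial annihilator $a = \theta - t_1$ of $u_C(t_1;1)=1$ from Remark \ref{remark4}; a direct computation gives $\widetilde\varphi_{\theta-t_1} = (\theta-t_1)(1-z\tau)$, hence $\widetilde\varphi_{\theta-t_1}^{(1)}(1) = -(\theta-t_1)$, and therefore
$$\mathcal{L}_P^{(1)}(\varphi/\mathbb{A}) = -\tfrac{[\varphi(\mathbb{A}/P\mathbb{A})]_{\mathbb{A}}}{(\theta-t_1)P}\log_{\varphi,P}(\theta - t_1).$$
Specializing at $t_1 = \chi(\theta)$ and multiplying by $g(\chi)$, the Gauss-Thakur rule combined with the type-1 identity $g(\chi)(\theta-\chi(\theta)) = -\tau(g(\chi))$ (immediate from (\ref{eqS3-2})) reduces the right-hand side to $\frac{1 - P^{-1}\chi(P)}{\theta - \chi(\theta)}\log_{C,P}(\tau(g(\chi)))$. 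The final identification uses (\ref{eqS3-1}) to recognize $u_{[\chi]}(1) = \sum_{\psi\in[\chi]} g(\psi) = \lambda_f$ and the commutation $\tau(\mu(\lambda_f)) = \mu(\lambda_f^q)$ to recast the Galois average in terms of $\lambda_f^q$. The delicate point is tracking the Frobenius twist introduced by $\varphi_{\theta-t_1}^{(1)} = -(\theta-t_1)\tau$: it is precisely this extra $\tau$ that forces $\lambda_f$ to become $\lambda_f^q$ in the final formula, and handling the resulting interplay between $\chi^{-1}$-weights and a $q$-power twist must be done at the level of the series defining $\log_{C,P}$, since $\log_{C,P}$ is only $\mathbb{F}_q$-linear and not $\overline{\mathbb{F}_q}$-linear.
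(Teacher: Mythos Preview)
Your overall strategy is sound and in fact more uniform than the paper's: you specialize the $P$-adic identities of Corollary~\ref{CorollaryS2-4} and Proposition~\ref{PropositionS2-5} directly, whereas the paper re-derives everything at the level of formal $z$-series in $K[t_1,\dots,t_n][[z]]$, using the functional equation $\theta\log_{C,z}=\log_{C,z}(z\tau+\theta)$ by hand for case~(3). Your use of Proposition~\ref{PropositionS2-5} with $a=\theta-t_1$ is exactly the abstract packaging of that manipulation, so the two routes converge.

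There is, however, one genuine gap: your identity ``$\tau^k(g(\chi))=b_k(\eta_1)\cdots b_k(\eta_n)\,g(\chi)$'' is false. Formula~(\ref{eqS3-2}) involves the semilinear Frobenius $\sigma$ (which fixes $\overline{\mathbb F_q}$ and sends $\lambda_\theta\mapsto\lambda_\theta^q$), not the absolute Frobenius $\tau$; since $g(\chi)$ has coefficients in $\overline{\mathbb F_q}$, these differ. Consequently the collapse you obtain is
\[
g(\chi)\,\log_{\varphi,P}(x)\big|_{t=\eta}=\sum_{k\ge 0}\frac{\sigma^k(g(\chi)x)}{\ell_k}=:\log_{\sigma,P}\bigl(g(\chi)x\bigr),
\]
not $\log_{C,P}(u_\chi(1))$. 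The passage to $\log_{C,P}$ requires the extra step (explicit in the paper's proof) of summing over $\psi\in[\chi]$: one has $u_{[\chi]}(1)=\sum_{\psi\in[\chi]}u_\psi(1)\in A[\lambda_f]$, and on $A[\lambda_f]$ the operators $\sigma$ and $\tau$ coincide, so $\log_{\sigma,P}=\log_{C,P}$ there. Only after this do your Galois-averaging and $\overline{\mathbb F_q}$-linearity arguments go through. The same correction applies in case~(3): your identity should read $g(\chi)(\theta-\chi(\theta))=-\sigma(g(\chi))$, and one first gets $\log_{\sigma,P}(\sigma(g(\chi)))$; summing over $[\chi]$ and using $\sigma(\lambda_f)=\lambda_f^q$ (since $\lambda_f\in\mathbb F_q((1/\lambda_\theta))$) then yields $\log_{C,P}(\lambda_f^q)$ as claimed.
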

\begin{proof}  Let's set :
$$  \log_{\sigma, z}=\sum_{k\geq 0}\frac{1}{\ell_k} z^k \sigma^k.                                                            $$
If we specialize $t_i$ in $\eta_i$ in the formula (\ref{eqS3-3}) and multiply by $g(\chi),$ we have the following  equality in $\mathbb F_q(\eta_1, \ldots, \eta_n)[\theta][\lambda_f][[z]]:$
$$\sum_{k\geq 0}\sum_{a\in A_{+,k}}\frac{\chi(a)}{a} g(\chi)z^k = \log_{\sigma,z} u_{\chi}(z).$$
Now, set :
$$\log_{C,z}=\sum_{k\geq 0}\frac{1}{\ell_k} z^k \tau^k.$$
We get the following equality in $K[\lambda_f][[z]]$ ($\tau$ acts trivially on $z$):
$$\sum_{\psi \in  [\chi]} \sum_{k\geq 0}\sum_{a\in A_{+,k}}\frac{\psi(a)}{a} g(\psi)z^k= \log_{C,z} u_{[\chi]}(z).$$
For $\mu \in \Delta_f,$ we have :
$\mu (g(\chi))=\chi(\mu)g(\chi)$, we therefore  deduce that :
$$\mid \Delta_f\mid \sum_{k\geq 0}\sum_{a\in A_{+,k}}\frac{\chi(a)}{a} g(\chi)z^k= \sum_{\mu \in \Delta_f}\chi^{-1}(\mu)\log_{C,z} \mu(u_{[\chi]}(z)).$$
Thus:
$$\mid \Delta_f\mid  \, g(\chi)  \sum_{k\geq 0}\sum_{a\in A_{+,k}}\frac{\chi(a)}{a} z^k =\sum_{\mu \in \Delta_f}\chi^{-1}(\mu)\log_{C,z} \mu(u_{[\chi]}(z)).$$
And finally :
$$g(\chi ) \sum_{k\geq 0}\sum_{\substack{a\in A_{+,k} \\ a\not \equiv 0\mod{P}}}\frac{\chi(a)}{a} z^k = (1-P^{-1}\chi (P)z^d)\frac{1}{\mid \Delta_f\mid }\sum_{\mu \in \Delta_f}\chi^{-1}(\mu)\log_{C,z} \mu(u_{[\chi]}(z)).$$ 
Recall that :
$$\theta \log_{C, z}= \log_{C,z} (z\tau +\theta).$$
Now, since $u_{[\chi]}(1)$ can be a $P$-adic unit, using the functional equation of $\log_{C,z}$ if necessary, we get if $\chi$ is even (i.e. $n\not \equiv 1\pmod{q-1}$) :
$$L_P(1,\chi)g(\chi) =(1-P^{-1}\chi (P))\frac{1}{\mid \Delta_f\mid }\sum_{\mu \in \Delta_f}\chi^{-1}(\mu)\log_{C,P} (\mu(u_{[\chi]}(1))).$$ 
Let's treat the case where  $\chi$ is odd (i.e. $n\equiv 1\pmod{q-1}$). If $n\geq q,$ by Lemma \ref{LemmaS2-3}, we have  $u_{[\chi]}(1)=0.$  Using again the same technique, we get in this case:
$$L_P^{(1)}(1, \chi) g(\chi)=(1-P^{-1}\chi (P))\frac{1}{\mid \Delta_f\mid }\sum_{\mu \in \Delta_f}\chi^{-1}(\mu)\log_{C,P} (\mu(u^{(1)}_{[\chi]}(1))).$$ 
Thus, it remains to treat the case $n=1$ . First, by formula (\ref{eqS3-1})   and Lemma \ref{LemmaS2-2}:
$$u_{[\chi]}(z)=\lambda_f.$$
Therefore, in this case :
$$g(\chi) \sum_{k\geq 0}\sum_{\substack{a\in A_{+,k}\\ a\not \equiv 0\pmod{P}}}\frac{\chi(a)}{a} z^k = (1-P^{-1}\chi (P)z^d)\frac{1}{\mid \Delta_f\mid }\sum_{\mu \in \Delta_f}\chi^{-1}(\mu)\log_{C,z} \mu(\lambda_f).$$ 
Now, select $\lambda \in \mathbb F_q$ such that $f$ is relatively prime to $\theta +\lambda.$ Then :
$$\sum_{\mu \in \Delta_f}\chi^{-1}(\mu)\log_{C,z} \mu(\lambda_f) = \chi ^{-1}(\theta+\lambda)
\sum_{\mu \in \Delta_f}\chi^{-1}(\mu)\log_{C,z}C _{\theta+\lambda}(\mu(\lambda_f)).$$
Thus $g(\chi) (\chi(\theta+\lambda)-(\theta +\lambda))\sum_{k\geq 0}\sum_{a\in A_{+,k}, a\not \equiv 0\pmod{P}}\frac{\chi(a)}{a} z^k$ is equal to:
$$ (1-P^{-1}\chi (P)z^d)\frac{1}{\mid \Delta_f\mid }\sum_{\mu \in \Delta_f}\chi^{-1}(\mu)(\log_{C,z} C_{\theta+\lambda }(\mu(\lambda_f))- (\theta +\lambda) \log_{C,z} \mu(\lambda_f)).$$ 
But $n=1,$ thus  $\chi (\theta +\lambda)= \chi (\theta) +\lambda.$ Thus, 
$$g(\chi)(\chi(\theta)-\theta)\sum_{k\geq 0}\sum_{a\in A_{+,k}, a\not \equiv 0\pmod{P}}\frac{\chi(a)}{a} z^k$$
 is equal to :
$$ (1-P^{-1}\chi (P)z^d)\frac{1}{\mid \Delta_f\mid }\sum_{\mu \in \Delta_f}\chi^{-1}(\mu)(\log_{C,z} C_{\theta }(\mu(\lambda_f))- \theta  \log_{C,z} \mu(\lambda_f)).$$ 
Set :
$$\log^{(1)}_{C,z}= \sum_{k\geq 0}\frac{k}{\ell_k} z^{k-1} \tau^k.$$
Then  :
$$\log^{(1)}_{C,z} (z\tau +\theta) -\theta \log^{(1)}_{C,z}= -\log_{C,z} \tau.$$
Therefore, for $n=1,$ we get :
$$g(\chi) L_P^{(1)}(1,\chi)=(1-P^{-1}\chi (P))\frac{1}{\mid \Delta_f\mid }\frac{1}{\theta- \chi(\theta)}\sum_{\mu \in \Delta_f}\chi^{-1}(\mu) \log_{C,P}( \mu (\lambda_f^q)) .$$
\end{proof}

${}$\par
\noindent {\sl Proof of Theorem \ref{TheoremS3-1} :}${}$\par

We first treat the case where $\chi$ is even (i.e. $n\not \equiv 1\pmod{q-1}$). Note that  by Proposition \ref{PropositionS2-3} and Lemma \ref{LemmaS2-2}, we have :
$$u_{[\chi]}(1)\not =0.$$
Since $\chi$ is even this implies that $u_{[\chi]}(1)$ is not a torsion point for the Carlitz module, in particular :
 $$\log_{C,P}(u_{[\chi]}(1))\not =0.$$
 But, by the proof of Proposition \ref{PropositionS3-1}, we have  :
 $$\log_{C,P}(u_{[\chi]}(1))=\sum_{\psi \in [\chi]}(1-\frac{\psi(P)}{P})^{-1}L_P(1, \psi) g(\psi) .$$
 This implies that there exists $\psi \in [\chi]$ such that :
 $$L_P(1, \psi)\not =0.$$
 Thus, we have to prove that if $L_P(1, \chi) \not =0,$ then $L_P(1, \chi^{\frac{1}{q}})\not =0.$ We are going to prove it by performing a change of variable.\par 
 
 \noindent Set ${K^{(q)}}=\mathbb F_q(\theta^q), $ observe that $K/K^{(q)} $ is totally ramified at every place of $K^{(q)}.$ Let ${C^{(q)}}$ be the Carlitz module for $A^{(q)}:=A^q=\mathbb F_q[\theta^q], $ i.e. ${C^{(q)}}_{\theta^q}= \tau +\theta ^q.$ Let $\lambda_{f(\theta^q)}^{(q)}=\exp_{{C^{(q)}}} (\frac{\widetilde{\pi }^{(q)}}{f(\theta^q)})$ (this is  also equal to $\lambda_f^q$). Then :
 $$K(\lambda_f) =K(\lambda_{f(\theta^q)}^{(q)}).$$
 Furthermore, the restriction map induces an isomorphism of groups :
 $$\Delta_f \simeq \Delta_{f(\theta^q)}^{(q)}.$$
 This implies that :
 $$\forall \sigma \in \Delta_f, \chi (\sigma) =\chi(\sigma^{(q)}).$$
 Let $(., K(\lambda_f)/K)$ be the Artin map, then for a monic irreducible element $Q$ of $A$ prime to $f:$
 $$ (Q, K(\lambda_f)/K)\mid_{K^{(q)}(\lambda_{f(\theta^q)}^{(q)})}= (Q(\theta^q),K^{(q)}(\lambda_{f(\theta^q)}^{(q)})/K^{(q)}).$$
 Observe that:
 $$L_P(1, \chi)=\sum_{n\geq 0}\sum_{a\in A_{+,n}, a\not \equiv 0\pmod{P},  a {\rm \, prime \, to \,} f}\frac{\chi((a, K(\lambda_f)/K))}{a} .$$
 Therefore $L_P(1, \chi)\not =0$ implies that $L_P^{(q)}(1, \chi)\not =0,$
 where :
 $$L_P^{(q)}(1, \chi)=\sum_{n\geq 0}\sum_{b\in A^{(q)}_{+,n}, b\not \equiv 0\pmod{P(\theta^q)},  b {\rm \, prime \, to \,} f(\theta^q)}\frac{\chi((b, K^{(q)}( \lambda_{f(\theta^q)}^{(q)})/K^{(q)}))}{b} .$$
 But :
 $$L_P^{(q)}(1, \chi)= (L_P(1, \chi^{\frac{1}{q}}))^q.$$
 Thus :
 $$L_P(1,\chi^{\frac{1}{q}})\not =0.$$\par
Now, we turn to the case where $\chi$ is odd of type $n \geq q.$ Again, by Proposition \ref{PropositionS2-3}, we get :
$$u^{(1)}_{[\chi]}(1) \not =0.$$
Since $\chi$ is not of type $1,$ by formula (\ref{eqS3-1}),  $u^{(1)}_{[\chi]}(1)$ is not a torsion point,  in particular:
 $$\log_{C,P}(u^{(1)}_{[\chi]}(1))\not =0.$$
Now by the proof of Theorem \ref{PropositionS3-1}:
$$\log_{C,P}(u^{(1)}_{[\chi]}(1))= \sum_{\psi \in [\chi]} (1-\frac{\psi (P)}{P})^{-1}L_P^{(1)}(1, \psi) g(\psi) .$$
We can conclude as in the even case.
It remains to treat the case $n=1.$ Observe that $\lambda_f^q$ is not a torsion point for the Carlitz module. Furthermore by Proposition  \ref{PropositionS3-1}, we have :
$$     \log_{C,P}(\lambda_f^q)= \sum_{\psi\in [\chi]}(\theta-\psi(\theta))(1-\frac{\psi(P)}{P})^{-1} L_P^{(1)}(1, \psi)g(\psi).$$                                                      
Again, we can conclude as in the even case. This achieves the proof of Theorem \ref{TheoremS3-1}.
\subsection{The class module of the $P$th cyclotomic function field}\label{ClassmoduleP}${}$\par
Let $P$ be a monic irreducible element in $A$ of degree d. Recall that $\lambda_P=\exp_C(\frac{\widetilde{\pi}}{P}).$ Let $L=K(\lambda_P)$,
recall that $L/K$ is a finite abelian extension of degree $q^d-1$ which is unramified outside $P$ and $\infty$ (\cite{ROS}, Proposition 12.7). Let's set $\Delta=\Delta_P={\rm Gal}(L/K).$  Let $O_L$ be the integral closure of $A$ in $L,$ one can show that $O_L=A[\lambda_P]$ (\cite{ROS}, Proposition 12.9) and that $PO_L = \lambda_P^{q^d-1} O_L$ (\cite{ROS}, Proposition 12.7). Recall that Taelman's class module associated to $L/K$ and the Carlitz module is defined by:
$$H=H(C/O_L)=\frac{L_\infty}{O_L+\exp_C(L_\infty)},$$
where $L_\infty= L\otimes_KK_\infty.$ We observe that $H$ is a finite $A[\Delta]$-module.\par
Let $A_P$ be the $P$-adic completion of $A,$ and set:
$$X=H\otimes_AA_P.$$
Then $X$ is a finite $A_P[\Delta]$-module. For $\chi \in \widehat{\Delta}:={\rm Hom}(\Delta, A_P^\times),$ we set:
$$X(\chi)=e_\chi X,$$
where $e_\chi=\frac{1}{\mid \Delta\mid} \sum_{\delta \in \Delta} \chi(\delta) \delta^{-1} \in \mathbb F_{q^d}[\Delta].$\par
If $\chi$ is an even Dirichlet character, then by \cite{ANG&TAE1}, Theorem 9.12, we have:
$$L_P(1, \chi) X(\chi)=\{0\}.$$
Furthermore, by \cite{ANG&TAE2}, Theorem 1, and \cite{GOS&SIN} (see also \cite{ABBL}), if the character $\chi$ is odd and $v_P(L_P^{(1)}(1, \chi))=0,$ then $X(\chi)$ is a cyclic $A_P$-module. In fact, we have:
\begin{theorem}\label{TheoremS3-2} If $\chi$ is an odd Dirichlet character of prime conductor $P$, then $L_P^{(1)}(1, \chi) X(\chi)$ is a cyclic $A_P$-module.
\end{theorem}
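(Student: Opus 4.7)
The strategy is to combine the cyclicity statement of Theorem \ref{TheoremS2-2} with the $P$-adic logarithm formula of Proposition \ref{PropositionS3-1}, via a specialization map relating the generic class module to $X(\chi)$.

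Let $n$ be the type of $\chi$, so $n\equiv 1\pmod{q-1}$. Assume first $n\ge q$. Apply Theorem \ref{TheoremS2-2} to the canonical deformation $\varphi$ of the Carlitz module over $\mathbb T_n(K_\infty)$: this yields a cyclic $A[t_1,\ldots,t_n]$-submodule $H^{(1)}(\varphi/A[t_1,\ldots,t_n])\subseteq H(\varphi/A[t_1,\ldots,t_n])$, generated by the class of $u_C^{(1)}(t_1,\ldots,t_n;1)$ (equivalently the image of $1/\theta^{(n-q)/(q-1)}$), with $\mathbb F_q[t_1,\ldots,t_n]$-torsion cokernel.

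The next step is to construct a specialization map to $X(\chi)$. Choose $\eta_1,\ldots,\eta_n\in\overline{\mathbb F_q}$ so that $\sigma(g(\chi))=\prod_{i=1}^n(\eta_i-\theta)\cdot g(\chi)$. Then the evaluations $t_i\mapsto \eta_i$, multiplication by the Gauss--Thakur sum $g(\chi)$, and averaging over the Frobenius orbit $[\chi]$, combined with $P$-adic completion, produce (following the specialization picture recalled in the introduction and made precise in \cite{APTR}) a surjective $A_P$-linear map
\[
\mathrm{ev}_\chi\colon H(\varphi/A[t_1,\ldots,t_n])\otimes_A A_P \twoheadrightarrow X(\chi).
\]
Under $\mathrm{ev}_\chi$ the cyclic submodule $H^{(1)}\otimes A_P$ maps to a cyclic $A_P$-submodule $Y_\chi\subseteq X(\chi)$ whose generator $\bar v$ is the class of $u_{[\chi]}^{(1)}(1)$, and Proposition \ref{PropositionS3-1}(2) identifies $\log_{C,P}(\bar v)$ with $L_P^{(1)}(1,\chi)\,g(\chi)$ up to a $P$-adic unit and the Euler factor $1-P^{-1}\chi(P)$.

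The crux of the proof is the containment $L_P^{(1)}(1,\chi)\,X(\chi)\subseteq Y_\chi$, which combined with cyclicity of $Y_\chi$ yields the theorem. The plan is to extract this from the equivariant class number formula (Corollary \ref{corollaryStark2}), applied to $L/K$ for the Carlitz module and projected onto the $\chi$-isotypic component and $P$-adically completed, together with the $P$-adic regulator interpretation provided by Proposition \ref{PropositionS3-1}(2). The main obstacle is the leading-term nature of this comparison: for odd $\chi$ one has $L_P(1,\chi)=0$, so the class number formula must be reformulated in terms of the derivative $L_P^{(1)}(1,\chi)$, and the Fitting ideal of the cokernel $X(\chi)/Y_\chi$ must be controlled by this leading term. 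The case $n=1$ is handled separately via Remark \ref{remark4} and Proposition \ref{PropositionS3-1}(3): take $\bar v$ to be the class of $\lambda_P^q$ in $X(\chi)$; then $A_P\bar v$ is trivially cyclic, and the same regulator identity shows $L_P^{(1)}(1,\chi)\,X(\chi)\subseteq A_P\bar v$.
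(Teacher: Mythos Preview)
Your outline identifies the right ingredients but the argument has a genuine gap at exactly the point you call ``the crux''.

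First, a point of confusion: the element $u_{[\chi]}^{(1)}(1)$ lies in $O_L$, so its class in $H(C/O_L)=L_\infty/(O_L+\exp_C(L_\infty))$ is zero. It therefore cannot be the generator of $Y_\chi\subset X(\chi)$, and it makes no sense to apply $\log_{C,P}$ to a class in $X(\chi)$. The correct generator of $H^{(1)}$ is the class of $1/\theta^{(n-q)/(q-1)}$ (as you note), and under specialization this becomes the image of $\exp_{C,\sigma}^{(1)}(\widetilde{\pi}_\chi)$ in $e_\chi H(C/O_L)$, not the class of $u_{[\chi]}^{(1)}(1)$. The element $u_{[\chi]}^{(1)}(1)\in O_L$ plays a different role entirely: it generates an $A[\Delta]$-submodule $\mathcal M\subset O_L$, and it is the Fitting ideal of $\sqrt{\mathcal M}/\mathcal M$ that eventually gets compared to $L_P^{(1)}(1,\chi)$.

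Second, and more seriously: Theorem~\ref{TheoremS2-2} only says that the cokernel $H/H^{(1)}$ is $\mathbb F_q[t_1,\ldots,t_n]$-torsion. After specialization this gives no control on $X(\chi)/Y_\chi$ for a fixed $\chi$; in particular it does not produce the containment $L_P^{(1)}(1,\chi)\,X(\chi)\subseteq Y_\chi$. The paper does \emph{not} argue via this containment. Instead it works at the level of $O_L$ with the map $\psi\colon U(C/O_L)/U_{St}(C/O_L)\to H(C/O_L)$ induced by $\exp_C^{(1)}$, and proves two things: (i) an embedding $\mathrm{Ker}\,\psi_{[\chi]}\hookrightarrow \sqrt{\mathcal M}/\mathcal M$, obtained by transporting the proof of Proposition~\ref{PropositionS2-4} and Theorem~\ref{TheoremS2-2} through specialization; and (ii) the equality $[\mathrm{Ker}\,\psi_{[\chi]}]_{A[\Delta]}=[\mathrm{Coker}\,\psi_{[\chi]}]_{A[\Delta]}$, which follows from the equivariant class formula of Corollary~\ref{corollaryStark2}. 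Since the source of $\psi_{[\chi]}$ is $A[\Delta]$-cyclic, one deduces that $[\sqrt{\mathcal M}/\mathcal M]_{A[\Delta]}\cdot e_{[\chi]}H(C/O_L)$ is $A[\Delta]$-cyclic. Only \emph{then} does the $P$-adic logarithm enter: Proposition~\ref{PropositionS3-1}(2) together with Theorem~\ref{TheoremS3-1} show that $L_P^{(1)}(1,\chi)A_P\subset \Fitt_{A_P} e_\chi(\sqrt{\mathcal M}/\mathcal M\otimes_A A_P)$. Your outline skips both (i) and (ii), and the vague appeal to the class number formula does not supply them.

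Finally, the case $n=1$ is trivial for a different reason than you give: one has $X(\chi)=\{0\}$ directly (\cite{ANG&TAE1}, Theorem~8.7), so there is nothing to prove. Your proposed generator $\lambda_P^q$ again lies in $O_L$ and has zero class in $X(\chi)$.
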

\begin{proof} ${}$\par
If $\chi$ is of type 1, by \cite{ANG&TAE1}, Theorem 8.7, we have   that $X(\chi)=\{0\}.$ Thus we can assume that $\chi$ is of type $n\geq q,$ $n\equiv 1\pmod{q-1}.$ Let $\eta_1, \ldots, \eta_n \in \mathbb F_{q^d}$ such that:
$$\forall a\in A, \chi (a)= a(\eta_1)\cdots a(\eta_n).$$\par

Set $\exp_C^{(1)}= \sum_{j\geq 1} \frac{j}{D_j}\tau^j.$ As in Corollary \ref{CorollaryS2-1}, combining the isomorphism of $A[\Delta]$-modules given by   Proposition \ref{propositionSt3} with the evaluation at $z=1$ we obtain a morphism of $A[\Delta]$-modules induced by $\exp_C^{(1)}:$
\begin{equation}\label{equapsi}
\psi:\frac{U(C/O_L)}{U_{St}(C/O_L)}\rightarrow H(C/O_L).
\end{equation}
We will first investigate $[e_{[\chi]}{\rm Coker}\,  \psi]_{A[\Delta]}.$\par
${}$\par

 We will intensively use the results obtained in section \ref{nodd}. Let $\varphi$ be the canonical deformation of the Carlitz module over $\mathbb T_n(K_\infty)$ (see section \ref{Thecaseofthecarlitzmodule}). Let $ev_\chi: \mathbb T_n(K_\infty)\rightarrow \mathbb F_q(\eta_1, \ldots, \eta_n)((\frac{1}{\theta}))$ be the surjective map given by $ev_\chi (f)= f\mid_{t_1=\eta_1, \ldots, t_n=\eta_n}.$
Set:
$$\Omega=L_\infty\otimes_{\mathbb F_q}\mathbb F_q(\eta_1, \ldots, \eta_n).$$
Let $\sigma =\tau\otimes 1: \Omega\rightarrow \Omega.$ Then:
$$e_\chi \Omega = g(\chi) \mathbb F_q(\eta_1, \ldots, \eta_n)((\frac{1}{\theta})).$$
Then, by formula (\ref{eqS3-2}), we get:
$$\forall f\in \mathbb T_n(K_\infty), 
(\sigma +\theta) (g(\chi) ev_\chi(f))= g(\chi)ev_\chi(\varphi_\theta(f)).$$
$$\forall f\in \mathbb T_n(K_\infty), 
\exp_{C,\sigma} (g(\chi) ev_\chi(f))= g(\chi)ev_\chi(\exp_\varphi(f)),$$
where:
$$\exp_{C,\sigma} = \sum_{j\geq 0} \frac{1}{D_j}\sigma^j.$$
Now, by Theorem \ref{TheoremSt2}, we have:
$$e_\chi U_{St}(C/O_L)\otimes_{\mathbb F_q} \mathbb F_q(\eta_1, \ldots, \eta_n)= A[\eta_1, \ldots , \eta_n] g(\chi)L(1, \chi),$$
where $L(1, \chi)=\sum_{a\in A_+} \frac{\chi(a)}{a} \in \mathbb F_q(\eta_1, \ldots, \eta_n)((\frac{1}{\theta}))^\times.$ Thus:
$$e_\chi U_{St}(C/O_L)\otimes_{\mathbb F_q} \mathbb F_q(\eta_1, \ldots, \eta_n)=g(\chi) ev_\chi(U_{St}(\varphi/A[t_1, \ldots, t_n])).$$\par

Select $v$ a place of $L$ above $\infty,$ and let $\widetilde{\pi}_v= (0, \ldots, O, \widetilde{\pi}, 0, \ldots, 0) \in L_{\infty}=\prod_{w\in S_\infty (L)} L_w$ which has zero at all its components except at the component $v$. Let's set:
$$\widetilde{\pi}_\chi = e_\chi \widetilde{\pi}_v\in e_\chi \Omega.$$
Then by \cite{ANG&TAE1}, Proposition 8.4, we have:
$${\rm Ker}(\exp_{C, \sigma}\mid_{e_\chi \Omega})= \widetilde{\pi}_\chi A[\eta_1, \ldots, \eta_n].$$
Since $\chi$ is odd and of type $>1,$ we deduce that:
$$e_\chi U(C/O_L)\otimes_{\mathbb F_q}\mathbb F_q(\eta_1, \ldots, \eta_n)= \widetilde{\pi}_\chi A[\eta_1, \ldots, \eta_n].$$
Since $ev_\chi (\mathbb B(t_1, \ldots, t_n))= [e_\chi H(C/O_L)\otimes _{\mathbb F_q} \mathbb F_q(\eta_1, \ldots, \eta_n)]_{A[\eta_1, \ldots, \eta_n]}$ (\cite{APTR}, Corollary 9.3),  by Lemma \ref{LemmaS2-1} and  \cite{ANG&TAE1}, Theorem A, Theorem B and Proposition 8.5,  we get:
$$e_\chi U(C/O_L)\otimes_{\mathbb F_q}\mathbb F_q(\eta_1, \ldots, \eta_n)=g(\chi) ev_\chi(U(\varphi/A[t_1, \ldots, t_n])).$$\par
Set:
$$\exp_{C, \sigma}^{(1)}=\sum_{j\geq 1} \frac{j}{D_j}\sigma^j.$$
As in the case of section \ref{nodd}, $\exp_{C, \sigma}^{(1)}$ induces a map:
$$ \frac{e_\chi \Omega}{g(\chi)\mathcal N}\rightarrow \frac{e_\chi \Omega}{g(\chi)\mathcal N},$$
where 
$$\mathcal N=\{ x\in \mathbb F_q(\eta_1, \ldots, \eta_n)((\frac{1}{\theta})), v_\infty(x)>\frac{n-q}{q-1}\}$$
(note that $e_\chi \Omega = \widetilde{\pi}A[\eta_1, \ldots, \eta_n]\oplus g(\chi)\mathcal N$). Now, using $ev_\chi,$  the proof of Proposition \ref{PropositionS2-4} works in this case and the above map is injective.\par
 Observe that:
$$\exp_{C, \sigma}(e_\chi \Omega)= g(\chi) \mathcal N,$$
$$e_\chi \Omega= \widetilde{\pi}_\chi A[\eta_1, \ldots, \eta_n]\oplus g(\chi)\mathcal N.$$
Set $V=\{ x\in e_\chi \Omega, \exp_{C, \sigma}^{(1)}(x)\in g(\chi)A[\eta_1, \ldots, \eta_n]+ g(\chi)\mathcal N\}.$ Then, we have an injective map induced by $\exp_{C, \sigma}^{(1)}:$
$$\frac{V}{g(\chi)\mathcal N}\hookrightarrow \frac{g(\chi)A[\eta_1, \ldots, \eta_n]\oplus g(\chi)\mathcal N}{g(\chi)\mathcal N}.$$
Note that the map $\psi$ defined in \eqref{equapsi} induces a map of $A[\eta_1, \ldots, \eta_n]$-modules:
$$\psi_\chi: e_\chi \frac{U(C/O_L)}{U_{St}(C/O_L)}\otimes_{\mathbb F_q} \mathbb  F_q(\eta_1, \ldots, \eta_n)\rightarrow\frac{ e_\chi \Omega}{g(\chi) A[\eta_1, \ldots, \eta_n]\oplus g(\chi)\mathcal N}.$$
 If we consider the proof of Theorem \ref{TheoremS2-2}, there exists  a monic polynomial $G\in A[\eta_1, \ldots, \eta_n]$  such that:
 $$e_\chi {U_{St}(C/O_L)}\otimes_{\mathbb F_q} \mathbb  F_q(\eta_1, \ldots, \eta_n)\subset G\widetilde{\pi}_\chi A[\eta_1, \ldots, \eta_n],$$
 $${\rm Ker} \, \psi_\chi= \frac{G\widetilde{\pi}_\chi A[\eta_1, \ldots, \eta_n]}{e_\chi {U_{St}(C/O_L)}\otimes_{\mathbb F_q} \mathbb  F_q(\eta_1, \ldots, \eta_n)},$$
$$G\widetilde{\pi}_\chi A[\eta_1, \ldots, \eta_n]+g(\chi)\mathcal N=V.$$
Let $e_\chi \mathcal M= g(\chi)ev_\chi (\mathcal M(\varphi/A[t_1, \ldots, t_n])).$ Then we have a natural injective map of $A[\eta_1, \ldots, \eta_n]$-modules (induced by $\exp_{C, \sigma}^{(1)}$):

 \begin{tikzcd}\displaystyle
 \frac{G\widetilde{\pi}_\chi A[\eta_1, \ldots, \eta_n]\oplus g(\chi)\mathcal N}{e_\chi U_{St}(C/O_L)\otimes_{\mathbb F_q} \mathbb F_q(\eta_1, \ldots, \eta_n)\oplus g(\chi) \mathcal N}\arrow[hookrightarrow]{r} & \displaystyle \frac{g(\chi)A[\eta_1, \ldots , \eta_n]\oplus g(\chi)\mathcal N}{e_\chi \mathcal M\oplus g(\chi)\mathcal N} \arrow[hookrightarrow]{d}
 \\
 & \displaystyle \frac{g(\chi)A[\eta_1, \ldots , \eta_n]}{e_\chi \mathcal M}
 \end{tikzcd}
 
 \noindent thus:
$${\rm Ker} \, \psi_\chi \hookrightarrow \frac{g(\chi)A[\eta_1, \ldots , \eta_n]}{e_\chi \mathcal M}.$$\par
Let $\mathcal M$ be the $A[\Delta]$-module via $C$ generated by $u_{[\chi]}^{(1)}(1)$ (see section \ref{P-adicDirichlet-GossL-series}) and let $\sqrt{\mathcal M}=\{ x\in O_L, \exists a\in A\setminus\{0\}, C_a(x)\in \mathcal{M}\}.$ Let's consider the map induced by $\psi:$
$$\psi_{[\chi]}: e_{[\chi]}\frac{U(C/O_L)}{U_{St}(C/O_L)}\rightarrow e_{[\chi]}H(C/O_L).$$
Then, by the above discussion, we have an injective morphism of $A[\Delta]$-modules:
$${\rm Ker}\,  \psi_{[\chi]}\hookrightarrow \frac{e_{[\chi]} O_L}{\mathcal M}.$$
But ${\rm Ker}\,  \psi_{[\chi]}$ is a finite $A$-module, therefore we have an injective morphism of $A[\Delta]$-modules:
$${\rm Ker}\,  \psi_{[\chi]}\hookrightarrow \frac{\sqrt{\mathcal M}}{\mathcal M}.$$ Note that $\mathcal M$ is a free $A$-module of rank $\mid [\chi]\mid.$ Thus,  by \cite{POO}, $\frac{\sqrt{\mathcal M}}{\mathcal M}$ is a finite $A[\Delta]$-module. 
By Corollary \ref{corollaryStark2}, we have:
$$[e_{[\chi]}\frac{U(C/O_L)}{U_{St}(C/O_L)}]_{A[\Delta]}=[e_{[\chi]}H(C/O_L)]_{A[\Delta]}.$$
Thus:
$$[{\rm Ker}\,  \psi_{[\chi]}]_{A[\Delta]}= [{\rm Coker}\,  \psi_{[\chi]}]_{A[\Delta]}.$$
Since $\frac{U(C/O_L)}{U_{St}(C/O_L)}$ is $A[\Delta]$-cyclic, we have $[{\rm Ker}\,  \psi_{[\chi]}]_{A[\Delta]} e_{[\chi]}H(C/O_L)$ is $A[\Delta]$-cyclic. But $[{\rm Ker}\,  \psi_{[\chi]}]_{A[\Delta]}$ divides $[\frac{\sqrt{\mathcal M}}{\mathcal M}]_{A[\Delta]}$ in $A[\Delta],$ thus we deduce that the module $[\frac{\sqrt{\mathcal M}}{\mathcal M}]_{A[\Delta]}e_{[\chi]}H(C/O_L)$ is $A[\Delta]$-cyclic. This in particular implies that :
$$ ({\rm Fitt}_{A_P} e_\chi (\frac{\sqrt{\mathcal M}}{\mathcal M}\otimes_AA_P)) X(\chi)\, {\rm is\,  } A_P{\rm -cyclic},$$
where  for $M$ a finite $A_P$-module, ${\rm Fitt}_{A_P} M$ denotes its  Fitting ideal. \par ${}$\par
Let $F$ be the $P$-adic completion of $L.$ Let $O_F$ be the valuation ring of $F$ and   let $\frak P=\{ x\in F, v_P(x)>0\}.$ Recall  from section \ref{PadicLseries} that $C$ extends into a morphism of $\mathbb F_q$-algebras $C: A_P\rightarrow A_P\{\{ \tau \}\}.$ We denote by $C(\frak P)$ the $\mathbb F_q$-vector space $\frak P$ viewed as an $A_P$-module via $C.$ Then $\log_{C,P}$ induces an isomorphism of $A_P[\Delta]$-modules:
$$\log_{C,P}: C(\frak P^2)\simeq \frak P^2.$$
We also observe that, since $\chi$ is odd of type $>1,$  $g(\chi^{q^j})A_P=e_{\chi^{q^j}}O_F \subset \frak P^2$ for all $j\geq 0$ (this follows for example from \cite{THA}, Theorem III). In particular:
$$\mathcal M, \sqrt{\mathcal M} \subset C(\frak P^2).$$
 Let $\overline{\mathcal M}$ be the $P$-adic closure of $\mathcal M$ in $C(\frak P^2).$ Then clearly $\overline{\mathcal M}$ is the $A_P[\Delta]$-module generated by $u_{[\chi]}^{(1)}(1).$ Now, by Proposition \ref{PropositionS3-1},  we have:
$$\log_{C,P}(e_\chi \overline{\mathcal M})= g(\chi) L_P^{(1)}(1, \chi) A_P.$$
 By Theorem \ref{TheoremS3-1}, we have an isomorphism of $A_P$-modules:
$$e_\chi (\mathcal M\otimes_A A_P) \simeq e_\chi \overline{\mathcal M}.$$
Furthermore:
$${\rm Fitt}_{A_P}\frac{e_\chi C(\frak P^2)}{e_\chi \overline{\mathcal M}}= L_P^{(1)}(1, \chi) A_P.$$
Let $\overline{\sqrt{\mathcal M}}$ be the $P$-adic closure of $\sqrt{\mathcal M}$ in $C(\frak P^2).$ Then, we have an isomorphism of $A_P$-modules:
$$e_\chi(\frac{\sqrt{\mathcal M}}{\mathcal M}\otimes_A A_P) \simeq e_\chi \frac{\overline{\sqrt{\mathcal M}}}{\overline{\mathcal M}}.$$
Thus:
$$L_P^{(1)}(1, \chi)A_P\subset {\rm Fitt}_{A_P} e_\chi (\frac{\sqrt{\mathcal M}}{\mathcal M}\otimes_AA_P) .$$
The Theorem follows.
\end{proof}
\begin{remark}\label{examples} Let $P$ be a monic irreducible element in $A$ of degree $d.$ Note that, in general, for $\chi$ odd of conductor $P,$ we have:
$$L_P^{(1)}(1, \chi) X(\chi)\not =\{0\}.$$
Indeed, let $\chi_P$ be the $P$-adic Teichm\"uller character (observe that this is a Dirichlet character of type $1$), i.e. :
$$\forall a\in A, \chi_P(a)\equiv a\pmod{PA_P}.$$
Let $n\in \{ 2, \ldots, q^d-2\},$ $ n\equiv 1\pmod{q-1}.$ Then, by \cite{TAE3}, we have:
$$X(\chi_P^n)\not =\{ 0\} \, {\rm if \, and \, only\, if\,} BC(q^d-n)\equiv 0\pmod{P},$$
where $BC(q^d-n)$ denotes the $(q^d-n)$th Bernoulli-Carlitz number (see \cite{GOS}, paragraph 9.2). Let $\beta(n-1)$ be the $(n-1)$th Bernoulli-Goss number (see \cite{GOS3}, \cite{GOS&SIN}, \cite{IRE&SMA}), then:
$$L_P^{(1)}(1, \chi_P^n)\equiv \beta(n-1)\pmod{P}.$$
For example, for $q=3,$ $P=\theta^3-\theta-1,$ we have (by \cite{GOS}, page 354):
$$BC(10)=\frac{2\theta^6+2\theta^4+2\theta^2+1}{\theta^3+2\theta}\equiv 0\pmod{P},$$
and a direct computation shows:
$$\beta(16)= \theta^{30} + 2\theta^{28} + 2\theta^4 + \theta^2 + 1 \equiv 1 \pmod{P}.$$
This implies:
$$L_P^{(1)}(1, \chi_P^{17})X(\chi_P^{17})\not =\{0\}.$$
\end{remark}

  \end{document}